\documentclass[twosided,a4paper]{amsart}

\usepackage{xypic}
\input xy
\xyoption{all}
\usepackage{epsfig}
\usepackage{color}
\usepackage{amsthm}
\usepackage{changepage}
\usepackage{amssymb}
\usepackage{mathdots}
\usepackage{amsmath}
\usepackage{amscd}
\usepackage{amsopn}
\usepackage{enumerate}
\usepackage{mathtools}
\usepackage{url}
\usepackage{graphicx}
\usepackage{hyperref}

\usepackage{hyperref}\hypersetup{colorlinks}


\usepackage{color} 

\definecolor{darkred}{rgb}{1,0,0} 
\definecolor{darkgreen}{rgb}{0,0.8,0}
\definecolor{darkblue}{rgb}{0,0,1}

\hypersetup{colorlinks,
linkcolor=darkblue,
filecolor=darkgreen,
urlcolor=darkred,
citecolor=darkgreen}



 \numberwithin{equation}{section}

\newtheorem {Theorem}{Theorem}

\numberwithin{Theorem}{section}
\newtheorem {Conjecture}[Theorem]  {Conjecture}
\newtheorem {Lemma}[Theorem]    {Lemma}

\newtheorem {Proposition}[Theorem]{Proposition}
\newtheorem {Corollary}[Theorem]{Corollary}
\theoremstyle{definition}
\newtheorem{Definition}[Theorem]{Definition}

\newtheorem{Remark}[Theorem]{Remark}
\newtheorem{Example}[Theorem]{Example}

\newtheoremstyle{MyTheorem}
        {.6em}{.6em}              
        {\itshape}                      
        {}                              
        {\bfseries}                     
        {. }                             
        { }                             
        {\thmname{#1}\thmnumber{\addtocounter{MyTheorem}{-4}#2}\thmnote{ \bfseries #3}}
\theoremstyle{MyTheorem}
\newtheorem{MyTheorem}{Theorem}

\newtheoremstyle{TheoremForIntro} 
        {.6em}{.6em}              
        {\itshape}                      
        {}                              
        {\bfseries}                     
        {.}                             
        { }                             
        {\thmname{#1}\thmnote{ \bfseries #3}}
    \theoremstyle{TheoremForIntro}
    \newtheorem{TheoremIntro}[MyTheorem]{Theorem}

\expandafter\chardef\csname pre amssym.def at\endcsname=\the\catcode`\@
\catcode`\@=11
\def\undefine#1{\let#1\undefined}
\def\newsymbol#1#2#3#4#5{\let\next@\relax
 \ifnum#2=\@ne\let\next@\msafam@\else
 \ifnum#2=\tw@\let\next@\msbfam@\fi\fi
 \mathchardef#1="#3\next@#4#5}
\def\mathhexbox@#1#2#3{\relax
 \ifmmode\mathpalette{}{\m@th\mathchar"#1#2#3}%
 \else\leavevmode\hbox{$\m@th\mathchar"#1#2#3$}\fi}
\def\hexnumber@#1{\ifcase#1 0\or 1\or 2\or 3\or 4\or 5\or 6\or 7\or 8\or
 9\or A\or B\or C\or D\or E\or F\fi}

\font\teneufm=eufm10
\font\seveneufm=eufm7
\font\fiveeufm=eufm5
\newfam\eufmfam
\textfont\eufmfam=\teneufm
\scriptfont\eufmfam=\seveneufm
\scriptscriptfont\eufmfam=\fiveeufm

\catcode`\@=\csname pre amssym.def at\endcsname



\newcommand{\fsl}{{\mathfrak {sl}}}
\newcommand{\fgl}{{\mathfrak {gl}}}
\newcommand{\fso}{{\mathfrak {so}}}

\newcommand{\fg}{{\mathfrak g}}
\newcommand{\fh}{{\mathfrak h}}

\newcommand{\ft}{{\mathfrak t}}

\newcommand{\fp}{{\mathfrak p}}
\newcommand{\fl}{{\mathfrak l}}
\newcommand{\fm}{{\mathfrak m}}

\newcommand{\fu}{{\mathfrak u}}

\newcommand{\sSU}{{\mathsf {SU}}}
\newcommand{\sSL}{{\mathsf {SL}}}
\newcommand{\sSO}{{\mathsf {SO}}}
\newcommand{\sO}{{\mathsf {O}}}
\newcommand{\sSp}{{\mathsf {Sp}}}
\newcommand{\sSpin}{{\mathsf {Spin}}}
\newcommand{\sGL}{{\mathsf {GL}}}

\newcommand{\sG}{{\mathsf G}}

\newcommand{\sH}{{\mathsf H}}
\newcommand{\sS}{{\mathsf S}}

\newcommand{\sT}{{\mathsf T}}

\newcommand{\sP}{{\mathsf P}}
\newcommand{\sL}{{\mathsf L}}

\newcommand{\sU}{{\mathsf U}}

\newcommand{\Sym}{{\mathsf{Sym}}}
\newcommand{\Bun}{{\mathsf{Bun}}}
\newcommand{\Stab}{{\mathsf{Stab}}}

\newcommand{\Fuch}{{\mathsf{Fuch}}}
\newcommand{\Hom}{{\mathsf{Hom}}}

\newcommand{\Pic}{{\mathsf{Pic}}}
\newcommand{\Prym}{{\mathsf{Prym}}}
\newcommand{\End}{{\mathsf{End}}}
\newcommand{\Hit}{{\mathsf{Hit}}}

\newcommand{\Aut}{{\mathsf{Aut}}}

%


\newcommand{\Xx}{{\mathcal X}}

\newcommand{\Ee}{{\mathcal E}}
\newcommand{\Ff}{{\mathcal F}}

\newcommand{\Gg}{{\mathcal G}}

\newcommand{\Mm}{{\mathcal M}}

\newcommand{\Oo}{{\mathcal O}}
\newcommand{\Pp}{{\mathcal P}}

\newcommand{\Ww}{{\mathcal W}}
\newcommand{\Zz}{{\mathcal Z}}

\newcommand{\ip}[2]{\langle #1, #2 \rangle}
\newcommand{\ipd}{\ip{\cdot}{\cdot}}

\newcommand{\floor}[1]{\left\lfloor #1\right\rfloor}

\newcommand{\mtrx}[1]{\left (\begin{matrix}#1\end{matrix}\right)}



\def    \HH     {{\mathbb H}}

\def    \C      {{\mathbb C}}
\def    \R      {{\mathbb R}}
\def    \Z      {{\mathbb Z}}

\def    \Q      {{\mathbb Q}}

\def    \P    {{\mathbb P}}

\def    \ra     {{\rightarrow}}

\def    \p      {\partial}

\def    \H  {\operatorname{\scriptscriptstyle{H}}}




\newcommand{\An}{\xymatrix{ *{\circ} \ar@{-}[r]|*\dir{ } & *{\circ}\ar@{-}[r]&{\cdots}&*{\circ}\ar@{-}[l]|*\dir{ }\ar@{-}[r]|*\dir{ }&*{\circ} }}
\newcommand{\Anlabel}{\xymatrix@R=.25em{ *{\circ}\ar@<-1ex>@{}[d]^{\alpha_{1}} \ar@{-}[r]|*\dir{ } & *{\circ}\ar@<-1ex>@{}[d]^{\alpha_{2}}\ar@{-}[r]&{\cdots}&*{\circ}\ar@{-}[l]|*\dir{ }\ar@{-}[r]|*\dir{ }\ar@<-2ex>@{}[d]^{\alpha_{n-1}}&*{\circ}\ar@<-1ex>@{}[d]^{\alpha_{n}}\\&&&& }}
\newcommand{\Bn}{\xymatrix{ *{\circ} \ar@{-}[r]|*\dir{ } & *{\circ}\ar@{-}[r]&{\cdots}&*{\circ}\ar@{-}[l]|*\dir{ }\ar@{=}[r]|*\dir{>}&*{\circ} }}
\newcommand{\Bnlabel}{\xymatrix@R=.25em{ *{\circ}\ar@<-1ex>@{}[d]^{\alpha_{1}} \ar@{-}[r]|*\dir{ } & *{\circ}\ar@<-1ex>@{}[d]^{\alpha_{2}}\ar@{-}[r]&{\cdots}&*{\circ}\ar@{-}[l]|*\dir{ }\ar@{=}[r]|*\dir{>}\ar@<-2ex>@{}[d]^{\alpha_{n-1}}&*{\circ}\ar@<-1ex>@{}[d]^{\alpha_{n}}\\&&&& }}
\newcommand{\Cn}{\xymatrix{ *{\circ} \ar@{-}[r]|*\dir{ } & *{\circ}\ar@{-}[r]&{\cdots}&*{\circ}\ar@{-}[l]|*\dir{ }\ar@{=}[r]|*\dir{<}&*{\circ} }}
\newcommand{\Cnlabel}{\xymatrix@R=.25em{ *{\circ}\ar@<-1ex>@{}[d]^{\alpha_{1}} \ar@{-}[r]|*\dir{ } & *{\circ}\ar@<-1ex>@{}[d]^{\alpha_{2}}\ar@{-}[r]&{\cdots}&*{\circ}\ar@{-}[l]|*\dir{ }\ar@{=}[r]|*\dir{<}\ar@<-2ex>@{}[d]^{\alpha_{n-1}}&*{\circ}\ar@<-1ex>@{}[d]^{\alpha_{n}}\\&&&& }}
\newcommand{\Dn}{\xymatrix@R=.25em{&&&&*{\circ} \\ *{\circ} \ar@{-}[r]|*\dir{ } & *{\circ}\ar@{-}[r]&{\cdots}&*{\circ}\ar@{-}[l]|*\dir{ }\ar@{-}[ur]|*\dir{ }\ar@{-}[dr]|*\dir{ }& \\ &&&&*{\circ} }}
\newcommand{\Dnlabel}{\xymatrix@R=.25em{&&&&*{\circ}\ar@<-1ex>@{}[d]^{\alpha_{n-1}} \\ *{\circ}\ar@<-1ex>@{}[d]^{\alpha_{1}} \ar@{-}[r]|*\dir{ } & *{\circ}\ar@<-1ex>@{}[d]^{\alpha_{2}}\ar@{-}[r]&{\cdots}&*{\circ}\ar@{-}[l]|*\dir{ }\ar@{-}[ur]|*\dir{ }\ar@{-}[dr]|*\dir{ }\ar@<-4ex>@{}[d]^{\alpha_{n-2}}& \\ &&&&*{\circ}\ar@<-2ex>@{}[u]^(-.5){\alpha_{n}} }}

\theoremstyle{remark}

\newcommand{\smtrx}[1]{\left (\begin{smallmatrix}#1\end{smallmatrix}\right)}
\author{Brian Collier}
\address{Brian Collier, University of Maryland, 4176 Campus Drive - William E. Kirwan Hall, College Park, MD 20742-4015}
\curraddr{}
\email{briancollier01@gmail.com}

\title{$\sSO(n,n+1)$-surface group representations and Higgs bundles}

\begin{document}

\setlength{\smallskipamount}{6pt}
\setlength{\medskipamount}{10pt}
\setlength{\bigskipamount}{16pt}
\begin{abstract}
We study the character variety of representations of the fundamental group of a closed surface of genus $g\geq 2$ into the Lie group $\sSO(n,n+1)$ using Higgs bundles. For each integer $0<d\leq n(2g-2),$ we show there is a smooth connected component of the character variety which is diffeomorphic to the product of a certain vector bundle over a symmetric product of a Riemann surface with the vector space of holomorphic differentials of degree $2,4,\cdots,2n-2.$
 In particular, when $d=n(2g-2)$, this recovers Hitchin's parameterization of the Hitchin component. 
We also exhibit $2^{2g+1}-1$ additional connected components of the $\sSO(n,n+1)$-character variety and compute their topology. 
Moreover, representations in all of these new components cannot be continuously deformed to representations with compact Zariski closure. Using recent work of Guichard-Wienhard on positivity, it is shown that each of the representations which define singularities (i.e. those which are not irreducible) in these $2^{2g+1}-1$ connected components are positive Anosov representations. 
\end{abstract}
\maketitle


\section{Introduction}

Since Higgs bundles were introduced, they have found application in parameterizing connected components of the moduli space of reductive surface group representations into a reductive Lie group $\sG$. 
In particular, for a closed surface $S$ with genus $g\geq2$, Hitchin gave an explicit parameterization of all but one of the connected components of the space of conjugacy classes of reductive representations of the fundamental group of $S$ into the Lie group $\sP\sSL(2,\R)$  \cite{selfduality}. 
Namely, he showed that each component with nonzero Euler class is diffeomorphic to the total space of a smooth vector bundle over an appropriate symmetric product of the surface. When the Euler class is maximal, this recovers a parameterization of the Teichm\"uller space of $S$ as a vector space of complex dimension $3g-3.$

Hitchin later showed that for $\sG$ a connected split real form, such as $\sP\sSL(n, \R)$ or $\sSO (n, n + 1)$, there is a connected component of this moduli space of representations which directly generalizes Teichm\"uller space \cite{liegroupsteichmuller}. 
Moreover, Hitchin parameterized this connected component, now called the Hitchin component, by a vector space of holomorphic differentials on the surface $S$ equipped with a Riemann surface structure. 
In this paper, we use Higgs bundle techniques to generalize both of these results for the group $\sSO(n,n+1)$. 

Let $\Gamma=\pi_1(S)$ be the fundamental group of a closed surface $S$ of genus $g\geq2$. For a real reductive algebraic Lie group $\sG,$ we will refer to the space of conjugacy classes of representations $\rho:\Gamma\ra\sG$ of $\Gamma$ into $\sG$ whose images have reductive Zariski closure as the {\em $\sG$-character variety}; it will be denoted by $\Xx(\sG).$ 
For connected reductive Lie groups, topological $\sG$ bundles on $S$ are classified by a characteristic class $\omega\in H^2(S,\pi_1(\sG))\cong\pi_1(\sG).$ 
Thus, the $\sG$-character variety decomposes as 
\[\Xx(\sG)=\bigsqcup\limits_{\omega\in\pi_1(\sG)}\Xx^\omega(\sG)~,\]
where the equivalence class of a reductive representation $\rho:\Gamma\ra\sG$ lies in $\Xx^\omega(\sG)$ if and only if the flat $\sG$ bundle determined by $\rho$ has topological type determined by $\omega\in\pi_1(\sG).$

The space $\Xx^\omega(\sG)$ is nonempty and connected for each $\omega\in \pi_1(\sG)$ when $\sG$ is compact and semisimple \cite{ramanathan_1975} and also when $\sG$ is complex and semisimple \cite{JunLiConnectedness}.
 Since $\sG$ is homotopic to its maximal compact subgroup, $\Xx^\omega(\sG)$ is connected if every representation in $\Xx^\omega(\sG)$ can be continuously deformed to one with compact Zariski closure. Connectedness of $\Xx^\omega(\sG)$ has been proven for many real forms using this technique, see \cite{AndrePGLnR,SO2n*connected}.  

There are exactly two known families of Lie groups for which the space $\Xx^\omega(\sG)$ is not connected. 
When $\sG$ is a split real form, the Hitchin component is not distinguished by an invariant $\omega\in\pi_1(\sG)$. 
Similarly, when $\sG$ is a group of Hermitian type, the connected components of {\em maximal representations} are usually not labeled by topological invariants $\omega\in\pi_1(\sG)$.
Both Hitchin representations and maximal representations define an important class of representations: they are the only known components of $\Xx(\sG)$ which consist entirely of Anosov representations \cite{AnosovFlowsLabourie,MaxRepsAnosov}. 

\subsection{New components for $\sG=\sSO(n,n+1)$}
The group $\sSO(n,n+1)$ has two connected components, we will denote the connected component of the identity by $\sSO_0(n,n+1).$ For $n\geq 3$, the group $\sSO(n,n+1)$ is a split group, but not of Hermitian type. Nevertheless, we show that the $\sSO_0(n,n+1)$-character variety has many non-Hitchin connected components which are not distinguished by a topological invariant $\omega\in\pi_1(\sSO_0(n,n+1))=\Z_2\oplus\Z_2.$ 
\begin{TheoremIntro}[\ref{THM1}]
	Let $\Gamma$ be the fundamental group of a closed surface $S$ of genus $g\geq2$ and let $\Xx(\sSO(n,n+1))$ be the $\sSO(n,n+1)$-character variety of $\Gamma$. 
	For each integer $d\in(0,n(2g-2)],$ there is a smooth connected component $\Xx_d(\sSO(n,n+1))$ of $\Xx(\sSO(n,n+1))$ which does not contain representations with compact Zariski closure. Furthermore, for each choice of Riemann surface structure $X$ on $S,$ the space $\Xx_d(\sSO(n,n+1))$ is diffeomorphic to the product 
	\[\Xx_d(\sSO(n,n+1))\cong\Ff_d\times \bigoplus\limits_{j=1}^{n-1} H^0(K^{2j})~,\] where $\Ff_d$ is the total space of a rank $d+(2n-1)(g-1)$ vector bundle over the symmetric product $\Sym^{n(2g-2)-d}(X)$ and $H^0(K^{2j})$ is the vector space of holomorphic differentials of degree $2j.$ 
\end{TheoremIntro}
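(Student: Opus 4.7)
The plan is to fix a Riemann surface structure $X$ on $S$ and apply the non-abelian Hodge correspondence, which identifies $\Xx(\sSO(n,n+1))$ with the moduli space $\Mm(\sSO(n,n+1))$ of polystable $\sSO(n,n+1)$-Higgs bundles on $X$. Such a Higgs bundle amounts to a triple $(V,W,\eta)$ where $V$ and $W$ are holomorphic orthogonal bundles of ranks $n$ and $n+1$ with $\det V\otimes\det W\cong\Oo_X$, and $\eta\in H^0(\Hom(V,W)\otimes K)$ is the Higgs field. The integer $d$ labeling the components enters as the degree of a distinguished line bundle determined by $(V,W,\eta)$, and the range $0<d\leq n(2g-2)$ reflects a Milnor--Wood-type bound on this invariant.

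Next, I would construct an explicit holomorphic family of $\sSO(n,n+1)$-Higgs bundles parameterized by $\Ff_d\times\bigoplus_{j=1}^{n-1}H^0(K^{2j})$. The building data are a line bundle $L$ of degree $d$, a section $\mu$ of $L^{-1}K^n$ whose zero divisor has degree $n(2g-2)-d$ (giving the point in $\Sym^{n(2g-2)-d}(X)$ underlying $\Ff_d$), auxiliary linear data (giving the fibers of $\Ff_d$), and holomorphic differentials $q_{2j}\in H^0(K^{2j})$. These pieces are assembled into a companion-matrix-type Higgs field whose image under the Hitchin morphism recovers $q_{2j}$ in the lower Pfaffian coordinates and $\mu$ in the top one. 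Polystability then reduces to a direct slope check against $\eta$-invariant isotropic subbundles of $V\oplus W$, forced by the condition $d>0$.

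The principal task is to show that the image of this family is exactly one connected component of $\Mm(\sSO(n,n+1))$. A dimension count matches the parameter space against the expected moduli dimension, giving openness of the image, while closedness follows from properness of the Hitchin fibration restricted to the relevant stratum. The harder point, which I expect to be the main obstacle, is connectedness: I would attack this via Morse theory for the Hitchin function $f(E,\phi)=\|\phi\|^2_{L^2}$, identifying the local minima of $f$ inside the image as a Cayley-type locus isomorphic to $\Ff_d$, showing that this minimum locus is connected, and then propagating connectedness to the entire component using properness of $f$. Finally, a representation with compact Zariski closure corresponds to a polystable Higgs bundle with $\eta=0$; since every member of the family has $\mu\neq 0$ and the invariant $d>0$ separates $\Xx_d$ from the trivial-Higgs locus under continuous deformation, no representation in $\Xx_d(\sSO(n,n+1))$ can be continuously deformed to one with compact image.
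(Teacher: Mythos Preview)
Your outline matches the paper's strategy in its broad architecture---non-abelian Hodge, an explicit family of Higgs bundles built from $(M,\mu,\nu,q_2,\ldots,q_{2n-2})$ with a companion-matrix Higgs field, openness via dimension and smoothness, closedness via properness of the Hitchin fibration---but it misidentifies where the real work lies and leaves the central technical step unaddressed.

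The issue is that you never establish that the parametrization map $\Psi_d\colon \Ff_d\times\bigoplus_{j=1}^{n-1}H^0(K^{2j})\to\Mm(\sSO(n,n+1))$ is \emph{injective}. In the paper this is Lemma~\ref{Lemma Image in Md}, explicitly flagged as ``the key technical step'': one must compute exactly which $\sS(\sO(n,\C)\times\sO(n+1,\C))$-gauge transformations identify two Higgs bundles in the family, and show the only redundancy is the $\C^*$-action $(\mu,\nu)\mapsto(\lambda\mu,\lambda^{-1}\nu)$. This same computation also shows that the automorphism group of each Higgs bundle in the image is trivial, which is what makes every image point smooth and is what you actually need for openness (a dimension count alone is not enough---you need $\HH^2=0$ and trivial stabilizer). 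Without injectivity you cannot conclude the diffeomorphism asserted in the theorem, only that the image is \emph{some} quotient of the parameter space.

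Conversely, once you have injectivity, the Morse-theoretic argument you sketch for connectedness is entirely unnecessary: the parameter space $\Ff_d\times\bigoplus H^0(K^{2j})$ is manifestly connected (a vector bundle over a symmetric product times a vector space), so its continuous image is connected, and open-plus-closed then gives a single component. The paper never invokes the $L^2$-norm functional here. So the piece you call ``the harder point'' is in fact free, and the piece you skip over---the gauge-orbit computation---is the actual content.
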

In fact, the representations $\rho\in\Xx_d(\sSO(n,n+1))$ factor through the connected component of the identity $\sSO_0(n,n+1)\subset\sSO(n,n+1)$.
\begin{Remark}
As a direct corollary, the connected components $\Xx_d(\sSO(n,n+1))$ deformation retract onto the symmetric product $\Sym^{n(2g-2)-d}(X).$ In particular, the cohomology ring of $\Xx_d(\sSO(n,n+1))$ is the same as the cohomology ring of the symmetric product $\Sym^{n(2g-2)-d}(X)$ which was computed in \cite{SymmetricProductsofAlgebraicCurves}.

Using the isomorphism $\sP\sSL(2,\R)\cong\sSO_0(1,2),$ Theorem \ref{THM1} recovers Hitchin's parameterization of the nonzero Euler class components of $\Xx(\sP\sSL(2,\R))$ mentioned above. Also, when the label $d$ in Theorem \ref{THM1} is maximal, the vector bundle $\Ff_{n(2g-2)}$ is the  rank $(4n-1)(g-1)$ vector space of holomorphic differentials of degree $2n.$ 
Thus, we recover the parameterization of the $\sSO(n,n+1)$-Hitchin component as a vector space of holomorphic differentials. 
When $n=2$, Theorem \ref{THM1} gives a parameterization of an $\sSO_0(2,3)=\sP\sSp(4,\R)$-version of $\sSp(4,\R)$ components discovered in \cite{sp4GothenConnComp}. 
For $n>2$ and $0<d<n(2g-2)$ the components are new.
\end{Remark}
There is also a connected component associated to $d=0$ which has non-orbifold singularities. We briefly describe it here. Let $X$ be a Riemann surface structure on $S$ and let $\Pic(X)$ be the Picard group of holomorphic line bundles on $X.$ 
Consider the space $\widetilde\Ff_0$ defined by 
 	\[\xymatrix@=.5em{\widetilde\Ff_0=\{(M,\mu,\nu)\ |\ M\in\Pic^0(X),\ \mu\in H^0(M^{-1}K^n),\ \nu\in H^0(MK^n)\ \}~.}\]
Recall that the group of matrices $\smtrx{\lambda&0\\0&\lambda^{-1}}$ and $\smtrx{0&\lambda\\\lambda^{-1}&0}$ for $\lambda\in\C^*$ is isomorphic to $\sO(2,\C).$ There is a natural action of $\sO(2,\C)$ on $\widetilde\Ff_0$ given by:
\[	g\cdot(M,\mu,\nu)=\begin{dcases}(M,\lambda^{-1}\mu,\lambda\nu)&\text{if}\ g=\smtrx{\lambda&0\\0&\lambda^{-1}}\\
	(M^{-1},\lambda^{-1}\nu,\lambda\mu)&\text{if}\ g=\smtrx{0&\lambda\\\lambda^{-1}&0}
	\end{dcases}~.\]

\begin{TheoremIntro}[\ref{THM2}]
Let $\Gamma$ be the fundamental group of a closed surface $S$ of genus $g\geq2$ and let $\Xx(\sSO(n,n+1))$ be the $\sSO(n,n+1)$-character variety of $\Gamma$. For each $n\geq2,$ there is a connected component $\Xx_0(\sSO(n,n+1))$ of $\Xx(\sSO(n,n+1))$ which does not contain representations with compact Zariski closure. Furthermore, for each Riemann surface structure on $S,$ the space $\Xx_0(\sSO(n,n+1))$ is homeomorphic to
\[\Xx_0(\sSO(n,n+1))\cong\Ff_0\times\bigoplus\limits_{j=1}^{n-1}H^0(K^{2j})~,\]
where $\Ff_0$ is the GIT quotient $\widetilde\Ff_0\big\slash\big\slash\sO(2,\C)$ of the $\sO(2,\C)$-space $\widetilde\Ff_0$ described above and $H^0(K^{2j})$ is the vector space of holomorphic differentials of degree $2j$. 
\end{TheoremIntro}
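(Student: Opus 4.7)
The plan is to apply the non-abelian Hodge correspondence and extend the Cayley-type parametrization used in the proof of Theorem \ref{THM1} to the degree $d=0$ stratum, where the residual gauge action is enlarged by a $\Z_2$ involution. First, I would replace $\Xx(\sSO(n,n+1))$ by the moduli space of polystable $\sSO(n,n+1)$-Higgs bundles on the chosen Riemann surface $X$. Using the description of $\sSO(n,n+1)$-Higgs bundles already set up in the structure analysis for Theorem \ref{THM1}, every polystable $\sSO(n,n+1)$-Higgs bundle whose discrete invariant equals $d=0$ can be written in a Cayley-type normal form determined by a line bundle $M \in \Pic^0(X)$, sections $\mu \in H^0(M^{-1}K^n)$ and $\nu \in H^0(MK^n)$, and Hitchin-base differentials $q_{2j} \in H^0(K^{2j})$ for $j=1,\dots,n-1$.

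The crucial new feature at $d=0$ is that both $M$ and $M^{-1}$ lie in $\Pic^0(X)$, so the orthogonal gauge symmetry of the decomposition is enlarged from the diagonal torus $\C^*$ (which occurs for $d>0$ and produces the smooth bundles $\Ff_d$) to the full $\sO(2,\C)$: the $\C^*$ rescales $\mu$ and $\nu$ oppositely, and the additional $\Z_2$ component swaps $M$ with $M^{-1}$ and $\mu$ with $\nu$, which is precisely the action described in the statement. I would next verify that two tuples $(M,\mu,\nu,q_2,\dots,q_{2n-2})$ and $(M',\mu',\nu',q_2',\dots,q_{2n-2}')$ define isomorphic Higgs bundles exactly when the differentials coincide and $(M,\mu,\nu)$ and $(M',\mu',\nu')$ lie in the same $\sO(2,\C)$-orbit. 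Matching polystability of the ambient $\sSO(n,n+1)$-Higgs bundle with GIT polystability for $\sO(2,\C)$ on $\widetilde{\Ff}_0$ via the Hilbert--Mumford criterion for one-parameter subgroups of $\sO(2,\C)$ then identifies the $(M,\mu,\nu)$ part of the moduli with $\Ff_0 = \widetilde{\Ff}_0/\!\!/\sO(2,\C)$. The Hitchin-base factor $\bigoplus_{j=1}^{n-1} H^0(K^{2j})$ is $\sO(2,\C)$-invariant and decouples from the Cayley data, giving the stated product decomposition.

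Connectedness of $\Xx_0(\sSO(n,n+1))$ then follows from connectedness of $\Pic^0(X)$, of each $H^0(K^{2j})$, and of GIT quotients of connected varieties. Non-deformability to representations with compact Zariski closure is best handled via the Hitchin function on $\Mm(\sSO(n,n+1))$: any such deformation would have to pass through the locus of Higgs bundles with vanishing Higgs field, but the topological invariants of those bundles place them in a different component of the moduli. The hardest step will be the correct matching of $\sSO(n,n+1)$-Higgs bundle polystability with $\sO(2,\C)$ polystability at $d=0$, because the enlarged residual symmetry produces genuinely singular strictly polystable points with continuous stabilizers; careful bookkeeping of stabilizers of reducible Higgs bundles is required to upgrade the resulting set-theoretic bijection to the homeomorphism asserted in the theorem.
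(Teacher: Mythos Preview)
Your overall architecture is right: build $\Psi_0$ from the Cayley-type normal form, identify the residual gauge symmetry with $\sO(2,\C)$, match Higgs bundle polystability with closed $\sO(2,\C)$-orbits, and then argue the image is open and closed in $\Mm(\sSO(n,n+1))$. This is exactly what the paper does. But two steps in your outline fail as written.

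First, your openness argument is incomplete at the strictly polystable locus. For the stable locus one can use dimension counting and the fact that stable points are smooth or orbifold points, as in Theorem~\ref{THM1}. At a strictly polystable point $(M,0,0,q_2,\dots,q_{2n-2})$, however, the automorphism group is positive-dimensional, and the local structure of $\Mm(\sSO(n,n+1))$ near such a point is governed by the Kuranishi slice $\kappa^{-1}(0)/\!\!/\Aut$ with $\kappa:\HH^1\to\HH^2$. To conclude that a neighborhood in the moduli space agrees with a neighborhood in $\Ff_0\times\bigoplus H^0(K^{2j})$ you must show $\HH^2(C^\bullet(V,W,\eta))=0$ at these points and then compute $\HH^1$ explicitly and match it with the tangent space of $\widetilde\Ff_0\times\bigoplus H^0(K^{2j})$ together with its $\sO(2,\C)$-action. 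The paper carries this out via a graded decomposition of the deformation complex (Lemmas~\ref{Lemma H2=0} and~\ref{Lemma HH1 decomp}); without it you have only a continuous injection, not a local homeomorphism. This is precisely the ``hardest step'' you flag at the end, but stabilizer bookkeeping alone is not enough: the vanishing of $\HH^2$ is the missing ingredient.

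Second, your argument that the component contains no representations with compact Zariski closure is wrong. You claim the topological invariants of bundles with vanishing Higgs field place them in a different component, but for $n\geq 3$ the components $\Mm_0(\sSO(n,n+1))$ are \emph{not} distinguished by the topological invariants $(sw_1,sw_2(V),sw_2(W))\in H^1(X,\Z_2)\times H^2(X,\Z_2)^2$; indeed $\Mm_0$ lies in $\Mm^{0,0,0}(\sSO(n,n+1))$, which also contains Higgs bundles with $\varphi=0$. The correct argument is simply that the Higgs field $\eta$ in the normal form \eqref{EQ M0 Higgs field} is never zero (the subdiagonal entries are identically $1$), so by Proposition~\ref{Prop smaller zariski closure} no representation in the component has compact Zariski closure.
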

In fact, the representations $\rho\in\Xx_0(\sSO(n,n+1))$ factor through the connected component of the identity $\sSO_0(n,n+1)\subset\sSO(n,n+1)$.
\begin{Remark}
	In Section \ref{singular components Section}, we provide a parameterization of the singular space $\Xx_0(\sSO(n,n+1)).$ This is a direct analogy of the associated component of the set of maximal $\sSO_0(2,3)$-representations provided in \cite{SO23LabourieConj}. Unlike the maximal $\sSO_0(2,3)$ case, the component $\Xx_0(\sSO(n,n+1)$ does not arise from a known topological invariant for $n\geq3$. Thus, to show that $\Xx_0(\sSO(n,n+1))$ does indeed define a connected component, it is necessary to analyze the local structure around the singularities (see Lemma \ref{Lemma H2=0}).
Also analogous to the computations in \cite{SO23LabourieConj}, $\Xx_0(\sSO(n,n+1)$ deformation retracts onto the quotient of $\Pic^0(X)$ by the $\Z_2$ action of inversion. In particular, the rational cohomology of $\Xx_0(\sSO(n,n+1))$ is given by 
\[H^j(\Xx_0(\sSO(n,n+1)),\Q)=\begin{dcases}
	H^{j}((S^1)^{2g},\Q)& \text{if\ $j$\ is\ even~}\\
	0&\text{otherwise}
\end{dcases}~.\]
\end{Remark}

Each nonzero cohomology class $sw_1\in H^1(S,\Z_2)$ corresponds to a connected principal $\Z_2$ bundle (i.e., orientation double cover) $X_{sw_1}\to X.$ 
If $\iota$ is the covering involution of the covering, then the Prym variety $\Prym(X_{sw_1},X)$ is defined as the kernel of $Id+\iota^*:\Pic^0(X_{sw_1})\to\Pic^0(X_{sw_1})$. That is, 
\[\Prym(X_{sw_1},X)=\{M\in \Pic^0(X_{sw_1})\ |\ \iota^*M=M^{-1}\}~.\]
 The Prym variety of an orientation double cover of a Riemann surface has two connected components determined by an invariant $sw_2\in H^2(X,\Z_2).$ Let $K_{X_{sw_1}}$ denote the canonical bundle of the double cover $X_{sw_1}.$

\begin{TheoremIntro}[\ref{THM3}]
Let $\Gamma$ be the fundamental group of a closed surface $S$ of genus $g\geq2$ and let $\Xx(\sSO(n,n+1))$ be the $\sSO(n,n+1)$-character variety of $\Gamma$. For each $n\geq2$ and each $(sw_1,sw_2)\in (H^1(X,\Z_2)\setminus\{0\})\times H^2(X,\Z_2)$, there is a connected component $\Xx^{sw_2}_{sw_1}(\sSO(n,n+1))$ of $\Xx(\sSO(n,n+1))$ which does not contain representations with compact Zariski closure. Furthermore, for each Riemann surface structure $X$ on $S,$ the space $\Xx_{sw_1}^{sw_2}(\sSO(n,n+1))$ is a smooth orbifold diffeomorphic to
		\[\Ff^{sw_2}_{sw_1}/(\Z_2\oplus\Z_2)\times \bigoplus\limits_{j=1}^{n-1}H^0(K^{2j}_X)\] where $\Ff^{sw_2}_{sw_1}\to \Prym^{sw_2}(X_{sw_1},X)$ is the rank $(4n-2)(2g-2)$ vector bundle over the connected component of the Prym variety associated to $sw_2$ with $\pi^{-1}(M)=H^0(MK_{X_{sw_1}}^n)$. Here the $\Z_2\oplus\Z_2$ action is generated by $(M,\mu)\mapsto(M,-\mu)$ and $(M,\mu)\mapsto(\iota^*M,\iota^*\mu),$ where $\iota$ is the covering involution of $X_{sw_1}$. 
\end{TheoremIntro}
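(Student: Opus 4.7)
The plan is to apply the non-abelian Hodge correspondence and work in the moduli space of polystable $\sSO(n,n+1,\C)$-Higgs bundles $(V,W,\eta)$ on the fixed Riemann surface $X$, where $V$ is a rank $n$ orthogonal bundle, $W$ is a rank $n+1$ orthogonal bundle with $\det V\otimes\det W\cong\Oo_X$, and $\eta\in H^0(\Hom(V,W)\otimes K)$. The invariant $sw_1\in H^1(X,\Z_2)$ is the common first Stiefel-Whitney class of $V$ and $W$, equivalently the obstruction to the representation factoring through $\sSO_0(n,n+1)$, and $sw_2\in H^2(X,\Z_2)$ is the corresponding second Stiefel-Whitney class. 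When $sw_1\neq 0$ the associated connected unramified double cover $\pi:X_{sw_1}\to X$ with covering involution $\iota$ is singled out: any representation with this $sw_1$ class restricts on the index-$2$ subgroup $\pi_1(X_{sw_1})\subset\Gamma$ to a representation in $\sSO_0(n,n+1)$, so the problem on $X$ can naturally be phrased as one about $\iota$-equivariant objects on $X_{sw_1}$.

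The heart of the argument is a Cayley-type structure theorem: every polystable $\sSO(n,n+1,\C)$-Higgs bundle in the sector $(sw_1,sw_2)$ should be canonically assembled from a triple $(M,\mu,\{q_{2j}\}_{j=1}^{n-1})$ with $M\in\Prym^{sw_2}(X_{sw_1},X)$, $\mu\in H^0(MK_{X_{sw_1}}^n)$, and $q_{2j}\in H^0(K^{2j})$. The data $(M,\mu)$, via a pushforward construction combined with the $\iota$-equivariance forced by $\iota^*M\cong M^{-1}$, produces a distinguished rank-$2$ orthogonal summand of $V$ together with the ``lowest weight'' contribution to $\eta$; the $q_{2j}$ encode the remaining Hitchin-section-like data. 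To establish this, I would run a Morse-theoretic analysis of the Hitchin function $(V,W,\eta)\mapsto\|\eta\|_{L^2}^2$: identify its minima in the given topological sector by a stability-driven vanishing argument on the off-diagonal components of $\eta$, and use the fact that each component deformation retracts onto its minima. The residual automorphism group of the Higgs bundles constructed this way is exactly $\Z_2\oplus\Z_2$, generated by $\mu\mapsto-\mu$ (from the centralizer element $-1\in\sSO(n,\C)$, as in the Hitchin component) and by the descent ambiguity $(M,\mu)\mapsto(\iota^*M,\iota^*\mu)$.

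Once the structure theorem is in hand, the identification $\Xx^{sw_2}_{sw_1}(\sSO(n,n+1))\cong(\Ff^{sw_2}_{sw_1}/(\Z_2\oplus\Z_2))\times\bigoplus_{j=1}^{n-1}H^0(K^{2j})$ is immediate, and connectedness follows from connectedness of the chosen component of the Prym variety combined with the vector-bundle structure of $\Ff^{sw_2}_{sw_1}$. Noncompactness of the Zariski closure of any representation in the component follows because deforming to unitary monodromy would force $\mu$ to vanish, which would change the topological sector. Orbifold smoothness reduces to a vanishing statement for the second hypercohomology of the deformation complex at every representative Higgs bundle in $\Xx^{sw_2}_{sw_1}$; this is the direct analog of Lemma \ref{Lemma H2=0} and should follow from an explicit computation using the block form supplied by the structure theorem.

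The main obstacle is the structure theorem itself: ruling out every other possible form for the minima of the Hitchin function in a fixed $(sw_1,sw_2)$-sector. This requires a careful stability analysis of $\sSO(n,n+1,\C)$-Higgs bundles adapted to maximal isotropic filtrations twisted by the order-$2$ line bundle associated with $sw_1$. It is genuinely harder than the $sw_1=0$ cases of Theorems \ref{THM1} and \ref{THM2} because the filtration cannot be split by a choice of square root of $K$, so an additional layer of descent bookkeeping on $X_{sw_1}$ is needed for the Prym variety to emerge naturally. A secondary difficulty is matching the two components of $\Prym(X_{sw_1},X)$ labeled by $sw_2$ with the two topological types of orthogonal structure on the rank-$2$ summand of $V$, which amounts to comparing $sw_2(V)$ with a discriminant-like invariant built from $M$ on $X_{sw_1}$.
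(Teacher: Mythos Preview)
Your Morse-theoretic strategy is quite different from the paper's actual argument, and it rests on a misconception. You propose to classify all polystable Higgs bundles in the topological sector determined by $(sw_1,sw_2)$ and show they all arise from the Prym data. But the components $\Xx_{sw_1}^{sw_2}(\sSO(n,n+1))$ are \emph{not} full topological sectors of the moduli space: for $n\geq 3$ they coexist with other components (in particular those containing representations with compact Zariski closure) having the same underlying topological type of $\sSO(n,n+1)$-bundle. This is precisely the point of the paper---these are ``exotic'' components not distinguished by any invariant in $\Bun_X(\sSO(n,n+1))$. An argument that tries to pin down the component by classifying the minima of $\|\eta\|^2$ in a fixed Stiefel--Whitney sector would find other minima as well, and would then have to separate them by some finer invariant, which you have not proposed.

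The paper avoids this entirely. Rather than classifying Higgs bundles in a sector, it constructs an explicit map $\Psi_{sw_1}^{sw_2}$ from $\big(\Ff_{sw_1}^{sw_2}/(\Z_2\oplus\Z_2)\big)\times\bigoplus_j H^0(K^{2j})$ into $\Mm(\sSO(n,n+1))$ by writing down $V=K^{n-1}\oplus\cdots\oplus K^{1-n}$, $W=\pi_*M\oplus K^{n-2}\oplus\cdots\oplus K^{2-n}$, and a specific Higgs field built from $\pi_*\mu$ and the $q_{2j}$. Stability is checked directly (pulling back to $X_{sw_1}$ reduces the computation to the $d=0$ case already handled), and the gauge-orbit identification is computed explicitly, yielding the $\Z_2\oplus\Z_2$ quotient. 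The image is then shown to be closed via properness of the Hitchin fibration, and open via a dimension count together with the fact that every point is stable, hence a smooth or orbifold point. No Morse theory, no classification of minima, no deformation retraction onto a critical set is used.

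A few smaller corrections: the rank-$2$ orthogonal bundle $\pi_*M$ is a summand of $W$, not of $V$; and your reasoning for noncompact Zariski closure is off. Setting $\mu=0$ does not change any topological invariant and is in fact allowed within the component. What rules out compact Zariski closure is simply that the Higgs field in this construction always has the nonzero off-diagonal $1$'s coming from the Hitchin-section part, so $\eta$ never vanishes.
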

When $n$ is even, the representations in the components $\Xx_{sw_1}^{sw_2}(\sSO(n,n+1))$ factor through the connected component of the identity $\sSO_0(n,n+1)$, however when $n$ is odd, they do not. 

\begin{Remark}
 Since the space $\Ff_{sw_1}^{sw_2}$ deformation retracts onto $\Prym^{sw_2}(X_{sw_1})$, the homotopy type of each component $\Xx_{sw_1}^{sw_2}(\sSO(n,n+1))$ is the same as the quotient of $(S^1)^{2g-2}$ be the $\Z_2$ action of inversion. In particular, its cohomology is given by 
 \[H^j(\Xx_0(\sSO(n,n+1)),\Q)=\begin{dcases}
	H^{j}((S^1)^{2g-2},\Q)& \text{if\ $j$\ is\ even~}\\
	0&\text{otherwise}
\end{dcases}~.\]
	Again, for $n=2,$ the connected components $\Xx_{sw_1}^{sw_2}(\sSO(2,3))$ of Theorem \ref{THM3} were described by Alessandrini and the author in \cite{SO23LabourieConj}. 
\end{Remark}

\begin{Remark}
	The components of Theorems \ref{THM1} and \ref{THM2} are labeled by an integer invariant $d\in[0,n(2g-2)]$ and the components of Theorem \ref{THM3} are labeled by $\Z_2$-invariants $(sw_1,sw_2)\in H^1(S,\Z_2)\setminus\{0\}\times H^2(S,\Z_2)$. 
	Similar types of invariants have recently been associated to the spectral data of certain $\sSO(n,n+1)$ Higgs bundles by Schaposnik and Baraglia-Schaposnik in \cite{LauraSO(pp+1)} and \cite{DavidLauraCayleyLanglands}. It would be very interesting to relate the spectral data invariants to the component description above. 
\end{Remark}
\subsection{Generalizations of low dimensional isomorphisms}
The group $\sSO_0(2,3)$ is isomorphic to $\sP\sSp(4,\R)$, and the connected components of Theorem \ref{THM1} are a $\sP\sSp(4,\R)$ version of the $\sSp(4,\R)$ components discovered by Gothen in  \cite{sp4GothenConnComp}. 
The groups $\sSp(2n,\R)$ and $\sSO_0(2,n)$ provide two families of Hermitian groups which generalize $\sSO_0(2,3)$. However,
the space of maximal $\sSp(2n,\R)$ representations behaves differently for $n=2$ and $n\geq 3,$ and the space maximal $\sSO_0(2,n)$ representations behaves differently for $n=3$ and $n\geq 4.$

For $\sSp(4,\R), $ there are $3\cdot 2^{2g}+2g-4$ connected components of the space of maximal $\sSp(4,\R)$-representations \cite{sp4GothenConnComp}, while the space of maximal $\sSp(2n,\R)$ representations has $3\cdot 2^{2g}$ connected components for $n\geq 3$ \cite{HiggsbundlesSP2nR}. 
In particular, for maximal $\sSp(4,\R)$ representations, $2g-4$ of the connected components consist entirely of Zariski dense representations \cite{MaximalSP4}. Similarly, for maximal $\sSO_0(2,3)$ representations, the $4g-5$ of the connected components from Theorem \ref{THM1} with $d\in(0,4g-4)$ consist entirely of Zariski dense representations \cite{SO23LabourieConj}. 
The remaining connected components of maximal $\sSp(4,\R)$ and $\sSO_0(2,3)$ representations contain representations which factor through a Fuchsian representation $\rho_{Fuch}:\Gamma\ra\sSL(2,\R)$ \cite{MaximalSP4,TopInvariantsAnosov}.

For $n\geq3,$ each connected component of maximal $\sSp(2n,\R)$ representations contains representations which factor through a Fuchsian representation \cite{TopInvariantsAnosov}. 
Similarly, there are $2^{2g+1}$ connected components of maximal $\sSO_0(2,n)$ representations for $n\geq4$, and each component contains representations which factor through a Fuchsian representation. 

Theorem \ref{THM1} gives an explanation of this difference as a consequence of the low dimensional isomorphism $\sSO_0(2,3)\cong \sP\sSp(4,\R)$. Namely, the extra maximal components appearing for $\sSp(4,\R)$ and $\sSO_0(2,3)$ are an $\sSO(n,n+1)$ phenomenon.  

For $n\geq 3$, the group $\sSO(n,n+1)$ is a split group but not of Hermitian type. 
As a result, Theorems \ref{THM1}, \ref{THM2} and \ref{THM3} provide the first examples of connected components of $\Xx(\pi_1,\sG)$ which are not maximal, not Hitchin, and are not distinguished by a topological invariant in $\pi_1(\sG).$ 

The component count of $\Xx(\sSO_0(n,n+1))$ was established by Goldman \cite{TopologicalComponents} for $n=1$ and by combining the work Bradlow-Garcia-Prada-Gothen \cite{HermitianTypeHiggsBGG} and Gothen-Oliveira \cite{AndreQuadraticPairs} for $n=2$. 
For $n\geq3,$ Theorems \ref{THM1}, \ref{THM2} and \ref{THM3} provide a new lower bound for the number of components of the space $\Xx(\sSO(n,n+1)).$ Namely,
\begin{equation}
	\label{lowerboundpi_0}
	\big|\pi_0\big(\Xx(\sSO(n,n+1))\big)\big|\geq 2^{2g+2}+1+ n(2g-2)+2(2^{2g}-1)~.
 \end{equation}
Here, the first $2^{2g+2}$ components contain representations with Zariski closure in $\sS(\sO(n)\times\sO(n+1))$ and the remaining components come from Theorems \ref{THM1}, \ref{THM2} and \ref{THM3}. In \cite{SOpqStabilityAndMinima}, the connected components of $\Xx(\sSO(n,m))$ are counted, and for $m=n+1$ it is shown that the lower bound in \eqref{lowerboundpi_0} is indeed an equality. 
\subsection{Positive Anosov representations}
We now turn to the geometry of the representations in the components described by Theorem \ref{THM2} and Theorem \ref{THM3}. 
Anosov representations were introduced by Labourie \cite{AnosovFlowsLabourie} and have many interesting geometric and dynamic properties which generalize convex cocompact representations into rank one Lie groups. 
Important examples of Anosov representations include quasi-Fuchsian representations, Hitchin representations into split groups and maximal representations into groups of Hermitian type.

Recently, Guichard and Wienhard \cite{PosRepsGWPROCEEDINGS} introduced the notion of a $\sP_\Theta$-positive Anosov representation which refines the notion of an Anosov representation. 
In particular, the spaces of Hitchin representations are positive with respect to the Borel subgroup \cite{fock_goncharov_2006,AnosovFlowsLabourie} and, for a Hermitian group $\sG$ of tube type, maximal representations are positive with respect to the parabolic subgroup which gives rise to the Shilov boundary of the Riemannian symmetric space of $\sG$ \cite{MaxRepsAnosov}.

Since the Lie group $\sSO(n,n+1)$ is split, it admits a notion of positivity with respect to the Borel subgroup.  Interestingly, $\sSO(n,n+1)$ also admits a notion of positivity with respect to the generalized flag variety $\sSO(n,n+1)/P_\Theta$ consisting of flags $V_1\subset\cdots\subset V_{n-1}\subset\R^{2n+1}$ where $V_j$ is an isotropic (with respect to a signature $(n,n+1)$ inner product) $j$-plane. We will call this $\sP_\Theta$-positivity.

\begin{Remark}\label{Remark positive closed in irr}
The set of positive Anosov representations is open in the character variety. In \cite{PosRepsGLW}, it is conjectured that positive Anosov representations are also closed in the character variety. 
In fact, it can be shown that the set of positive Anosov representations is closed in the set of irreducible representations \cite{AnnaPrivateCommunication}. Namely, let $\rho_j:\Gamma\to \sSO(n,n+1)$ be a sequence of $\sP_\Theta$-positive Anosov representation which converge to $\rho_\infty:\Gamma\to\sSO(n,n+1).$ If the action of each $\rho_j$ on $\R^{2n+1}$ via the standard representations of $\sSO(n,n+1)$ is irreducible and $\rho_{\infty}$ is also irreducible, then $\rho_{\infty}$ is $\sP_\Theta$ Anosov.
\end{Remark}

Here we prove that the set on {\em non-irreducible representations} in the connected components $\Xx_0(\sSO(n,n+1))$ and $\Xx_{sw_1}^{sw_2}(\sSO(n,n+1))$ are $\sP_\Theta$ positive Anosov. 
\begin{TheoremIntro}[\ref{THM4}]
	Let $\sSO(n,n+1)/P_\Theta$ be the generalized flag variety of flags 
	\[V_1\subset\cdots\subset V_{n-1}\subset V_{n-1}^\perp\subset\cdots\subset V_1^\perp\subset\R^{2n+1}~,\] where $V_j\subset\R^{2n+1}$ is an isotropic $j$-plane. If $n\geq2,$ then the set of representations $\rho$ in $\Xx_0(\sSO(n,n+1))$ or $\Xx_{sw_1}^{sw_2}(\sSO(n,n+1))$ for which the action of $\rho$ on $\R^{2n+1}$ is reducible is a nonempty set consisting entirely of $\sP_\Theta$-positive Anosov representation. 
\end{TheoremIntro}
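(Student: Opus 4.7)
The strategy is to exploit the Higgs bundle parameterizations of Theorems~\ref{THM2} and~\ref{THM3} to identify the reducible locus explicitly, decompose each such representation as an orthogonal direct sum containing an $\sSO(n-1,n)$-Hitchin representation, and then verify $\sP_\Theta$-positivity by assembling an explicit equivariant boundary map to $\sSO(n,n+1)/\sP_\Theta$.

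The first step is to observe that, under the non-abelian Hodge correspondence, reducibility of the standard action on $\R^{2n+1}$ corresponds to the $\sSO(n,n+1)$-Higgs bundle being strictly polystable, i.e.~decomposing as an orthogonal direct sum of stable Higgs subbundles. For the parameterization of $\Xx_0(\sSO(n,n+1))$ this occurs precisely on the sublocus $\{\mu=0\}\cup\{\nu=0\}\subset\widetilde\Ff_0$ (independent of the Hitchin base coordinate), and for $\Xx_{sw_1}^{sw_2}(\sSO(n,n+1))$ on the sublocus $\{\mu=0\}$; both are manifestly non-empty. Concretely, the associated Higgs bundle splits as $\Ee=\Ee_H\oplus\Ee_{\mathrm{res}}$, where $\Ee_H$ carries a polystable $\sSO(n-1,n)$-Higgs bundle whose Hitchin data is precisely the factor $\bigoplus_{j=1}^{n-1}H^0(K^{2j})$, and $\Ee_{\mathrm{res}}$ is a flat orthogonal bundle of rank $3$ or rank $2$ governed by the line bundle $M$ (respectively by the Prym datum) together with an invariant line. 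The corresponding monodromy then splits as $\rho=\rho_H\oplus\chi$, with $\rho_H$ an $\sSO(n-1,n)$-Hitchin representation and $\chi$ taking values in a compact or split torus.

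To verify $\sP_\Theta$-positivity, one assembles the equivariant boundary map $\xi:\partial\Gamma\to\sSO(n,n+1)/\sP_\Theta$ from the positive Hitchin limit curve $\xi_H:\partial\Gamma\to\sSO(n-1,n)/B$ of Fock--Goncharov and Labourie, completed by the $\chi$-invariant line in the complementary non-degenerate factor. Transversality of pairs and the Anosov contraction property are inherited directly from $\xi_H$, because the Hitchin factor dominates the complementary torus factor. The main obstacle is verifying the positivity of triples themselves: one must check that, for every cyclically ordered triple in $\partial\Gamma$, the resulting configuration of isotropic flags lies in the $\sP_\Theta$-positive semigroup of $\sSO(n,n+1)$. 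This reduces to showing that the Borel-positive structure for the $\sSO(n-1,n)$-Hitchin factor, extended via the invariant line to flags in $\R^{2n+1}$, sits inside the $\sP_\Theta$-positive cone identified by Guichard--Wienhard; this is essentially a combinatorial check in the unipotent radical of $\sP_\Theta$, using the known compatibility of principal embeddings with the positive structures of split subgroups.
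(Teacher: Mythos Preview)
Your identification of the reducible locus in $\Xx_0(\sSO(n,n+1))$ is incomplete, and this causes you to miss an entire family of reducible representations that require a genuinely different argument. Recall that in the polystable parameter space $\widetilde\Ff_0^{ps}$ one has $\mu=0$ if and only if $\nu=0$, so your locus $\{\mu=0\}\cup\{\nu=0\}$ collapses to $\{\mu=\nu=0\}$. But reducibility also occurs when $M^2\cong\Oo$ and $\mu=\lambda\nu$ for some $\lambda\in\C^*$ with $\mu\neq 0$: here the associated $\sSL(2n+1,\C)$ Higgs bundle splits off an orthogonal line subbundle of $M\oplus M^{-1}$ (namely the kernel of $\eta_\mu^*$), and the complementary rank $2n$ piece is an $\sSO(n,n)$-Hitchin Higgs bundle (or $\sO^{+,\pm}(n,n)$ when $M\neq\Oo$), \emph{not} an $\sSO(n-1,n)$-Hitchin bundle. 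Your decomposition $\rho=\rho_H\oplus\chi$ with $\rho_H$ an $\sSO(n-1,n)$-Hitchin representation simply does not apply to these points.

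The paper handles this by proving (Theorem~\ref{THM: Zariski closures}) that every reducible representation, after possibly passing to a finite-index subgroup, factors through either $\sSO(n,n-1)\times\sSO(2)$ with Hitchin first factor \emph{or} through an $\sSO(n,n)$-Hitchin representation. The positivity check then proceeds via the chain of embeddings $\sSO(n,n-1)\hookrightarrow\sSO(n,n)\hookrightarrow\sSO(n,n+1)$, with explicit verifications (Propositions~\ref{Prop: Delta to Delta Positivity} and~\ref{Prop: Delta to Theta Positivity}) that each embedding sends the positive unipotent semigroup for the Borel into the positive semigroup for the next parabolic. In particular the $\sSO(n,n)\to\sSO(n,n+1)$ step requires checking that the image of $c^\Delta_{\delta_{n-1}}\times c^\Delta_{\delta_n}$ lands in the light-cone $c^\Theta_{\beta_{n-1}}\subset\fu_{\beta_{n-1}}$, which is a genuine computation rather than a formal compatibility of principal embeddings. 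Your sketch, which only contemplates the embedding $\sSO(n-1,n)\hookrightarrow\sSO(n,n+1)$ directly, would need to be supplemented by this $\sSO(n,n)$ analysis. The passage to finite-index subgroups (needed both for $M^2=\Oo$, $M\neq\Oo$ in $\Xx_0$ and for the entire $\Xx_{sw_1}^{sw_2}$ case) is also not addressed in your outline.
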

\begin{Remark}
Assuming the results mentioned in Remark \ref{Remark positive closed in irr}, Theorem \ref{THM4} can be significantly strengthened to the statement that the components $\Xx_0(\sSO(n,n+1))$ and $\Xx_{sw_1}^{sw_2}(\sSO(n,n+1))$ consist {\em entirely of Anosov representations}. The argument is as follows: Let $\rho_0$ be a reducible representation in $\Xx_0(\sSO(n,n+1))$ or $\Xx_{sw_1}^{sw_2}(\sSO(n,n+1)).$ 
Since positive representations define an open set in the character variety, there is an open neighborhood $U_{\rho_0}$ of $\rho_0$ consisting of $\sP_\Theta$-positive representations. 
In particular, there exists $\rho\in U_{\rho_0}$ which is irreducible. Since positivity is closed in the set of irreducible representations, all irreducible representations $\rho\in\Xx_0(\sSO(n,n+1))$ are $\sP_\Theta$-positive. Thus, by Theorem \ref{THM4} all representations in $\Xx_0(\sSO(n,n+1))$ and $\Xx_{sw_1}^{sw_2}(\sSO(n,n+1))$ are $\sP_\Theta$ Anosov.
 \end{Remark}

For $n=2$, Theorem \ref{THM4} follows from maximality of the corresponding representations.
For $n\geq3,$ the proof relies heavily on the work of Guichard-Wienhard and Guichard-Labourie-Wienhard on positive representations \cite{PosRepsGWPROCEEDINGS,PosRepsGLW} and establishing that the representations which correspond to the singularities of $\Xx_{sw_1}^{sw_2}(\sSO(n,n+1))$ and $\Xx_0(\sSO(n,n+1))$ are products of Hitchin representations in $\sSO(n-1,n)$ with an $\sSO(2)$ representation or $\sSO(n,n)$-Hitchin representations. 

For $0<d<n(2g-2),$ the spaces $\Xx_d(\sSO(n,n+1))$ from Theorem \ref{THM1} are smooth; hence all the representations in these components are irreducible. Thus, if there exists a representation $\rho\in\Xx_d(\sSO(n,n+1))$ which is positive Anosov, then, by Remark \ref{Remark positive closed in irr}, $\Xx_d(\sSO(n,n+1))$ would consist entirely of positive Anosov representations. 
There are however no obvious model representations to consider in the components $\Xx_d(\sSO(n,n+1))$. In particular, for $n=2,$ all representations in these components are Zariski dense. We conjecture this holds for the components $\Xx_d(\sSO(n,n+1))$ for $0<d<n(2g-2).$
\begin{Conjecture}
	For $0<d<n(2g-2)$, all representations in the component $\Xx_d(\sSO(n,n+1))$ from Theorem \ref{THM1} are Zariski dense. 
\end{Conjecture}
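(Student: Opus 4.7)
The overall plan is to prove the conjecture by contradiction, working at the level of Higgs bundles via the non-abelian Hodge correspondence. If $\rho\in\Xx_d(\sSO(n,n+1))$ is not Zariski dense, then the Zariski closure of $\rho(\Gamma)$ is a proper reductive subgroup $H\subsetneq\sSO(n,n+1)$, and the associated polystable $\sSO(n,n+1,\C)$-Higgs bundle $(V,W,\eta)$ admits a holomorphic reduction of structure group to the complexification $H_{\C}$. The goal is to show that no such reduction is compatible with the explicit form of $(V,W,\eta)$ coming from the parametrization in Theorem~\ref{THM1} when $0<d<n(2g-2)$.

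First I would catalogue the conjugacy classes of maximal proper reductive subgroups of $\sSO(n,n+1,\C)$: the block-orthogonal subgroups $\sS(\sO(p,\C)\times\sO(2n+1-p,\C))$; the Levi factor $\sGL(n,\C)$ preserving a maximal isotropic splitting; principal embeddings of smaller split groups such as $\sSO(k,k+1,\C)$ with $k<n$ and the principal $\sSL(2,\C)$; and the exceptional embedding $\sG_2\hookrightarrow\sSO(3,4,\C)$ when $n=3$. For each such $H_{\C}$, a reduction of $(V,W,\eta)$ to an $H_{\C}$-Higgs bundle splits $V\oplus W$ into $H_{\C}$-invariant holomorphic summands which must be preserved by $\eta$, and so imposes strong constraints on the divisor, line bundle and sections furnished by Theorem~\ref{THM1}.

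The heart of the argument is that the integer $d$ is recovered as the degree of a distinguished line subbundle appearing in the canonical filtration of $(V,W)$ induced by $\eta$ in the parametrization of Theorem~\ref{THM1}. Each candidate reduction should constrain this degree to a boundary value: the block-orthogonal reductions force $\eta$ to decouple and push $d$ to $0$ or $n(2g-2)$; the principal embedding of $\sSO(k,k+1,\C)$ for $k<n$ rigidifies the holomorphic differentials and aligns $d$ with the Hitchin locus $k(2g-2)$ of the smaller group; and a reduction to $\sGL(n,\C)$ identifies $\eta$ with a holomorphic top-degree differential, forcing $d=n(2g-2)$. Together with the fact that $\Xx_d$ is smooth for $0<d<n(2g-2)$ — so that all its Higgs bundles are stable and their automorphism groups are as small as the centre of $\sSO(n,n+1,\C)$ allows — these constraints should rule out every standard reduction.

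The main obstacle is the case where $H_{\C}$ does not preserve any real orthogonal decomposition and is not detected by a topological invariant in $\pi_1(\sSO(n,n+1))$ — in particular, reductions to Hermitian-type subgroups such as $\sU(p,q)$ embedded in an even-dimensional orthogonal block. These are invisible to naive degree arguments and would require a careful matching of the degrees of the holomorphic subbundles stabilised by $H_{\C}$ against the cohomological constraints satisfied by the sections defining $\eta$. The $n=2$ case of the conjecture is known from \cite{SO23LabourieConj}, and the plan is to generalise that argument by combining the positivity framework of Guichard-Wienhard entering Theorem~\ref{THM4} with a systematic degree-based exclusion of each remaining class of reductions, reducing Zariski density to a finite cohomological check.
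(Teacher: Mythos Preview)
The statement you are trying to prove is explicitly a \emph{conjecture} in the paper; the paper does not prove it, and there is no ``paper's own proof'' to compare against. What the paper does (in Section~6.3) is explain why the problem is hard and record partial evidence: since $\Xx_d(\sSO(n,n+1))$ is smooth for $0<d<n(2g-2)$, every representation in it is irreducible, so the Zariski closure of any $\rho$ must be a \emph{simple} Lie group $\sG'$ admitting a faithful irreducible $(2n+1)$-dimensional real representation preserving a signature $(n,n+1)$ form. The paper notes that for $n=2$ one can use the Burger--Iozzi--Wienhard classification of tight embeddings (since $\sSO_0(2,3)$ is Hermitian), that for $n=3$ one must handle $\sG_2\subset\sSO(3,4)$ by hand, and that Atlas computations show no candidate $\sG'$ exists for $3<n<7$; but no general argument is offered.

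Your proposal has the right overall shape (reduce to a classification of subgroups and rule out reductions at the level of Higgs bundles), but there is a genuine gap. Your catalogue of subgroups is largely irrelevant: the block-orthogonal subgroups $\sS(\sO(p,\C)\times\sO(2n+1-p,\C))$ and the Levi $\sGL(n,\C)$ act reducibly on $\C^{2n+1}$, so they are excluded immediately by the irreducibility that follows from smoothness --- there is no need for your degree argument, and the claim that such reductions ``push $d$ to $0$ or $n(2g-2)$'' never comes into play. The actual obstacle is the one the paper identifies: simple groups $\sG'$ with an irreducible $(2n+1)$-dimensional representation of the correct signature (e.g.\ the adjoint of $\sSU(p,p)$ landing in $\sSO(2p^2,2p^2-1)$, or $\sG_2\subset\sSO(3,4)$). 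Your assertion that ``each candidate reduction should constrain this degree to a boundary value'' is precisely the step that is not known; for $\sG_2$ the paper only says this can be checked directly, and for larger $n$ the list of candidate $\sG'$ is not even finite in any uniform way. Finally, the appeal to the positivity framework of Theorem~\ref{THM4} is misplaced: that theorem concerns the reducible representations in the \emph{other} components $\Xx_0$ and $\Xx_{sw_1}^{sw_2}$, and positivity/Anosov properties say nothing about Zariski density inside a smooth component where every representation is already irreducible.
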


\noindent\textbf{Organization of Paper:} In Sections \ref{section Background} and \ref{Section sonn+1 Higgs}, we recall the necessary features of Higgs bundles and character varieties. In Section \ref{section: smooth components}, we prove Theorem \ref{THM1} and in Section, \ref{singular components Section} we prove Theorems \ref{THM2} and \ref{THM3}. In Section \ref{Section Zariski closure}, we prove results about the Zariski closures of representations in the new components of $\Xx(\sSO(n,n+1))$. Finally, in Section \ref{section: Positive}, the notion of positive Anosov representations is recalled and we use the results on Zariski closures to prove Theorem \ref{THM4}.

\smallskip
\noindent\textbf{Acknowledgments:} This project benefited from many fruitful discussions on $\sSO(n,m)$ Higgs bundles with Daniele Alessandrini, Steve Bradlow, Oscar Garcia-Prada, Peter Gothen and Andr\'e Oliveira.  I am very grateful to Anna Wienhard for her explanations of positive structures and Jean-Philippe Burelle for many helpful conversations on positivity. I would also like to thank Jeff Adams for his help understanding representations of low dimensional groups and Olivier Guichard for his many useful comments on an early draft. 

The author is funded by a National Science Foundation Mathematical Sciences Postdoctoral Fellowship, NSF MSPRF no. 1604263 and also acknowledge the support from U.S. National Science Foundation grants DMS 1107452, 1107263, 1107367 “RNMS: GEometric structures And Representation varieties” (the GEAR Network).

\section{Higgs bundles and Surface group representations}\label{section Background}
We start by recalling the necessary facts about Higgs bundles and surface group representations which, in subsequent sections, will be used for $\sG=\sSO(n,n+1)$. 

\subsection{$\sG$ Higgs bundles}
Let $\sG$ be a {\em real} algebraic semisimple Lie group\footnote{In fact, with slight modifications, everything works for real reductive Lie groups. We will not need this more general setting.} with Lie algebra $\fg,$ and fix $\sH\subset\sG$ a maximal compact subgroup with Lie algebra $\fh$. 
Let $\fg=\fh\oplus\fm$ be the corresponding Cartan decomposition of the Lie algebra $\fg.$ 
Here $\fm$ is the orthogonal complement of $\fh$ with respect to the Killing form of $\fg,$ and 
the splitting $\fh\oplus\fm$ consists of the $\pm1$-eigenspaces of an involution $\theta:\fg\to\fg.$ 
Thus, $[\fm,\fm]\subset\fh$ and $[\fh,\fm]\subset\fm,$ and the splitting $\fg=\fh\oplus\fm$ is invariant with respect to the adjoint action of $\sH$ on $\fg.$ 
Complexifying everything, we have an $Ad_{\sH_\C}$ invariant decomposition
\[\fg_\C=\fh_\C\oplus\fm_\C~.\]

Let $X$ be a closed Riemann surface of genus $g\geq2$ and canonical bundle $K.$
For any group $\sG$, if $P$ is a principal $\sG$ bundle and $\alpha:\sG\ra\sGL(V)$ is a linear representation, denote the associated vector bundle $P\times_{\sG} V$ by $P[V].$

\begin{Definition}\label{Def GHiggsBundle}
	Fix a smooth principal $\sH_\C$ bundle $P\ra X$. A {\em $\sG$ Higgs bundle structure on $P$} is a pair $(\Pp,\varphi)$ where $\Pp$ is a holomorphic principal $\sH_\C$ bundle with underlying smooth bundle $P$ and $\varphi\in H^0(X,\Pp[\fm_\C]\otimes K)$ is a holomorphic section of the associated $\fm_\C$ bundle twisted by $K.$ The section $\varphi$ is called the {\em Higgs field}.
\end{Definition}
\begin{Example}
If $\sG$ is compact, then $\sH_\C=\sG_\C$ and $\fm_\C=\{0\}.$ Thus for compact groups, a Higgs bundle is the same as a holomorphic principal $\sG_\C$ bundle. 
When $\sG$ is a complex semisimple Lie group, we have $\sH_\C=\sG$ and $\fm_\C\cong\fg$. In this case, a $\sG$ Higgs bundle consists of a holomorphic $\sG$ bundle together with a holomorphic $K$-twisted section of the adjoint bundle.
\end{Example}

If $\alpha:\sH_\C\ra\sGL(V)$ is a linear representation of $\sH_\C$, the data of a $\sG$ Higgs bundle can be described by the vector bundle associated to $\alpha$ and a section of another associated bundle. For instance, if $\alpha:\sGL(n,\C)\to\sGL(\C^n)$ is the standard representation, then a $\sGL(n,\C)$ Higgs bundle is equivalent to a rank $n$ holomorphic vector bundle $\Ee\to X$ and a holomorphic section $\Phi$ of $\End(\Ee)\otimes K$. 
Similarly, using the standard representation, an $\sSL(n,\C)$ Higgs bundle is equivalent to a $\sGL(n,\C)$ Higgs bundle $(\Ee,\Phi)$ with $\Lambda^n\Ee=\Oo$ and $\mathrm{Tr}(\Phi)=0.$

To form the moduli space of Higgs bundles, we need the notion of stability. 


 \begin{Definition}\label{DEF stability of SLnC}
A $\sGL(n,\C)$ Higgs bundle $(\Ee,\Phi)$ is called {\em stable} if for all $\Phi$-invariant subbundles $\Ff\subset\Ee$ we have $\frac{deg(\Ff)}{rk(\Ff)}<\frac{deg(\Ee)}{rk(\Ee)}~.$
An $\sSL(n,\C)$ Higgs bundle $(\Ee,\Phi)$ is 
\begin{itemize}
    \item {\em stable} if all $\Phi$-invariant subbundles $\Ff\subset\Ee$ satisfy $deg(\Ff)<0$,
    \item {\em polystable} if $(\Ee,\Phi)=\bigoplus(\Ee_j,\Phi_j)$ where each $(\Ee_j,\Phi_j)$ is a stable $\sGL(n_j,\C)$ Higgs bundle with $deg(\Ee_j)=0$ for all $j.$
\end{itemize}
 \end{Definition} 

There are appropriate notions of stability and polystability for $\sG$ Higgs bundles. With respect to these notions, the moduli space of $\sG$ Higgs bundles is defined as a polystable quotient. 
Rather than recalling the definition of polystability for $\sG$ Higgs bundles, we will use the following result (see \cite{HiggsPairsSTABILITY}). 

\begin{Proposition}\label{Prop GHiggs polystable}
	Let $\sG$ be a real form of an irreducible subgroup of $\sSL(n,\C)$. A $\sG$ Higgs bundle $(\Pp,\varphi)$ is polystable if and only if the associated $\sSL(n,\C)$ Higgs bundle is polystable. 
\end{Proposition}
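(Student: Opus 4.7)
The plan is to deduce this equivalence from the Hitchin-Kobayashi correspondences on both sides. Recall that for $\sSL(n,\C)$-Higgs bundles, the theorem of Hitchin-Simpson states that polystability is equivalent to the existence of a Hermitian metric $h$ on $\Ee$ solving Hitchin's equations $F_{D(h)} + [\Phi,\Phi^{*_h}] = 0$. For $\sG$-Higgs bundles, the corresponding statement (due to García-Prada-Gothen-Mundet i Riera) asserts that polystability is equivalent to the existence of a reduction $\sigma$ of the $\sH_\C$-bundle $\Pp$ to $\sH$ solving the analogous $\sG$-Hitchin equations. The strategy is therefore to show that under the irreducibility hypothesis these two existence conditions are equivalent.

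The forward direction is relatively direct. Given a reduction $\sigma$ of $\Pp$ to $\sH$ solving the $\sG$-Hitchin equations, the embedding $\sG\hookrightarrow \sSL(n,\C)$, which sends $\sH$ into $\sSU(n)$, induces a Hermitian metric on the associated rank $n$ bundle $\Pp[\C^n]$. Since Chern connections, curvature, and the adjoint operation $*_h$ all commute with associated bundle constructions, this metric automatically satisfies the $\sSL(n,\C)$ Hitchin equations for the associated Higgs field, yielding polystability.

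For the converse, assume the associated $\sSL(n,\C)$-Higgs bundle $(\Ee,\Phi)$ is polystable and pick a harmonic metric $h$ solving its Hitchin equations. One needs to show $h$ comes from a reduction of $\Pp$ to the compact real form $\sH$. Let $\theta$ denote the Cartan involution of $\sSL(n,\C)$ whose fixed points cut out $\sG$; then $\theta^* h$ is again a harmonic metric on $(\Ee,\Phi)$. The uniqueness statement in Hitchin-Simpson, applied to the polystable decomposition, shows that any two such metrics differ by an automorphism preserving the decomposition. Averaging $h$ over the compact group generated by $\theta$ and by the gauge of this automorphism, one obtains a $\theta$-invariant harmonic metric; by definition this is precisely a reduction of $\Pp$ to $\sH$ solving the $\sG$-Hitchin equations, hence $(\Pp,\varphi)$ is polystable.

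The main obstacle is the reverse direction, specifically ensuring that the averaging procedure does not destroy harmonicity and that the resulting $\theta$-invariant metric is genuinely compatible with the $\sG$-structure. Irreducibility of $\sG\hookrightarrow \sSL(n,\C)$ is essential at precisely this step: it controls the group of automorphisms preserving the polystable decomposition tightly enough for the averaging to succeed. Without irreducibility, a polystable decomposition of $(\Ee,\Phi)$ could split in a way incompatible with the real structure of $\sG$, and no $\theta$-invariant harmonic metric need exist. Once the compatibility is established, the two Hitchin-Kobayashi correspondences close the loop and yield the equivalence.
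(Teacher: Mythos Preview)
The paper does not give its own proof of this proposition; it is quoted from \cite{HiggsPairsSTABILITY} and is used in lieu of recalling the general definition of polystability for $\sG$-Higgs bundles. So there is no in-paper argument to compare against, and your sketch should be judged on its own.

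Your overall strategy---relating the two notions via the Hitchin--Kobayashi correspondences---is the standard one, and the forward direction is fine. The reverse direction, however, has a genuine error and a gap. First, the sentence ``let $\theta$ denote the Cartan involution of $\sSL(n,\C)$ whose fixed points cut out $\sG$'' is incorrect: the fixed locus of a Cartan involution of $\sSL(n,\C)$ is the maximal compact $\sSU(n)$, not a noncompact real form. What singles out $\sG$ inside $\sSL(n,\C)$ is an \emph{antiholomorphic} conjugation $\sigma$, and one must then analyze how $\sigma$ (together with the compact conjugation $\tau$) interacts with harmonic metrics and the Higgs field; this is more delicate than applying a holomorphic involution. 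Second, ``averaging $h$ over the compact group generated by $\theta$'' is not a well-defined operation on Hermitian metrics that preserves the Hitchin equations: the space of metrics is not linear, and even if one averages in some sense, harmonicity is not convex. The correct mechanism is uniqueness: if $h$ solves the equations then so does the metric obtained by transporting $h$ through the real structure, and one must argue that these two solutions coincide (up to the appropriate automorphism). In the strictly polystable case uniqueness only holds up to automorphisms of the Higgs bundle, and it is here that irreducibility of $\sG\subset\sSL(n,\C)$ enters, since it forces the centralizer (hence the automorphism ambiguity) to be as small as possible. Your final paragraph gestures at this, but the argument as written does not establish it.
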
 
 
The gauge group $\Gg_{\sH_\C}$ of smooth bundle automorphisms of a smooth $\sH_\C$ bundle $P_{\sH_\C}$ acts on the set of Higgs bundle structures $(\Pp,\varphi)=(\bar\p_P,\varphi)$ by the adjoint action. 

\begin{Definition} \label{Def ModuliofHiggsBundle}
Fix a smooth principal $\sH_\C$ bundle $P_{\sH_\C}$ on $X.$ The moduli space of $\sG$ Higgs bundle structures on $P_{\sH_\C}$ consists of isomorphism classes of polystable Higgs bundles with underlying smooth bundle $P_{\sH_\C},$ 
\[\Mm(P_{\sH_\C},\sG) = \{\text{polystable } \sG\text{-Higgs bundle structures on } P_{\sH_\C} \}/\Gg_{\sH_\C}~.\]
The union over the set of isomorphism classes of smooth principal $\sH_\C$ bundles on $X$ of the spaces $\Mm(P_{\sH_\C},\sG)$ will be referred to as the moduli space of $\sG$ Higgs bundles and denoted by $\Mm(\sG)$. 
\end{Definition}
The space $\Mm(\sG)$ can in fact be given the structure of an complex analytic variety of expected dimension $dim(\sG)(g-1)$ \cite{selfduality,localsystems,schmitt_2005}.  
Since $\sH_\C$ and $\sG$ are both homotopy equivalent to $\sH,$ the set of equivalence classes of topological $\sH_\C$ bundles on $X$ is the same as the set of equivalence classes of topological $\sG$ bundles on $X$. Denote this set by $\Bun_X(\sG)$. If the group $\sG$ is connected, then 
\[\Bun_X(\sG)\cong H^2(X,\pi_1(\sG))\cong \pi_1(\sG)~.\]
If $\sG$ is not connected, the description is slightly more complicated, see \cite[Section 3.1]{AndrePGLnR}.
This gives a decomposition of the Higgs bundle moduli space:
\[\Mm(\sG)=\bigsqcup\limits_{a\in\Bun_X(\sG)}\Mm^a(\sG)~,\]
where $a\in\Bun_X(\sG)$ is the topological type of the underlying $\sH_\C$ bundle of the Higgs bundles in $\Mm^a(\sG).$ 

The automorphism group $\Aut(\bar\p_\Pp,\varphi)$ of a polystable $\sG$ Higgs bundle $(\Pp,\varphi)$ is defined by 
\[\Aut(\bar\p_\Pp,\varphi)=\{g\in\Gg_{\sH_\C}|\ (Ad_g\bar\p_\Pp,Ad_g\varphi)=(\bar\p_\Pp,\varphi)\}~.\]
The center $\Zz(\sG_\C)$ of $\sG_\C$ is the intersection of the center of $\sH_\C$ and the kernel of the representation $Ad:\sH_\C\ra\sGL(\fm_\C).$ 
Thus, we always have $\Zz(\sG_\C)\subset\Aut(\bar\p_\Pp,\varphi).$ 
Using our definition of polystability from Proposition \ref{Prop GHiggs polystable}, we use the following (nonstandard) definition of stability of a $\sG$ Higgs bundle.
\begin{Definition}
	\label{DEF Stability of G Higgs bundle} Let $\sG$ be a semisimple Lie group which is a real form of an irreducible subgroup of $\sSL(n,\C)$. A polystable $\sG$ Higgs bundle $(\Pp,\varphi)$ is {\em stable} if $\Aut(\Pp,\varphi)$ is finite.
\end{Definition}

Given a polystable $\sG$ Higgs bundle $(\Pp,\varphi)$, consider the complex of sheaves \[C^\bullet(\Pp,\varphi):\xymatrix{\Pp[\fh_\C]\ar[r]^{ad_\varphi\ \ \ }&\Pp[\fm_\C]\otimes K}~.\] This gives a long exact sequence in hypercohomology:

\begin{equation}\label{Eq hypercohomology}
\xymatrix@R=1em@C=1.3em{0\ar[r]&\HH^0(C^\bullet(\Pp,\varphi))\ar[r]&H^0(\Pp[\fh_\C])\ar[r]^{ ad_\varphi\ \ \ \ }&H^0(\Pp[\fm_\C]\otimes K)\ar[r]&\HH^1(C^\bullet(\Pp,\varphi))\\\ar[r]&H^1(\Pp[\fh_\C])\ar[r]^{ad_\varphi \ \ }&H^1(\Pp[\fm_\C]\otimes K)\ar[r]&\HH^2(C^\bullet(\Pp,\varphi))\ar[r]&0~.}
\end{equation}

Note that the automorphism group $\Aut(\bar\p_\Pp,\varphi)$ acts on $\HH^1(C^\bullet(\Pp,\varphi))$.
Using standard slice methods of Kuranishi (see \cite[Chapter 7.3]{DiffGeomCompVectBun} for details for the moduli space of holomorphic bundles), a neighborhood of the isomorphism class of a polystable Higgs bundle $(\Pp,\varphi)$ in $\Mm(\sG)$ is given by 
\begin{equation}\label{EQ: local kuranishi}
	\kappa^{-1}(0)\big\slash\big\slash\Aut(\Pp,\phi)
\end{equation}
where $\kappa:\HH^1(C^\bullet(\Pp,\varphi))\to \HH^2(C^\bullet(\Pp,\varphi))$ is the so called Kuranishi map. 

When $\HH^2(C^\bullet(\Pp,\varphi))=0,$ this simplifies considerably. Namely, in this case, a neighborhood of the isomorphism class of a polystable Higgs bundle $(\Pp,\varphi)$ in $\Mm(\sG)$ is given by 
\[\HH^1(C^\bullet(\Pp,\varphi))\big\slash\big\slash \Aut(\Pp,\varphi)~.\]
\begin{Remark}
For all of the $\sSO(n,n+1)$ Higgs bundles considered in the subsequent sections we will prove that the relevant $\HH^2$ always vanishes. For this reason, we will not recall the construction of the Kuranishi map. 
\end{Remark}

When the automorphism group $\Aut(\bar\p,\varphi)$ is finite, the GIT quotient above simplifies to a regular quotient. This gives the following characterizations of smooth points and orbifold points of $\Mm(\sG).$


\begin{Proposition}\label{Prop Orbifold/Smooth GHiggs}
	Let $\sG$ be a semisimple real Lie group. If $(\Pp,\varphi)$ is a polystable $\sG$ Higgs bundle with $\HH^2(C^\bullet(\Pp,\varphi))=0$ and $\Aut(\Pp,\varphi)$ finite, then the isomorphism class of $(\Pp,\varphi)$ is an orbifold point of $\Mm(\sG)$ of type $\Aut(\Pp,\varphi)/\Zz(\sG).$ In particular, if $\Aut(\Pp,\varphi)=\Zz(\sG)$, then $(\Pp,\varphi)$ defines a smooth point of $\Mm(\sG).$ 
\end{Proposition}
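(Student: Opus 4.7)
The plan is to analyze the Kuranishi local slice for the moduli space $\Mm(\sG)$ around $[(\Pp,\varphi)]$ already assembled in the text via \eqref{EQ: local kuranishi}, and specialize it to the present hypotheses. Since $\HH^2(C^\bullet(\Pp,\varphi))=0$, the Kuranishi obstruction map $\kappa:\HH^1(C^\bullet(\Pp,\varphi))\to\HH^2(C^\bullet(\Pp,\varphi))$ has zero target, so $\kappa^{-1}(0)$ is all of $\HH^1(C^\bullet(\Pp,\varphi))$. Thus a neighborhood of $[(\Pp,\varphi)]$ in $\Mm(\sG)$ is modeled by the GIT quotient $\HH^1(C^\bullet(\Pp,\varphi))\big\slash\big\slash\Aut(\Pp,\varphi)$, exactly as the text records in the paragraph immediately following \eqref{EQ: local kuranishi}.

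Next, because $\Aut(\Pp,\varphi)$ is assumed finite, this GIT quotient coincides with the ordinary set-theoretic quotient of the finite-dimensional complex vector space $\HH^1(C^\bullet(\Pp,\varphi))$ by the finite group $\Aut(\Pp,\varphi)$. The remaining task is to pin down which subgroup acts effectively. Here I would invoke the characterization of $\Zz(\sG_\C)$ recalled in the text as the intersection of the center of $\sH_\C$ with $\ker(Ad:\sH_\C\to\sGL(\fm_\C))$: this implies that $\Zz(\sG)$ acts trivially on the associated bundles $\Pp[\fh_\C]$ and $\Pp[\fm_\C]\otimes K$, hence trivially on the complex $C^\bullet(\Pp,\varphi)$ and on every hypercohomology group appearing in \eqref{Eq hypercohomology}. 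Consequently the $\Aut(\Pp,\varphi)$-action on $\HH^1(C^\bullet(\Pp,\varphi))$ descends to an effective action of the finite quotient $\Aut(\Pp,\varphi)/\Zz(\sG)$, and the local model becomes
\[\HH^1(C^\bullet(\Pp,\varphi))\big/\big(\Aut(\Pp,\varphi)/\Zz(\sG)\big),\]
exhibiting $[(\Pp,\varphi)]$ as an orbifold point of the asserted type. The smooth case $\Aut(\Pp,\varphi)=\Zz(\sG)$ then follows at once, since the quotient group is trivial and the local chart is the vector space $\HH^1(C^\bullet(\Pp,\varphi))$ itself.

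Since the Kuranishi slice construction and the simplification of the local model under $\HH^2=0$ are quoted from earlier in the text rather than reproved, there is no substantial obstacle in the argument. The only piece needing a quick verification is the trivial action of $\Zz(\sG)$ on both $\fh_\C$ and $\fm_\C$, which is immediate from the description of the center recalled in the text. The conceptual content of the proof is simply the passage from the $\Aut$-action on the slice to the effective $\big(\Aut/\Zz(\sG)\big)$-action, which is precisely what produces the orbifold type in the conclusion.
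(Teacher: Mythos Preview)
Your argument is correct and is exactly the approach the paper takes: the proposition is stated immediately after the Kuranishi discussion, and its proof is nothing more than the specialization you outline --- $\HH^2=0$ kills the obstruction map, finiteness of $\Aut$ turns the GIT quotient into an ordinary quotient, and the center acts trivially by the description already given in the text. One small caveat: you assert that $\Aut(\Pp,\varphi)/\Zz(\sG)$ acts \emph{effectively} on $\HH^1$, but you only verify that $\Zz(\sG)$ acts trivially, not that nothing else does; this does not affect the conclusion as stated, since ``orbifold point of type $G$'' here just means the local model is $V/G$.
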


Let $p_1,\cdots,p_{n-1}$ be a basis of $\sSL(n,\C)$ invariant homogeneous polynomials on $\fsl(n,\C)$ with $deg(p_j)=j+1.$ 
Given an $\sSL(n,\C)$ Higgs bundle $(E,\Phi),$ the tensor $p_j(\Phi)$ is a holomorphic differential of degree equal to the degree of $p_j.$ The map 
\[(E,\Phi)\mapsto(p_1(\Phi),\cdots,p_{n-1}(\Phi))\] from the set of Higgs bundles to the vector space $\bigoplus\limits_{j=2}^{n}H^0(K^j)$ descends to a map 
\begin{equation}\label{EQ Hitchin Fibration}
	h:\xymatrix{\Mm(\sSL(n,\C))\ar[r]&\bigoplus\limits_{j=2}^{n}H^0(K^j)}~.
\end{equation}
 The map $h$ will be referred to as the Hitchin fibration. In \cite{IntSystemFibration}, Hitchin showed that $h$ is a {\em proper} map. The properness of the Hitchin fibration will play a key role in Sections \ref{section: smooth components} and \ref{singular components Section}.


Finally, we have the notion of reducing the structure group of a $\sG$ Higgs bundle. This will be important in Sections \ref{Section Zariski closure} and \ref{section: Positive}.
\begin{Definition}\label{DEF:Higgs bundle Reduction}
	Let $\sG$ and $\sG'$ be semisimple Lie groups with maximal compact subgroups $\sH$ and $\sH'$ and Cartan decompositions $\fg=\fh\oplus\fm$ and $\fg=\fh'\oplus\fm'$. Suppose $i:\sG'\ra\sG$ is an embedding such that $i(\sH')\subset\sH$ and $di(\fm')\subset\fm.$ A $\sG$ Higgs bundle $(\Pp,\varphi)$ {\em reduces to a $\sG'$ Higgs bundle} $(\Pp',\varphi)$ if the holomorphic $\sH_\C$ bundle $\Pp$ admits a holomorphic reduction of structure group to the $\sH_\C'$ bundle $\Pp'$ and, with respect to this reduction, $\varphi\in H^0((\Pp'\times_{\sH_\C'}\fm_\C')\otimes K)\subset H^0((\Pp\times_{\sH_\C}\fm_\C)\otimes K)$. 
\end{Definition}
\begin{Remark}
	Note that a polystable $\sG$-Higgs bundle $(\Pp,\varphi)$ reduces to its maximal compact subgroup if and only if the Higgs field $\varphi$ vanishes.
\end{Remark}
\subsection{Relation to surface group representations}
Let $\Gamma$ be the fundamental group of a closed oriented surface $S$ of genus $g\geq 2$ and let $\sG$ be a real algebraic semisimple Lie group. 
\begin{Definition}
	A representation $\rho:\Gamma\ra\sG$ is {\em reductive} if the Zariski closure of the image $\rho(\Gamma)\subset\sG$ is a reductive subgroup. 
 \end{Definition}
 The conjugation action of $\sG$ on $\Hom(\Gamma,\sG)$ does not in general have a Hausdorff quotient. However, if we restrict to the set of reductive representations, the quotient will be Hausdorff.
\begin{Definition}
	The {\em $\sG$-character variety} $\Xx(\sG)$ of a surface group $\Gamma$ is the space of conjugacy classes of reductive representations of $\Gamma$ in $\sG$:
	\[\Xx(\sG)=\Hom^{red}(\Gamma,\sG)/\sG~.\]
\end{Definition}
\begin{Example}\label{EX: Fuchsian reps}
	The set of {\em Fuchsian representations} $\Fuch(\Gamma)\subset\Xx(\Gamma,\sSO(1,2))$ is defined to be the subset of conjugacy classes of {\em faithful} representations with {\em discrete image}.  
The space $\Fuch(\Gamma)$ defines one connected components of $\Xx(\Gamma,\sSO(1,2))$ \cite{TopologicalComponents} and is in one to one correspondence with the Teichm\"uller space of isotopy classes of marked Riemann surface structures on the surface $S.$ 
\end{Example}
Each representation $\rho\in\Xx(\sG)$ defines a flat $\sG$ bundle 
\[E_\rho=(\widetilde S\times\sG)/\Gamma~.\]
This gives a decomposition of the $\sG$ character variety:
\[\Xx(\sG)=\bigsqcup\limits_{a\in\Bun_S(\sG)}\Xx^a(\sG)~,\]
where $a\in\Bun_S(\sG)$ is the topological type of the flat $\sG$ bundle of the representations in $\Xx^a(\sG).$ 

We will rely heavily on the following theorem which was proven by Hitchin \cite{selfduality}, Donaldson \cite{harmoicmetric}, Corlette \cite{canonicalmetrics} and Simpson \cite{SimpsonVHS} in various generalities. For the proof of the general statement below, see \cite{HiggsPairsSTABILITY}.
\begin{Theorem}\label{Nonabelian Hodge Correspondence}
	Let $S$ be a closed oriented surface of genus $g\geq2$ and $\sG$ be a real algebraic semisimple Lie group. 
	For each Riemann surface structure $X$ on $S$ there is a homeomorphism between the moduli space $\Mm(\sG)$ of $\sG$ Higgs bundles on $X$ and the $\sG$-character variety $\Xx(\sG)$. Furthermore, this homeomorphism is a diffeomorphism when restricted to the smooth loci. 
	Moreover, for each $a\in\Bun_S(\sG)$, this homeomorphism identifies the spaces $\Mm^a(\sG)$ and $\Xx^a(\sG).$
\end{Theorem}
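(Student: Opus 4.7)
The plan is to prove the theorem by combining the Corlette-Donaldson existence theorem for equivariant harmonic maps with the Hitchin-Simpson solvability theorem for Hitchin's self-duality equations. The key point is that both $\Xx(\sG)$ and $\Mm(\sG)$ can be identified with the moduli space of solutions to Hitchin's equations on a fixed smooth principal $\sH$-bundle on $S$ modulo $\sH$-gauge transformations, where $\sH\subset\sG$ is maximal compact. The homeomorphism is then assembled as the composition of the two identifications.

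For the direction $\Xx(\sG)\to\Mm(\sG)$, I would start with a reductive representation $\rho:\Gamma\to\sG$ and form the associated flat bundle $E_\rho$, equivalently a $\rho$-equivariant map $\widetilde S\to\sG/\sH$. Corlette's theorem (Donaldson's in the $\sSL(2,\C)$ case) uses precisely the reductivity hypothesis to produce a $\rho$-equivariant harmonic map, i.e.\ a metric reduction of $E_\rho$ to a smooth $\sH$-bundle $P_\sH$. Decomposing the flat connection via the Cartan decomposition $\fg=\fh\oplus\fm$ as $D=A+\Psi$ with $A$ an $\sH$-connection and $\Psi$ an $\fm$-valued one-form, the harmonic map equation $d_A^*\Psi=0$ together with flatness of $D$ gives Hitchin's self-duality equations once the Riemann surface structure $X$ is used to split $\Psi=\varphi+\varphi^*$ into its $(1,0)$ and $(0,1)$ components. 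The resulting pair $(\bar\p_A,\varphi)$ is a $\sG$ Higgs bundle, and reductivity of $\rho$ rules out a destabilizing subobject, giving polystability in the sense of Proposition \ref{Prop GHiggs polystable}.

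For the reverse direction $\Mm(\sG)\to\Xx(\sG)$, I would invoke the generalization of the Hitchin-Simpson theorem to $\sG$ Higgs bundles for real reductive $\sG$ proven in \cite{HiggsPairsSTABILITY}: a polystable $\sG$ Higgs bundle admits an $\sH$-reduction, unique up to $\sH$-gauge, solving Hitchin's equations. Assembling $D=A+\varphi+\varphi^*$ yields a flat $\sG$-connection whose monodromy is a reductive representation. The two constructions are inverse to each other at the level of solutions to Hitchin's equations modulo gauge, and standard elliptic estimates for the self-duality equations give bicontinuity, producing the homeomorphism. Preservation of topological type $a\in\Bun_S(\sG)$ is automatic, since the underlying smooth $\sH$-bundle is preserved by both constructions and the homotopy equivalences $\sG\simeq\sH\simeq\sH_\C$ identify the corresponding classifying sets, as already noted in the paragraph following Definition \ref{Def ModuliofHiggsBundle}.

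Finally, for the diffeomorphism statement on smooth loci, I would compare deformation theories: at a smooth stable Higgs bundle the Zariski tangent space is the hypercohomology $\HH^1(C^\bullet(\Pp,\varphi))$ of the complex in \eqref{Eq hypercohomology}, while on the character variety side it is the twisted group cohomology $H^1(\Gamma,\fg)$, and the harmonic metric induces a Dolbeault-to-de Rham comparison identifying these, which is the derivative of the homeomorphism. The main obstacle is producing the harmonic metric in the $\Mm\to\Xx$ direction for general real $\sG$: the classical Hitchin-Simpson proofs handle complex groups by a Donaldson-Uhlenbeck-Yau argument, and the extension to real forms requires a delicate adaptation which I would take as a black box from \cite{HiggsPairsSTABILITY}. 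Rather than reproving this, the proposal is to state the theorem as an assembly of these deep inputs together with the gauge-theoretic dictionary between flat connections and Hitchin pairs.
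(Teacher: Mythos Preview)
The paper does not prove this theorem; it is stated as a foundational result with attribution to Hitchin, Donaldson, Corlette, and Simpson, and for the general real semisimple case the reader is referred to \cite{HiggsPairsSTABILITY}. Your proposal is a correct outline of the standard argument behind the nonabelian Hodge correspondence, and in particular your identification of the two directions (Corlette--Donaldson harmonic metric for $\Xx\to\Mm$, Hitchin--Simpson--Garc\'ia-Prada--Gothen--Mundet Hitchin--Kobayashi correspondence for $\Mm\to\Xx$) and your acknowledgment that the real-form case requires the black box from \cite{HiggsPairsSTABILITY} are exactly how the result is assembled in the literature. Since the paper treats this as a cited input rather than something to be proved, there is nothing to compare your argument against; your sketch is appropriate and there is no gap to flag.
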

In Sections \ref{Section Zariski closure} and \ref{section: Positive}, it will be important to determine when a representations has smaller Zariski closure. This leads to the definition of a representation factoring through a reductive subgroup.
\begin{Definition}\label{DEF: Factoring Representations}
	Let $\sG$ and $\sG'$ be reductive Lie groups and $i:\sG'\ra\sG$ be an embedding.
	A representation $\rho:\Gamma\to\sG$ {\em factors through $\sG'$} if there exists a representations $\rho':\Gamma\ra\sG'$ such that $\rho=i\circ\rho'.$
\end{Definition}
\begin{Remark}
	The group $\sSO(1,2)$ is the set of isometries of the hyperbolic plane and $\sSO_0(1,2)$ is the set of orientation preserving isometries.  Note that since the surface $S$ is assumed to be orientable, all Fuchsian representations $\rho$ from Example \ref{EX: Fuchsian reps} factor through the connected component of the identity $\sSO_0(1,2).$ 
\end{Remark}
The following proposition is immediate from Theorem \ref{Nonabelian Hodge Correspondence}. 
\begin{Proposition}\label{Prop smaller zariski closure}
	Let $\sG'$ be a reductive Lie subgroup of a semisimple Lie group $\sG.$ A reductive representation $\rho:\Gamma\ra\sG$ factors through a representation $\rho':\Gamma\ra\sG'$ if and only if the corresponding polystable $\sG$ Higgs bundle $(\Pp,\varphi)$ reduces to a $\sG'$ Higgs bundle. In particular, $\rho$ has compact Zariski closure if and only if $\varphi=0.$
\end{Proposition}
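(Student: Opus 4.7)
The plan is to derive both implications from the naturality of the nonabelian Hodge correspondence (Theorem \ref{Nonabelian Hodge Correspondence}) with respect to the embedding $i:\sG'\hookrightarrow\sG$. Recall that the correspondence proceeds via a harmonic metric: to a polystable $\sG$ Higgs bundle $(\Pp,\varphi)$ there corresponds a reduction of structure group to $\sH\subset\sH_\C$ solving Hitchin's equation $F_A+[\varphi,\theta(\varphi)]=0$, equivalently a $\rho$-equivariant harmonic map $\tf:\widetilde X\to\sG/\sH$, and the flat connection $D=d_A+\varphi-\theta(\varphi)$ has holonomy $\rho$. The compatibilities $i(\sH')\subset\sH$ and $di(\fm_\C')\subset\fm_\C$ imply both that Hitchin's equation for $\sG$ pulls back to the one for $\sG'$ and, dually, that $\sG'/\sH'$ sits inside $\sG/\sH$ as a totally geodesic subspace.

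For the forward direction, suppose $\rho=i\circ\rho'$. The Zariski closure of $\rho'$ in $\sG'$ coincides with the Zariski closure of $\rho$ in $i(\sG')\subset\sG$, so reductivity of $\rho$ forces reductivity of $\rho'$. Applying Theorem \ref{Nonabelian Hodge Correspondence} within $\sG'$ produces a polystable $\sG'$ Higgs bundle $(\Pp',\varphi')$ with harmonic metric $h'$. Extending the structure group along $i$ yields a $\sG$ Higgs bundle $(\Pp,\varphi):=(\Pp'\times_{\sH_\C'}\sH_\C,\,di(\varphi'))$ together with an extended harmonic metric, whose associated flat $\sG$ connection is simply the pushforward of the flat $\sG'$ connection of $(\Pp',\varphi')$ and therefore has holonomy $i\circ\rho'=\rho$. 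By the uniqueness clause in Theorem \ref{Nonabelian Hodge Correspondence}, $(\Pp,\varphi)$ is the polystable Higgs bundle attached to $\rho$, and by construction it reduces to $(\Pp',\varphi')$ in the sense of Definition \ref{DEF:Higgs bundle Reduction}.

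For the converse, assume $(\Pp,\varphi)$ reduces to $(\Pp',\varphi')$. The reduction is equivalent to the statement that the $\rho$-equivariant harmonic map $\tf:\widetilde X\to\sG/\sH$ attached to $(\Pp,\varphi)$ factors through $\sG'/\sH'$. Because $\sG'/\sH'$ is totally geodesic, the factored map is automatically harmonic as a map into $\sG'/\sH'$, so $(\Pp',\varphi')$ is polystable and its associated flat $\sG'$ connection has holonomy $\rho':\Gamma\to\sG'$ satisfying $i\circ\rho'=\rho$. The ``in particular'' assertion follows by taking $\sG'=\sH$: the Cartan decomposition of $\sH$ is $\fh\oplus\{0\}$, so an $\sH$ Higgs bundle is precisely a holomorphic $\sH_\C$ bundle with zero Higgs field, and every compact subgroup of $\sG$ is conjugate into $\sH$, so $\varphi=0$ is equivalent to $\rho$ having compact Zariski closure.

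The main obstacle is the assertion, used in the converse direction, that the unique harmonic metric attached to the polystable $(\Pp,\varphi)$ really is compatible with the holomorphic reduction, i.e.\ that $\tf$ actually takes values in $\sG'/\sH'$ and not merely in $\sG/\sH$. The cleanest route is to first solve Hitchin's equations directly on $(\Pp',\varphi')$ via a Dirichlet energy minimization within the totally geodesic subspace $\sG'/\sH'$, in the homotopy class of equivariant maps determined by the reduction; the resulting $h'$ extends to a harmonic metric on $(\Pp,\varphi)$, and uniqueness of the harmonic metric on $(\Pp,\varphi)$ up to $\Zz(\sG)$ then forces it to coincide with this extension. All other steps are essentially formal consequences of the functoriality of the Higgs bundle/flat connection correspondence under $i$.
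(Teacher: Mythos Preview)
The paper provides no proof at all beyond the remark ``The following proposition is immediate from Theorem \ref{Nonabelian Hodge Correspondence},'' so your proposal is already far more detailed than what the paper offers, and your overall strategy---deducing both directions from the naturality of the nonabelian Hodge correspondence under the embedding $i:\sG'\hookrightarrow\sG$---is exactly the intended content of that remark.

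One point deserves sharpening. In the converse direction you write that the holomorphic reduction ``is equivalent to the statement that the $\rho$-equivariant harmonic map $\tf$ factors through $\sG'/\sH'$,'' then immediately (and correctly) flag this as the main obstacle. The cleanest way to close the gap is not via an ad hoc energy minimization, but to invoke the fact---proved in \cite{HiggsPairsSTABILITY} and already used in the paper as Proposition \ref{Prop GHiggs polystable}---that a $\sG'$ Higgs bundle is polystable if and only if the associated $\sG$ Higgs bundle is. Thus $(\Pp',\varphi')$ is automatically polystable, the Hitchin--Simpson theorem for $\sG'$ yields a harmonic metric $h'$ on it, the extension of $h'$ solves Hitchin's equations for $(\Pp,\varphi)$, and the flat $\sG'$ connection built from $h'$ has some holonomy $\rho'$ with $i\circ\rho'=\rho$. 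With this input your argument is complete; without it the energy minimization you sketch would itself require polystability of $(\Pp',\varphi')$ to converge, so the circularity is only resolved by citing that result.
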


 

\begin{Definition}
	\label{DEF irreducible REP}
	Let $\sG$ be a real form of a subgroup of $\sSL(n,\C)$. A representation $\rho:\Gamma\ra\sG$ is called {\em irreducible} if the induced representation $\Gamma\ra\sSL(n,\C)$ has no nonzero proper invariant subspaces. 
\end{Definition}

For $\sG=\sSL(n,\C)$, Theorem \ref{Nonabelian Hodge Correspondence} gives a one to one correspondence between irreducible representations and stable $\sSL(n,\C)$ Higgs bundles \cite{selfduality,localsystems}. This implies the following proposition which will play a key role in Sections \ref{Section Zariski closure} and \ref{section: Positive}.
\begin{Proposition}\label{Prop Irreducible reps and Stable SLnC Higgs bundles}
	Suppose $\sG$ is a real form of an irreducible subgroup of $\sSL(n,\C)$. Let $\rho:\Gamma\ra\sG$ be a reductive representation and let $(\Pp,\varphi)$ be the corresponding $\sG$ Higgs bundle given by Theorem \ref{Nonabelian Hodge Correspondence}. The representation $\rho$ is irreducible if and only if the $\sSL(n,\C)$ Higgs bundle associated to $(\Pp,\varphi)$ is stable.
\end{Proposition}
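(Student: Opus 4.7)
The plan is to reduce the statement to the standard $\sSL(n,\C)$ case of the nonabelian Hodge correspondence, via functoriality under the embedding $i:\sG\hookrightarrow\sSL(n,\C)$ coming from the hypothesis. First I would unpack what the ``associated $\sSL(n,\C)$ Higgs bundle'' means concretely: since $\sG$ is a real form of an irreducible subgroup of $\sSL(n,\C)$, the embedding $i$ sends a maximal compact $\sH$ into $\sSU(n)$ and complexifies to $\sH_\C\hookrightarrow\sSL(n,\C)$ with $di(\fm_\C)\subset\fsl(n,\C)$. Extending the structure group of $\Pp$ along $i$ and applying the standard representation produces the polystable $\sSL(n,\C)$ Higgs bundle $(\Ee,\Phi)$ with $\Ee=\Pp[\C^n]$ of trivial determinant and $\Phi=di(\varphi)$ a traceless, $K$-valued endomorphism of $\Ee$.

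Next I would invoke the functoriality of the nonabelian Hodge correspondence under such extensions of structure group, which is essentially Proposition \ref{Prop smaller zariski closure} read in the reverse direction: if $(\Pp,\varphi)$ corresponds to $\rho:\Gamma\to\sG$ under Theorem \ref{Nonabelian Hodge Correspondence}, then the extended polystable $\sSL(n,\C)$ Higgs bundle $(\Ee,\Phi)$ corresponds to the composite representation $i\circ\rho:\Gamma\to\sSL(n,\C)$. Analytically this reflects the fact that a harmonic metric solving the Hitchin equation for $(\Pp,\varphi)$ induces, via $i$, a harmonic metric solving the $\sSL(n,\C)$ Hitchin equation for $(\Ee,\Phi)$, and the resulting flat connections are identified.

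With these two compatibilities in hand, I would conclude by invoking the classical theorem of Hitchin and Simpson for the complex group $\sSL(n,\C)$: a reductive representation $\tilde\rho:\Gamma\to\sSL(n,\C)$ is irreducible, in the sense of having no proper nonzero invariant subspace of $\C^n$, if and only if the corresponding polystable $\sSL(n,\C)$ Higgs bundle is stable in the sense of Definition \ref{DEF stability of SLnC}. Applied to $\tilde\rho=i\circ\rho$ and combined with Definition \ref{DEF irreducible REP}, which declares $\rho$ irreducible precisely when $i\circ\rho$ is, this produces the desired equivalence.

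The only genuine point to verify, and what I expect to be the main obstacle to making the argument fully rigorous, is that reductiveness of $\rho$ implies reductiveness of $i\circ\rho$, so that the $\sSL(n,\C)$ nonabelian Hodge correspondence is applicable on the right-hand side. This should follow because the Zariski closure of $i(\rho(\Gamma))$ inside $\sSL(n,\C)$ coincides with the complexification of the Zariski closure of $\rho(\Gamma)$ inside $\sG$, and the complexification of a reductive real algebraic group is reductive. Once this compatibility is recorded, the remainder of the proof is essentially formal.
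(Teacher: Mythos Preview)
Your proposal is correct and follows the same route as the paper. The paper does not even spell out a proof: it simply records, immediately before the proposition, that for $\sG=\sSL(n,\C)$ the nonabelian Hodge correspondence matches irreducible representations with stable Higgs bundles (citing Hitchin and Simpson), and then states that this implies the proposition; your write-up just makes the functoriality under $i:\sG\hookrightarrow\sSL(n,\C)$ and the reductiveness of $i\circ\rho$ explicit.
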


\section{$\sSO(n,n+1)$ Higgs Bundles}
\label{Section sonn+1 Higgs}
In this section we specialize to the group $\sSO(n,n+1)$ of orientation preserving automorphisms of $\R^{2n+1}$ which preserve a nondegenerate symmetric quadratic form of signature $(n,n+1).$ The group $\sSO(n,n+1)$ has two connected components, denote the connected component of the identity by $\sSO_0(n,n+1).$
If $Q_n$ and $Q_{n+1}$ are positive definite symmetric $n\times n$ and $(n+1)\times(n+1)$ matrices, then the Lie algebra $\fso(n,n+1)$ is defined by the matrices
\[\left\{\mtrx{A&B\\C&D}\ \left|\  \mtrx{A&B\\C&D}^T\right.\mtrx{Q_n&\\&-Q_{n+1}}+\mtrx{Q_n&\\&-Q_{n+1}}\mtrx{A_{ }&B\\C&D}=0\right\}~,\]
where $A$ is an $n\times n$ matrix, $B$ is an $n\times (n+1)$ matrix, $C$ is an $(n+1)\times n$ matrix and $D$ is an $(n+1)\times(n+1)$ matrix.
Thus, 
\begin{equation}
\label{EQ so(n,n+1) Lie algebra}	\xymatrix@C=.5em{A^TQ_n+Q_nA=0,& D^TQ_{n+1}+Q_{n+1}D=0&\text{and}&B=-Q_n^{-1}C^TQ_{n+1}~.}
\end{equation}

The maximal compact subgroup of $\sSO(n,n+1)$ is $\sS(\sO(n)\times\sO(n+1))$. Using \eqref{EQ so(n,n+1) Lie algebra}, the complexified Cartan decomposition of the Lie algebra $\fso(n,n+1)\otimes \C$ is 
\[\fso(2n+1,\C)=(\fso(n,\C)\oplus\fso(n+1,\C))\oplus \Hom(V,W)\]
 where $V$ and $W$ are the standard representations of $\sO(n,\C)$ and $\sO(n+1,\C).$ 
 Using Definition \ref{Def GHiggsBundle}, an $\sSO(n,n+1)$ Higgs bundle on $X$ is a pair $(\Pp,\varphi)$ where $\Pp\ra X$ is a holomorphic $\sS(\sO(n,\C)\times\sO(n+1,\C))$-principal bundle and $\varphi$ is a holomorphic section of $\Pp[\Hom(V,W)]\otimes K.$ 

Given a holomorphic principal $\sO(n,\C)$ bundle, the rank $n$ vector bundle $V$ associated to the standard representation satisfies $det(V)^2=(\Lambda^nV)^2\cong \Oo$. 
Furthermore, $V$ admits an orthogonal structure $Q_V\in H^0(Sym^2(V))$. An orthogonal structure $Q_V$ will be interpreted as a holomorphic symmetric isomorphism $Q_V:V\to V^*.$ 
We take the following vector bundle definition of an $\sSO(n,n+1)$ Higgs bundle. 

\begin{Definition}\label{DEF SO(n,n+1) Higgs bundles}
An {\em $\sSO(n,n+1)$ Higgs bundle} on Riemann surface $X$ is a triple $(V,W,\eta)$ where 
\begin{itemize}
	\item $V$ and $W$ are respectively rank $n$ and $(n+1)$ holomorphic vector bundles on $X$ equipped with holomorphic orthogonal structures $Q_V$ and $Q_W$ such that $det(V)=det(W).$
	\item $\eta\in H^0(\Hom(V,W)\otimes K).$
\end{itemize}
\end{Definition}
\begin{Remark}
	An $\sSO(n,n+1)$ Higgs bundle $(V,W,\eta)$ reduces to an $\sSO_0(n,n+1)$ Higgs bundle if and only if $\Lambda^nV=\Oo$. 
\end{Remark}
The $\sS(\sO(n,\C)\times\sO(n+1,\C))$-gauge group consists of pairs $(g_V,g_W)$ where $g_V$ and $g_W$ are smooth automorphisms of $V$ and $W$ such that 
\[\xymatrix{g_V^TQ_Vg_V=Q_V~,& g_W^TQ_Wg_W=Q_W& \text{and} }\]
\[Id=det(g_V)\otimes det(g_W):\Lambda^n V\otimes\Lambda^{n+1}W\longrightarrow \Lambda^n V\otimes\Lambda^{n+1}W~.\] Such a gauge transformation acts on the data $(V,W,\eta)$ by 
	\[(g_V,g_W)\cdot (\bar\p_V,\bar\p_W,\eta)=(g_V\bar\p_Vg_V^{-1},g_{W}\bar\p_Wg_{W}^{-1},g_W\eta g_{V}^{-1})~.\]

The $\sSL(2n+1,\C)$ Higgs bundle $(E,\Phi)$ associated to an $\sSO(n,n+1)$ Higgs bundle $(V,W,\eta)$ is given by 
\begin{equation}\label{Eq SL(2n+1,C) Higgs bundle}
	(E,\Phi)=\left(V\oplus W, \mtrx{0&\eta^*\\\eta&0}\right)~,
\end{equation}
where $\eta^*$ is defined by $\eta^*=-Q_V^{-1}\circ \eta^T\circ Q_W:W\ra V\otimes K$.
 Such a Higgs bundle will be represented schematically as 
\[
	\xymatrix{V\ar@/_.5pc/[r]_\eta& W\ar@/_.5pc/[l]_{\eta^*}}
\]
where we have suppressed the twisting by $K$ from the notation. 

Note that, when restricted to $\sSO(n,n+1)$-Higgs bundles, the Hitchin fibration \eqref{EQ Hitchin Fibration} maps to the space of even holomorphic differentials. Indeed, $Tr(\Phi^j)$ form a basis of invariant polynomials and for Higgs fields $\Phi$ of the form \eqref{Eq SL(2n+1,C) Higgs bundle}, $Tr(\Phi^j)=0$ for $j$ odd and $Tr(\Phi^{2j})=2Tr((\eta^*\otimes\eta)^j).$ The expected dimension of the moduli space $\Mm(\sSO(n,n+1))$ is
	\begin{equation}
		\label{EQ Expected Dimension for SO(n,n+1)}
		dim(\sSO(n,n+1))\cdot(2g-2)= n(2n+1)(2g-2)=dim\left(\bigoplus\limits_{j=1}^n H^0(K^{2j})\right)~.
	\end{equation}

 We will use the following proposition to conclude the hypercohomology group $\HH^2(C^\bullet(V,W,\eta))$ vanishes in some nice cases (see \cite[Proposition 3.17]{HiggsPairsSTABILITY}).
\begin{Proposition}\label{Prop: H2 vanish of stable}
	If $(V,W,\eta)$ is a polystable $\sSO(n,n+1)$ Higgs bundle such that the associated $\sSL(2n+1,\C)$ Higgs bundle given by \eqref{Eq SL(2n+1,C) Higgs bundle} is {\em stable}, then 
	\[\HH^2(C^\bullet(V,W,\eta))\equiv0~.\]
\end{Proposition}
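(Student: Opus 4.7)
The plan is to realise $C^\bullet(V,W,\eta)$ as a direct summand of the $\sSL(2n+1,\C)$-deformation complex of the associated Higgs bundle $(E,\Phi)$ from \eqref{Eq SL(2n+1,C) Higgs bundle}, and then conclude by a standard Serre-duality argument on the $\sSL(2n+1,\C)$-complex.

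I would start by introducing two commuting involutions on $\fsl(E)$ for $E=V\oplus W$ equipped with the orthogonal form $Q=Q_V\oplus Q_W$. The first is the Cartan involution $\tau(A)=-Q^{-1}A^{T}Q$ coming from $Q$, whose $\pm 1$-eigenspaces yield $\End_0(E)=\fso(E)\oplus\sS_0(E)$, with $\sS_0(E)$ the traceless $Q$-symmetric part. The second is $\mathrm{Ad}_J$, for $J=I_V\oplus(-I_W)$, which distinguishes block-diagonal from block-anti-diagonal endomorphisms. Restricting the joint eigenspace decomposition to $\fso(E)$ recovers $\fso(E)=\fh_\C\oplus\fm_\C$ with $\fh_\C=\fso(V)\oplus\fso(W)$ and $\fm_\C=\Hom(V,W)$, and likewise splits $\sS_0(E)=\sS_0(E)^{+}\oplus\sS_0(E)^{-}$ into its block-diagonal and block-anti-diagonal summands.

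Since $\Phi$ is block-anti-diagonal and $Q$-skew, it lies in the $+1$-eigenspace of $\tau$ and in the $-1$-eigenspace of $\mathrm{Ad}_J$. Because $\tau$ and $\mathrm{Ad}_J$ are Lie-algebra involutions of $\fsl(E)$, the operator $ad_\Phi$ preserves the $\tau$-eigenspaces and swaps the $\mathrm{Ad}_J$-eigenspaces. This forces the $\sSL(2n+1,\C)$-deformation complex to split as a direct sum of four two-term subcomplexes,
\[
\bigl(\fh_\C\xrightarrow{ad_\Phi}\fm_\C\otimes K\bigr)\oplus\bigl(\fm_\C\xrightarrow{ad_\Phi}\fh_\C\otimes K\bigr)\oplus\bigl(\sS_0(E)^{+}\xrightarrow{ad_\Phi}\sS_0(E)^{-}\otimes K\bigr)\oplus\bigl(\sS_0(E)^{-}\xrightarrow{ad_\Phi}\sS_0(E)^{+}\otimes K\bigr),
\]
the first factor being precisely $C^\bullet(V,W,\eta)$. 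Consequently $\HH^2(C^\bullet(V,W,\eta))$ is a direct summand of $\HH^2$ of the $\sSL(2n+1,\C)$-complex, and it suffices to prove vanishing for the latter. For that I would invoke the classical argument: the trace pairing on $\End_0(E)$ identifies the $\sSL(2n+1,\C)$-deformation complex with its own Serre dual up to a twist by $K$, so that $\HH^2(C^\bullet_{\sSL}(E,\Phi))^{*}\cong \HH^0(C^\bullet_{\sSL}(E,\Phi))$, and the latter parameterises trace-free endomorphisms of $E$ commuting with $\Phi$, which by stability must be scalar and hence zero.

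The main technical point to verify carefully is the four-fold splitting of $C^\bullet_{\sSL}(E,\Phi)$. This amounts to the two identities $\tau\circ ad_\Phi=ad_\Phi\circ\tau$ and $\mathrm{Ad}_J\circ ad_\Phi=-ad_\Phi\circ\mathrm{Ad}_J$, both of which follow formally from the fact that $\Phi\in\fm_\C\subset\fso(E)$ together with the Lie-algebra involution property of $\tau$ and $\mathrm{Ad}_J$; once these are in hand, Serre duality on the $\sSL(2n+1,\C)$-complex and the triviality of $\HH^0$ under stability are standard.
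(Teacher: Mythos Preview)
Your argument is correct. The paper does not actually prove this proposition; it simply cites \cite[Proposition~3.17]{HiggsPairsSTABILITY}, so there is no proof in the paper to compare against in detail.

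That said, it is worth noting what your approach buys relative to the general statement being cited. The referenced result works abstractly for $\sG$-Higgs bundles: one shows directly, via the Killing form, that the complex $\fh_\C\to\fm_\C\otimes K$ is Serre dual to the companion complex $\fm_\C\to\fh_\C\otimes K$, and then identifies $\HH^0$ of the latter with infinitesimal automorphisms of the associated $\sG_\C$-Higgs bundle. Your route instead embeds $C^\bullet(V,W,\eta)$ as a direct summand of the $\sSL(2n+1,\C)$-deformation complex using the two commuting involutions $\tau$ and $\mathrm{Ad}_J$, and then invokes the self-duality of the $\sSL$-complex together with simplicity of stable $\sSL$-Higgs bundles. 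This is slightly more concrete and has the advantage that you never need to analyse $\HH^0$ of the ``dual'' complex $\fm_\C\to\fh_\C\otimes K$ separately; everything is absorbed into the single well-known vanishing $\HH^2(C^\bullet_{\sSL})=0$ for stable $(E,\Phi)$. The cost is that it is specific to real forms sitting inside $\sSL(N,\C)$ in the way $\sSO(n,n+1)$ does, whereas the cited result applies to arbitrary real reductive $\sG$.
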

By Proposition \ref{Prop Orbifold/Smooth GHiggs}, we have the following corollary.
\begin{Corollary}\label{Corollary smooth/orbifold points}
If $(V,W,\eta)$ is a polystable $\sSO(n,n+1)$ Higgs bundle such that the associated $\sSL(2n+1,\C)$ Higgs bundle given by \eqref{Eq SL(2n+1,C) Higgs bundle} is {\em stable}, then the isomorphism class of $(V,W,\eta)$ defines an $\Aut(V,W,\eta)$-orbifold point of $\Mm(\sSO(n,n+1))$. 
In particular, since the center of $\sS(\sO(n,\C)\times \sO(n+1,\C))$ is trivial, $(V,W,\eta)$ defines a smooth point of $\Mm(\sSO(n,n+1))$ if and only if $\Aut(V,W,\eta)$ is trivial.
 \end{Corollary}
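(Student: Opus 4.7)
The plan is to deduce the corollary directly from the two pieces of machinery that immediately precede it, namely Proposition \ref{Prop: H2 vanish of stable} and Proposition \ref{Prop Orbifold/Smooth GHiggs}. The only things one has to verify on top of citing these are (i) that $\Aut(V,W,\eta)$ is finite, as needed to be in the setting of Proposition \ref{Prop Orbifold/Smooth GHiggs}, and (ii) that $\Zz(\sSO(2n+1,\C))$ is trivial, so that the refinement at the end of Proposition \ref{Prop Orbifold/Smooth GHiggs} collapses to the claimed smoothness criterion.

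First, I would invoke Proposition \ref{Prop: H2 vanish of stable} under the standing hypothesis that the associated $\sSL(2n+1,\C)$ Higgs bundle $(E,\Phi)$ of \eqref{Eq SL(2n+1,C) Higgs bundle} is stable; this immediately gives $\HH^2(C^\bullet(V,W,\eta))=0$, so the Kuranishi map in \eqref{EQ: local kuranishi} is identically zero and a neighborhood of the isomorphism class of $(V,W,\eta)$ in $\Mm(\sSO(n,n+1))$ is modeled on $\HH^1(C^\bullet(V,W,\eta))\big\slash\big\slash\Aut(V,W,\eta)$.

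Next I would argue finiteness of $\Aut(V,W,\eta)$. By Proposition \ref{Prop Irreducible reps and Stable SLnC Higgs bundles} (equivalently, by the standard fact that stable $\sSL(N,\C)$ Higgs bundles have automorphism group contained in $\Zz(\sSL(N,\C))$), stability of $(E,\Phi)$ forces its $\sSL(2n+1,\C)$ automorphism group to be the finite cyclic group $\Zz(\sSL(2n+1,\C))\cong\Z_{2n+1}$. The group $\Aut(V,W,\eta)$ is a subgroup of this, hence finite. Together with $\HH^2=0$ this places us squarely in the hypotheses of Proposition \ref{Prop Orbifold/Smooth GHiggs}, giving an $\Aut(V,W,\eta)/\Zz(\sSO(n,n+1))$-orbifold point.

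Finally, I would note that $\Zz(\sSO(n,n+1))$ is trivial. Indeed, by the definition recalled just before Definition \ref{DEF Stability of G Higgs bundle}, $\Zz(\sG_\C)=\Zz(\sSO(2n+1,\C))$ is the intersection of $\Zz(\sS(\sO(n,\C)\times \sO(n+1,\C)))$ with the kernel of $\Ad$ on $\fm_\C=\Hom(V,W)$; since $2n+1$ is odd, the center of $\sSO(2n+1,\C)$ is already trivial. Therefore the orbifold type is $\Aut(V,W,\eta)$, and the point is smooth precisely when this group is trivial. I expect no real obstacle here: the argument is essentially a direct concatenation of Proposition \ref{Prop: H2 vanish of stable}, Proposition \ref{Prop Orbifold/Smooth GHiggs}, and the triviality of $\Zz(\sSO(2n+1,\C))$, with the only mild point being the subgroup-of-center argument for the finiteness of $\Aut(V,W,\eta)$.
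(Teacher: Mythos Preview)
Your proposal is correct and follows exactly the route the paper intends: the paper simply says ``By Proposition \ref{Prop Orbifold/Smooth GHiggs}, we have the following corollary,'' and you have spelled out the three implicit ingredients (vanishing of $\HH^2$ via Proposition \ref{Prop: H2 vanish of stable}, finiteness of $\Aut(V,W,\eta)$ from stability of the associated $\sSL(2n+1,\C)$ Higgs bundle, and triviality of $\Zz(\sSO(2n+1,\C))$). One small remark: Proposition \ref{Prop Irreducible reps and Stable SLnC Higgs bundles} is not quite the right citation for the finiteness of the automorphism group of a stable $\sSL(N,\C)$ Higgs bundle; your parenthetical ``standard fact'' is the actual justification, so you may want to drop that reference.
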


\subsection{Topological classes of $\sSO(n,n+1)$ bundles on $X$} 
Recall that the set of equivalences classes of $\sSO(n,n+1)$ bundles $\Bun_X(\sSO(n,n+1))$ on $X$ gives a decomposition of the moduli space of $\sSO(n,n+1)$ Higgs bundles. Recall also that $\Bun_X(\sSO(n,n+1))=\Bun_X(\sS(\sO(n)\times\sO(n+1)))$ since $\sSO(n,n+1)$ is homotopy equivalent to its maximal compact subgroup.

An $\sO(n)$ bundle $V\to X$ has a first and second Stiefel-Whitney class  
\[\xymatrix{sw_1(V)=sw_1(\Lambda^nV)\in H^1(X,\pi_0(\sO(n)))&\text{and}& sw_2(V)\in H^2(X,\Z_2)~.}\] 
When $n\geq3,$ $\pi_1(\sO(n))=\Z_2$ and these characteristic classes are in bijective correspondence with $\Bun_X(\sO(n)).$

\begin{Proposition}
 	For $n\geq3$, we have 
 	\[\Bun_X(\sSO(n,n+1))\cong H^1(X,\Z_2)\times H^2(X,\Z_2)\times H^2(X,\Z_2)~.\]
 \end{Proposition}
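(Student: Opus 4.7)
The plan is to reduce the classification of $\sSO(n,n+1)$ bundles on $X$ to Stiefel-Whitney data by passing to a maximal compact subgroup and applying obstruction theory on the surface. Since $\sSO(n,n+1)$ deformation retracts onto $\sS(\sO(n)\times\sO(n+1))$, the starting point is the identification
\[\Bun_X(\sSO(n,n+1))\cong \Bun_X(\sS(\sO(n)\times\sO(n+1)))~,\]
and an $\sS(\sO(n)\times\sO(n+1))$ bundle amounts to a pair $(V,W)$ of $\sO(n)$ and $\sO(n+1)$ bundles subject to the constraint $\det V\cong\det W$ as $\Z_2$ line bundles.

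Next I would classify $\sO(k)$-bundles on the closed surface $X$ for $k\geq 3$ via obstruction theory. Because $X$ is a $2$-complex, a principal $\sO(k)$ bundle is determined up to isomorphism by its obstructions in $H^i(X,\pi_{i-1}(\sO(k)))$ for $i=1,2$; using $\pi_0(\sO(k))=\Z_2$ and $\pi_1(\sO(k))=\Z_2$ (valid precisely for $k\geq 3$, which is why the hypothesis $n\geq 3$ is needed so that both $k=n$ and $k=n+1$ qualify), these obstructions are exactly $sw_1\in H^1(X,\Z_2)$ and $sw_2\in H^2(X,\Z_2)$. I would then verify surjectivity of the classifying map by exhibiting explicit $\sO(k)$-bundles realizing any prescribed pair $(sw_1,sw_2)$, for instance by starting with the direct sum of a line bundle of the chosen $sw_1$ with a trivial rank $(k-1)$-summand and then twisting by a rank-$2$ bundle with appropriate second Stiefel-Whitney class.

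With $(V,W)$ classified separately by $(sw_1(V),sw_2(V))$ and $(sw_1(W),sw_2(W))$, I would translate the constraint $\det V\cong\det W$ into the equality of first Stiefel-Whitney classes $sw_1(V)=sw_1(W)$, since $sw_1$ of an orthogonal bundle is by definition the first Stiefel-Whitney class of its determinant line. Fixing a common value $sw_1\in H^1(X,\Z_2)$, the classes $sw_2(V)$ and $sw_2(W)$ may still be chosen independently in $H^2(X,\Z_2)$, which yields the bijection with $H^1(X,\Z_2)\times H^2(X,\Z_2)\times H^2(X,\Z_2)$.

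The step I expect to be most delicate is the realizability: showing that for any triple $(sw_1,a,b)$ there truly exist $\sO(n)$ and $\sO(n+1)$ bundles $V,W$ with matching determinants and with $sw_2(V)=a$, $sw_2(W)=b$. In essence this reduces to checking that twisting $V$ by a rank-$2$ orthogonal bundle of trivial determinant changes $sw_2(V)$ by an arbitrary class in $H^2(X,\Z_2)$ without disturbing $sw_1(V)$, and doing the same for $W$; granting this standard fact, the bijection in the statement follows.
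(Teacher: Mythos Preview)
Your proposal is correct and follows essentially the same approach as the paper: both reduce to $\sS(\sO(n)\times\sO(n+1))$, interpret such a bundle as a pair $(V,W)$ with $\det V=\det W$, classify each factor by its Stiefel-Whitney classes, and observe that the determinant constraint forces $sw_1(V)=sw_1(W)$. The paper's proof is terser, having already recorded in the preceding paragraph that $\sO(k)$ bundles on $X$ for $k\geq 3$ are classified by $(sw_1,sw_2)$; your obstruction-theory discussion and explicit treatment of realizability simply flesh out steps the paper takes for granted.
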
 

\begin{proof}
	An $\sS(\sO(n)\times\sO(n+1))$ bundle is equivalent to a pair $(V,W)$ where $V$ is an $\sO(n)$ bundle and $W$ is an $\sO(n+1)$ bundle with $det(V)=det(W).$ 
The Stiefel-Whitney classes of $V$ and $W$ determine the topological class of an $\sSO(n,n+1)$ bundle, but, since $det(V)=det(W)$ we have $sw_1(V)=sw_1(W).$ 
\end{proof}
For $n\geq 3$, the moduli space of $\sSO(n,n+1)$ Higgs bundles thus decomposes as 
\begin{equation}\label{EQ Decomp SOnn+1 top inv}
	\Mm(\sSO(n,n+1))=\bigsqcup\limits_{\substack{(a,b,c)\in\\H^1(X,\Z_2)\times H^2(X,\Z_2)\times H^2(X,\Z_2)}}\Mm^{a,b,c}(\sSO(n,n+1))~.
\end{equation}
Moreover, an $\sSO(n,n+1)$ Higgs bundle in $\Mm^{a,b,c}(\sSO(n,n+1))$ reduces to an $\sSO_0(n,n+1)$ Higgs bundle if and only if $a=0.$


\subsection{$\sSO(1,2)=\sP\sGL(2,\R)$ Higgs bundles} For $\sSO(1,2),$ we can explicitly describe the Higgs moduli space. Moreover, in this case, the connected component description is deduced from topological invariants of orthogonal bundles. Although these results are not new, we include the arguments here since the methods will be generalized in subsequent sections. One important difference of the $\sSO(n,n+1)$ generalizations is that they are not distinguished by a known topological invariant for $n\geq 3$. 

Using Definition \ref{DEF SO(n,n+1) Higgs bundles}, an $\sSO(1,2)$ Higgs bundle $(V,W,\eta)$ is given by $(\Lambda^2 W,W,\eta)$ where $W$ is a rank two holomorphic vector bundle with an orthogonal structure $Q_W$. The $\sSL(3,\C)$ Higgs bundle associated to $(\Lambda^2 W,W,\eta)$ is represented by 
\begin{equation}
	\label{EQ SL3 Higgsbundle}	\xymatrix{\Lambda^2W\ar@/_1pc/[r]_{\eta}&W\ar@/_1pc/[l]_{\eta^*}}~.
\end{equation}
As above, rank $2$ orthogonal bundles on $X$ have first and second Stiefel-Whitney classes $(sw_1,sw_2)\in H^1(X,\Z_2)\oplus  H^2(X,\Z_2).$ 
If $\Mm^{sw_2}_{sw_1}(\sSO(1,2))$ is the moduli space of $\sSO(1,2)$ Higgs bundles consisting of triple $(\Lambda^2 W, W,\eta)$ where the first and second Stiefel-Whitney classes $W$ are $(sw_1,sw_2)$, then
\begin{equation}
	\label{EQ sw1 sw2 decomposition} \Mm(\sSO(1,2))=\bigsqcup\limits_{\substack{(sw_1,sw_2)\in\\ H^1(X,\Z_2)\oplus H^2(X,\Z_2)}}\Mm_{sw_1}^{sw_2}(\sSO(1,2))~.
\end{equation}

If the first Stiefel-Whitney class of $W$ vanishes, then the structure group of $W$ reduces to $\sSO(2,\C)$. 
Since $\sSO(2,\C)\cong\C^*,$ a holomorphic orthogonal bundle $(W,Q_W)$ is isomorphic to 
\[(W,Q_W)=\left(M\oplus M^{-1},\mtrx{0&1\\1&0}\right)~,\] where $M\in\Pic^d(X)$ is a degree $d$ holomorphic line bundle. 
In this case, $\Lambda^2W\cong\Oo$, the second Stiefel-Whitney class is given by the degree of $M$ mod $2,$ and the Higgs field $\eta$ decomposes as $\eta=(\mu,\nu)\in H^0(M^{-1}K)\oplus H^0(MK)$. 
The associated $\sSL(3,\C)$ Higgs bundle given by
\begin{equation}
	\label{EQ SL3 Higgsbundle sw_1=0}
	\xymatrix{M\ar@/_1pc/[r]_{\mu}&\Oo\ar@/_1pc/[r]_{\mu}\ar@/_1pc/[l]_{\nu}&M^{-1}\ar@/_1pc/[l]_{\nu}}~.
\end{equation} 

If $deg(M)>0$, then the $\sSO(1,2)$ Higgs bundle \eqref{EQ SL3 Higgsbundle sw_1=0} is polystable if and only if $\mu\neq0\in H^0(M^{-1}K).$ Thus $deg(M)\leq 2g-2.$ 
Note that the $\sS(\sO(1,\C)\times\sO(2,\C))$ gauge transformation\footnote{Note that this switching isomorphism is in the $\sS(\sO(1,\C)\times\sO(2,\C))$-gauge group but not the $\sSO(1,\C)\times\sSO(2,\C)$-gauge group. In fact, the moduli space $\Mm(\sSO_0(1,2))$ is a double cover of $\Mm_{sw_1=0}(\sSO(1,2))$. The fiber of the map $\Mm(\sSO_0(1,2))\to\Mm_d(\sSO(1,2))$ is connected when $d=0$ and consists of two isomorphic components if $d\neq0$.}
\begin{equation}\label{EQ switching iso 1,2}
	\mtrx{&&-1\\&-1&\\-1&&}:M\oplus\Oo\oplus M^{-1}\longrightarrow M^{-1}\oplus \Oo\oplus M
\end{equation}
gives an isomorphism between the data $(M,\mu,\nu)$ and $(M^{-1},\nu,\mu).$ Thus we may assume $deg(M)\geq0.$ 

Let $\Mm_d(\sSO(1,2))$ denote the moduli space of polystable $\sSO(1,2)$ Higgs bundles of the form \eqref{EQ SL3 Higgsbundle} with vanishing first Stiefel-Whitney class and $deg(M)=d.$ The moduli space $\Mm_{sw_1=0}(\sSO(1,2))$ decomposes as 
	\[\Mm_{sw_1=0}(\sSO(1,2))=\bigsqcup\limits_{0\leq d\leq 2g-2}\Mm_d(\sSO(1,2))~.\]
Hitchin proved the following theorem for $\sP\sSL(2,\R)=\sSO_0(1,2)$.
\begin{Theorem}( \cite[Theorem 10.8]{selfduality})\label{THM Md SO(1,2)} For each integer $d\in (0,2g-2]$, the moduli space $\Mm_d(\sSO(1,2))$ is smooth and diffeomorphic to a rank $(d+g-1)$-vector bundle $\Ff_d$ over the $(2g-2-d)$-symmetric product $\Sym^{2g-2-d}(X).$
\end{Theorem}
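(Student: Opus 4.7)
The plan is to put every polystable class in $\Mm_d(\sSO(1,2))$ in a normal form, analyse the residual gauge action, and identify the resulting moduli space with a vector bundle over $\Sym^{2g-2-d}(X)$ constructed from the universal divisor. Since every $(V,W,\eta) \in \Mm_d(\sSO(1,2))$ has $sw_1(W)=0$, each polystable class has a representative $(\Oo, M \oplus M^{-1}, \eta)$ with $M \in \Pic^d(X)$, $\mu \in H^0(M^{-1}K)$, $\nu \in H^0(MK)$, and associated $\sSL(3,\C)$ Higgs bundle \eqref{EQ SL3 Higgsbundle sw_1=0}. The residual gauge group preserving this presentation is $\sO(2,\C)$: its identity component $\C^*$ acts by $\lambda\cdot(M,\mu,\nu)=(M,\lambda^{-1}\mu,\lambda\nu)$, while the other component is the switching isomorphism \eqref{EQ switching iso 1,2}, which sends $d \mapsto -d$ and is therefore irrelevant on the stratum $d > 0$.

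A quick check shows that polystability forces $\mu \neq 0$ when $d > 0$: otherwise $M$ would be a $\Phi$-invariant subbundle of positive degree incompatible with any polystable splitting of the orthogonal bundle. Consequently $\mu$ has a well-defined zero divisor $D := (\mu)_0 \in \Sym^{2g-2-d}(X)$, invariant under the $\C^*$-action, and I would define
\[
\pi\colon \Mm_d(\sSO(1,2)) \longrightarrow \Sym^{2g-2-d}(X), \qquad [(M,\mu,\nu)] \longmapsto (\mu)_0.
\]
Given $D \in \Sym^{2g-2-d}(X)$, the reconstruction sets $M = K(-D)$, so $\mu$ becomes the canonical section of $\Oo(D)$ (a single free $\C^*$-orbit) and the fibre of $\pi$ above $D$ is canonically $H^0(K^2(-D))$. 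Riemann-Roch, together with $h^1(K^2(-D))=0$ (since $\deg K^{-1}(D)<0$ when $d>0$), gives constant fibre dimension $d+g-1$.

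To globalise, I would use the universal effective divisor $\Delta \subset X \times \Sym^{2g-2-d}(X)$ with projections $\pi_X, \pi_S$, set $\mathcal{L} := \pi_X^* K \otimes \Oo(-\Delta)$, and invoke Grauert's theorem to conclude that $\Ff_d := (\pi_S)_*(\mathcal{L} \otimes \pi_X^* K)$ is a rank $(d+g-1)$ holomorphic vector bundle on $\Sym^{2g-2-d}(X)$. The fibrewise identifications of the previous paragraph then assemble into a continuous bijection $\Ff_d \cong \Mm_d(\sSO(1,2))$.

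Finally, to promote this bijection to a diffeomorphism it remains to show that every $(V,W,\eta)$ in the image is a smooth point of $\Mm_d(\sSO(1,2))$. By Corollary \ref{Corollary smooth/orbifold points} this reduces to proving that (i) the associated $\sSL(3,\C)$ Higgs bundle \eqref{EQ SL3 Higgsbundle sw_1=0} is stable, and (ii) $\Aut(\Oo, M \oplus M^{-1}, \eta)$ is trivial. I expect step (i) to be the main obstacle: one must enumerate all $\Phi$-invariant line and rank-two subbundles of $M \oplus \Oo \oplus M^{-1}$ and use $d > 0$ together with $\mu \neq 0$ to show each has strictly negative degree. Once (i) is established, (ii) follows because an orthogonal automorphism commuting with a stable Higgs field must be central and the centre of $\sS(\sO(1,\C) \times \sO(2,\C))$ is trivial.
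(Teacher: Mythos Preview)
Your approach is essentially the same as the paper's: parameterize by triples $(M,\mu,\nu)$, quotient by the residual $\C^*$-action, and identify the result with a vector bundle over $\Sym^{2g-2-d}(X)$ via the divisor of $\mu$. The paper's proof is considerably terser --- it simply asserts that the gauge-orbit computation is ``straightforward to check'' and that the fibre over $D$ is non-canonically $H^0(\Oo(-D)K^2)$, without invoking Grauert's theorem or the universal divisor to globalise, and without separately arguing stability of the associated $\sSL(3,\C)$ Higgs bundle for smoothness. Your extra care on these points (Grauert for local freeness, and the stability/automorphism check via Corollary~\ref{Corollary smooth/orbifold points}) is warranted and fills in details the paper leaves implicit; note that the paper later proves the analogous stability statement for general $n$ in Lemma~\ref{Lemma defining Higgs bundles in Md} by reducing to the ``cyclic'' fixed-point locus, which would also handle your step~(i) directly.
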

\begin{proof}
	Let $\widetilde\Ff_d=\{(M,\mu,\nu)\ |\ M\in\Pic^d(X),\ \mu\in H^0(M^{-1}K),\ \nu\in H^0(MK)\}$. By the above discussion, there is a surjective map $\widetilde \Ff_d\to\Mm_d(\sSO(1,2))$ defined by sending $(M,\mu,\nu)$ to the isomorphism class of the Higgs bundle \eqref{EQ SL3 Higgsbundle}. 
	It is straight forward to check that the $\sSO(1,2)$ Higgs bundles associated two points $(M,\mu,\nu)$ and $(M',\mu',\nu')$ lie in the same gauge orbit if and only if $M'=M,$ $\mu'=\lambda\mu$ and $\nu'=\lambda^{-1}\nu$ for $\lambda\in\C^*$.

	This gives a diffeomorphism between the quotient space $\Ff_d=\widetilde\Ff_d/\C^*$ and the moduli space $\Mm_d(\sSO(1,2))$. 
	The map $\pi_d:\Ff_d\to\Sym^{2g-2-d}(X)$ defined by taking the projective class of $\mu$ is surjective. 
	For a divisor $D\in\Sym^{2g-2-d}(X),$ the fiber $\pi^{-1}(D)$ is (non-canonically) identified with $H^0(\Oo(-D)K^2)\cong\C^{d+g-1}$ where $\Oo(-D)$ is the inverse of the line bundle associated to $D.$
\end{proof}
\begin{Remark}\label{Rem Higgs bundles for Fuch}
	Note that when the integer invariant $d=2g-2,$ the connected component $\Mm_d(\sSO(1,2))$ is diffeomorphic to the vector space $H^0(K^2)$ of holomorphic differentials on $X.$ These are the Higgs which correspond to the Fuchsian representations from Example \ref{EX: Fuchsian reps}. In particular, we recover the classical result that the Teichm\"uller space of $S$ is diffeomorphic to a vector space of complex dimension $3g-3$. Moreover, the Fuchsian representation which corresponds to zero in $H^0(K^2)$ uniformizes the Riemann surface $X.$
\end{Remark}
\begin{Theorem}\label{THM M0 homotopy type}
	The space $\Mm_0(\sSO(1,2))$ deformation retracts onto $\Pic^0(X)/\Z_2$ where $\Z_2$ acts by inversion. In particular, $\Mm_0(\sSO(1,2))$ is homotopy equivalent to the quotient of a $2g$-dimensional torus by inversion.
\end{Theorem}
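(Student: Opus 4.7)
The plan is to build on the parameterization of $\Mm_d(\sSO(1,2))$ from the proof of Theorem \ref{THM Md SO(1,2)} and extend it to the $d=0$ case. By the discussion surrounding the Higgs bundle \eqref{EQ SL3 Higgsbundle sw_1=0}, polystable $\sSO(1,2)$ Higgs bundles with vanishing first Stiefel--Whitney class and $d=0$ are given by triples $(M,\mu,\nu)$ with $M\in\Pic^0(X)$, $\mu\in H^0(M^{-1}K)$, and $\nu\in H^0(MK)$, up to the action of $\sO(2,\C)$ generated by $(M,\mu,\nu)\mapsto (M,\lambda^{-1}\mu,\lambda\nu)$ for $\lambda\in\C^*$ and the switching isomorphism \eqref{EQ switching iso 1,2} sending $(M,\mu,\nu)\mapsto(M^{-1},\nu,\mu)$. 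Unlike the $d>0$ case, there is no stability constraint forcing $\mu$ or $\nu$ to be nonzero: the underlying bundle $M\oplus\Oo\oplus M^{-1}$ is polystable for every $M\in\Pic^0(X)$. Hence $\Mm_0(\sSO(1,2))$ is the GIT quotient $\widetilde\Ff_0\big\slash\big\slash\sO(2,\C)$, where $\widetilde\Ff_0$ denotes the total space of triples.

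Next I would define the one-parameter family $H_t:\widetilde\Ff_0\to\widetilde\Ff_0$ by $H_t(M,\mu,\nu)=(M,(1-t)\mu,(1-t)\nu)$ for $t\in[0,1]$. A direct calculation shows $H_t$ commutes with both the $\C^*$-action and the switching involution, so it is $\sO(2,\C)$-equivariant. Consequently $H_t$ descends to a continuous family $\bar H_t$ on the GIT quotient $\Mm_0(\sSO(1,2))$ with $\bar H_0=\id$. Note that each $H_t$ lands inside $\widetilde\Ff_0$ by construction, so no polystability issues arise: the moduli-space structure is already built into the GIT quotient.

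At $t=1$, the image $H_1(\widetilde\Ff_0)=\{(M,0,0):M\in\Pic^0(X)\}$ is a closed subset on which the $\C^*$-factor acts trivially, while the switching involution restricts to the inversion $M\mapsto M^{-1}$. The induced subspace of $\Mm_0(\sSO(1,2))$ is therefore homeomorphic to $\Pic^0(X)/\Z_2$. Since $\bar H_1$ restricts to the identity on this subspace and $\bar H_0=\id$, the family $\bar H_t$ provides a strong deformation retraction of $\Mm_0(\sSO(1,2))$ onto $\Pic^0(X)/\Z_2$. The final sentence of the theorem follows because $\Pic^0(X)$ is a complex $g$-dimensional torus, hence a real $2g$-dimensional torus, with inversion the standard $\Z_2$-action.

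The only subtle point is verifying that $H_t$ genuinely descends to a \emph{continuous} map on the GIT quotient rather than merely on a naive orbit space. This is handled by combining the $\sO(2,\C)$-equivariance with the observation that $H_t$ sends orbit closures to orbit closures: for any $x\in\widetilde\Ff_0$, continuity gives $H_t\bigl(\overline{\sO(2,\C)\cdot x}\bigr)\subset\overline{\sO(2,\C)\cdot H_t(x)}$, so $S$-equivalent points are mapped to $S$-equivalent points. This guarantees that $\bar H_t$ is well-defined and continuous with respect to the Hausdorff GIT topology, and the deformation retraction is thereby genuine.
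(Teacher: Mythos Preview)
Your proof is correct and follows essentially the same approach as the paper: scale the Higgs field to zero via the family $(M,t\mu,t\nu)$ (the paper lets $t\to 0$, you use $1-t$), then identify the zero section $\{(M,0,0)\}$ with $\Pic^0(X)/\Z_2$ via the switching gauge transformation \eqref{EQ switching iso 1,2}. Your treatment is in fact more careful than the paper's brief sketch, since you explicitly address the $\sO(2,\C)$-equivariance and the passage to the GIT quotient; the paper only records that each $(M,t\mu,t\nu)$ remains polystable and leaves the descent to the moduli space implicit.
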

\begin{proof}
	Let $(M,\mu,\nu)$ be an $\sSO_0(1,2)$ Higgs bundle with $deg(M)=0.$ The associated $\sSL(3,\C)$ Higgs bundle is given by \eqref{EQ SL3 Higgsbundle}. Since $deg(M)=0,$ the bundle $M\oplus M^{-1}\oplus \Oo$ is polystable as a holomorphic vector bundle. Thus, the family $(M,t\mu,t\nu)$ is a family of polystable $\sSO(1,2)$ Higgs bundles which converge to $(M,0,0).$ Finally, the $\sS(\sO(2)\times\sO(1))$-gauge transformation \eqref{EQ switching iso 1,2} defines an isomorphism between the $\sSO(1,2)$ Higgs bundles associated to $(M,0,0)$ and $(M^{-1},0,0).$ 
\end{proof}

So far we have assumed that the first Stiefel-Whitney class of the $\sO(2,\C)$ bundle $W$ is zero. 
Equivalently, we have only considered $\sSO(1,2)$ Higgs bundles which reduce to $\sSO_0(1,2)$ Higgs bundles. 
We now recall Mumford's description of holomorphic $\sO(2,\C)$ bundles \cite{MumO2Bun}. 

\begin{Proposition}\label{Prop Mumford O2} Let $sw_1\in H^1( X,\Z_2)\setminus\{0\}$ with corresponding unramified double cover $\pi:X_{sw_1}\to X$, and denote the covering involution by $\iota:X_{sw_1}\to X_{sw_1}$. 
Consider the following space:
\begin{equation}
	\label{Eq Prym def}
	\Prym(X_{sw_1},X)=\{M\in\Pic^0(X_{sw_1})\ |\ \iota^*M=M^{-1}\}~.
\end{equation}
There is a bijection between $\Prym(X_{sw_1},X)$ and holomorphic $\sO(2,\C)$ bundles on $ X$ with first Stiefel-Whitney class $sw_1$ given by 
\[\xymatrix{M\ar@{|->}[r]&(W,Q_W)=(\pi_*M,\pi_*\iota^*)}~.\]
\end{Proposition}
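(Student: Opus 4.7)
The plan is to construct mutually inverse assignments between $\Prym(X_{sw_1},X)$ and isomorphism classes of rank $2$ orthogonal bundles on $X$ with first Stiefel--Whitney class $sw_1$, using $\pi_*$ in one direction and $\pi^*$ in the other. The geometric engine is that $\pi$ is étale of degree $2$, so the pushforward of a line bundle is a rank $2$ bundle, the relative dualizing sheaf is trivial, and Grothendieck--Serre duality reduces to the identity $(\pi_*M)^*\cong\pi_*(M^{-1})$.

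For the forward direction, given $M\in\Prym(X_{sw_1},X)$, the Prym condition $\iota^*M\cong M^{-1}$ together with $\pi_*\iota^*M=\pi_*M$ (since $\pi\circ\iota=\pi$) and the duality identity above produce a natural isomorphism
\[
Q_W\colon \pi_*M \xrightarrow{\sim} \pi_*\iota^*M \xrightarrow{\sim} \pi_*(M^{-1}) \xrightarrow{\sim} (\pi_*M)^*,
\]
which is symmetric because $\iota^2=\id$, and non-degenerate because each arrow is an isomorphism. To compute the Stiefel--Whitney class, I would use the standard formula $\det(\pi_*M)\cong\operatorname{Nm}_\pi(M)\otimes\det(\pi_*\Oo_{X_{sw_1}})$: the norm $\operatorname{Nm}_\pi(M)=M\otimes\iota^*M$ is trivial by the Prym condition, and $\det(\pi_*\Oo_{X_{sw_1}})\cong L$ where $L$ is the $2$-torsion line bundle classifying the cover, whose first Stiefel--Whitney class is exactly $sw_1$. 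Hence $\det(\pi_*M)\cong L$ and $(\pi_*M,Q_W)$ is an $\sO(2,\C)$ bundle of the required topological type.

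For the inverse, given an $\sO(2,\C)$ bundle $(W,Q_W)$ with $sw_1(W)=sw_1$, one has $\det W\cong L$ and $\pi^*L\cong\Oo_{X_{sw_1}}$ by construction of the cover. So $\pi^*W$ is orthogonal with trivial determinant and reduces to $\sSO(2,\C)\cong\C^*$, hence splits as a sum of its two $Q_{\pi^*W}$-isotropic line subbundles $\pi^*W\cong M\oplus M^{-1}$. The covering involution $\iota$ lifts canonically and must permute these isotropic lines non-trivially, since otherwise the splitting would descend to $X$ and force $sw_1(W)=0$; therefore $\iota^*M\cong M^{-1}$. Moreover $2\deg M=\deg\pi^*W=2\deg L=0$, so $M\in\Prym(X_{sw_1},X)$. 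The two constructions are inverse by the projection-formula identity $\pi^*\pi_*M\cong M\oplus\iota^*M$, which naturally identifies one isotropic line subbundle of $\pi^*\pi_*M$ with $M$, and étale descent then recovers $\pi_*M\cong W$ as orthogonal bundles.

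The main obstacle is keeping careful track of the involution $\iota$: the points $M$ and $\iota^*M=M^{-1}$ of $\Prym(X_{sw_1},X)$ have the same underlying pushforward, and to get an honest bijection one must observe that the forward map threads the specific isomorphism $\iota^*M\cong M^{-1}$ into the orthogonal structure $Q_W=\pi_*\iota^*$, so that the two choices are distinguished at the level of orthogonal (rather than merely vector) bundles and correspond precisely to the two orderings of the isotropic line subbundles on the cover. Verifying the symmetry and non-degeneracy of $Q_W$ at the stalk level requires a parallel chase through the projection formula and the $\iota$-action; all remaining compatibilities are routine consequences of étale base change.
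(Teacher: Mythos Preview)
Your approach is the same as the paper's: pull back $W$ to split $\pi^*W\cong M\oplus M^{-1}$, push forward $M$ to recover $W$, and invoke $\pi^*\pi_*M\cong M\oplus\iota^*M$ for the inverse. You supply considerably more detail than the paper does (the duality construction of $Q_W$, the norm-formula computation of $\det(\pi_*M)$, and the argument that $\iota$ must swap the isotropic summands lest $sw_1$ vanish), and all of that is correct.

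However, your resolution of the $M$ versus $M^{-1}$ ambiguity does not work. The orthogonal structure does not depend on the choice: any two isomorphisms $\iota^*M\cong M^{-1}$ differ by a scalar, and rescaling $Q_W$ yields an isomorphic orthogonal bundle. More directly, the Prym condition for $M^{-1}$ gives a natural isomorphism $\pi_*M\cong\pi_*(\iota^*M)\cong\pi_*(M^{-1})$, which one checks is an isometry, so $M$ and $M^{-1}$ produce isomorphic $\sO(2,\C)$-bundles on $X$. The paper's own proof simply asserts that ``the above construction gives a bijection'' without addressing this, and indeed the paper's subsequent use of the proposition (in the description of $\Mm_{sw_1}^{sw_2}(\sSO(1,2))$) identifies the moduli of such orthogonal bundles with $\Prym^{sw_2}(X_{sw_1},X)$ modulo the inversion $M\mapsto M^{-1}$. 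So the map is really a surjective two-to-one parameterization (away from the $2$-torsion of $\Prym$); the imprecision lies in the statement rather than in your argument, and you were right to flag the issue even though your proposed fix does not hold.
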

\begin{proof}
Let $(W,Q_W)$ be a holomorphic $\sO(2,\C)$ bundle on $X$ with first Stiefel-Whitney class $sw_1\neq0.$ Since $X_{sw_1}$ is the orientation double cover, we have $sw_1(\pi^*W,\pi^*Q_W)=0$. 
Thus,  
\[(\pi^*W,\pi^*Q_W)\cong\left(M\oplus M^{-1},\smtrx{0&1\\1&0}\right)\] and $(\pi^*W,\pi^*Q_W)$ is invariant under the covering involution 
	\[\iota^*(M\oplus M^{-1})=\iota^*M\oplus\iota^* M^{-1}\cong M\oplus M^{-1}~.\]
Given $M\in Pic^0(X_{sw_1})$ with $\iota^*M=M^{-1}$ we get an orthogonal bundle $(W,Q_W)=(\pi_*M,\pi_*\iota^*)$. 
Since $X_{sw_1}\to X$ is unramified, $\pi^*\pi_*(M)=M\oplus \iota^*M$, and the above construction gives a bijection. 
\end{proof}
\begin{Remark}\label{Remark prym variety torsor}
	The space $\Prym(X_{sw_1},X)$ has two connected components. For $M\in\Prym(X_{sw_1},X)$, the second Stiefel-Whitney class of the orthogonal bundle $\pi_*M$ distinguishes the connected component which contains $M$ \cite{MumO2Bun}. We will write 
	\begin{equation}\label{EQ prym decomp}
		\Prym(X_{sw_1},X)=\bigsqcup\limits_{sw_2\in H^2(X,\Z_2)}\Prym^{sw_2}(X_{sw_1},X)~.
	\end{equation}
	The connected component of the identity, $\Prym^{0}(X_{sw_1},X)$, is an $g-1$ dimensional abelian variety called the {\em Prym variety} of the covering $X_{sw_1}\to X.$ Moreover, $\Prym^1(X_{sw_1},X)$ is a $\Prym^0(X_{sw_1},X)$ torsor. 
\end{Remark}
As in the proof of Theorem \ref{THM M0 homotopy type}, the bundle $\Lambda^2 W\oplus W$ is a polystable vector bundle for an $\sSO(1,2)$ Higgs bundle $(W,\eta)$ which defines a point of $\Mm_{sw_1}^{sw_2}(\sSO(1,2))$. Thus, the family of polystable Higgs bundles $(W,t\eta)$ converges to $(W,0).$ 
Furthermore, the $\sS(\sO(1,\C)\times\sO(2,\C))$-gauge transformation $(g_{\Lambda^2W},g_W)=(det(Q_W),Q_W)$ defines an isomorphism between $(W,\eta)$ and $(W^*,\eta^*).$ 
Thus, we have proven the following:
\begin{Theorem}\label{THM Msw1 SO(1,2)}
	 For $(sw_1,sw_2)\in (H^1(X,\Z_2)\setminus\{0\})\times H^2(X,\Z_2)$, the connected component $\Mm_{sw_1}^{sw_2}(\sSO(1,2))$ from \eqref{EQ sw1 sw2 decomposition} deformation retracts onto the moduli space $\Mm_{sw_1}^{sw_2}(\sS(\sO(1)\times\sO(2))).$ Since $\Mm_{sw_1}^{sw_2}(\sS(\sO(1)\times\sO(2)))$ is given by the quotient of the torus $\Prym^{sw_2}(X_{sw_1},X)$ by inversion, the space $\Mm_{sw_1}^{sw_2}(\sSO(1,2))$ is homotopy equivalent to the quotient of a $(2g-2)$-dimensional torus by inversion.
\end{Theorem}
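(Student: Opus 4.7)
The plan is to adapt the scaling-the-Higgs-field deformation from Theorem \ref{THM M0 homotopy type} and combine it with Mumford's description in Proposition \ref{Prop Mumford O2} to identify the fixed locus. Let $(W,\eta)$ be a polystable $\sSO(1,2)$-Higgs bundle representing a class in $\Mm_{sw_1}^{sw_2}(\sSO(1,2))$. By Proposition \ref{Prop Mumford O2}, the orthogonal bundle $(W,Q_W)$ corresponds to some $M\in\Prym^{sw_2}(X_{sw_1},X)$, and since $\deg(M)=0$ we have $\pi^*W\cong M\oplus M^{-1}$, which is polystable of degree $0$. Descending, $W$ is itself semistable of degree $0$, and together with the degree zero line bundle $\Lambda^2 W$ this shows that $\Lambda^2 W\oplus W$ is polystable as a rank three holomorphic vector bundle.

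With this in hand I would, exactly as in the proof of Theorem \ref{THM M0 homotopy type}, consider the family $(W,t\eta)$ for $t\in[0,1]$. Polystability of the underlying bundle forces each member of this family to be a polystable $\sSO(1,2)$-Higgs bundle, so the assignment $((W,\eta),t)\mapsto [(W,t\eta)]$ defines a continuous deformation inside the moduli space that retracts $\Mm_{sw_1}^{sw_2}(\sSO(1,2))$ onto the locus where $\eta=0$. By the remark following Definition \ref{DEF:Higgs bundle Reduction}, this locus is precisely $\Mm_{sw_1}^{sw_2}(\sS(\sO(1)\times\sO(2)))$.

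Next I would identify $\Mm_{sw_1}^{sw_2}(\sS(\sO(1)\times\sO(2)))$ with $\Prym^{sw_2}(X_{sw_1},X)/\Z_2$. Since $V=\Lambda^2 W$ is determined by $W$, this moduli space parameterizes isomorphism classes of holomorphic $\sO(2,\C)$-bundles $(W,Q_W)$ on $X$ with Stiefel--Whitney classes $(sw_1,sw_2)$, modulo $\sS(\sO(1,\C)\times\sO(2,\C))$-gauge. Proposition \ref{Prop Mumford O2} sends $(W,Q_W)$ to $M\in\Prym^{sw_2}(X_{sw_1},X)$, and the $\sSO(2,\C)$-component of the gauge group acts trivially on this class. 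The remaining gauge transformations come from pairing a reflection $g_W\in\sO(2,\C)\setminus\sSO(2,\C)$ with $-1\in\sO(1,\C)$; after pulling back along $\pi$, this reflection exchanges the summands $M$ and $\iota^*M=M^{-1}$ of $\pi^*W$, and therefore descends to the inversion $M\mapsto M^{-1}$ on $\Prym^{sw_2}(X_{sw_1},X)$.

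Combining the two steps gives that $\Mm_{sw_1}^{sw_2}(\sSO(1,2))$ is homotopy equivalent to $\Prym^{sw_2}(X_{sw_1},X)/\Z_2$, which by Remark \ref{Remark prym variety torsor} is the quotient of a real $(2g-2)$-dimensional torus by inversion. The step I expect to require the most care is the identification of the $\sO(2,\C)\setminus\sSO(2,\C)$ gauge action on $(W,Q_W)$ with the inversion on the Prym side; this amounts to tracing Mumford's correspondence through the $\sS$-condition on the gauge group, and once this verification is in place the remainder of the argument reduces to the polystability input already used in Theorem \ref{THM M0 homotopy type}.
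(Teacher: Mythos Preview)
Your proposal is correct and follows essentially the same strategy as the paper: scale the Higgs field to zero using polystability of the underlying bundle $\Lambda^2 W\oplus W$, then identify the $\eta=0$ locus with $\Prym^{sw_2}(X_{sw_1},X)/\Z_2$ via Mumford's correspondence. The only difference is that where you argue abstractly that a reflection in $\sO(2,\C)\setminus\sSO(2,\C)$ paired with $-1\in\sO(1,\C)$ pulls back to the swap $M\leftrightarrow M^{-1}$, the paper simply writes down the explicit $\sS(\sO(1,\C)\times\sO(2,\C))$-gauge transformation $(g_{\Lambda^2 W},g_W)=(\det Q_W,\,Q_W)$ realizing the isomorphism $(W,\eta)\cong(W^*,\eta^*)$; this is exactly the concrete instance of the reflection you describe, so the ``step requiring the most care'' that you flag is in fact immediate.
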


\section{Parameterizing the smooth components $\Mm_d(\sSO(n,n+1))$}\label{section: smooth components}
In this section we will prove Theorems \ref{THM1}. We start by recalling Hitchin's parameterization of the $\sSO_0(n,n+1)$-Hitchin component. 

Recall from Example \ref{EX: Fuchsian reps} that the set of Fuchsian representations $\Fuch(\Gamma)\subset\Xx(\sSO(1,2))$ defines a particularly interesting class of representations. Recall that the second symmetric product of the standard representation of $\sGL(2,\R)$ on $\R^2$ is the standard representation of $\sSO(1,2)$ on $\R^3$.
 The $2n^{th}$-symmetric product of the standard representation of $\sGL(2,\R)$ defines an irreducible representation $\sSO(1,2)\to\sSL(2n+1,\R)$ which preserves a signature $(n,n+1)$ quadratic form on $\R^{2n+1}.$ Thus we have an irreducible representation
\[i:\sSO(1,2)\to\sSO(n,n+1).\]
This defines a map $\iota:\Xx(\sSO(1,2))\to\Xx(\sSO(n,n+1))$, where $\iota(\rho)=i\circ\rho.$

\begin{Definition}\label{Def: Hitchin comp}
The $\sSO(n,n+1)$-{\em Hitchin component} $\Hit(\sSO(n,n+1))$ is the connected component of $\Xx(\sSO(n,n+1))$ that contains $\iota(\Fuch(\Gamma)).$
\end{Definition}
\begin{Remark}
The map $i:\sSO(1,2)\to\sSO(n,n+1)$ is an example of a principal embedding of $\sP\sSL(2,\R)\cong\sSO_0(1,2)$ into a split real Lie group $\sG$ of adjoint type. The Hitchin component for a split group $\sG$ is defined as the deformation space of the image of $\iota(\Fuch(\Gamma))$ in $\Xx(\Gamma,\sG)$. See \cite{ptds} and \cite{liegroupsteichmuller} for more details.
\end{Remark}

\begin{Theorem}\label{THM Hitchin component}
	\cite[Theorem 7.5]{liegroupsteichmuller} The Hitchin component $\Hit(\sSO(n,n+1))$ is diffeomorphic to the vector spaces of holomorphic differentials $\bigoplus\limits_{j=1}^{n} H^0(K^{2j}).$ 
\end{Theorem}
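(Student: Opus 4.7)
The plan is to reduce the statement to Higgs bundles via Theorem \ref{Nonabelian Hodge Correspondence} and produce an explicit section of the Hitchin fibration \eqref{EQ Hitchin Fibration} restricted to $\Mm(\sSO(n,n+1))$ whose image exhausts the Hitchin component. Fix a Riemann surface structure $X$ on $S$ and a square root $K^{1/2}$; the final answer will not depend on this choice. First I would identify the $\sSO(n,n+1)$-Higgs bundle corresponding to $\iota(\rho_{\mathrm{Fuch}})$, where $\rho_{\mathrm{Fuch}}$ is the representation uniformizing $X$. Taking the $2n$-th symmetric power of the rank-$2$ bundle $K^{1/2}\oplus K^{-1/2}$ produces the $\sSO(n,n+1)$-Higgs bundle $(V_0,W_0,\eta_0)$ with
\[V_0=\bigoplus_{j=1}^{n}K^{2j-n-1},\qquad W_0=\bigoplus_{j=0}^{n}K^{2j-n},\]
equipped with the natural orthogonal structures pairing $K^a$ and $K^{-a}$, and $\eta_0:V_0\to W_0\otimes K$ the principal nilpotent map sending each summand of $V_0$ by the identity into the adjacent summand of $W_0\otimes K$.

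Second, I would construct the Hitchin section using the Kostant slice of $\fso(2n+1,\C)$. The principal nilpotent $e_0=\eta_0$ fits into a principal $\fsl_2$-triple $(e_0,h_0,f_0)$ inside $\fso(2n+1,\C)$, and the fundamental invariants of $\fso(2n+1,\C)$ have degrees $2,4,\ldots,2n$, exactly matching the summands of the Hitchin base $\bigoplus_{j=1}^{n}H^0(K^{2j})$. Kostant's theorem produces a cross-section $e_0+\ker\mathrm{ad}(f_0)$ of the adjoint quotient $\fso(2n+1,\C)\to\fso(2n+1,\C)/\!\!/\sSO(2n+1,\C)\cong\C^n$. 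Globalizing over $X$ by twisting each irreducible summand of $\ker\mathrm{ad}(f_0)$ with the appropriate power of $K$, every tuple $(q_2,q_4,\ldots,q_{2n})\in\bigoplus_{j=1}^{n}H^0(K^{2j})$ determines a canonical perturbation $\eta(q_2,\ldots,q_{2n})$ of $\eta_0$ which is again an $\sSO(n,n+1)$-Higgs field on the smooth bundle $(V_0,W_0)$. This defines a holomorphic map
\[s\colon \bigoplus_{j=1}^{n}H^0(K^{2j})\longrightarrow \Mm(\sSO(n,n+1)),\qquad (q_2,\ldots,q_{2n})\longmapsto [(V_0,W_0,\eta(q_2,\ldots,q_{2n}))],\]
which by construction satisfies $h\circ s=\mathrm{id}$ and is therefore injective.

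Third, I would verify stability and identify the image with the Hitchin component. The principal embedding $\fsl_2\hookrightarrow\fso(2n+1,\C)$ acts irreducibly on $V_0\oplus W_0$, so the $\sSL(2n+1,\C)$-Higgs bundle $(V_0\oplus W_0,\Phi_0)$ associated via \eqref{Eq SL(2n+1,C) Higgs bundle} is stable; transversality of the Kostant slice to adjoint orbits preserves this stability along the family $\eta(q_2,\ldots,q_{2n})$. Hence Proposition \ref{Prop GHiggs polystable} and Corollary \ref{Corollary smooth/orbifold points} place the image of $s$ in the smooth locus of $\Mm(\sSO(n,n+1))$. Since the image of $s$ is connected and contains the Fuchsian point $\iota(\rho_{\mathrm{Fuch}})=s(0)$, it lies inside $\Hit(\sSO(n,n+1))$. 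To finish, a Riemann--Roch computation gives
\[\dim_\C\bigoplus_{j=1}^{n}H^0(K^{2j})=\sum_{j=1}^{n}(4j-1)(g-1)=n(2n+1)(g-1),\]
which matches the complex dimension of $\Mm(\sSO(n,n+1))$ implicit in \eqref{EQ Expected Dimension for SO(n,n+1)}. Thus $s$ is an injective holomorphic map between complex manifolds of equal dimension, hence a holomorphic open embedding onto its image. Properness of the $\sSL(2n+1,\C)$-Hitchin fibration \cite{IntSystemFibration} restricts to properness of $h$ on $\Mm(\sSO(n,n+1))$, so $s$ (being a right inverse of a proper map) is itself proper and therefore a closed embedding. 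Openness together with closedness inside the connected space $\Hit(\sSO(n,n+1))$ force $s$ to be surjective, completing the proof.

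The main obstacle is writing down the Kostant slice for $\fso(2n+1,\C)$ in a form compatible with the split real structure of $\sSO(n,n+1)$, so that the resulting family $\eta(q_2,\ldots,q_{2n})$ defines $\sSO(n,n+1)$-Higgs bundles rather than just $\sSO(2n+1,\C)$-Higgs bundles. This is exactly what is carried out by Hitchin in \cite{liegroupsteichmuller} for an arbitrary split real group; specialization to $\sSO(n,n+1)$ reduces to careful bookkeeping with the principal $\fsl_2$-module structure on $\fso(2n+1,\C)$.
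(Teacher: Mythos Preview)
The paper does not prove this theorem; it is quoted from Hitchin's original paper \cite{liegroupsteichmuller} and the explicit Hitchin section \eqref{EQ Hitchin component Higgs field SO(n,n+1)} is simply written down. Your outline is a faithful sketch of Hitchin's argument via the principal $\fsl_2$ and the Kostant slice, and it is correct. Two small comments: the choice of $K^{1/2}$ is unnecessary for $\sSO(n,n+1)$ since the $2n$-th symmetric power only involves integer powers of $K$, and your closedness argument does not actually require properness of $h$---the identity $h\circ s=\mathrm{id}$ together with continuity of $h$ already forces the image of $s$ to be closed.

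It is worth noting that the paper's own techniques furnish an alternative, more concrete proof as the special case $d=n(2g-2)$ of Theorem~\ref{THM1}. There one bypasses the abstract Kostant theory entirely: stability is checked directly on the explicit Higgs bundles (Lemma~\ref{Lemma defining Higgs bundles in Md}), first at the ``cyclic'' point $(q_2,\ldots,q_{2n})=0$ by inspecting invariant subbundles of a chain of line bundles, and then propagated to all differentials using openness of stability together with the $\C^*$-scaling gauge transformations \eqref{EQ scaling gauges}. Closedness and openness of the image are then obtained exactly as you describe. This hands-on approach trades the structure theory of semisimple Lie algebras for elementary bundle arguments, and it is what makes the generalization to the non-maximal components $\Mm_d(\sSO(n,n+1))$ possible.
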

For $\Hit(\sSO(n,n+1))$, the map $\bigoplus\limits_{j=1}^{n} H^0(K^{2j})\longrightarrow \Mm(\sSO(n,n+1))$ can be defined by sending a tuple of differentials $(q_2,q_4,\cdots,q_{2n})$ to the Higgs bundle $(V,W,\eta)$ where
\[\xymatrix@R=.2em{V=K^{n-1}\oplus K^{n-3}\oplus\cdots\oplus K^{3-n}\oplus K^{1-n}~,\\ W=K^{n}\oplus K^{n-2}\oplus\cdots\oplus K^{2-n}\oplus K^{-n}~,}  \]
\vspace{-.5em}
\begin{equation}
	\label{EQ Hitchin component Higgs field SO(n,n+1)}
	\ \ \ \ \ \ \eta=\mtrx{q_2&q_4&q_6&\cdots&q_{2n-2}&q_{2n}\\
			   1&q_2&q_4&\cdots&q_{2n-4}&q_{2n-2}\\
			   0&1&q_2&q_4&\cdots&q_{2n-4}\\
			   0&0&1&q_2&\cdots&q_{2n-6}\\
			   &\ddots&\ddots&\ddots&&\\
			   &&&0&1&q_2\\
			   &&&&0&1}:V\longrightarrow W\otimes K
\end{equation}
The orthogonal structures on $V$ and $W$ are the standard ones:
\[Q_V=\smtrx{&&1\\&\iddots&\\1&&}:V\to V^*\ \ \ \ \ \ \ \ \text{and}\ \ \ \ \ \ \ \ \ Q_W=\smtrx{&&1\\&\iddots&\\1&&}:W\ra W^*~.\]

\begin{Remark}\label{REMARK Aut of Hit comp is trivial}
	Since the Hitchin component is smooth, the automorphism group of $(V,W,\eta)$ of the form \eqref{EQ Hitchin component Higgs field SO(n,n+1)} is trivial. Also, since $\Lambda^n(V)=\Oo$ and $\Lambda^{n+1}W=\Oo,$ all Higgs bundles in $\Hit(\sSO(n,n+1))$ reduce to $\sSO_0(n,n+1)$ Higgs bundles. 
\end{Remark}
Recall from Remark \ref{Rem Higgs bundles for Fuch} that the $\sSO(2,1)$ Higgs bundles which give rise to the Fuchsian representations which uniformizes the Riemann surface $X$ has $\sSL(3,\C)$ Higgs bundle given by 
\begin{equation}
	\label{EQ Fuch Higgs bundle}(E,\Phi)=\left(K\oplus\Oo\oplus K^{-1}\ ,\ \mtrx{0&0&0\\1&0&0\\0&1&0}\right)~.
\end{equation} 
Moreover, the $\sSL(2n+1,\C)$ Higgs bundle $\left(V\oplus W,\mtrx{0&\eta^*\\\eta&0}\right)$ associated to the locus where the differential $q_2,\cdots,q_{2n}$ are all zero is the $n^{th}$ symmetric product of the Higgs bundle $(E,\Phi)$ from \eqref{EQ Fuch Higgs bundle}. Thus, Theorem \ref{THM Hitchin component} really does parameterizes the Hitchin component from Definition \ref{Def: Hitchin comp}.
\subsection{The components $\Mm_d(\sSO(n,n+1))$}
We will now show that the connected components $\Mm_d(\sSO(1,2))$ from Theorem \ref{THM Md SO(1,2)} generalize to $\Mm(\sSO(n,n+1)).$ We start with some preliminary lemmas.
 \begin{Lemma}\label{Lemma defining Higgs bundles in Md}
 	For each integer $d\in(0,n(2g-2)]$, define the space $\widetilde\Ff_d$ by 
 	\begin{equation}\label{EQ tildeFd DEF}
 		\widetilde\Ff_d=\{(M,\mu,\nu)\ |\ M\in\Pic^d(X),\ \mu\in H^0(M^{-1}K^n)\setminus\{0\},\ \text{and}\ \nu\in H^0(MK^n)\}~.
 	\end{equation} 
 	There is a well defined smooth map 
 	\begin{equation}
 		\label{EQ tildePsid Def}
 		\widetilde\Psi_d:\xymatrix{\widetilde\Ff_d\times \bigoplus\limits_{j=1}^{n-1} H^0(K^{2j})\ar[r]&\{\text{stable}\ \sSO(n,n+1)\text{-Higgs\ bundles}}\}
 	\end{equation}
 	defined by $\widetilde\Psi_d(M,\mu,\nu,q_2,\cdots q_{2p-2})=(V,W,\eta)$ where 
 	\[(V,Q_V)=\big(K^{n-1}\oplus K^{n-3}\oplus\cdots\oplus K^{3-n}\oplus K^{1-n},\smtrx{&&1\\&\iddots&\\1&&}\big)~,\]
 	\[(W,Q_W)=\big(M\oplus K^{n-2}\oplus K^{n-4}\oplus\cdots\oplus K^{4-n}\oplus K^{2-n}\oplus M^{-1},\smtrx{&&1\\&\iddots&\\1&&}\big)~,\]
 	\begin{equation}
 		\label{EQ Md Higgs field}
 		\eta=\mtrx{0&0&0&\cdots&0&\nu\\
			   1&q_2&q_4&\cdots&q_{2n-4}&q_{2n-2}\\
			   0&1&q_2&q_4&\cdots&q_{2n-4}\\
			   0&0&1&q_2&\cdots&q_{2n-6}\\
			   &\ddots&\ddots&\ddots&&\\
			   &&&0&1&q_2\\
			   &&&&0&\mu}:V\longrightarrow W\otimes K~.
 	\end{equation} 
 \end{Lemma}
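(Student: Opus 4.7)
The plan is to verify three things: that $\widetilde\Psi_d$ lands in the set of $\sSO(n,n+1)$-Higgs bundles, that its image lies in the stable locus, and that the map is smooth. Well-definedness is a direct check: the exponents of $K$ in the summands of $V$ are symmetric about $0$, so $\det V=\Oo$, and in $W$ the factors $M,M^{-1}$ cancel while the middle exponents are again symmetric, so $\det W=\Oo=\det V$. The anti-diagonal pairings $Q_V$ and $Q_W$ give nondegenerate holomorphic orthogonal structures compatible with the trivial determinant. Each entry of $\eta$ is verified to live in the correct line bundle by inspection: $\nu$ maps $V_n=K^{1-n}$ into $W_1\otimes K=MK$, hence is a section of $MK^n$; $\mu$ maps $V_n$ to $W_{n+1}\otimes K=M^{-1}K$, hence lies in $H^0(M^{-1}K^n)$; each $q_{2k}$ lies in $H^0(K^{2k})$ by the standard Hitchin-section calculation; and the sub-diagonal $1$'s are sections of $\Oo$. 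Smoothness is immediate from the algebraic dependence of the construction on the tuple $(M,\mu,\nu,q_2,\ldots,q_{2n-2})$; a family over $\Pic^d(X)$ is obtained using a Poincar\'e line bundle on $\Pic^d(X)\times X$.

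The main work is proving stability. By Proposition \ref{Prop GHiggs polystable}, it suffices to show that the associated $\sSL(2n+1,\C)$-Higgs bundle $(E,\Phi)=\bigl(V\oplus W,\smtrx{0&\eta^*\\\eta&0}\bigr)$ is stable, i.e., $\deg F<0$ for every proper $\Phi$-invariant holomorphic subbundle $F\subset E$. The strategy is to exploit the cyclic chain structure of $\eta$: the identities on the sub-diagonal of $\eta$, together with the corresponding entries of $\eta^*$, link neighboring summands $V_j\leftrightarrow W_{j+1}$ for $j=1,\ldots,n-1$ up and down the chain, and the nonzero maps $\mu:V_n\to W_{n+1}\otimes K$ and $\nu:V_n\to W_1\otimes K$ tie the bottom summand $V_n$ to the two ``corner'' summands $W_1=M$ and $W_{n+1}=M^{-1}$. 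The connectivity of this chain forces any nonzero $\Phi$-invariant subsheaf $F$ that contains a local section in some summand to propagate, via iterated application of $\Phi$ and $\Phi^*$, to contain contributions from every other summand; in particular, no direct sum of a proper sub-collection of the $V_j, W_i$ can be $\Phi$-invariant, and the corners $M, M^{-1}$ are accessible only through $\nu$ and $\mu$ respectively, with $\mu\neq 0$ being essential.

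The main obstacle will be handling $\Phi$-invariant subbundles that are ``twisted'' across several graded pieces, that is, subbundles which are not direct sums of some of the summands $V_j,W_i$ but rather subsheaves spread across them. The plan is a saturation argument: iterate $\Phi,\Phi^*$ on a generic local section of $F$, use the sub-diagonal identities to propagate into neighboring summands, pass to the saturation of the resulting $\Phi$-invariant subsheaf, and enumerate a finite list of candidate subbundles to analyze. For each candidate, the fact that $\deg V=\deg W=0$, combined with the strict bounds $\deg V_n=(1-n)(2g-2)<0$ (for $n\geq 2$) and $\deg M^{-1}=-d<0$ (using $d>0$), forces $\deg F<0$. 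The higher order entries $q_{2k}$ do not change the chain connectivity qualitatively, so the analysis reduces to the special fiber where $q_{2k}=0$ and $\nu=0$, where $(V,W,\eta)$ becomes a variation of Hodge structure whose stability can be checked by a direct weight analysis. Finally, the upper bound $d\leq n(2g-2)$ in the hypothesis enters precisely to ensure $\deg(M^{-1}K^n)\geq 0$, so that the nonvanishing condition $\mu\in H^0(M^{-1}K^n)\smallsetminus\{0\}$ is not vacuous.
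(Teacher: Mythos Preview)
Your well-definedness and smoothness checks are fine. The stability argument, however, has a real gap in the key step.

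First, the ``propagation via $\Phi$ and $\Phi^*$'' is not a valid move: $\Phi$-invariance means $\Phi(F)\subset F\otimes K$, and says nothing about $\Phi^*(F)$ or the $Q$-adjoint of $\Phi$. In the schematic chain
\[
\xymatrix@C=2em{M\ar[r]_{\mu\ \ }&K^{n-1}\ar[r]_1&K^{n-2}\ar[r]_1&\cdots\ar[r]_1&K^{1-n}\ar[r]_{\mu}&M^{-1}}
\]
the map $\Phi$ goes only one way; you cannot climb back up by ``iterating $\Phi^*$''. So a $\Phi$-invariant subsheaf whose leading piece sits in some $K^j$ is not forced by connectivity alone to fill out the whole chain, and for general $(q_2,\ldots,q_{2n-2},\nu)$ invariant subbundles need not be direct sums of summands at all. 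Your saturation/enumeration plan therefore does not reduce to a finite list of graded candidates.

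Second, your sentence ``the higher order entries $q_{2k}$ do not change the chain connectivity qualitatively, so the analysis reduces to the special fiber'' is exactly the step that needs proof. The paper supplies it by a standard but essential two-part argument you omit: (i) at $(q_2,\ldots,q_{2n-2},\nu)=(0,\ldots,0)$ the Higgs bundle is a $\C^*$-fixed point (your VHS), and by Simpson's result on such fixed points every $\Phi$-invariant subbundle is a direct sum of sub-line-bundles of the graded pieces, so stability is immediate since the only invariant summand is $M^{-1}$ of degree $-d<0$; (ii) stability is open, and the holomorphic gauge transformations $g_V=\mathrm{diag}(\lambda^{n-1},\ldots,\lambda^{1-n})$, $g_W=\mathrm{diag}(\lambda^n,\ldots,\lambda^{-n})$ satisfy
\[
(g_V,g_W)\cdot(V,W,\lambda\eta)=\widetilde\Psi_d(M,\mu,\lambda^{2n}\nu,\lambda^2q_2,\ldots,\lambda^{2n-2}q_{2n-2}),
\]
so any parameter value can be rescaled into the open stable neighborhood of the fixed point, and stability is preserved under scaling the Higgs field. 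Once you insert this openness-plus-$\C^*$-scaling argument in place of the saturation heuristic, your proof matches the paper's.
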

 \begin{proof}
 	We will show the $\sSL(2n+1,\C)$ Higgs bundle $(E,\Phi)=\left(V\oplus W,\mtrx{0&\eta^T\\\eta&0}\right)$ corresponding to a Higgs bundle $(V,W,\eta)$ in \eqref{EQ Md Higgs field} is stable. 
 	For 
 	\[(q_2,\cdots,q_{2n-2},\nu)=(0,\cdots,0)~,\] the corresponding $\sSL(2n+1,\C)$ Higgs bundle can be written schematically as
 	\begin{equation}
 		\label{EQ Fixedpoint in Md}
 		\xymatrix@C=2em{M\ar[r]_{\mu\ \ }&K^{n-1}\ar[r]_1&K^{n-2}\ar[r]_1&\cdots\ar[r]_1&K^{2-n}\ar[r]_1&K^{1-n}\ar[r]_{\mu}&M^{-1}}~.
 	\end{equation}
For such Higgs bundles the above summands are the eigen-bundles of a holomorphic gauge transformation of $V\oplus W$. In particular, polystable Higgs bundles of this form define fixed points of the $\C^*$ action on the moduli space. 
 To check stability for such Higgs bundles, it suffices to consider invariant subbundles of each summand (see Proposition 6.3 of \cite{KatzMiddleInvCyclicHiggs}\footnote{ In \cite{KatzMiddleInvCyclicHiggs}, Simpson works with parabolic bundles, however the proof for the non parabolic case is identical.}). The Higgs bundle \eqref{EQ Fixedpoint in Md} is stable since each summand is a line bundle and the only invariant bundle is the negative degree line bundle $M^{-1}$. 

Stability is an open condition. Hence, there is an open neighborhood $U$ of $(0,\cdots,0)$ such that the Higgs bundles \eqref{EQ Md Higgs field} are stable for $(q_2,\cdots,q_{2n-2},\nu)\in U$. 
Let $(V,W,\eta)=\widetilde\Psi_d(M,\mu,\nu,q_2,\cdots,q_{2n-2})$. For $\lambda\in\C^*,$ consider the following holomorphic orthogonal gauge transformations of $V$ and $W$
\begin{equation}
	\label{EQ scaling gauges}g_V=\smtrx{\lambda^{n-1}&&&\\&\lambda^{n-3}&&\\&&\ddots&\\&&&\lambda^{1-n}}\ \ \ \text{and}\ \ \ g_W=\smtrx{\lambda^{n}&&&&\\&\lambda^{n-2}&&&\\&&\ddots&&\\&&&\lambda^{2-n}&\\&&&&\lambda^{-n}}~.
\end{equation}
A straight forward computation shows that 
\begin{equation}\label{EQ scaling gauge action}
	(g_V,g_W)\cdot(V,W,\lambda\eta)=\widetilde\Psi_d(M,\mu,\lambda^{2n}\nu,\lambda^2 q_2,\lambda^4q_4,\cdots,\lambda^{2n-2} q_{2n-2})~.
\end{equation} 
Since stability is preserved by scaling the Higgs field by $\C^*,$ the associated Higgs bundles \eqref{EQ Md Higgs field} is stable for all values of $(q_2,\cdots,q_{2n-2},\nu).$
 \end{proof}
 The next lemma is the key technical step in proving Theorem \ref{THM1}.  
 \begin{Lemma}\label{Lemma Image in Md}
 	Let $\widetilde \Psi_d:\widetilde\Ff_d\times \bigoplus\limits_{j=1}^{n-1}H^0(K^{2j})\to \{\text{polystable }\sSO(n,n+1)\ \text{Higgs bundles}\}$ be given by \eqref{EQ tildePsid Def}. If $(M,\mu,\nu,q_2,\cdots,q_{2n-2})$ and $(M',\mu',\nu',q_2',\cdots,q_{2n-2}')$ are two points in $\widetilde\Ff_d\times\bigoplus\limits_{j=1}^{n-1}H^0(K^{2j})$ , then the Higgs bundles $\widetilde\Psi_d(M,\mu,\nu,q_2,\cdots,q_{2n-2})$ and $\widetilde\Psi_d(M',\mu',\nu',q_2',\cdots,q_{2n-2}')$ lie in the same $\sS(\sO(n,\C)\times\sO(n+1,\C))$ gauge orbit if and only if for $\lambda\in\C^*$ and all $j,$
 	\[\xymatrix{M=M', &\mu=\lambda\mu',& \nu=\lambda^{-1}\nu'&\text{and}&q_{2j}'=q_{2j}}~.\]
 	 
 \end{Lemma}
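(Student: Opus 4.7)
The plan is to verify both directions of the equivalence: I will construct the required gauge transformation for the forward implication, and I will pin down the space of all gauge transformations between Higgs bundles of the form \eqref{EQ Md Higgs field} for the converse.

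For the forward direction, given $\lambda\in\C^*$ I take $g_V=\id_V$ and let $g_W$ act as scalar $\lambda$ on the first summand $M$ of $W$, as $\lambda^{-1}$ on the last summand $M^{-1}$, and as the identity on the middle summands $K^{n-2}\oplus\cdots\oplus K^{2-n}$. Since $Q_W$ pairs $M$ with $M^{-1}$ via duality and the two scaling factors multiply to $1$, orthogonality $g_W^TQ_Wg_W=Q_W$ is immediate, and $\det(g_V)\det(g_W)=1$. A direct multiplication in \eqref{EQ Md Higgs field} shows that $\nu$ at position $(1,n)$ picks up a factor $\lambda$ and $\mu$ at $(n+1,n)$ picks up $\lambda^{-1}$, while every other entry is preserved. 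This realizes the gauge equivalence $\widetilde\Psi_d(M,\mu,\nu,\vec q\,)\sim\widetilde\Psi_d(M,\lambda^{-1}\mu,\lambda\nu,\vec q\,)$, which matches the claimed relations.

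For the converse, suppose $(g_V,g_W)$ realizes a gauge equivalence between $\widetilde\Psi_d(M,\mu,\nu,\vec q\,)$ and $\widetilde\Psi_d(M',\mu',\nu',\vec q\,')$. My first task is to identify $M=M'$ at the level of bundles. For generic $M\in\Pic^d(X)$, i.e.\ $M$ not isomorphic to any $K^{n-2k}$ appearing as a summand, the line bundle summands of $W$ are pairwise non-isomorphic, so Atiyah's theorem on the uniqueness of the indecomposable decomposition of a holomorphic vector bundle over a compact Riemann surface forces $\{M,M^{-1}\}=\{M',(M')^{-1}\}$ as multisets. The degree constraint $\deg(M)=\deg(M')=d>0$ excludes $M=(M')^{-1}$ (which would have degree $-d$), leaving $M=M'$. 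The discrete exceptional locus where $M\cong K^{n-2k}$ I plan to handle either by a continuity argument (since gauge equivalence is a closed relation and this locus has positive codimension) or by invoking the stability of the associated $\sSL(2n+1,\C)$ Higgs bundle from Lemma \ref{Lemma defining Higgs bundles in Md}, whose automorphism group reduces to the center $\mu_{2n+1}$ and meets the $\sS(\sO(n,\C)\times\sO(n+1,\C))$-structure group trivially.

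Having $M=M'$, the vanishing of $\Hom$ between non-isomorphic line bundles of equal degree, combined with orthogonality, forces $g_V=\mathrm{diag}(\beta_1,\ldots,\beta_n)$ and $g_W=\mathrm{diag}(\lambda_0,\alpha_1,\ldots,\alpha_{n-1},\lambda_0^{-1})$ to be block diagonal, the last entry of $g_W$ being determined by the $Q_W$-pairing of $M$ with $M^{-1}$. I then compare $g_W\eta g_V^{-1}=\eta'$ entry by entry: the subdiagonal $1$'s at positions $(r,r-1)$ give $\alpha_s=\beta_s$ for $s=1,\ldots,n-1$; combining this with the orthogonality relations $\alpha_r\alpha_{n-r}=1$ and $\beta_c\beta_{n+1-c}=1$ yields $\beta_1=\cdots=\beta_n=\beta$ with $\beta^2=1$; and the determinant condition $\det(g_V)\det(g_W)=\beta^{2n-1}=1$ pins down $\beta=1$. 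Hence $g_V=\id$ and $g_W=\mathrm{diag}(\lambda_0,I,\lambda_0^{-1})$; reading off the $(1,n)$ and $(n+1,n)$ entries then gives $\nu'=\lambda_0\nu$, $\mu'=\lambda_0^{-1}\mu$, and $q_{2j}'=q_{2j}$, and setting $\lambda=\lambda_0$ closes the argument. The principal obstacle I anticipate is the stratum where $M\cong K^{n-2k}$, since there the Krull--Schmidt decomposition of $W$ has repeated indecomposable factors and the Hom-vanishing that underlies block-diagonality fails; the stability/continuity fallback above is my intended workaround.
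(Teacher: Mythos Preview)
Your forward direction is fine. The converse has a genuine gap at the step where you claim that ``the vanishing of $\Hom$ between non-isomorphic line bundles of equal degree, combined with orthogonality, forces $g_V=\mathrm{diag}(\beta_1,\ldots,\beta_n)$ and $g_W=\mathrm{diag}(\lambda_0,\alpha_1,\ldots,\alpha_{n-1},\lambda_0^{-1})$.'' Neither half of this is true. The summands of $V=K^{n-1}\oplus\cdots\oplus K^{1-n}$ have \emph{different} degrees, and $\Hom(K^a,K^b)=H^0(K^{b-a})$ is nonzero whenever $b\geq a$; so a holomorphic automorphism of $V$ is only upper triangular, and likewise for $W_0=K^{n-2}\oplus\cdots\oplus K^{2-n}$ inside $W$. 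Nor does orthogonality with respect to the antidiagonal form force an upper triangular matrix to be diagonal: already for $n=3$ the matrix $\smtrx{a&d&-af^2/2\\0&b&f\\0&0&a^{-1}}$ with $b^2=1$ and $f=-bd/a$ is $Q_V$-orthogonal for every $d$. Your subsequent entry-by-entry comparison therefore rests on an unproven diagonality.

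The paper's proof supplies exactly the missing mechanism: it keeps $g_V$ and $g_{W_0}$ merely upper triangular, writes $g_W$ in $3\times 3$ block form relative to $W=M\oplus W_0\oplus M^{-1}$, and uses the equation $g_W\eta g_V^{-1}=\eta'$ (not just orthogonality) to kill the off-block pieces step by step. The subdiagonal $1$'s in $\eta_0$ together with upper-triangularity of $g_V^{-1}$ force the blocks $A,D$ to vanish; orthogonality then kills $B,C$; and the degree constraint $\deg(M^{-1}\otimes(M')^{-1})=-2d<0$ gives $b_n=0$, hence $c_0=0$ and $b_0=c_n^{-1}\neq 0$. At that point $(g_V,g_{W_0})$ is a gauge equivalence between two points of the $\sSO(n-1,n)$-Hitchin section, and one invokes Hitchin's parameterization (trivial automorphisms there) to get $g_V=\pm Id$, $g_{W_0}=\pm Id$, with the determinant condition selecting $+Id$. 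Note that $b_0\neq 0$ already gives an isomorphism $M\cong M'$, so your separate Krull--Schmidt argument and its continuity/stability fallback for the stratum $M\cong K^{n-2k}$ are unnecessary once the proof is organized this way.
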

 
 \begin{proof}
 	For $(M,\mu,\nu,q_2,\cdots,q_{2n-2})\in\widetilde\Ff_d\times\bigoplus\limits_{j=1}^{n-1}H^0(K^{2j})$, let $(V,W,\eta)$ be the $\sSO(n,n+1)$ Higgs bundle defined by $\widetilde\Psi_d(M,\mu,\nu,q_2,\cdots,q_{2n-2})$. It is given by \eqref{EQ Md Higgs field}. 
 	Write 
 	\begin{equation}\label{EQ W decomp}
 		\xymatrix{W=M\oplus W_0\oplus M^{-1}&\text{where}&W_0=K^{n-2}\oplus\cdots\oplus K^{2-n}}.
 	\end{equation}
 	Recall that the action of an element $(g_V,g_W)\in\Gg_{\sH_\C}(V,W,\eta)$ in the gauge group is given by 
 	\[(g_V,g_W)\cdot(\bar\p_V,\bar\p_W,\eta)=(g_V\bar\p_V g_V^{-1}\ ,\  g_W\bar\p_W g_W^{-1}\ ,\ g_W\eta g_V^{-1})\]
 	where $g_V$ and $g_W$ are smooth orthogonal gauge transformation with $det(g_V)\cdot det(g_W)=1.$
 	With respect to the decomposition \eqref{EQ W decomp}, a gauge transformation $g_W$ decomposes as
 	\begin{equation}\label{EQ decomposition gWd}
 		g_W=\mtrx{b_0&A&c_0\\
 				B&g_{W_0}&C\\
 				b_n&D&c_n}\ \ \ \ \ \ \ \ \ \ \text{and}\ \ \ \ \ \ \ \ \ \eta=\mtrx{\alpha\\\eta_0\\\beta}~,
 	\end{equation}
 where $g_{W_0}$ is an orthogonal gauge transformation of $W_0$ and
 \[ A=\mtrx{a_1&\cdots&a_{n-1}},\ \ D=\mtrx{d_1&\cdots&d_{n-1}},\ \ B=\mtrx{b_1\\\vdots\\b_{n-1}},\ \  C=\mtrx{c_1\\\vdots\\c_{n-1}}~,\]
\[\alpha=\mtrx{0&\cdots&0&\nu},\ \ \  \ \ \ \beta=\mtrx{0&\cdots&0&\mu}\ \  \ \ \text{and}\ \  \ \ \eta_0=\mtrx{1&q_2&\cdots& q_{2n-2}\\&\ddots&\ddots&\\0&\cdots&1&q_2}~.\]
For the Higgs field, $g_W\eta g_V^{-1}$ is given by 
\begin{equation}
	\label{EQ gauged transformed eta}
	g_W\eta g_V^{-1}=\mtrx{b_0\alpha+A\eta_0+c_0\beta\\B\alpha+g_{W_0}\eta_0+D\beta\\ c_0\alpha+C\eta_0+c_n\beta}g_V^{-1}=\mtrx{\alpha'\\\eta_0'\\\beta'}~.
\end{equation}
For $\alpha'=\mtrx{0&\cdots&0&\nu'}$, $\beta'=\mtrx{0&\cdots&0&\mu'}$, $\eta_0'=\mtrx{1& q_2'&\cdots&  q_{2n-2}'\\&\ddots&\ddots&\\0&\cdots&1& q_2'},$ the goal is to show that, if
\[(g_V,g_W)\cdot\left(\bar\p_V,\bar\p_W,\mtrx{\alpha\\\eta_0\\\beta}\right)= \left(\bar\p_V,\bar\p_W,\mtrx{\alpha'\\\eta_0'\\\beta'}\right), \]
then $g_V=Id_V$ and $g_W=\mtrx{\lambda&&\\&Id_{W_0}&\\&&\lambda^{-1}}.$

We will do this by first showing $A,B,C,D,c_0,b_n$ all vanish. Since $g_V$ and $g_W$ are holomorphic and negative degree line bundles do not have nonzero holomorphic sections, both $g_V$ and $g_{W_0}$ are upper triangular and $b_n=0$.
The term $\alpha'=\mtrx{0&\cdots&0&\nu'}$ is given by  $(b_0\alpha+A\eta_0+c_0\beta)\cdot g_V^{-1}$ from \eqref{EQ gauged transformed eta}. A computation shows $(b_0\alpha+A\eta_0+c_0\beta)$ is given by   
\begin{equation}
	\mtrx{a_1& a_1q_2+a_2&a_1q_4+a_2q_2+a_3&\cdots&b_0\nu+ \sum\limits_{j=1}^{n-1}a_{j}q_{2n-2j}+c_0\mu }~.
\end{equation}
Since $g_V^{-1}$ is invertible and upper triangular and $\alpha'=\mtrx{0&\cdots&0&\nu'}$, we conclude that $a_j=0$ for $1\leq j\leq n-1$. Hence the matrix $A$ in \eqref{EQ decomposition gWd} vanishes. By a similar computation, the matrix $D$ from \eqref{EQ decomposition gWd} also vanishes.

Recall that the gauge transformation $g_W$ is orthogonal with respect to the orthogonal structure $Q_W$, i.e. $g_W^TQ_Wg_W=Q_W$. If $Q_{W_0}$ is the restriction of the orthogonal structure $Q_W$ to the subbundle $W_0$, then, using the decomposition \eqref{EQ decomposition gWd}, 
\[g_W^TQ_Wg_W=\mtrx{b_0&B^T&0\\0&g_{W_0}^T&0\\c_0&C^T&c_n}\mtrx{&&1\\&Q_{W_0}\\1&&}\mtrx{b_0&0&c_0\\B&g_{W_0}&C\\0&0&c_n}=\mtrx{&&1\\&Q_{W_0}&\\1&&}.\]
Thus,
\begin{equation}
	\label{EQ b0bn=0}\mtrx{B^TQ_{W_0}b_0&B^TQ_{W_0}g_{W_0}&B^TQ_{W_0}C+b_0c_n\\ g_{W_0}^TQ_{W_0}B&g_{W_0}^TQ_{W_0}g_{W_0}&g_{W_0}^TQ_{W_0}C\\c_nb_0+C^TQ_{W_0}B&C^TQ_{W_0}g_{W_0}&2c_nc_0+C^TQ_{W_0}C}=\mtrx{0&0&1\\0&Q_{W_0}&0\\1&0&0}
\end{equation}
The term $C^TQ_{W_0}$ is given by $\mtrx{c_{n-1}&c_{n-2}&\cdots&c_1}.$ Since $C^TQ_{W_0}g_{W_0}=0$ and $g_{W_0}$ is invertible and upper triangular, we conclude that $C=0$. Similarly, the term $B$ also vanishes. This forces $0\neq b_0=c_n^{-1}$ and thus $c_0=0.$

Finally, by Hitchin's parameterization of $\Hit(\sSO(n-1,n))$, we conclude $g_V$ and $g_{W_0}$ are either both the identity or minus the identity. However, since $det(g_V)=det(g_W)=det(g_{W_0})$ we conclude, both $g_V$ and $g_{W_0}$ are the identity.
 \end{proof}
 Using Corollary \ref{Corollary smooth/orbifold points}, we have:
 \begin{Corollary}\label{COR Md Smooth}
If $(V,W,\eta)$ is an $\sSO(n,n+1)$ Higgs bundle of the form \eqref{EQ Md Higgs field}, then the automorphism group $\Aut(V,W,\eta)$ is trivial. In particular, the isomorphism class of such a $(V,W,\eta)$ defines a smooth point of $\Mm(\sSO(n,n+1)).$
 \end{Corollary}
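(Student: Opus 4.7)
The plan is to reduce the corollary to the two pieces of machinery already assembled just above. Once $\Aut(V,W,\eta)$ is shown to be trivial, smoothness will follow at once from Corollary \ref{Corollary smooth/orbifold points}, since Lemma \ref{Lemma defining Higgs bundles in Md} already guarantees that the associated $\sSL(2n+1,\C)$ Higgs bundle is stable and hence, by Proposition \ref{Prop: H2 vanish of stable}, $\HH^2(C^\bullet(V,W,\eta)) = 0$. Thus the entire content of the corollary reduces to the computation of the automorphism group.

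To carry out that computation, I would specialize Lemma \ref{Lemma Image in Md} to the diagonal case, taking both parameter tuples to be $(M,\mu,\nu,q_2,\ldots,q_{2n-2})$. A careful reading of the proof of that lemma shows not only the uniqueness-of-parameters statement but, along the way, an explicit description of the gauge transformations realizing any equivalence: any $\sS(\sO(n,\C)\times\sO(n+1,\C))$ gauge transformation between Higgs bundles of the form \eqref{EQ Md Higgs field} must take the block-diagonal shape $g_V=\mathrm{Id}_V$ and $g_W=\mathrm{diag}(\lambda,\mathrm{Id}_{W_0},\lambda^{-1})$ for some $\lambda\in\C^*$. Under such a transformation, the section $\mu \in H^0(M^{-1}K^n)$ appearing in the last row of $\eta$ is rescaled by a nontrivial power of $\lambda$ (concretely, $\mu \mapsto \lambda^{-1}\mu$, analogous to the $\C^*$-action recorded in \eqref{EQ scaling gauge action}).

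For an automorphism, the rescaled section must equal the original one. Since the defining condition of $\widetilde\Ff_d$ in \eqref{EQ tildeFd DEF} requires $\mu \neq 0$, the equation $\lambda^{-1}\mu = \mu$ forces $\lambda = 1$; hence $g_V = \mathrm{Id}_V$ and $g_W = \mathrm{Id}_W$, so $\Aut(V,W,\eta)$ is trivial. Applying Corollary \ref{Corollary smooth/orbifold points} (whose vanishing-$\HH^2$ hypothesis is already in hand) then yields the smooth point claim.

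The only step requiring care is verifying the precise way $\lambda$ scales $\mu$ under the block-diagonal gauge transformation, but this is essentially a one-line calculation already implicit in the proof of Lemma \ref{Lemma Image in Md} and parallel to the scaling identity \eqref{EQ scaling gauge action}. I do not anticipate any genuine obstacle: the nontriviality condition $\mu \neq 0$ built into $\widetilde\Ff_d$ is exactly what is needed to kill the residual $\C^*$ ambiguity, so the argument should be short and essentially formal.
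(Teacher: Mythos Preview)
Your proposal is correct and follows essentially the same approach as the paper. The paper treats this corollary as an immediate consequence of Lemma \ref{Lemma Image in Md} (whose proof explicitly identifies the gauge transformations as $g_V=\mathrm{Id}_V$, $g_W=\mathrm{diag}(\lambda,\mathrm{Id}_{W_0},\lambda^{-1})$) together with Corollary \ref{Corollary smooth/orbifold points}; your writeup simply makes explicit the step that $\mu\neq 0$ forces $\lambda=1$, which is indeed the only thing left to observe.
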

\begin{Remark}\label{Remark Open and degM 0}
Note that the only time we used the fact that the degree of $M$ is nonzero was to conclude that $b_n$ and hence $c_0$ both vanish.
\end{Remark}
For $d\in(0,n(2g-2)]$, let
\[\widetilde\Ff_{-d}=\{(M,\nu,\mu)\ |\ M\in\Pic^{-d}(X),\ \nu\in H^0(MK^{n})\setminus \{0\},\ \text{and}\ \mu\in H^0(M^{-1}K^n)\}~,\]
and define a map 
\[\widetilde\Psi_{-d}:\xymatrix{\widetilde\Ff_{-d}\times \bigoplus\limits_{j=1}^{n-1} H^0(K^{2j})\ar[r]&\{\text{polystable}\ \sSO(n,n+1)\text{-Higgs\ bundles}}\}\]
by \eqref{EQ Md Higgs field}. 
We have the following proposition relating the images of $\widetilde\Psi_d$ and $\widetilde\Psi_{-d}.$
\begin{Proposition}
For $d\in(0,n(2g-2)],$ and $(M,\mu,\nu)\in\widetilde\Ff_{d},$ the stable $\sSO(n,n+1)$ Higgs bundles $\Psi_d(M,\mu,\nu,q_2,\cdots,q_{2n-2})$ and $\widetilde\Psi_{-d}(M^{-1},\nu,\mu,q_2,\cdots,q_{2n-2})$ are in the same $\sS(\sO(n,\C)\times\sO(n+1,\C))$-gauge orbit.
\end{Proposition}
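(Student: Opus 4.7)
I would exhibit an explicit $\sS(\sO(n,\C)\times\sO(n+1,\C))$-gauge transformation $(g_V,g_W)$ intertwining the two Higgs bundles. Write $(V,W,\eta)=\widetilde{\Psi}_d(M,\mu,\nu,q_2,\ldots,q_{2n-2})$ and $(V,W',\eta')=\widetilde{\Psi}_{-d}(M^{-1},\nu,\mu,q_2,\ldots,q_{2n-2})$. Inspecting formula \eqref{EQ Md Higgs field}, the $V$ bundles agree, the middle block $W_0=K^{n-2}\oplus\cdots\oplus K^{2-n}$ sitting inside both $W=M\oplus W_0\oplus M^{-1}$ and $W'=M^{-1}\oplus W_0\oplus M$ is identical, and the two Higgs-field matrices differ only in the top-right and bottom-right entries: they exchange the roles of $\nu\in H^0(MK^n)$ and $\mu\in H^0(M^{-1}K^n)$ between the first and last rows, while all the middle rows (the $1$'s and $q_{2j}$'s) coincide.

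The natural candidate is the bundle map $\tau:W\to W'$ defined by $\tau(m,w,m')=(m',w,m)$, i.e.\ the swap of the outer line-bundle summands, paired with $g_V=Id_V$. A short calculation with the anti-diagonal form $Q_W=\smtrx{0&0&1\\0&Q_{W_0}&0\\1&0&0}$ (and identically $Q_{W'}$) shows $\tau^T Q_{W'}\tau=Q_W$, so $\tau$ is a holomorphic orthogonal isomorphism. Conjugating $\eta$ by $(Id_V,\tau)$ literally permutes the first and last rows of the matrix, exchanging $\nu$ and $\mu$ while leaving the middle rows untouched, so $\tau\circ\eta=\eta'$.

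The one remaining subtlety is the determinant constraint $\det g_V\cdot\det g_W=1$ built into $\sS(\sO(n,\C)\times\sO(n+1,\C))$. Since $\tau$ permutes two $1$-dimensional summands it has $\det\tau=-1$, so $(Id_V,\tau)$ fails the constraint. I would correct this by instead taking $(g_V,g_W)=(-Id_V,-\tau)$; then $\det(-Id_V)=(-1)^n$ and $\det(-\tau)=(-1)^{n+1}\det\tau=(-1)^n$, so their product is $1$, while on the Higgs field we still get $g_W\eta g_V^{-1}=(-\tau)\eta(-Id_V)=\tau\eta=\eta'$. This produces the desired gauge equivalence. The heart of the argument is the natural symmetry $M\leftrightarrow M^{-1}$ of the formula \eqref{EQ Md Higgs field}; the only point requiring genuine care is the sign bookkeeping needed to stay in the special orthogonal group, which is easily handled by the compensating minus signs above.
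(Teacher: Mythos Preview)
Your proof is correct and essentially identical to the paper's: the gauge transformation $(-Id_V,-\tau)$ you arrive at is exactly the paper's $\bigl(-Id_V,\smtrx{&&-1\\&-Id_{W_0}&\\-1&&}\bigr)$. The only difference is expository---you motivate the choice by first trying the naive swap $\tau$ and then correcting the determinant, whereas the paper simply writes down the final transformation and asserts it works.
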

\begin{proof}
	By Lemma \ref{Lemma Image in Md}, if $d\in(0,n(2g-2)]$ and $M\in\Pic^d(X),$ then the $\sSO(n,n+1)$ Higgs bundles $(V_{\pm d},W_{\pm d},\eta_{\pm d})$ which are given by $\widetilde\Psi_{d}(M,\mu,\nu,q_2,\cdots,q_{2n-2})$ and $\widetilde\Psi_{-d}(M^{-1},\nu,\mu,q_2,\cdots,q_{2n-2})$ have
	\[\xymatrix@R=0em{V_d=K^{n-1}\oplus K^{n-3}\oplus\cdots\oplus K^{3-n}\oplus K^{1-n}& W_d= M\oplus W_0\oplus M^{-1}\\ V_{-d}=K^{n-1}\oplus K^{n-3}\oplus\cdots\oplus K^{3-n}\oplus K^{1-n}& W_{-d}= M^{-1}\oplus W_0\oplus M} \]
	where $W_0=K^{n-2}\oplus K^{n-4}\oplus\cdots\oplus K^{2-n}$ and Higgs field $\eta_{\pm d}$ given by \eqref{EQ Md Higgs field}. 

Consider the following orthogonal gauge transformation 
	\begin{equation}
		\label{EQ switching M and M-1}
		g_W=\mtrx{&&-1\\&-Id_{W_0}&\\-1&&}:\xymatrix{ M\oplus W_0\oplus M^{-1}\ar[r]& M^{-1}\oplus W_0\oplus M}~.
	\end{equation} 
	A simple calculation shows that $(-Id_V,g_W)$ defines an $\sS(\sO(n,\C)\times\sO(n+1,\C))$-gauge transformation which provides the desired isomorphism.
\end{proof}
\begin{Remark}
	Note that if the gauge transformation \eqref{EQ switching M and M-1} defines an $\sSO(n,\C)\times\sSO(n+1,\C)$ gauge transformation if and only if $n$ is even. 
\end{Remark}
We are now set up to prove the $\widetilde\Psi_d$ maps onto a connected component of $\Mm(\sSO(n,n+1)),$ and hence onto a connected component of the $\sSO(n,n+1)$ character variety $\Xx(\sSO(n,n+1))$.

 \begin{Theorem}\label{THM1}
	Let $\Gamma$ be the fundamental group of a closed surface $S$ of genus $g\geq2$ and let $\Xx(\sSO(n,n+1))$ be the $\sSO(n,n+1)$-character variety of $\Gamma$. 
	For each integer $d\in(0,n(2g-2)],$ there is a smooth connected component $\Xx_d(\sSO(n,n+1))$ of $\Xx(\sSO(n,n+1))$ which does not contain representations with compact Zariski closure. Furthermore, for each choice of Riemann surface structure $X$ on $S,$ the space $\Xx_d(\sSO(n,n+1))$ is diffeomorphic to the product 
	\[\Xx_d(\sSO(n,n+1))\cong\Ff_d\times \bigoplus\limits_{j=1}^{n-1} H^0(K^{2j})~,\] where $\Ff_d$ is the total space of a rank $d+(2n-1)(g-1)$ vector bundle over the symmetric product $\Sym^{n(2g-2)-d}(X)$ and $H^0(K^{2j})$ is the vector space of holomorphic differentials of degree $2j.$ 
\end{Theorem}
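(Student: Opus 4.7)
The plan is to upgrade the map $\widetilde\Psi_d$ from Lemma \ref{Lemma defining Higgs bundles in Md} to an open embedding of $\Ff_d\times\bigoplus_{j=1}^{n-1}H^0(K^{2j})$ into the smooth locus of $\Mm(\sSO(n,n+1))$, and then use the properness of the Hitchin fibration \eqref{EQ Hitchin Fibration} to conclude that the image is a connected component.

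First I would construct $\Ff_d$ as the geometric quotient $\widetilde\Ff_d/\C^*$ under the free $\C^*$-action $\lambda\cdot(M,\mu,\nu)=(M,\lambda\mu,\lambda^{-1}\nu)$ identified in Lemma \ref{Lemma Image in Md}; the action is free because $\mu\neq 0$. The map $(M,\mu,\nu)\mapsto \divv(\mu)$ descends to a projection $\pi_d:\Ff_d\to\Sym^{n(2g-2)-d}(X)$. Over a divisor $D$, the line bundle $M=K^n\Oo(-D)$ is determined, $\mu$ is determined up to scalar, and the fiber is identified with $H^0(MK^n)=H^0(K^{2n}\Oo(-D))$. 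Since $\deg(K^{2n}\Oo(-D))=n(2g-2)+d>2g-2$ for $d>0$ and $g\geq 2$, a Riemann--Roch computation yields $h^1=0$ and $h^0=d+(2n-1)(g-1)$, producing the claimed vector bundle structure on $\Ff_d$.

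Next, Lemmas \ref{Lemma defining Higgs bundles in Md} and \ref{Lemma Image in Md} together show that $\widetilde\Psi_d$ descends to an injective map
\[\Psi_d:\Ff_d\times\bigoplus_{j=1}^{n-1}H^0(K^{2j})\longrightarrow\Mm(\sSO(n,n+1))\]
whose image consists of stable Higgs bundles, and Corollary \ref{COR Md Smooth} places that image in the smooth locus. A Riemann--Roch count gives
\[\dim\Big(\Ff_d\times\bigoplus_{j=1}^{n-1}H^0(K^{2j})\Big)=n(2g-2)+(2n-1)(g-1)+(n-1)(2n-1)(g-1)=n(2n+1)(g-1),\]
which matches the expected dimension of $\Mm(\sSO(n,n+1))$ from \eqref{EQ Expected Dimension for SO(n,n+1)}. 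Hence $\Psi_d$ is an injective local diffeomorphism onto an open subset of the smooth locus.

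The main obstacle is showing that the image is also closed, so that it is an entire connected component. The plan is to prove that $\Psi_d$ is a proper map by composing with the Hitchin fibration. A direct computation of the invariants $\tr(\Phi^{2j})$ on the Higgs field \eqref{EQ Md Higgs field} shows that $h\circ\Psi_d$ sends $(M,[\mu],\nu,q_2,\ldots,q_{2n-2})$ to $(q_2,\ldots,q_{2n-2},c\,\mu\nu)$ for a nonzero constant $c$. I would then argue that this composite map is proper: the first $n-1$ coordinates are identity factors, the compactness of $\Pic^d(X)$ and $\Sym^{n(2g-2)-d}(X)$ handles the Picard and divisor degrees of freedom, and for bounded $\mu$ the relation $\mu\nu=q_{2n}$ together with the injectivity of the linear map $\nu\mapsto\mu\nu$ (valid whenever $\mu$ is a nonzero section in an integral domain) confines $\nu$ to a compact subset of the fiber $H^0(MK^n)$. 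Combining this with the properness of $h$ established in \cite{IntSystemFibration} yields the properness of $\Psi_d$, and hence the closedness of its image.

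Finally, because $\Ff_d\times\bigoplus_{j=1}^{n-1}H^0(K^{2j})$ is connected, its image is a single connected component of $\Mm(\sSO(n,n+1))$, and Theorem \ref{Nonabelian Hodge Correspondence} transports it to a connected component $\Xx_d(\sSO(n,n+1))$ of $\Xx(\sSO(n,n+1))$. The Higgs field \eqref{EQ Md Higgs field} is never identically zero, because of the constant $1$ entries on the subdiagonal, so Proposition \ref{Prop smaller zariski closure} rules out representations with compact Zariski closure in $\Xx_d(\sSO(n,n+1))$.
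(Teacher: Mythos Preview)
Your proposal is correct and follows essentially the same strategy as the paper: construct $\Ff_d=\widetilde\Ff_d/\C^*$, use Lemmas \ref{Lemma defining Higgs bundles in Md}, \ref{Lemma Image in Md} and Corollary \ref{COR Md Smooth} to get an injective map into the smooth locus, match dimensions via \eqref{EQ Expected Dimension for SO(n,n+1)} to get openness, and obtain closedness from the explicit form of $h\circ\Psi_d$ together with properness of the Hitchin fibration. Your Riemann--Roch verification of the fiber dimension and your phrasing of closedness as properness of $\Psi_d$ are slightly more explicit than the paper's divergent-sequence argument, but the content is the same.
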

\begin{proof}
Let $\widetilde\Ff_d$ be as in \eqref{EQ tildeFd DEF}. There is a free $\C^*$-action on $\widetilde\Ff_d$ given by 
\[\lambda\cdot(M,\mu,\nu)=(M,\lambda\mu,\lambda^{-1}\nu)~.\] 
Let $\Ff_d$ be the quotient, $\Ff_d=\widetilde\Ff_d/\C^*$.
By Lemma \ref{Lemma defining Higgs bundles in Md}, Lemma \ref{Lemma Image in Md} and Corollary \ref{COR Md Smooth}, there is a smooth map 
\[\Psi_d:\xymatrix{\Ff_d\times \bigoplus\limits_{j=1}^{n-1}H^0(K^{2j})\ar[r]&\Mm(\sSO(n,n+1))}\]
defined by 
\[\Psi_d([M,\mu,\nu], q_2,\cdots, q_{2n-2})= [\widetilde\Psi_d(M,\mu,\nu,q_2,\cdots,q_{2n-2})]\in\Mm(\sSO(n,n+1)).\]
which is a diffeomorphism onto its image.

Just as in Hitchin's proof of Theorem \ref{THM Md SO(1,2)}, there is a map from $\Ff_d$ to the $(n(2g-2)-d)^{th}$ symmetric product of $X$
defined by taking the projective class of the nonzero section $\mu\in H^0(M^{-1}K^n)\setminus\{0\}$ 
\[\pi_d:\xymatrix@R=0em{\Ff_d\ar[r]&\Sym^{n(2g-2)-d}(X)\\([M,\mu,\nu])\ar@{|->}[r]&[\mu]}~.\]

Given a divisor $D\in\Sym^{n(2g-2)-d}(X),$ the fiber $\pi_d^{-1}(D)$ is non-canonically identified with $H^0(\Oo(-D)\otimes K^n)\cong\C^{d+(2n-1)(g-1)}$, where $\Oo(-D)$ is the inverse of the line bundle associated to $D.$ 
Thus, the space $\Ff_d$ is a rank $d+(2n-1)(g-1)$-vector bundle over the compact space $\Sym^{n(2g-2)-d}(X).$

Let $(E,\Phi)$ be the $\sSL(2n+1,\C)$ Higgs bundle associated to the $\sSO(n,n+1)$ Higgs bundle $\widetilde\Psi_d(M,\mu,\nu,q_2,\cdots,q_{2n-2})$.  Let $h:\Mm(\sSL(2n+1,\C))\ra\bigoplus\limits_{j=2}^{2n+1}H^0(K^j)$ denote the Hitchin fibration defined by the basis of invariant polynomials $(p_1,\cdots,p_n)$ so that
\begin{equation}\label{EQ Hitchin fibration on Md}
p_j(\Phi)=\begin{dcases}
	q_{2j}&  1\leq j\leq n-1\\
	\mu\otimes\nu& j=n
\end{dcases}~.
  \end{equation}  

To show the image of $\Psi_d$ is closed, consider a divergent sequence $\{x_i\}$ in the image of $\Psi_d.$ Denote, the inverse image of $\{x_i\}$ by $\Psi_d^{-1}(x_i)=(y_i,q_2^i,\cdots,q_{2n-2}^i)$ for $y_i\in\Ff_d;$ this sequence diverges in $\Ff_d\times\bigoplus\limits_{j=1}^{n-1} H^0(K^{2j})$. 
Thus, either there exists $j$ so that $q_{2j}^i$ goes to infinity in $H^0(K^{2j})$ or, since $\Sym^{n(2g-2)-d}(X)$ is compact, there is a subsequence, $x_{i_k}$ so that $\pi_d(y_{i_k})$ converges to $y_\infty\in\Sym^{n(2g-2)-d}(X)$ and $y_{i_k}$ goes to infinity in the fiber direction. In either case, \eqref{EQ Hitchin fibration on Md} and the properness of the Hitchin fibration imply the sequence $\{x_i\}=\{\Psi_d(y_i,q_{2}^i,\cdots,q_{2n-2}^i)\}$ diverges in $\Mm(\sSO(n,n+1)).$ Thus, the image of $\Psi_d$ is a closed subset. 

By a simple calculation, the dimension of $\Ff_d\times\bigoplus\limits_{j=1}^{n-1}H^0(K^{2j})$ is the expected dimension of the moduli space $\Mm(\sSO(n,n+1)).$ 
Hence, since the $\Ff_d\times\bigoplus\limits_{j=1}^{n-1}H^0(K^{2j})$ is a manifold without boundary and the image of $\Psi_d$ is closed, we conclude that the image of $\Psi_d$ is also open.

We have established that for each integer $d\in(0,n(2g-2)]$ the image of $\Psi_d$ defines a smooth connected component $\Mm_d(\sSO(n,n+1))$ of $\Mm(\sSO(n,n+1))$. 
Recall that the correspondence between the $\sG$ Higgs bundle moduli space and the $\sG$ character variety is a diffeomorphism on the smooth locus.
Since, the connected components $\Mm_d(\sSO(n,n+1))$ are smooth, we conclude that for each integer $d\in(0,n(2g-2)]$ there is a smooth connected component $\Xx_d(\sSO(n,n+1))$ of the $\sSO(n,n+1)$-character variety which is diffeomorphic to $\Ff_d\times\bigoplus\limits_{j=1}^{n-1}H^0(K^{2j}).$ Finally, since the Higgs field is non-vanishing for Higgs bundles in $\Mm_d(\sSO(n,n+1)),$ no representation in $\Xx_d(\sSO(n,n+1))$ has compact Zariski closure by Proposition \ref{Prop smaller zariski closure}.
\end{proof}
\begin{Remark}
	For the maximal value $d=n(2g-2)$, the space $\Xx_{n(2g-2)}(\sSO(n,n+1))$ is the $\sSO(n,n+1)$-Hitchin component $\Hit(\sSO(n,n+1)).$
\end{Remark}

\begin{Corollary}
	The component $\Xx_d(\sSO(n,n+1))$ deformation retracts onto the symmetric product $\Sym^{n(2g-2)-d}(X).$ In particular, 
	\[H^*(\Xx_d(\sSO(n,n+1),\Z)\cong H^*(\Sym^{n(2g-2)-d}(X),\Z)~.\]
\end{Corollary}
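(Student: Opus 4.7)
The plan is to assemble an explicit strong deformation retraction using the product decomposition $\Xx_d(\sSO(n,n+1))\cong\Ff_d\times \bigoplus_{j=1}^{n-1} H^0(K^{2j})$ provided by Theorem \ref{THM1}, retracting each factor separately.

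First, the higher-differential factor $\bigoplus_{j=1}^{n-1} H^0(K^{2j})$ is a finite-dimensional complex vector space, and it contracts linearly to the origin via $(q_2,\ldots,q_{2n-2})\mapsto(tq_2,\ldots,tq_{2n-2})$ for $t\in[0,1]$. Second, recall from the proof of Theorem \ref{THM1} that $\Ff_d=\widetilde\Ff_d/\C^*$ is the total space of a rank $d+(2n-1)(g-1)$ vector bundle over $\Sym^{n(2g-2)-d}(X)$, with projection $[M,\mu,\nu]\mapsto\mathrm{div}(\mu)$. Its zero section $\{[M,\mu,0]\}$ maps bijectively onto $\Sym^{n(2g-2)-d}(X)$: an effective divisor $D$ of degree $n(2g-2)-d$ determines both the line bundle $M=K^n\otimes\Oo(-D)$ and the projective class of $\mu\in H^0(M^{-1}K^n)=H^0(\Oo(D))$. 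The fiberwise scaling $r_t([M,\mu,\nu])=[M,\mu,t\nu]$ descends to a well-defined map on the quotient $\Ff_d$ because it commutes with the $\C^*$-action $\lambda\cdot(M,\mu,\nu)=(M,\lambda\mu,\lambda^{-1}\nu)$, and it defines a strong deformation retraction of $\Ff_d$ onto its zero section.

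Combining the two homotopies yields a strong deformation retraction of $\Xx_d(\sSO(n,n+1))$ onto $\Sym^{n(2g-2)-d}(X)$, and the cohomology isomorphism then follows from the homotopy invariance of singular cohomology. There is no real obstacle to overcome: the argument is essentially a bookkeeping exercise once Theorem \ref{THM1} is in hand, and the only substantive verification is the $\C^*$-equivariance of the fiber-scaling, which is automatic from the opposite weights on $\mu$ and $\nu$.
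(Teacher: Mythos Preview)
Your proof is correct and matches the paper's approach: the paper states this corollary without proof, treating it as immediate from the description in Theorem \ref{THM1} of $\Xx_d(\sSO(n,n+1))$ as the product of a vector bundle $\Ff_d\to\Sym^{n(2g-2)-d}(X)$ with a vector space. You have simply written out the obvious retractions explicitly, including the verification that the fiberwise scaling descends through the $\C^*$-quotient.
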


Recall from \eqref{EQ Decomp SOnn+1 top inv} that the moduli space $\Mm(\sSO(n,n+1)$ decomposes into a disjoint union of spaces $\Mm^{a,b,c}(\sSO(n,n+1))$ where the isomorphism class of a Higgs bundle $(V,W,\eta)$ lies in $\Mm^{a,b,c}(\sSO(n,n+1))$ if and only if the first Stiefel-Whitney classes of $V$ and $W$ are given by 
\[\xymatrix{a=sw_1(V)=sw_1(W)~,&b=sw_2(V)&\text{and}&c=sw_2(W)}~.\]
Thus, we have  $\Mm_d(\sSO(n,n+1))\subset\Mm^{0,0,d\ \text{mod}\ 2}(\sSO(n,n+1)).$ 
Moreover, if $(V,W,\eta)$ is a polystable $\sSO(n,n+1)$ Higgs bundle, then the corresponding representation $\rho\in\Xx(\sSO(n,n+1))$ lifts to the split real form $\sSpin(n,n+1)\subset\sSpin(2n+1,\C)$ if and only the second Stiefel-Whitney classes of $V$ and $W$ are the same. 
\begin{Corollary}
A representation in the component $\Xx_d(\sSO(n,n+1))$ lifts to $\sSpin(n,n+1)$ if and only if $d=0\ mod\ 2.$
\end{Corollary}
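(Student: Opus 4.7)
The plan is to read off the topological invariants from the explicit description of Higgs bundles in $\Mm_d(\sSO(n,n+1))$ and then apply the lifting criterion stated just before the corollary. Recall that a polystable $\sSO(n,n+1)$ Higgs bundle $(V,W,\eta)$ gives a representation lifting to $\sSpin(n,n+1)$ precisely when $sw_2(V)=sw_2(W)$, and the paper has already identified the inclusion $\Mm_d(\sSO(n,n+1))\subset\Mm^{0,0,d\,\mathrm{mod}\,2}(\sSO(n,n+1))$. So the task is just to justify the claim $sw_2(V)=0$ and $sw_2(W)=d\,\mathrm{mod}\,2$ for the normal-form Higgs bundles \eqref{EQ Md Higgs field}, and then invoke the criterion.

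First I would compute $sw_2(V)$. With the orthogonal structure $Q_V$ in the proof of Lemma~\ref{Lemma defining Higgs bundles in Md}, the bundle $V$ is the orthogonal direct sum of hyperbolic planes $K^{n-1-2i}\oplus K^{2i+1-n}$ (together with a trivial factor $\Oo$ if $n$ is odd). Each such hyperbolic plane is an $\sO(2,\C)$ bundle of $sw_2$ equal to the degree of one summand modulo $2$; since every $K^{j}$ has even degree $j(2g-2)$, each hyperbolic plane has vanishing $sw_2$. The Whitney product formula then yields $sw_2(V)=0$.

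Next I would compute $sw_2(W)$ from the decomposition $W=M\oplus W_0\oplus M^{-1}$ with $W_0=K^{n-2}\oplus\cdots\oplus K^{2-n}$ used in \eqref{EQ W decomp}. The same argument applied to $W_0$ gives $sw_2(W_0)=0$, while the remaining hyperbolic plane $M\oplus M^{-1}$ has $sw_2=\deg(M)\,\mathrm{mod}\,2=d\,\mathrm{mod}\,2$. By the Whitney product formula, $sw_2(W)=d\,\mathrm{mod}\,2$. Since $sw_1(V)=sw_1(W)=0$, this also confirms the containment $\Mm_d(\sSO(n,n+1))\subset\Mm^{0,0,d\,\mathrm{mod}\,2}(\sSO(n,n+1))$ used in the statement.

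Finally, since the topological type of the underlying $\sS(\sO(n,\C)\times\sO(n+1,\C))$ bundle is locally constant on $\Mm(\sSO(n,n+1))$, the invariant $sw_2(V)-sw_2(W)\in H^2(X,\Z_2)$ is constant on the connected component $\Xx_d(\sSO(n,n+1))$. Applying the stated lifting criterion, a representation in $\Xx_d(\sSO(n,n+1))$ lifts to $\sSpin(n,n+1)$ if and only if $sw_2(V)=sw_2(W)$, i.e. $0=d\,\mathrm{mod}\,2$. There is no real obstacle to carrying this out: once the Stiefel--Whitney class of a hyperbolic plane $L\oplus L^{-1}$ is identified with $\deg(L)\,\mathrm{mod}\,2$, everything reduces to elementary parity bookkeeping on the explicit normal form \eqref{EQ Md Higgs field}.
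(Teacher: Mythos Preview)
Your proposal is correct and follows exactly the approach the paper intends: the paper states in the paragraph immediately preceding the corollary that $\Mm_d(\sSO(n,n+1))\subset\Mm^{0,0,d\ \text{mod}\ 2}(\sSO(n,n+1))$ and that the lifting criterion is $sw_2(V)=sw_2(W)$, leaving the corollary as an immediate consequence without further proof. You have simply supplied the Stiefel--Whitney class computations (via the hyperbolic-plane decomposition and Whitney product formula) that the paper treats as evident from the explicit form \eqref{EQ Md Higgs field}.
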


\section{The singular components $\Mm_0(\sSO(n,n+1))$ and $\Mm_{sw_1}^{sw_2}(\sSO(n,n+1))$}\label{singular components Section}
We now show the components of $\Mm(\sSO(1,2)) $ from Theorems \ref{THM M0 homotopy type} and \ref{THM Msw1 SO(1,2)} also generalize to $\Mm(\sSO(n,n+1)$. These components are more difficult to describe because they are singular. 

Consider the space $\widetilde\Ff_0$ defined by 
 	\begin{equation}\label{EQ tildeF0 DEF}
 	\xymatrix@=.5em{\widetilde\Ff_0=\{(M,\mu,\nu)\ |\ M\in\Pic^0(X),\ \mu\in H^0(M^{-1}K^n),\ \nu\in H^0(MK^n)\ \}~.}
 	\end{equation}
The group $\sO(2,\C)$ is isomorphic to the group of $2\times2$ matrices generated by $\smtrx{\lambda&0\\0&\lambda^{-1}}$ and $\smtrx{0&1\\1&0}$ for $\lambda\in\C^*.$ There is a natural action of $\sO(2,\C)$ on $\widetilde\Ff_0$ given by:
\begin{equation}\label{EQ O2 action on F0}
	\smtrx{\lambda&\\&\lambda^{-1}}\cdot(M,\mu,\nu)=(M,\lambda^{-1}\mu,\lambda\nu)\ \ \ \ \ \ \text{and}\ \ \ \ \ \smtrx{0&1\\1&0}\cdot(M,\mu,\nu)=
	(M^{-1},\nu,\mu)~.
\end{equation}


\begin{Theorem}\label{THM2}
Let $\Gamma$ be the fundamental group of a closed surface $S$ of genus $g\geq2$ and let $\Xx(\sSO(n,n+1))$ be the $\sSO(n,n+1)$-character variety of $\Gamma$. For each $n\geq2,$ there is a connected component $\Xx_0(\sSO(n,n+1))$ of $\Xx(\sSO(n,n+1))$ which does not contain representations with compact Zariski closure. Furthermore, for each Riemann surface structure on $S,$ the space $\Xx_0(\sSO(n,n+1))$ is homeomorphic to
\[\Xx_0(\sSO(n,n+1))\cong\Ff_0\times\bigoplus\limits_{j=1}^{n-1}H^0(K^{2j})~,\]
where $\Ff_0$ is the GIT quotient $\widetilde\Ff_0\big\slash\big\slash\sO(2,\C)$ of the $\sO(2,\C)$-space $\widetilde\Ff_0$ from \eqref{EQ tildeF0 DEF} and $H^0(K^{2j})$ is the vector space of holomorphic differentials of degree $2j$. 
\end{Theorem}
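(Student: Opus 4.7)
The plan is to imitate the proof of Theorem \ref{THM1}, replacing the smooth parameter space $\widetilde\Ff_d$ by $\widetilde\Ff_0$ and the free $\C^*$-action by the $\sO(2,\C)$-action \eqref{EQ O2 action on F0}. First I would define
\[
\widetilde\Psi_0:\widetilde\Ff_0\times\bigoplus_{j=1}^{n-1}H^0(K^{2j})\longrightarrow\{\text{polystable $\sSO(n,n+1)$ Higgs bundles}\}
\]
by exactly the same formula \eqref{EQ Md Higgs field} used for $d>0$. On the open locus where $(\mu,\nu)\neq(0,0)$, the argument of Lemma \ref{Lemma defining Higgs bundles in Md} still produces a \emph{stable} $\sSL(2n+1,\C)$ Higgs bundle, so these are smooth points. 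At the locus $\mu=\nu=0$ the underlying $\sSL(2n+1,\C)$ Higgs bundle is only \emph{polystable}: it is the orthogonal direct sum of the $\sSO(n-1,n)$-Hitchin representative built out of $(K^{n-2},\dots,K^{2-n},1,q_2,\dots)$ and the degree-zero orthogonal bundle $(M\oplus M^{-1},0)$ with trivial Higgs field. Polystability ensures that $\widetilde\Psi_0$ lands in $\Mm(\sSO(n,n+1))$ throughout.

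Next I would establish the $\sO(2,\C)$-orbit version of Lemma \ref{Lemma Image in Md}. The matrix computation there goes through verbatim away from the top-left/bottom-right block of $W=M\oplus W_0\oplus M^{-1}$, but, as flagged in Remark \ref{Remark Open and degM 0}, at $\deg M=0$ the argument no longer forces the off-diagonal entries $b_n, c_0$ between $M$ and $M^{-1}$ to vanish; these are precisely the entries that realize the swap $\smtrx{0&1\\1&0}\in\sO(2,\C)$. Together with the diagonal $\C^*$-rescalings, this shows that two data points are gauge-equivalent iff the $q_{2j}$'s agree and the triples $(M,\mu,\nu)$ lie in the same $\sO(2,\C)$-orbit. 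Hence $\widetilde\Psi_0$ descends to an injective continuous map
\[
\Psi_0:\Ff_0\times\bigoplus_{j=1}^{n-1}H^0(K^{2j})\longrightarrow \Mm(\sSO(n,n+1)).
\]
Closedness of the image follows as in Theorem \ref{THM1} from the properness of the $\sSL(2n+1,\C)$-Hitchin fibration: using $p_j(\Phi)=q_{2j}$ for $j<n$ and $p_n(\Phi)=\mu\otimes\nu$, a divergent sequence in $\Ff_0\times\bigoplus H^0(K^{2j})$ — noting that the base $\Pic^0(X)/\sO(2,\C)$ is compact — either makes some $q_{2j}$ blow up or makes $\mu\otimes\nu$ blow up, in either case forcing the images to diverge in $\Mm(\sSO(n,n+1))$.

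The main obstacle is openness, since the image of $\Psi_0$ meets the strictly polystable locus and the implicit function argument of Theorem \ref{THM1} no longer applies. Here I would use the Kuranishi local model \eqref{EQ: local kuranishi}. The crucial input is the vanishing of $\HH^2(C^\bullet(V,W,\eta))$ at the singular points, which is not covered by Proposition \ref{Prop: H2 vanish of stable} because the associated $\sSL(2n+1,\C)$ Higgs bundle is not stable when $\mu=\nu=0$; this is precisely the content of Lemma \ref{Lemma H2=0} advertised in the introduction to Section \ref{singular components Section}, and verifying it by a direct deformation-theoretic calculation on the cyclic model \eqref{EQ Md Higgs field} is the hard technical step. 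Granting $\HH^2=0$, a neighborhood of the strictly polystable point $[\widetilde\Psi_0(M,0,0,0,\ldots,0)]$ in $\Mm(\sSO(n,n+1))$ is identified with $\HH^1\big/\!\!\big/\Aut(V,W,\eta)$, and the automorphism group at such a point is $\sO(2,\C)\times\Zz(\sS(\sO(n-1,\C)\times\sO(n,\C)))$. A direct count against the expected dimension \eqref{EQ Expected Dimension for SO(n,n+1)} shows that the dimension of $\Ff_0\times\bigoplus H^0(K^{2j})$ agrees with the local model, so $\Psi_0$ is a local homeomorphism onto its image; combined with closedness this proves the image is a whole connected component $\Mm_0(\sSO(n,n+1))$.

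Finally, Theorem \ref{Nonabelian Hodge Correspondence} transports this description to the character variety, giving $\Xx_0(\sSO(n,n+1))\cong\Ff_0\times\bigoplus H^0(K^{2j})$. Since the Higgs field \eqref{EQ Md Higgs field} always has the nontrivial subdiagonal 1's of the $\sSO(n-1,n)$-Hitchin section, it never vanishes, so by Proposition \ref{Prop smaller zariski closure} no representation in $\Xx_0(\sSO(n,n+1))$ has compact Zariski closure. The statement that the Higgs bundles reduce to $\sSO_0(n,n+1)$ follows from the identities $\Lambda^nV=\Oo$ and $\Lambda^{n+1}W=\Lambda^{n+1}(M\oplus W_0\oplus M^{-1})=\Oo$.
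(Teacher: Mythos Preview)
Your overall strategy matches the paper's, but there are two genuine gaps.

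First, the stability claim on the locus $(\mu,\nu)\neq(0,0)$ is false. When $\deg M=0$ and exactly one of $\mu,\nu$ vanishes---say $\nu=0$, $\mu\neq 0$---the line subbundle $M^{-1}\subset W$ is $\Phi$-invariant of degree zero but does not split off, so the associated $\sSL(2n+1,\C)$ Higgs bundle is semistable but \emph{not} polystable. The argument of Lemma~\ref{Lemma defining Higgs bundles in Md} genuinely uses $\deg M>0$ at exactly this step. The correct polystable locus in $\widetilde\Ff_0$ is $\widetilde\Ff_0^{ps}=\{\mu=0 \iff \nu=0\}$, which the paper identifies (Lemma~\ref{Lemma Closed O2 orbits}) with the closed-orbit locus for the $\sO(2,\C)$-action; this is why the GIT quotient is the right parameter space, but your assertion that $\widetilde\Psi_0$ lands in $\Mm(\sSO(n,n+1))$ \emph{throughout} $\widetilde\Ff_0$ is incorrect.

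Second, and more seriously, your openness argument at the strictly polystable points is incomplete. A dimension count does not show $\Psi_0$ is open at a singular point: knowing that $\dim\HH^1$ equals the dimension of the domain does not imply that the local Kuranishi model $\HH^1\big\slash\big\slash\Aut$ is homeomorphic to a neighborhood in $\Ff_0\times\bigoplus H^0(K^{2j})$. One must actually compute $\HH^1$ and identify the $\Aut$-action. The paper carries this out (Lemmas~\ref{Lemma HH1 decomp} and~\ref{Lemma image psi0 closed}): at $(M,0,0,0,\dots,0)$ one finds
\[\HH^1\cong H^1(\Oo)\times H^0(MK^n)\times H^0(M^{-1}K^n)\times\bigoplus\nolimits_{j} H^0(K^{2j})~,\]
and the $\Aut$-action (which is $\sO(2,\C)$ when $M^2=\Oo$ and only $\sSO(2,\C)$ otherwise---not the group you wrote) is then matched explicitly with the $\sO(2,\C)$-action on a local chart of $\widetilde\Ff_0$, using that $H^1(\Oo)$ is the tangent space to $\Pic^0(X)$ at $M$. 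Without this identification you cannot rule out deformation directions in $\Mm(\sSO(n,n+1))$ escaping the image of $\Psi_0$.
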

\begin{Corollary}
	Since $\Ff_0$ deformation retracts onto $\Pic^0(X)/\Z_2,$ the connected component $\Xx_0(\sSO(n,n+1))$ is homotopic to the quotient of $(S^1)^{2g}$ by the $\Z_2$ action given by inversion. In particular, its rational cohomology is given by 
	\[H^j(\Xx_0(\sSO(n,n+1)),\Q)=\begin{dcases}
	H^{j}((S^1)^{2g},\Q)& \text{if\ $j$\ is\ even~}\\
	0&\text{otherwise}~.
\end{dcases}\]
\end{Corollary}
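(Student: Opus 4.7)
The plan is to prove the three assertions of the corollary in order: the retraction of $\Ff_0$ onto $\Pic^0(X)/\Z_2$, the resulting homotopy equivalence for $\Xx_0(\sSO(n,n+1))$, and the rational cohomology computation. For the retraction, I would define the scaling homotopy $H_t(M,\mu,\nu)=(M,t\mu,t\nu)$ on $\widetilde\Ff_0$ for $t\in[0,1]$. A direct check shows $H_t$ is $\sO(2,\C)$-equivariant: commutativity with the diagonal element $\smtrx{\lambda&0\\0&\lambda^{-1}}$ is immediate since $t$ is a scalar, and for the swap $\smtrx{0&1\\1&0}$ both compositions produce $(M^{-1},t\nu,t\mu)$. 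By the functoriality of affine GIT under equivariant polynomial maps of reductive group actions, $H_t$ descends to a homotopy on $\Ff_0=\widetilde\Ff_0//\sO(2,\C)$ between the identity at $t=1$ and a retraction at $t=0$ onto the image of the zero section $\{(M,0,0)\,:\,M\in\Pic^0(X)\}$. On this zero section the diagonal $\C^*$ acts trivially while the swap sends $M$ to $M^{-1}$, so its image in $\Ff_0$ is exactly $\Pic^0(X)/\Z_2$; since this subspace is pointwise fixed throughout the homotopy, the retraction is strong.

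Next, Theorem \ref{THM2} identifies $\Xx_0(\sSO(n,n+1))$ with the product of $\Ff_0$ and the contractible vector space $\bigoplus_{j=1}^{n-1}H^0(K^{2j})$. Combined with the previous step this yields the homotopy equivalence $\Xx_0(\sSO(n,n+1))\simeq\Pic^0(X)/\Z_2$. Because $\Pic^0(X)$ is diffeomorphic as a real Lie group to the torus $(S^1)^{2g}$, and the holomorphic inversion $M\mapsto M^{-1}$ corresponds under this diffeomorphism to the group inversion $x\mapsto-x$, we obtain the first claim of the corollary.

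For the rational cohomology, I would invoke the transfer isomorphism, which applies since $|\Z_2|=2$ is invertible in $\Q$, giving $H^*((S^1)^{2g}/\Z_2,\Q)\cong H^*((S^1)^{2g},\Q)^{\Z_2}$. Setting $V=H^1((S^1)^{2g},\Q)\cong\Q^{2g}$ and using $H^j((S^1)^{2g},\Q)\cong\Lambda^jV$, the $(-1)$-action of inversion on $V$ induces the sign $(-1)^j$ on $\Lambda^jV$. Hence the $\Z_2$-invariants vanish in odd degrees and recover all of $H^j((S^1)^{2g},\Q)$ in even degrees, which is exactly the stated formula. The only point I expect to need more than routine care is checking that the scaling homotopy truly descends to the categorical GIT quotient rather than merely to the topological orbit space; but this reduces to the equivariance calculation together with the standard fact that an equivariant morphism of affine $G$-varieties induces a continuous (indeed algebraic) map on the affine GIT quotients, so no further obstruction arises.
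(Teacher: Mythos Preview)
Your proof is correct and follows the same approach the paper intends. The paper does not supply a separate proof for this corollary; it treats the statement as immediate from Theorem~\ref{THM2}, with the retraction of $\Ff_0$ onto $\Pic^0(X)/\Z_2$ understood to come from the same scaling homotopy $(M,\mu,\nu)\mapsto(M,t\mu,t\nu)$ that the paper already uses explicitly in the proof of Theorem~\ref{THM M0 homotopy type} for the $n=1$ case. Your added care about the descent to the GIT quotient and the transfer argument for the rational cohomology simply fills in details the paper leaves to the reader.
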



For $sw_1\in H^1(S,\Z_2)\setminus\{0\}$, let $X_{sw_1}\to X$ be the associated orientation double cover. Denote the covering involution by $\iota$ and set 
\[\Prym(X_{sw_1},X)=\{M\in\Pic^0(X_{sw_1})\ |\ \iota^* M=M^{-1}\}~.\]
As in \eqref{EQ prym decomp}, $\Prym(X_{sw_1},X)$ has two connected components $\Prym^{sw_2}(X_{sw_1},X)$ which are labeled by an invariant $sw_2\in H^2(X,\Z_2)$. 
 
Consider the following space 
\begin{equation}
	\label{eq: Fsw1sw2}\Ff_{sw_1}^{sw_2}=\{(M,\mu): M\in\Prym^{sw_2}(X_{sw_1},X)\ \text{and}\ \mu\in H^0(M^{-1}K_{X_{sw_1}}^n)\}~.
\end{equation}
For $n>1,$ $H^0(M^{-1}K^n_{X_sw_1})= \C^{2(2n-1)(g-1)};$ thus, $\Ff_{sw_1}^{sw_2}$ is a vector bundle. 
The group $\Z_2\oplus\Z_2$ acts on $\widetilde\Ff_{sw_1}^{sw_2}$ by
\begin{equation}
	\label{eq Z2 action on Fsw1}
	\xymatrix{(M,\mu)\to(M,-\mu)&\text{and}&(M,\mu)\to(\iota^*M,\iota^*\mu)}~.
\end{equation}
The quotient space $\Ff_{sw_1}^{sw_2}/\Z_2\oplus\Z_2$ is an orbifold. Here the orbifold points correspond to pairs $(M,\mu)$ with $M= M^{-1}$ and $\iota^*\mu=\pm\mu.$
\begin{Theorem}\label{THM3}
Let $\Gamma$ be the fundamental group of a closed surface $S$ of genus $g\geq2$ and let $\Xx(\sSO(n,n+1))$ be the $\sSO(n,n+1)$-character variety of $\Gamma$. For each $n\geq2$ and each $(sw_1,sw_2)\in (H^1(X,\Z_2)\setminus\{0\})\times H^2(X,\Z_2)$, there is a connected component $\Xx^{sw_2}_{sw_1}(\sSO(n,n+1))$ of $\Xx(\sSO(n,n+1))$ which does not contain representations with compact Zariski closure. Furthermore, for each Riemann surface structure $X$ on $S,$ the space $\Xx_{sw_1}^{sw_2}(\sSO(n,n+1))$ is a smooth orbifold diffeomorphic to
		\[\Ff^{sw_2}_{sw_1}/(\Z_2\oplus\Z_2)\times \bigoplus\limits_{j=1}^{n-1}H^0(K^{2j}_X)\] where $\Ff^{sw_2}_{sw_1}\ra \Prym^{sw_2}(X_{sw_1},X)$ is the rank $(4n-2)(2g-2)$ vector bundle from \eqref{eq: Fsw1sw2}, and the $\Z_2\oplus\Z_2$ action is given by \eqref{eq Z2 action on Fsw1}
\end{Theorem}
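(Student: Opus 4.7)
My plan is to follow the strategy used in the proofs of Theorems \ref{THM1} and \ref{THM2}, replacing the line bundle $M$ on $X$ by the orthogonal rank-two bundle $W_{sw_1}=\pi_*M$ attached to $M\in\Prym^{sw_2}(X_{sw_1},X)$. First I construct a map
\[
\widetilde\Psi_{sw_1}^{sw_2}\colon\Ff_{sw_1}^{sw_2}\times\bigoplus_{j=1}^{n-1} H^0(K^{2j})\longrightarrow\{\text{polystable }\sSO(n,n+1)\text{-Higgs bundles}\}
\]
sending $(M,\mu,q_2,\dots,q_{2n-2})$ to a Higgs bundle $(V,W,\eta)$ whose $W$-component is $W=W_{sw_1}\oplus K^{n-2}\oplus\dots\oplus K^{2-n}$, whose $V$-component is an orthogonal bundle built with the principal-$\sSL(2)$ shape but adjusted so that $\det V=\det W$, and whose Higgs field has the Hitchin shape \eqref{EQ Md Higgs field} with subdiagonal $1$'s and differentials $q_{2j}$, except that the extremal $(\mu,\nu)$ entries are replaced by a single section $\tilde\mu\in H^0(W_{sw_1}\otimes K^n)$ obtained from $\mu\in H^0(M^{-1}K_{X_{sw_1}}^n)$ via the projection formula isomorphism $H^0(X,W_{sw_1}\otimes K^n)\cong H^0(X_{sw_1},M^{-1}K_{X_{sw_1}}^n)$.

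Next I establish polystability in parallel with Lemma \ref{Lemma defining Higgs bundles in Md}. At the $\C^*$-fixed point where $\mu=0$ and $q_\bullet=0$, the associated $\sSL(2n+1,\C)$ Higgs bundle splits $\Phi$-orthogonally as $W_{sw_1}\oplus E_0$, with $W_{sw_1}$ a polystable rank-two orthogonal bundle of degree zero carrying zero Higgs field and $E_0$ a stable Hitchin-type Higgs bundle on the remaining line-bundle summands; each summand has slope zero so the total bundle is polystable. Polystability then propagates to the whole image under a scaling analogue of \eqref{EQ scaling gauge action}, taking into account that on $W_{sw_1}$ only $\pm 1\in\sSO(2,\C)$ acts by orthogonal scalars.

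The main obstacle is the gauge-orbit classification, which identifies the gauge equivalence classes in the image with $\Z_2\oplus\Z_2$-orbits under the action \eqref{eq Z2 action on Fsw1}. Adapting the block-triangular argument from Lemma \ref{Lemma Image in Md}, the analysis reduces to the orthogonal automorphisms of $W_{sw_1}$, namely $\sO(2,\C)$, which has two connected components. Scalar automorphisms in $\sSO(2,\C)$ that preserve the orthogonal form are restricted to $\pm 1$ and account for the involution $(M,\mu)\mapsto(M,-\mu)$; the non-identity component of $\sO(2,\C)$ swaps the two sheets of $\pi^*W_{sw_1}=M\oplus M^{-1}$ and, at the level of data, induces $(M,\mu)\mapsto(\iota^*M,\iota^*\mu)=(M^{-1},\iota^*\mu)$. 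Together these two involutions generate exactly the $\Z_2\oplus\Z_2$-action in \eqref{eq Z2 action on Fsw1}, while the block-triangular bookkeeping rules out further gauge equivalences. Consequently the orbifold stabilizers on $\Ff_{sw_1}^{sw_2}/(\Z_2\oplus\Z_2)$---supported at points with $M$ of order two or $\iota^*\mu=\pm\mu$---match the automorphism groups of the corresponding Higgs bundles.

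Finally I identify the image as a full connected component. Openness follows from $\HH^2(C^\bullet(V,W,\eta))=0$---proved as in the forthcoming Lemma \ref{Lemma H2=0} using that at the fixed locus $W_{sw_1}$ is the unique polystable destabilizing $\Phi$-invariant orthogonal summand---combined with the Kuranishi description \eqref{EQ: local kuranishi} and a dimension count matching the expected moduli dimension $\dim\sSO(n,n+1)\cdot(2g-2)$. Closedness follows from properness of the Hitchin fibration, exactly as in Theorem \ref{THM1}: the coefficients $p_j(\Phi)\in H^0(K^{2j})$ recover the $q_{2j}$ for $j<n$, while the top coefficient $p_n(\Phi)\in H^0(K^{2n})$ is proportional to $\pi_*(\mu\otimes\iota^*\mu)$, so divergence in $\Ff_{sw_1}^{sw_2}\times\bigoplus H^0(K^{2j})$ forces divergence in the Hitchin base and hence in $\Mm(\sSO(n,n+1))$. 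The image is therefore a connected component of the moduli space. Nonabelian Hodge (Theorem \ref{Nonabelian Hodge Correspondence}) transports this to $\Xx_{sw_1}^{sw_2}(\sSO(n,n+1))$, and the non-vanishing subdiagonal $1$'s in $\eta$ combined with Proposition \ref{Prop smaller zariski closure} rule out representations with compact Zariski closure.
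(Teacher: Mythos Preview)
Your proposal is correct and follows essentially the same route as the paper. The one substantive methodological difference is in the gauge-orbit classification: the paper (Lemma~\ref{Lemma Defining Higgs field sw1not0}) pulls the Higgs bundle back along $\pi:X_{sw_1}\to X$, where $\pi^*\pi_*M\cong M\oplus M^{-1}$ splits, so that the analysis reduces verbatim to the $d=0$ computation (Lemma~\ref{Lemma Gauge group fixing im Psi0}) restricted to $\iota^*$-invariant gauge transformations; the $\iota^*$-invariance condition $\lambda=\lambda^{-1}$ is exactly what cuts the $\sO(2,\C)$ automorphisms down to the finite group $\Z_2\oplus\Z_2$. You instead work directly on $X$ with the non-split bundle $\pi_*M$ and its holomorphic orthogonal automorphism group, which is equally valid but requires redoing the block analysis rather than simply quoting it. A second minor difference: since the gauge-orbit analysis already shows that every Higgs bundle in the image has finite automorphism group, the paper declares each point a smooth or orbifold point and invokes the openness argument of Lemma~\ref{Lemma: s is open closure is ps} directly, without a separate $\HH^2$ computation; your proposal to verify $\HH^2=0$ in the style of Lemma~\ref{Lemma H2=0} is not wrong, and in fact supplies the missing justification at the locus $\mu=0$, where the associated $\sSL(2n+1,\C)$ Higgs bundle is only strictly polystable and Proposition~\ref{Prop: H2 vanish of stable} does not literally apply.
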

 Recall from Remark \ref{Remark prym variety torsor} that $\Prym^{sw_2}(X_{sw_1},X)$ is topologically a $(2g-2)$-dimensional torus. 
\begin{Corollary}
	Since $\Ff_{sw_1}^{sw_2}/\Z_2\oplus\Z_2$ deformation retracts onto $\Prym^{sw_2}(X)/\Z_2,$ the connected component $\Xx_{sw_1}^{sw_2}(\sSO(n,n+1))$ is homotopic to the quotient of $(S^1)^{2g-2}$ by the $\Z_2$ action given by inversion. In particular, its rational cohomology is 
	\[H^j(\Xx_{sw_1}^{sw_2}(\sSO_0(n,n+1)),\Q)=\begin{dcases}
	H^{j}((S^1)^{2g-2},\Q)& \text{if\ $j$\ is\ even~}\\
	0&\text{otherwise}
\end{dcases}~.\]
\end{Corollary}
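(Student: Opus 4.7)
The plan is to combine the product decomposition of $\Xx_{sw_1}^{sw_2}(\sSO(n,n+1))$ supplied by Theorem \ref{THM3} with a linear deformation retraction on the vector bundle factor, reducing the homotopy problem to a computation on the Prym torus. The only nontrivial cohomological input is the standard fact that for a free abelian group $A$ of rank $r$, the inversion involution on the torus $T=A\otimes_{\Z}(\R/\Z)\cong(S^1)^r$ acts by $-1$ on $H^1(T,\Q)$, hence by $(-1)^j$ on $H^j(T,\Q)=\Lambda^j H^1(T,\Q)$.

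First, by Theorem \ref{THM3}, for any chosen Riemann surface structure $X$,
\[
\Xx_{sw_1}^{sw_2}(\sSO(n,n+1))\cong\bigl(\Ff_{sw_1}^{sw_2}/(\Z_2\oplus\Z_2)\bigr)\times\bigoplus_{j=1}^{n-1}H^0(K_X^{2j}).
\]
The second factor is a finite dimensional complex vector space, hence contractible, so the inclusion of the first factor is a homotopy equivalence. Next, scalar multiplication $(M,\mu)\mapsto(M,t\mu)$ for $t\in[0,1]$ is compatible with both generators of the $\Z_2\oplus\Z_2$ action from \eqref{eq Z2 action on Fsw1}, so it descends to a deformation retraction of $\Ff_{sw_1}^{sw_2}/(\Z_2\oplus\Z_2)$ onto the zero section $\Prym^{sw_2}(X_{sw_1},X)/(\Z_2\oplus\Z_2)$. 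On the zero section the generator $(M,\mu)\mapsto(M,-\mu)$ acts trivially, while $(M,\mu)\mapsto(\iota^*M,\iota^*\mu)$ restricts to $M\mapsto\iota^*M=M^{-1}$, so the quotient is $\Prym^{sw_2}(X_{sw_1},X)/\Z_2$ with $\Z_2$ acting by inversion.

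By Remark \ref{Remark prym variety torsor}, $\Prym^0(X_{sw_1},X)$ is a $(g-1)$-dimensional complex abelian variety, hence diffeomorphic to $(S^1)^{2g-2}$, and $\Prym^1(X_{sw_1},X)$ is a torsor over it, also diffeomorphic to $(S^1)^{2g-2}$. Under any such diffeomorphism the involution $M\mapsto M^{-1}$ corresponds to inversion on the torus (after a choice of origin in the torsor case; on $\Prym^1$ the fixed set is nonempty, so a fixed point can be chosen as origin to make this identification equivariant). Combining the two steps yields the homotopy equivalence
\[
\Xx_{sw_1}^{sw_2}(\sSO(n,n+1))\simeq (S^1)^{2g-2}/\Z_2,
\]
where $\Z_2$ acts by inversion, which is the first claim.

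For the cohomological statement, since $\Z_2$ is finite and we work with rational coefficients, the transfer argument gives
\[
H^*\bigl((S^1)^{2g-2}/\Z_2,\Q\bigr)\cong H^*\bigl((S^1)^{2g-2},\Q\bigr)^{\Z_2}.
\]
The inversion involution acts as $-1$ on $H^1((S^1)^{2g-2},\Q)$ and therefore as $(-1)^j$ on the exterior power $H^j((S^1)^{2g-2},\Q)=\Lambda^j H^1$. The invariants are thus all of $H^j$ when $j$ is even and $0$ when $j$ is odd, giving exactly the stated formula. The main (and only) delicate point is confirming that on the non-identity component $\Prym^1(X_{sw_1},X)$ the inversion $M\mapsto M^{-1}$ really is conjugate to the standard inversion on a torus; this follows once one checks that this involution has a fixed point, which is standard (e.g.\ tensoring with a square root of the trivial bundle in the appropriate component).
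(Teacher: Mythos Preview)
Your proof is correct and follows exactly the approach the paper intends: the corollary in the paper is stated without a detailed proof, with the deformation retraction $\Ff_{sw_1}^{sw_2}/(\Z_2\oplus\Z_2)\to\Prym^{sw_2}(X_{sw_1},X)/\Z_2$ built into the statement; you have simply written out the argument in full. One small remark: your justification for the existence of a fixed point of $M\mapsto M^{-1}$ on the torsor $\Prym^1$ is slightly informal, but it is indeed automatic that the affine involution $N\mapsto N_0 N^{-1}$ on a connected real torus has fixed points (any square root of $N_0$), and in any case the cohomological conclusion only requires that translations act trivially on $H^*$, which they do.
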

\begin{Remark}
	For $n=2,$ these results were proven in \cite{SO23LabourieConj}. In the $n=2$ case, the extra invariants arise from topological invariants of the Cayley partner of a maximal $\sSO_0(2,3)$ Higgs bundle. Since $\sSO(n,n+1)$ is not a group of Hermitian type for $n\geq3,$ the proofs of the above theorems require a more technical analysis.
\end{Remark}
\begin{Corollary}\label{COR: connected comp}
	For $n\geq 3,$ the character variety $\Xx(\sSO(n,n+1))$ of a genus $g\geq 2$ closed surface has at least $2^{2g+2}+2^{2g+1}-1+n(2g-2)$ connected components.
\end{Corollary}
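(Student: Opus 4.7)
The plan is to assemble a lower bound by combining two disjoint collections of connected components: one produced from the topological classification of $\sSO(n,n+1)$-bundles (these will contain representations with compact Zariski closure), and a second produced by the three theorems just proved (these explicitly avoid representations with compact Zariski closure). Since the defining property of the second collection excludes the defining property of the first, disjointness is automatic and the whole argument reduces to counting.

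First I would set up the ``compact Zariski closure'' collection. Recall from \eqref{EQ Decomp SOnn+1 top inv} that for $n\geq3,$
\[\Bun_X(\sSO(n,n+1))\cong H^1(X,\Z_2)\times H^2(X,\Z_2)\times H^2(X,\Z_2),\]
which has cardinality $2^{2g}\cdot 2\cdot 2=2^{2g+2}.$ Each of these topological types $(a,b,c)$ is realized by some smooth principal $\sS(\sO(n)\times\sO(n+1))$-bundle, and equipping such a bundle with the zero Higgs field gives a polystable $\sSO(n,n+1)$-Higgs bundle in $\Mm^{a,b,c}(\sSO(n,n+1))$; in particular this moduli space is nonempty and so contributes at least one connected component. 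Applying Theorem \ref{Nonabelian Hodge Correspondence} we obtain $2^{2g+2}$ connected components of $\Xx(\sSO(n,n+1))$, each containing a representation that factors through the maximal compact subgroup $\sS(\sO(n)\times\sO(n+1))$ and hence has compact Zariski closure by Proposition \ref{Prop smaller zariski closure}.

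Next, I would tally the components produced by the three theorems. Theorem \ref{THM1} gives $n(2g-2)$ pairwise distinct smooth components $\Xx_d(\sSO(n,n+1))$ for $d\in(0,n(2g-2)]$; Theorem \ref{THM2} gives one further component $\Xx_0(\sSO(n,n+1))$; Theorem \ref{THM3} gives $(|H^1(X,\Z_2)|-1)\cdot |H^2(X,\Z_2)|=(2^{2g}-1)\cdot 2=2^{2g+1}-2$ components $\Xx^{sw_2}_{sw_1}(\sSO(n,n+1))$. Each of these is asserted in its theorem to be a connected component containing no representation with compact Zariski closure, so none of them coincides with any component in the first collection. Mutual distinctness among the three families is then read off from the underlying topological labels: the Theorem \ref{THM3} components sit in topological classes $(a,b,c)$ with $a=sw_1\neq 0$, while the Theorem \ref{THM1} and Theorem \ref{THM2} components all have $a=0$, and the families $\{\Xx_d\}$ and $\{\Xx_0\}$ are already separated by the theorems themselves.

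Finally, adding the two collections gives
\[2^{2g+2}+\bigl(n(2g-2)+1+(2^{2g+1}-2)\bigr)=2^{2g+2}+2^{2g+1}-1+n(2g-2),\]
which is the desired bound. The only subtle ingredient is the nonemptiness of every $\Mm^{a,b,c}(\sSO(n,n+1))$; this would be the main (minor) obstacle to address carefully, and it is handled by exhibiting, for each $(a,b,c)$, an explicit orthogonal direct sum of line bundles realizing the prescribed Stiefel--Whitney invariants on the $\sO(n)$ and $\sO(n+1)$ factors, then taking the Higgs field to be zero.
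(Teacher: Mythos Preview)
Your proposal is correct and follows essentially the same line as the paper's proof: assemble $2^{2g+2}$ components containing representations with compact Zariski closure from the topological classification of $\sSO(n,n+1)$-bundles (the paper cites \cite{Oliveira_GarciaPrada_2016} for nonemptiness rather than constructing explicit bundles as you propose), add the $n(2g-2)+1+2(2^{2g}-1)$ components supplied by Theorems \ref{THM1}--\ref{THM3}, and separate the two collections by the compact-Zariski-closure criterion.

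One small correction: your argument that the Theorem \ref{THM3} components lie in topological classes with $a=sw_1\neq 0$ is only valid when $n$ is odd; the paper notes just after Theorem \ref{THM3} that for $n$ even these representations factor through $\sSO_0(n,n+1)$, so $a=0$ in that case. Mutual distinctness among the three new families is more robustly read off from the explicit Higgs-bundle data (the bundle $W$ in the Theorem \ref{THM3} construction contains the rank-two orthogonal summand $\pi_*M$ with $sw_1(\pi_*M)=sw_1\neq0$, which never occurs in the other two families) or from the pairwise distinct homotopy types recorded in the corollaries following each theorem. The paper's own proof simply takes this distinctness as implicit in the constructions.
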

\begin{proof}
	The topological invariants of a flat $\sSO(n,n+1)$ bundle are a first Stiefel-Whitney class and two second Stiefel-Whitney classes. This gives $2^{2g+2}$ topological invariants. 
	For each value of these invariants, there is a connected component of the character variety $\Xx(\sSO(n,n+1))$ which contains representations $\rho:\Gamma\ra\sSO(n,n+1)$ with compact Zariski closure \cite{Oliveira_GarciaPrada_2016}. 
	The $n(2g-2)$ components from Theorem \ref{THM1}, the connected component from Theorem \ref{THM2} and the $2(2^{2g}-1)$ components from Theorem \ref{THM3} do not contain any representations with compact Zariski closures.
	This gives $2^{2g+2}+2^{2g+1}-1+n(2g-2)$ connected components.
\end{proof}
\begin{Remark}
	In \cite{SOpqStabilityAndMinima}, the connected components of $\Xx(\sSO(n,m))$ are computed. In particular, we show there are exactly $2^{2g+2}+2^{2g+1}-1+n(2g-2)$ connected components of the character variety $\Xx(\sSO(n,n+1)).$ 
\end{Remark}
\subsection{Proof of Theorem \ref{THM2}}
To prove Theorem \ref{THM2} we start with a sequence of lemmas.

\begin{Lemma}
	\label{Lemma Closed O2 orbits} Consider the space $\widetilde\Ff_0$ from \eqref{EQ tildeF0 DEF} with the $\sO(2,\C)$ action given by \eqref{EQ O2 action on F0}. Define the subspace $\widetilde\Ff_0^{ps}\subset \widetilde\Ff_0$ by
	\begin{equation}\label{EQ F0ps}
		\xymatrix{\widetilde\Ff^{ps}_0=\{(M,\mu,\nu)\in\widetilde\Ff_0\ |\ \mu=0\ \text{if and only if\ }\nu=0 \}}~.
	\end{equation}
	For each $x\in\widetilde\Ff_0$, the orbit $\sO(2,\C)\cdot x$ is closed if and only if $x\in\widetilde\Ff_0^{ps}.$ In particular,
	\[\Ff_0= \widetilde\Ff_0//\sO(2,\C) = \widetilde\Ff_0^{ps}/\sO(2,\C)~.\]
\end{Lemma}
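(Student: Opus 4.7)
The plan is to verify the characterization of closed $\sO(2,\C)$-orbits by a case analysis based on the vanishing pattern of $\mu$ and $\nu$, and then deduce the GIT statement from the standard fact that a GIT quotient by a reductive group parameterizes closed orbits. The projection $\widetilde\Ff_0\to\Pic^0(X)$ is $\sO(2,\C)$-equivariant for the $\Z_2$-action on $\Pic^0(X)$ given by inversion, so each $\sO(2,\C)$-orbit sits inside the preimage of a single unordered pair $\{M, M^{-1}\}$. Hence closedness can be checked inside the finite-dimensional linear space $H^0(M^{-1}K^n)\oplus H^0(MK^n)$ (together with its twin fiber over $M^{-1}$), where the identity component $\C^*\subset\sO(2,\C)$ acts diagonally with weights $(-1,+1)$.

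First I would dispatch the two easy cases. If $\mu=\nu=0$, then the $\sO(2,\C)$-orbit of $(M,0,0)$ is the finite set $\{(M,0,0),(M^{-1},0,0)\}$, which is automatically closed. Conversely, if $(M,\mu,\nu)\notin\widetilde\Ff_0^{ps}$, say $\mu\neq0$ and $\nu=0$, then the one-parameter family $\lambda\mapsto(M,\lambda^{-1}\mu,0)$ has limit $(M,0,0)$ as $\lambda\to\infty$. The point $(M,0,0)$ does not lie in the orbit, since the full $\sO(2,\C)$-orbit is $\{(M,\lambda^{-1}\mu,0)\}\cup\{(M^{-1},0,\lambda\mu):\lambda\in\C^*\}$, none of whose members equal $(M,0,0)$ or $(M^{-1},0,0)$. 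So the orbit is not closed; the case $\nu\neq0$, $\mu=0$ is symmetric.

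The main step is showing closedness when both $\mu$ and $\nu$ are nonzero. Here I would argue directly: suppose $g_i\cdot(M,\mu,\nu)$ converges. By passing to a subsequence assume all $g_i$ lie in the identity component, so $g_i=\mathrm{diag}(\lambda_i,\lambda_i^{-1})$ and the orbit points are $(M,\lambda_i^{-1}\mu,\lambda_i\nu)$. If $\lambda_i^{-1}\mu\to\alpha$ and $\lambda_i\nu\to\beta$ in the respective finite-dimensional vector spaces, then since $\mu\neq0$, the vectors $\lambda_i^{-1}\mu$ all lie on the punctured line $\C^*\cdot\mu$, so either $\alpha\in\C^*\cdot\mu$ (in which case $\lambda_i\to\lambda_\infty\in\C^*$, giving $\beta=\lambda_\infty\nu$ and the limit is in the orbit) or $\alpha=0$. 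The second alternative forces $\lambda_i\to\infty$, hence $\lambda_i\nu\to\infty$ (because $\nu\neq0$), contradicting convergence. The symmetric case in the other component of $\sO(2,\C)$ is treated identically, which handles the remaining subsequences and proves closedness.

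For the GIT statement, I would invoke the standard principle that, for a reductive group acting on an affine space, each GIT fiber contains a unique closed orbit, and every orbit has a closed orbit in its closure. Applied fiberwise over $\Pic^0(X)/\Z_2$ to the linear action of $\sO(2,\C)$ described above, the closed orbits are precisely those in $\widetilde\Ff_0^{ps}$ by the preceding cases, while every non-closed orbit has the corresponding $(M,0,0)$ in its closure. This gives the identification $\Ff_0=\widetilde\Ff_0/\!/\sO(2,\C)=\widetilde\Ff_0^{ps}/\sO(2,\C)$. The principal obstacle I foresee is pedantic rather than deep, namely justifying the use of affine GIT in a setting where $\widetilde\Ff_0$ is a family over $\Pic^0(X)$ rather than a single affine variety; I would address this by passing to a finite étale cover of $\Pic^0(X)/\Z_2$ and applying GIT fiberwise, or equivalently by observing that $\widetilde\Ff_0\to\Pic^0(X)/\Z_2$ is a vector bundle whose total space admits a geometric invariant-theoretic quotient by the reductive linear action of $\sO(2,\C)$.
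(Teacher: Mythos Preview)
Your proposal is correct and follows essentially the same case analysis as the paper's proof: the non-closed case is handled by letting a one-parameter subgroup degenerate $(M,\mu,0)$ to $(M,0,0)$, and the closed cases are asserted as clear. You supply considerably more detail than the paper does for the case $\mu\neq0$, $\nu\neq0$ (the paper simply declares this closed) and for the GIT identification, but the underlying argument is the same.
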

\begin{proof}
	If $x\in \widetilde\Ff_0\setminus\widetilde\Ff_0^{ps}$, then either $\mu=0$ or $\nu=0$ but not both. Suppose $\mu\neq0$ and $\nu=0.$ The orbit through $x$ is not closed since, for $\lambda\in \R^>0$, we have 
	\[\lim\limits_{\lambda\to0}\smtrx{\lambda&0\\0&\lambda^{-1}}\cdot(M,\mu,0)=(M,0,0)~.\]
	It is clear that the orbit through a point $(M,0,0)\in\widetilde\Ff_0^{ps}$ is closed. Similarly, if $(M,\mu,\nu)\in\widetilde\Ff_0^{ps}$ with $\mu\neq0$ and $\nu\neq0$, then the $\sO(2,\C)$-orbit through $(M,\mu,\nu)$ is closed. 
\end{proof}
It is straight forward to compute the $\sO(2,\C)$-stabilizers of points of $\widetilde\Ff_0^{ps}.$
\begin{Lemma}\label{Lemma O2 stabilizer}
	Let $\widetilde \Ff_0^{ps}$ be the space from \eqref{EQ F0ps}, the $\sO(2,\C)$-stabilizer of a point $(M,\mu,\nu)\in\widetilde\Ff_0^{ps}$ is
	\[\Stab_{\sO(2,\C)}(M,\mu,\nu)=\begin{dcases}
\sO(2,\C)&\text{if}\ M=M^{-1}\ \text{and\ } \mu=\nu=0\\
\sSO(2,\C)& \text{if}\ M\neq M^{-1}\ \text{and\ } \mu=\nu=0\\
\Z_2 & \text{if}\ M=M^{-1}\,\ \mu\neq0\ \text{and\ } \mu=\lambda\nu\ \text{for\ } \lambda\in\C^*\\
\{1\} & \text{otherwise}
	\end{dcases}~.\]
\end{Lemma}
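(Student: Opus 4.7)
The plan is to parameterize $\sO(2,\C)$ by its two connected components and read off the stabilizer equations directly from the formulas in \eqref{EQ O2 action on F0}. Every element of $\sO(2,\C)$ is of the form $D_\lambda=\smtrx{\lambda&0\\0&\lambda^{-1}}$ or $A_\lambda=\smtrx{0&\lambda\\\lambda^{-1}&0}=D_\lambda\smtrx{0&1\\1&0}$ for some $\lambda\in\C^*$. From the definition of the action, one computes
\[D_\lambda\cdot(M,\mu,\nu)=(M,\lambda^{-1}\mu,\lambda\nu)\qquad\text{and}\qquad A_\lambda\cdot(M,\mu,\nu)=(M^{-1},\lambda^{-1}\nu,\lambda\mu).\]
So the stabilizer computation reduces to solving, in each case, the three equations obtained by equating components.

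First I would handle the case $\mu=\nu=0$, where the stabilizer conditions on $D_\lambda$ are vacuous on $(\mu,\nu)$ and force nothing on $\lambda$, while the element $A_\lambda$ stabilizes if and only if $M=M^{-1}$. This immediately gives the full group $\sO(2,\C)$ when $M=M^{-1}$ and exactly the identity component $\sSO(2,\C)$ otherwise. Next I would address the case $\mu\neq 0$ (so $\nu\neq 0$ by the definition of $\widetilde\Ff_0^{ps}$ in \eqref{EQ F0ps}). For $D_\lambda$, the equations $\lambda^{-1}\mu=\mu$ and $\lambda\nu=\nu$ combined with $\mu,\nu\neq0$ force $\lambda=1$, so the only diagonal stabilizer is the identity. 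For $A_\lambda$, the equation $M^{-1}=M$ is necessary, and then the remaining equations $\lambda^{-1}\nu=\mu$ and $\lambda\mu=\nu$ are equivalent and solvable in $\lambda\in\C^*$ precisely when $\mu$ and $\nu$ are $\C^*$-proportional as sections, in which case the solution $\lambda$ is unique.

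Combining these observations, the stabilizer is $\{I\}\cup\{A_\lambda\}\cong\Z_2$ exactly in the case $M=M^{-1}$ and $\mu=\lambda\nu$ for a (unique) $\lambda\in\C^*$, and it is trivial otherwise. This matches all four rows of the tabulated answer. I do not anticipate any real obstacle: the entire argument is a short case analysis using the explicit formulas for the action, and the only subtlety is remembering that on $\widetilde\Ff_0^{ps}$ the vanishing of $\mu$ is equivalent to the vanishing of $\nu$, so the branches of the case analysis are indeed indexed only by $\{\mu=\nu=0\}$ versus $\{\mu\neq0,\nu\neq0\}$ together with the condition $M=M^{-1}$ or not.
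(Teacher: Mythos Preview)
Your proposal is correct and is precisely the direct case-by-case computation that the paper has in mind; the paper itself gives no explicit proof of this lemma, only the remark that ``It is straight forward to compute the $\sO(2,\C)$-stabilizers of points of $\widetilde\Ff_0^{ps}$.'' Your argument fills in exactly that straightforward verification.
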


For each point of $(M,\mu,\nu,q_2,\cdots q_{2p-2})\in \widetilde\Ff_0\times\bigoplus\limits_{j=1}^{n-1}H^0(K^{2j})$ define the $\sSO(n,n+1)$-Higgs bundle 
 $\widetilde\Psi_0(M,\mu,\nu,q_2,\cdots q_{2p-2})=(V,W,\eta)$  by 
 	\[\xymatrix@=.5em{V=K^{n-1}\oplus K^{n-3}\oplus\cdots K^{3-n}\oplus K^{1-n}~,\\W=M\oplus K^{n-2}\oplus K^{n-4}\oplus\cdots\oplus K^{4-n}\oplus K^{2-n}\oplus M^{-1}~,}\]
 	\begin{equation}
 		\label{EQ M0 Higgs field}
 		\eta=\mtrx{0&0&0&\cdots&0&\nu\\
			   1&q_2&q_4&\cdots&q_{2n-4}&q_{2n-2}\\
			   0&1&q_2&q_4&\cdots&q_{2n-4}\\
			   0&0&1&q_2&\cdots&q_{2n-6}\\
			   &&\ddots&\ddots&\ddots&\\
			   &&&0&1&q_2\\
			   &&&&0&\mu}:V\longrightarrow W\otimes K~.
 	\end{equation} 

\begin{Lemma}\label{Lemma Gauge group fixing im Psi0}
For two points $(M,\mu,\nu,q_2,\cdots,q_{2n-2})$ and $(M',\mu',\nu',q_2',\cdots,q_{2n-2}')$ in $\widetilde\Ff_0\times\bigoplus\limits_{j=1}^{n-1}H^0(K^{2j})$ the associated $\sSO(n,n+1)$ Higgs bundles from \eqref{EQ M0 Higgs field} are gauge equivalent if and only if $q_{2j}=q_{2j}'$ for all $j$ and $(M,\mu,\nu)$ and $(M',\mu',\nu')$ are in the same $\sO(2,\C)$-orbit.
 \end{Lemma}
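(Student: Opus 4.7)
The plan is to adapt the proof of Lemma~\ref{Lemma Image in Md} to the degree-zero setting, tracking carefully where the hypothesis $\deg M > 0$ was used and identifying the extra gauge freedom that appears when this hypothesis is dropped. That extra freedom will turn out to be precisely the $\sO(2,\C)$-action on $\widetilde\Ff_0$ from \eqref{EQ O2 action on F0}.

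Given a hypothetical $\sS(\sO(n,\C)\times \sO(n+1,\C))$-gauge transformation $(g_V, g_W)$ between $\widetilde\Psi_0(M,\mu,\nu,q_\bullet)$ and $\widetilde\Psi_0(M',\mu',\nu',q'_\bullet)$, I would decompose $W = M \oplus W_0 \oplus M^{-1}$ and $W' = M' \oplus W_0 \oplus (M')^{-1}$ with $W_0 = K^{n-2} \oplus \cdots \oplus K^{2-n}$, and expand $g_W$ as a $(1,n-1,1)$-block matrix with corners $b_0, c_0, b_n, c_n$, off-middle rows/columns $A, B, C, D$, and middle block $g_{W_0}$. The arguments from the proof of Lemma~\ref{Lemma Image in Md} showing that $g_V$ and $g_{W_0}$ are upper triangular, that $A = D = 0$ from the top and bottom rows of $g_W \eta g_V^{-1} = \eta'$, and that $B = C = 0$ from the off-diagonal blocks of $g_W^T Q_W g_W = Q_W$, all transfer verbatim since they do not use $\deg M$.

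The essential divergence lies in the corners: where in the $d>0$ case $b_n$ vanished because $\deg M^{-2} < 0$, the remaining content of the orthogonality equation in degree zero is
\[ b_0 b_n = 0, \qquad c_0 c_n = 0, \qquad b_0 c_n + b_n c_0 = 1, \]
and combined with holomorphicity of the corners as sections of degree-zero line bundles, these split into exactly two cases: either $b_n = c_0 = 0$ with $c_n = b_0^{-1} \in \C^*$ (forcing $M' \cong M$), or $b_0 = c_n = 0$ with $c_0 = b_n^{-1} \in \C^*$ (forcing $M' \cong M^{-1}$). In either case the middle row of the Higgs-field equation reads $g_{W_0} \eta_0 = \eta'_0 g_V$, so by the smoothness of the $\sSO(n-1,n)$-Hitchin component (Remark~\ref{REMARK Aut of Hit comp is trivial}), $q'_j = q_j$ for all $j$ and $g_V, g_{W_0} \in \{\pm\mathrm{Id}\}$; the determinant constraint $\det(g_V)\det(g_W) = 1$ then pins $g_V = g_{W_0} = \mathrm{Id}$ in the first case and $g_V = g_{W_0} = -\mathrm{Id}$ in the second.

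A short computation of $g_W \eta g_V^{-1}$ then identifies the first case with the $\sSO(2,\C)$-scaling orbit and the second case with the swap-component orbit of the $\sO(2,\C)$-action on $(M,\mu,\nu)$; the converse direction — that every element of $\sO(2,\C)$ is realized by an honest gauge transformation — is immediate from writing down the block matrices explicitly. The main obstacle is the orthogonality bookkeeping that yields the three corner relations and the careful sign-matching needed to pair the two cases with the two components of $\sO(2,\C)$. The case $M \cong M^{-1}$ (i.e., $M$ is a $2$-torsion point of $\Pic^0(X)$) requires no separate treatment: both cases may then occur simultaneously, in agreement with the enlarged stabilizer already recorded in Lemma~\ref{Lemma O2 stabilizer}.
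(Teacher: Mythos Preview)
Your proposal is correct and follows essentially the same strategy as the paper: invoke the block analysis of Lemma~\ref{Lemma Image in Md} to kill $A,B,C,D$, reduce to the three corner relations $b_0b_n=c_0c_n=0$ and $b_0c_n+b_nc_0=1$, split into the diagonal and swap cases, and identify these with the two components of the $\sO(2,\C)$-action. Your determinant bookkeeping in the swap case, giving $g_V=g_{W_0}=-\mathrm{Id}$ uniformly in $n$, is in fact correct where the paper's parity-dependent formula $(-1)^{n+1}\mathrm{Id}$ is not (for odd $n$ the paper's transformation has $\det(g_V)\det(g_W)=-1$), though the final identification with $\sO(2,\C)$ is unaffected either way.
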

\begin{proof}
	As in the proof of Lemma \ref{Lemma Image in Md}, for $(M,\mu,\nu,q_2,\cdots,q_{2n-2})\in\widetilde\Ff_0\times\bigoplus\limits_{j=1}^{n-1}H^0(K^{2j})$, let $(V,W,\eta)$ be the $\sSO(n,n+1)$ Higgs bundle defined by $\widetilde\Psi_0(M,\mu,\nu,q_2,\cdots,q_{2n-2})$. It is given by \eqref{EQ M0 Higgs field}. 
 	Write 
 	\begin{equation}\label{EQ W decomp 0}
 		\xymatrix{W=M\oplus W_0\oplus M^{-1}&\text{for}&W_0=K^{n-2}\oplus\cdots\oplus K^{2-n}}.
 	\end{equation}
 With respect to the decomposition \eqref{EQ W decomp 0}, a gauge transformation $g_W$ of $W$ and the Higgs field decomposes as
 	\begin{equation}\label{EQ decomposition gW}
 		g_W=\mtrx{b_0&A&c_0\\
 				B&g_{W_0}&C\\
 				b_n&D&c_n}\ \ \ \ \ \ \ \ \ \ \text{and}\ \ \ \ \ \ \ \ \ \eta=\mtrx{\alpha\\\eta_0\\\beta}~.
 	\end{equation}

 Recall that in the proof of Lemma \ref{Lemma Image in Md}, we only used the positivity assumption on the degree of the line bundle $M$ to show that $c_0$ and $b_n$ where zero. Thus, using the same arguments as the proof of Lemma \ref{Lemma Image in Md}, we have 
\begin{equation}
 	\label{EQ d=0 gauge Trans}
 	g_W=\mtrx{b_0&0&c_0\\
 				0& g_{W_0}&0\\
 				b_n&0&c_n} ~.
 \end{equation} 
Using \eqref{EQ b0bn=0}, we have $b_0b_n=0$, $c_0cn=0$, $c_0b_n+b_0c_n=1$.  Thus, either $b_n=0,$ $c_0=0$ and $b_0=c_n^{-1}$ or $b_0=0,$ $c_n=0$ and $b_n=c_0^{-1}$.
Furthermore, by Hitchin's parameterization of $\Hit(\sSO(n,n-1))$, we have 
\[\xymatrix{(g_{W_0},g_V)=(Id_{W_0},Id_V)&\text{or}&(g_{W_0},g_V)=(-Id_{W_0},-Id_V)}~.\] 
However, since $det(g_W)\cdot det(g_V)=1,$ we must have 
\begin{itemize}
	\item $g_V=Id_V$ and $g_{W_0}=Id_{W_0}$ if $b_n=0,c_0=0$ and $b_0=c_n^{-1}$,
	\item $g_V=Id_V$ and $g_{W_0}=Id_{W_0}$ if n is odd and $b_0=0,c_n=0$ and $b_n=c_0^{-1}$,
	\item $g_V=-Id_V$ and $g_{W_0}=-Id_{W_0}$ if n is even and $b_0=0,c_n=0$ and $b_n=c_0^{-1}$.
\end{itemize}

To conclude, the subgroup $\Gg_{\Psi_0}$ of the $\sS(\sO(n,\C)\times\sO(n+1,\C))$ gauge group which preserves the image of $\widetilde\Psi_0$ is given by gauge transformations $(g_W,g_V)$ of the form
\[
	\left(\mtrx{\lambda&0&0\\0&Id_{W_0}&0\\0&0&\lambda^{-1}} ,Id_V\right) \ \ \text{and}\ \ \left(\mtrx{0&0&\lambda\\0&(-1)^{n+1}Id_{W_0}&0\\\lambda^{-1}&0&0},(-1)^{n+1}Id_V\right)
~,\]
where $\lambda\in\C^*.$ In both cases, the group $\Gg_{\Psi_0}$ is isomorphic to $\sO(2,\C).$
\end{proof}
Define an $\sO(2,\C)$ action on $\widetilde\Ff_0\times \bigoplus\limits_{j=1}^{n-1}H^0(K^{2j})$ by trivially extending the action on $\widetilde\Ff_0$ from \eqref{EQ O2 action on F0}. 
We will now show that the Higgs bundle $\widetilde\Psi_0(x)$ is polystable if and only if the $x\in\widetilde\Ff_0^{ps}.$

\begin{Lemma}\label{Lemma Polystable image of Psi0}
For $x\in\widetilde\Ff_0\times\bigoplus\limits_{j=1}^{n-1} H^0(K^{2j}),$ the Higgs bundle $\widetilde\Psi_0(x)$ from \eqref{EQ M0 Higgs field} is polystable if and only if $x\in \widetilde\Ff_0^{ps}\times\bigoplus\limits_{j=1}^{n-1} H^0(K^{2j})$. The Higgs bundle $\widetilde\Psi_0(x)$ is stable if and only if $x\in\widetilde\Ff_0^s\times\bigoplus\limits_{j=1}^{n-1} H^0(K^{2j})$, where $\widetilde\Ff^{s}_0=\{(M,\mu,\nu)\in\widetilde\Ff_0^{ps}\ |\ \mu\neq0~\text{and}~\nu\neq0 \}.$
\end{Lemma}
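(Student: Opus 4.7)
\emph{The plan is as follows.} By Proposition \ref{Prop GHiggs polystable}, polystability of $\widetilde\Psi_0(x)$ is equivalent to polystability of the associated $\sSL(2n+1,\C)$ Higgs bundle $(E,\Phi)=(V\oplus W,\smtrx{0&\eta^*\\\eta&0})$; stability in the sense of Definition \ref{DEF Stability of G Higgs bundle} then amounts to polystability together with finiteness of $\Aut(\widetilde\Psi_0(x))$. By Lemma \ref{Lemma Gauge group fixing im Psi0} this automorphism group coincides with the $\sO(2,\C)$-stabilizer of $(M,\mu,\nu)$, and Lemma \ref{Lemma O2 stabilizer} shows this stabilizer is finite precisely when both $\mu$ and $\nu$ are nonzero. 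Hence, once polystability of $(E,\Phi)$ is settled in each of the three cases below, the stability statement of the lemma follows immediately, and the real work is in the polystability analysis.

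If $\mu=\nu=0$, the Higgs field of \eqref{EQ M0 Higgs field} becomes block diagonal with respect to $W=M\oplus W_0\oplus M^{-1}$, where $W_0=K^{n-2}\oplus\cdots\oplus K^{2-n}$, and $(V,W,\eta)$ decomposes as $(M,0)\oplus(V,W_0,\eta_0)\oplus(M^{-1},0)$. The middle factor is an $\sSO(n-1,n)$-Hitchin Higgs bundle analogous to \eqref{EQ Hitchin component Higgs field SO(n,n+1)}, hence stable by the analogue of Remark \ref{REMARK Aut of Hit comp is trivial}, while $M$ and $M^{-1}$ are degree-zero line bundles with vanishing Higgs field and so polystable; thus $(E,\Phi)$ is polystable. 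If instead exactly one of $\mu,\nu$ vanishes, the $\sO(2,\C)$ symmetry of \eqref{EQ O2 action on F0} reduces the argument to $\mu\neq 0,\ \nu=0$: then $F=V\oplus W_0\oplus M^{-1}\subset E$ is $\Phi$-invariant (the entries of $\eta$ and $\eta^*$ that would take $F$ out of itself all carry coefficient $\nu$), with $\deg F=0$, but the natural complement $M$ is not $\Phi$-invariant since $\eta^*|_M$ is multiplication by $-\mu\neq 0$, and a short extension-class computation rules out any other $\Phi$-invariant complement. Consequently $(E,\Phi)$ is not polystable.

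The core case is $\mu,\nu\neq 0$. A fibrewise computation shows that $\Phi$ has generic rank $2n$ and identifies $\ker\Phi$ with the saturation of the line subsheaf of $M\oplus M^{-1}\subset W$ cut out by the single relation $\mu b_1+\nu b_{n+1}=0$; denote the resulting line subbundle by $L\subset E$. Using the defining identity $\eta^*=-Q_V^{-1}\eta^TQ_W$ one checks that $\langle\eta(v),\ell\rangle_{Q_W}=-v^TQ_V\eta^*(\ell)=0$ for every $v\in V$ and $\ell\in L=\ker\eta^*|_W$; hence $\eta(V)\subset L^\perp\otimes K$, and the subbundle $L'=V\oplus L^\perp\subset E$ is $\Phi$-invariant. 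A direct calculation with the hyperbolic pairing on $M\oplus M^{-1}$ shows that $L$ is $Q_W$-non-isotropic whenever $\mu\neq 0$, so $L\cap L^\perp=0$ and one obtains a direct-sum decomposition $(E,\Phi)=(L,0)\oplus(L',\Phi|_{L'})$ into Higgs subbundles of degree zero. Polystability of $L'$, and hence of $(E,\Phi)$, is then deduced from the structure of the reduced spectral curve $\lambda^{2n}+q_2\lambda^{2n-2}+\cdots+q_{2n-2}\lambda^2+\mu\nu=0$: for generic parameters this cover is irreducible, making $L'$ stable, and at special parameter values the decomposition is refined according to factorizations of the reduced characteristic polynomial.

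The principal technical obstacle is that the degree of $L$ is not uniformly negative. At generic parameters $\deg L=-n(2g-2)<0$, which alone would give $\sSL$-stability of $(E,\Phi)$; but at the sublocus where $M^2=\mathcal{O}$ and $\mu,\nu$ become proportional (after the identification $M\cong M^{-1}$) the degree of $L$ jumps to $0$, and this is precisely the sublocus on which the $\sO(2,\C)$-stabilizer of $(M,\mu,\nu)$ jumps from trivial to $\Z_2$. At those points the orthogonal-complement construction $E=L\oplus L'$ is essential for exhibiting a polystable decomposition, since the naive degree-of-subbundle argument fails; verifying polystability of $L'$ uniformly across $\widetilde\Ff_0^s\times\bigoplus_j H^0(K^{2j})$---including the loci where the reduced spectral curve degenerates---is the main substance of the proof.
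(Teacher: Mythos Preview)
Your treatment of the cases $\mu=\nu=0$ and of exactly one of $\mu,\nu$ vanishing is fine and matches the paper. The core case $\mu,\nu\neq 0$, however, has a genuine gap.

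The direct-sum decomposition $(E,\Phi)=(L,0)\oplus(L',\Phi|_{L'})$ does not exist as a splitting of vector bundles unless $\deg L=0$. Indeed, $L^\perp\subset M\oplus M^{-1}$ is the kernel of the surjection $M\oplus M^{-1}\to L^*$ induced by $Q_W$, so $\deg L^\perp=\deg L$. When $\deg L<0$ (the generic situation you describe), the inclusion $L\oplus L^\perp\hookrightarrow M\oplus M^{-1}$ has torsion cokernel of length $-2\deg L>0$, so $E\neq L\oplus L'$. Consequently the sentence ``$\deg L=-n(2g-2)<0$, which alone would give $\sSL$-stability of $(E,\Phi)$'' is unjustified: you have exhibited one $\Phi$-invariant subbundle of negative degree, but stability requires checking \emph{all} invariant subbundles, and your $L'$ (with $\deg L'=\deg L<0$) is not a complement. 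The decomposition \emph{does} work on the sublocus $M^2=\mathcal O$, $\mu=\lambda\nu$, exactly because there $\deg L=0$ forces $Q_W|_L$ to be nondegenerate everywhere; this is the content of Proposition~\ref{PROP: Zariski closures}, not a proof of polystability for general $x$. Finally, the appeal to the spectral curve (``at special parameter values the decomposition is refined according to factorizations'') is not an argument: reducibility of the spectral curve does not by itself produce a polystable splitting, and you would still have to analyse each degeneration.

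The paper avoids all of this by a much lighter mechanism. It first treats only the cyclic locus $q_2=\cdots=q_{2n-2}=0$, where the Higgs bundle \eqref{EQ cyclic d=0 higgs bundle} is graded and (by Simpson's observation for cyclic Higgs bundles) invariant subbundles must respect the grading; this reduces the check to subbundles of the single rank-two summand $M\oplus M^{-1}$, which has no positive-degree subbundles, and whose only isotropic line subbundles $M,M^{-1}$ are not $\Phi$-invariant since $\mu,\nu\neq 0$. Any degree-zero invariant line subbundle is then automatically orthogonal and splits off, giving polystability; finiteness of the automorphism group (Lemmas~\ref{Lemma O2 stabilizer} and~\ref{Lemma Gauge group fixing im Psi0}) yields stability. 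The extension to arbitrary $(q_2,\dots,q_{2n-2})$ is then automatic: stability is open, and the gauge transformations \eqref{EQ scaling gauges} show $(V,W,\lambda\eta)$ is isomorphic to $\widetilde\Psi_0(M,\mu,\lambda^{2n}\nu,\lambda^2 q_2,\dots,\lambda^{2n-2}q_{2n-2})$, so scaling $\lambda\to 0$ connects every point of $\widetilde\Ff_0^s\times\bigoplus_j H^0(K^{2j})$ to the cyclic locus through stable Higgs bundles. You should replace your kernel/orthogonal-complement argument by this reduction.
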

\begin{proof}
Let $(V,W,\eta)$ be the Higgs bundle $\widetilde\Psi_0(M,\mu,\nu,q_2,\cdots,q_{2n-2})$, it is given by \eqref{EQ M0 Higgs field}.
First note that if $\mu=0$, then $M$ is an $\Phi$-invariant degree zero subbundle of the associated $\sSL(2n+1,\C)$ Higgs bundle $(E,\Phi)=\left(V\oplus W,\smtrx{0&\eta^*\\\eta&0}\right)$. 
Similarly, if $\nu=0,$ then $M^{-1}$ is an degree zero $\Phi$-invariant subbundle. If $x\in(\widetilde \Ff_0\setminus\widetilde \Ff_0^{ps})\times\bigoplus\limits_{j=1}^{n-1} H^0(K^{2j})$, then the Higgs bundle $(E,\Phi)$ has a degree zero invariant subbundle but is not polystable. Thus, the associated Higgs bundle $\widetilde\Psi_0(x)$ is not polystable. 

For $x\in\widetilde\Ff_0^{ps}\setminus\widetilde\Ff^{s}\times\bigoplus\limits_{j=1}^{n-1} H^0(K^{2j})$, the $\sSL(2n+1,\C)$ Higgs bundle $(E,\Phi)$ associated to $\widetilde\Psi_0(x)$ is the direct sum of the polystable $\sSL(2,\C)$ Higgs bundle $(M\oplus M^{-1},\Phi=0)$ and a Higgs bundle in $\Hit(\sSO(n-1,n))$. Thus, $\widetilde\Psi_0(x)$ is polystable. By Lemma \ref{Lemma O2 stabilizer}, the automorphism group of such a Higgs bundle is not finite, hence, $\widetilde\Psi_0(x)$ is polystable but not stable. 

The rest of the proof is similar to Lemmas \ref{Lemma defining Higgs bundles in Md} and \ref{Lemma Image in Md}.  The $\sSL(2n+1,\C)$ Higgs bundle associated to $x=(M,\mu,\nu,0,\cdots,0)\in\widetilde\Ff_0^{s}\times\bigoplus\limits_{j=1}^{n-1}H^0(K^{2j})$ can be written schematically as 
	\begin{equation}
		\label{EQ cyclic d=0 higgs bundle}
		\xymatrix@R=1em{K^{n-1}\ar[r]^1&K^{n-2}\ar[r]^1&\cdots\ar[r]^1&K^{2-n}\ar[r]&K^{1-n}\ar[dll]^{(\nu\ \ \mu)^T}\\&&M\oplus M^{-1}\ar[ull]^{(\mu\ \ \nu)}&&}
	\end{equation}
Such a Higgs bundle is not fixed by the $\C^*$-action, but is fixed by the subgroup of $2n^{th}$-roots of unity and the above summands are each eigen-bundles of a holomorphic gauge transformation. For such {\em cyclic} Higgs bundles, checking polystability reduces to checking for destabilizing subbundles in each bundle in the chain (see Proposition 6.3 \cite{KatzMiddleInvCyclicHiggs}). 
	Since none of the line bundles in the chain are invariant, it suffices to check $(M\oplus M^{-1}).$
	As $M$ and $M^{-1}$ both have degree zero, $M\oplus M^{-1}$ has no positive degree subbundles. 
	
	Note that the only isotropic subbundles of $M\oplus M^{-1}$ are the summands $M$ and $M^{-1}$. Since neither of these summands are invariant, if $N\subset M\oplus M^{-1}$ is a degree zero invariant line subbundle, then $N$ is an orthogonal subbundle. Thus, we can take its orthogonal complement and split the Higgs bundle as a stable $\sSL(2n,\C)$ Higgs bundle plus an invariant degree zero line bundle. 
	This implies the Higgs bundle \eqref{EQ cyclic d=0 higgs bundle} above is polystable. 
	Moreover, by Lemmas \ref{Lemma O2 stabilizer} and \ref{Lemma: s is open closure is ps} the the stabilizer of $\widetilde\Psi_0(x)$ is finite. Hence, by Definition \ref{DEF Stability of G Higgs bundle}, the $\sSO(n,n+1)$ Higgs bundle $\Psi_0(x)$ is stable.

As in the proof of Lemma \ref{Lemma defining Higgs bundles in Md}, stability is an open condition. Thus, there is an open neighborhood $U$ of $(\mu,\nu,0,\cdots,0)$ such that the Higgs bundles \eqref{EQ M0 Higgs field} are stable for $(\mu,\nu,q_2,\cdots,q_{2n-2})\in U$. 
Using the gauge transformations \eqref{EQ scaling gauges}, the Higgs bundle $(V,W,\lambda\eta)$ is gauge equivalent to $\widetilde\Psi_0(M,\mu,\lambda^{2n}\nu,\lambda^2 q_2,\lambda^4q_4,\cdots,\lambda^{2n-2} q_{2n-2})~.$
Since stability is preserved by scaling the Higgs field, $\widetilde\Psi_0(\widetilde\Ff_0^s\times\bigoplus\limits_{j=1}^{n-1}H^0(K^{2j}))$ consists of stable $\sSO(n,n+1)$ Higgs bundles. 
\end{proof}
	
	
Putting together the above lemmas, we have the following proposition.
\begin{Proposition}
	The following spaces are homeomorphic
	\[\left(\widetilde\Ff_0\big\slash\big\slash \sO(2,\C)\right)\times\bigoplus\limits_{j=1}^{n-1}H^0(K^{2j})\ \cong\ \left(\widetilde\Ff^{ps}_0\big\slash \sO(2,\C)\right)\times\bigoplus\limits_{j=1}^{n-1}H^0(K^{2j})\ \cong\  \]
	\[\widetilde\Psi_0\left(\widetilde\Ff^{ps}_0\times\bigoplus\limits_{j=1}^{n-1}H^0(K^{2j})\right)\big\slash\sO(2,\C)\ \cong\ \widetilde\Psi_0\left(\widetilde\Ff^{ps}_0\times\bigoplus\limits_{j=1}^{n-1}H^0(K^{2j})\right)\big\slash\big\slash\sO(2,\C)\ ~. \]
In particular, if $\widetilde\Ff^{ps}_0\big\slash \sO(2,\C)=\Ff^{ps}_0,$ then we have a continuous map 
\begin{equation}
	\label{EQ Psi0 map}\Psi_0:\xymatrix{\Ff^{ps}_0\times\bigoplus\limits_{j=1}^{n-1}H^0(K^{2j})\ar[r]&\Mm(\sSO(n,n+1))}~
	\end{equation}
which is a homeomorphism onto its image. 
\end{Proposition}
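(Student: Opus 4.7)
The plan is to assemble the preceding lemmas into the chain of homeomorphisms and then promote the resulting continuous injection $\Psi_0$ to a topological embedding via properness of the Hitchin fibration. The first identification $\widetilde\Ff_0\big\slash\big\slash\sO(2,\C)\cong \widetilde\Ff_0^{ps}/\sO(2,\C)$ is Lemma~\ref{Lemma Closed O2 orbits}; this survives taking the product with $\bigoplus_{j=1}^{n-1}H^0(K^{2j})$ on which $\sO(2,\C)$ acts trivially. For the middle identification, Lemma~\ref{Lemma Gauge group fixing im Psi0} shows that $\sO(2,\C)$ is precisely the subgroup $\Gg_{\Psi_0}$ of the gauge group preserving the image of $\widetilde\Psi_0$, and acts there via the tautological action on parameters, so $\widetilde\Psi_0$ is $\sO(2,\C)$-equivariant and descends to an injection on $\sO(2,\C)$-quotients; Lemma~\ref{Lemma Polystable image of Psi0} guarantees the image lies in the polystable locus and so defines classes in $\Mm(\sSO(n,n+1))$. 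The final identification $/\sO(2,\C)=\big\slash\big\slash\sO(2,\C)$ is again the closed-orbit principle, now applied inside the image of $\widetilde\Psi_0$: by Lemma~\ref{Lemma Closed O2 orbits} the relevant orbits are all closed.

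The induced map $\Psi_0:\Ff_0^{ps}\times\bigoplus_{j=1}^{n-1}H^0(K^{2j})\to\Mm(\sSO(n,n+1))$ is continuous (since $\widetilde\Psi_0$ is a polynomial assignment of Higgs data, compatible with the quotient topologies on both sides) and injective (Lemma~\ref{Lemma Gauge group fixing im Psi0}). The main obstacle is upgrading this to a homeomorphism onto image. The plan is to show $\Psi_0$ is proper onto its image by composing with the $\sSO(n,n+1)$-Hitchin fibration $h:\Mm(\sSO(n,n+1))\to\bigoplus_{j=1}^{n}H^0(K^{2j})$. Under $h\circ\Psi_0$, a point $([M,\mu,\nu],q_2,\ldots,q_{2n-2})$ is sent to $(q_2,\ldots,q_{2n-2},\mu\otimes\nu)$, where $\mu\otimes\nu\in H^0(K^{2n})$ is well-defined (the twists $M^{-1}K^n$ and $MK^n$ multiply to $K^{2n}$) and $\sO(2,\C)$-invariant. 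Since $h$ is proper, it suffices to prove properness of this composed map, which in turn reduces (using that the $q_{2j}$ factors are the identity) to properness of the map $\Ff_0^{ps}\to H^0(K^{2n})$, $[M,\mu,\nu]\mapsto\mu\otimes\nu$.

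The hard step will be this last properness statement. Let $[M_i,\mu_i,\nu_i]$ diverge in $\Ff_0^{ps}$. Since $\Pic^0(X)$ is compact and the locus $\mu=\nu=0$ is parametrized by the compact $\Pic^0(X)/\Z_2$, after passing to a subsequence we have $M_i\to M_\infty$ in $\Pic^0(X)$ and $\mu_i,\nu_i\neq 0$. Using the $\C^*$-action, normalize $\|\mu_i\|=\|\nu_i\|$; divergence then forces this common norm to tend to $\infty$. If $\mu_i\otimes\nu_i$ were bounded, the rescaled unit-norm sections $\tilde\mu_i=\mu_i/\|\mu_i\|$ and $\tilde\nu_i=\nu_i/\|\nu_i\|$ would, after further subsequences and using continuity of $M\mapsto H^0(M^{-1}K^n)$ in a local trivialization of $\Pic^0$ near $M_\infty$, converge to nonzero holomorphic sections $\tilde\mu_\infty,\tilde\nu_\infty$ of $M_\infty^{-1}K^n$ and $M_\infty K^n$; but then $\tilde\mu_\infty\otimes\tilde\nu_\infty = \lim\|\mu_i\|^{-2}(\mu_i\otimes\nu_i)=0$, which forces one of the two factors to vanish identically, a contradiction. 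Hence $\|\mu_i\otimes\nu_i\|\to\infty$, $h\circ\Psi_0$ is proper, and so $\Psi_0$ is proper and therefore a homeomorphism onto its image.
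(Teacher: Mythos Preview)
Your argument is correct and in fact supplies a detail the paper glosses over. The paper states the proposition as an immediate consequence of the preceding lemmas (``Putting together the above lemmas, we have the following proposition'') and gives no further proof. The chain of identifications does follow directly from Lemmas~\ref{Lemma Closed O2 orbits}, \ref{Lemma Gauge group fixing im Psi0}, and \ref{Lemma Polystable image of Psi0}, exactly as you describe. But the final assertion that $\Psi_0$ is a \emph{homeomorphism} onto its image (not merely a continuous bijection) needs the inverse to be continuous, and this is not automatic from the lemmas alone. Your properness argument via the Hitchin fibration establishes this, and it is precisely the same technique the paper deploys in the very next lemma (Lemma~\ref{Lemma: s is open closure is ps}) to compute the closure of the stable locus. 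So your proof is in the spirit of the paper, just more explicit at this particular step.

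One small wording quibble: you write ``Since $h$ is proper, it suffices to prove properness of this composed map.'' Properness of $h$ is not actually what drives the reduction. What you use is that $h$ is continuous and $\Mm(\sSO(n,n+1))$ is Hausdorff: for $K\subset\Mm(\sSO(n,n+1))$ compact, $\Psi_0^{-1}(K)$ is closed and contained in the compact set $(h\circ\Psi_0)^{-1}(h(K))$, hence compact. The logic is sound; only the attribution is slightly off.
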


We will show that the image of $\Psi_0$ is open and closed. However, since the image of $\Psi_0$ is singular, this it is more complicated than the proof of Theorem \ref{THM1}. We start by analyzing the stable locus.
\begin{Lemma}\label{Lemma: s is open closure is ps}
	Consider the map $\Psi_0$ from \eqref{EQ Psi0 map} and the spaces $\widetilde\Ff_0^s\subset\widetilde\Ff_0^{ps}$ from Lemma \ref{Lemma Polystable image of Psi0}. Denote the quotients of these spaces by $\sO(2,\C)$ by $\Ff_0^s$ and $\Ff_0^{ps}$ respectively. The image $\Psi_0\big(\Ff^{s}_0\times\bigoplus\limits_{j=1}^{n-1}H^0(K^{2j})\big)$ is open in $\Mm(\sSO(n,n+1)),$ and the closure of the image is given by
	\[\overline{\Psi_0\big(\Ff^{s}_0\times\bigoplus\limits_{j=1}^{n-1}H^0(K^{2j})\big)}=\Psi_0\big(\Ff^{ps}_0\times\bigoplus\limits_{j=1}^{n-1}H^0(K^{2j})\big)~.\]
	\end{Lemma}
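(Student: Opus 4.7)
The lemma has two parts: openness of $\Psi_0(\Ff_0^s\times\bigoplus H^0(K^{2j}))$ in $\Mm(\sSO(n,n+1))$, and the identification of its closure with $\Psi_0(\Ff_0^{ps}\times\bigoplus H^0(K^{2j}))$. Granting openness, the closure statement reduces to (i) closedness of the full polystable image in $\Mm(\sSO(n,n+1))$, and (ii) density of the stable stratum within the polystable one under $\Psi_0$. I will handle these three tasks in order.

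\emph{Openness.} For $x\in\widetilde\Ff_0^s\times\bigoplus H^0(K^{2j})$, Lemma \ref{Lemma Polystable image of Psi0} shows that $\widetilde\Psi_0(x)$ is a stable $\sSO(n,n+1)$ Higgs bundle. A cyclic-stability argument using \eqref{EQ cyclic d=0 higgs bundle} (extended to nonzero $q_{2j}$ via the scaling gauge \eqref{EQ scaling gauges}) shows that when both $\mu$ and $\nu$ are nonzero no proper $\Phi$-invariant subbundle of nonnegative degree exists, so the associated $\sSL(2n+1,\C)$ Higgs bundle is also stable. Proposition \ref{Prop: H2 vanish of stable} then gives $\HH^2(C^\bullet)=0$, and the Kuranishi slice identifies a neighborhood of $[\widetilde\Psi_0(x)]\in\Mm(\sSO(n,n+1))$ with $\HH^1(C^\bullet)/\Aut(\widetilde\Psi_0(x))$, the automorphism group being finite by Lemma \ref{Lemma O2 stabilizer}. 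A direct dimension count yields
\[
\dim\Bigl(\Ff_0^s\times\bigoplus_{j=1}^{n-1}H^0(K^{2j})\Bigr)=n(2n+1)(g-1),
\]
matching the expected dimension in \eqref{EQ Expected Dimension for SO(n,n+1)}. Since $\Psi_0$ is a continuous injection (by Lemma \ref{Lemma Gauge group fixing im Psi0} after passing to the $\sO(2,\C)$-quotient), invariance of domain applied on the Kuranishi chart gives openness of the image.

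\emph{Closedness and density.} Suppose $\Psi_0(y_i)\to z_\infty$ in $\Mm(\sSO(n,n+1))$ with $y_i=([M_i,\mu_i,\nu_i],q_2^i,\ldots,q_{2n-2}^i)$. Composing with the Hitchin fibration, proper by \cite{IntSystemFibration}, yields convergence of the $q_{2j}^i\in H^0(K^{2j})$ and of $\mu_i\nu_i\in H^0(K^{2n})$. Since $\Pic^0(X)$ is compact, a subsequence satisfies $M_i\to M_\infty$. I rescale representatives $(\mu_i,\nu_i)$ using the $\C^*$-factor of $\sO(2,\C)$ so that $\|\mu_i\|=\|\nu_i\|$ whenever both are nonzero; boundedness of $\mu_i\nu_i$ then bounds both factors, yielding $\mu_i\to\mu_\infty$ and $\nu_i\to\nu_\infty$ along a further subsequence. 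The mixed limit case (exactly one of $\mu_\infty,\nu_\infty$ zero) is ruled out because such a point lies outside $\widetilde\Ff_0^{ps}$ and would force $z_\infty$ to admit a $\Phi$-invariant degree-zero subbundle with no polystable complement, contradicting polystability of $z_\infty$. Hence $(M_\infty,\mu_\infty,\nu_\infty)\in\widetilde\Ff_0^{ps}$ and $z_\infty$ lies in the polystable image. For density, given $[M,0,0,q_2,\ldots,q_{2n-2}]\in\Ff_0^{ps}\times\bigoplus H^0(K^{2j})$, select any nonzero $\mu\in H^0(M^{-1}K^n)$ and $\nu\in H^0(MK^n)$ (they exist since $\deg(M^{\pm1}K^n)=n(2g-2)>0$), and note that $[M,t\mu,t\nu,q_2,\ldots]$ is stable for $t\neq 0$ and converges in the quotient topology to $[M,0,0,q_2,\ldots]$ as $t\to 0$; continuity of $\Psi_0$ yields the desired density.

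The main obstacle is the closedness step: the rescaling trick used to bound $\mu_i$ and $\nu_i$ relies on the Hitchin-fibration control over $\mu_i\nu_i$, and the exclusion of the mixed limit case depends on polystability of $z_\infty$. The openness and density arguments are comparatively routine adaptations of the stable-case machinery already deployed in the proof of Theorem \ref{THM1}.
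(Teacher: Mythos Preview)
Your openness argument has a genuine gap. You assert that for every $x\in\widetilde\Ff_0^s$ the associated $\sSL(2n+1,\C)$ Higgs bundle is stable, but this fails on the locus where $M^2\cong\Oo$ and $\mu=\lambda\nu$ for some $\lambda\in\C^*$. At such a point the degree-zero line subbundle $N=\{(a,-\lambda a)\}\subset M\oplus M^{-1}\cong M\oplus M$ lies in the kernel of $\eta^*|_{M\oplus M^{-1}}=(\mu,\nu):M\oplus M^{-1}\to K^{n-1}\otimes K$, so $N$ is a proper $\Phi$-invariant subbundle of nonnegative degree and the $\sSL(2n+1,\C)$ Higgs bundle is strictly polystable. (This is precisely the $\Z_2$-stabilizer locus of Lemma~\ref{Lemma O2 stabilizer}; Proposition~\ref{PROP: Zariski closures} later identifies it with the Higgs bundles reducing to $\sO^{+,\pm}(n,n)$.) Consequently Proposition~\ref{Prop: H2 vanish of stable} does not apply there, you cannot conclude $\HH^2=0$, and the invariance-of-domain step is unjustified at those points.

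The paper avoids this by arguing openness through the Hitchin fibration rather than through $\sSL$-stability: since the image of $\Psi_0|_{\Ff_0^s}$ is an orbifold of the expected dimension and $h\circ\Psi_0$ lands in an open subset of the Hitchin base, a local comparison identifies small neighborhoods in the image with small neighborhoods in $\Mm(\sSO(n,n+1))$. Your closure and density arguments, by contrast, are correct and somewhat more explicit than the paper's: the normalization $\|\mu_i\|=\|\nu_i\|$, combined with the fact that a product of nonzero holomorphic sections is nonzero, gives the required bound on $(\mu_i,\nu_i)$, and the norm constraint already forces both limits to vanish together, so your separate polystability argument for the ``mixed'' limit case is superfluous.
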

\begin{proof}
	 As in \eqref{EQ Hitchin fibration on Md}, we can choose a basis of invariant polynomials $(p_1,\cdots,p_n)$ so that 
	\[p_j(\Psi_0(M,\mu,\nu,q_2,\cdots,q_{2n-2}))=\begin{dcases}
		q_{2j}& 1\leq j\leq n-1\\
		\mu\otimes\nu&j=n
	\end{dcases}~.
	%
	\]
	Let $h:\Mm(\sSO(n,n+1))\to\bigoplus\limits_{j=1}^nH^0(K^{2j})$ denote the Hitchin fibration.  
	Note that image $h\big(\Psi_0(\Ff^s_0\times\bigoplus\limits_{j=1}^{n-1}H^0(K^{2j}))\big)$ is $H^0(K^{2n})\setminus\{0\}\times \bigoplus\limits_{j=1}^{n-1}H^0(K^{2j}).$

    Since all of the Higgs bundles in the image are stable, the image is a smooth orbifold. Moreover, the dimension of $\Psi_0\big(\Ff_0^s\times\bigoplus\limits_{j=1}^{n-1}H^0(K^{2j})\big)$ is the expected dimension of the moduli space $\Mm(\sSO(n,n+1))$.
    If $U$ is a sufficiently small neighborhood of $x\in\Ff_0^s\times\bigoplus\limits_{j=1}^{n-1}H^0(K^{2j})$, then $h(\Psi_0(U))$ is clearly open in the Hitchin base. Since $\Psi_0(x)$ is either a smooth point or an orbifold point, $h^{-1}(h(\Psi_0(U)))=\Psi_0(U)$. Thus, the image $\Psi_0\big(\Ff^{s}_0\times\bigoplus\limits_{j=1}^{n-1}H^0(K^{2j})\big)$ is open.

	As in the proof of closedness for Theorem \ref{THM1}, to compute the closure of the image we use the properness of the Hitchin fibration.  
	Let $x_i=(y_i,q_2^i,\cdots,q_{2n-2}^i)$ be a sequence in $\Ff_0^s\times\bigoplus\limits_{j=1}^{n-1}H^0(K^{2j})$ which diverges and such that the image $\Psi_0(x_j)$ converges to $\Mm(\sSO(n,n+1)).$ 
	By the properness of the Hitchin fibration, the sequence of differentials $q_{2j}^i$ must converge for all $j$ and $\lim\limits_{i\to\infty}y_i\in\Ff_0^{ps}\setminus\Ff_0^s.$ Thus, the closure of $\Psi_0\big(\Ff_0^s\times \bigoplus\limits_{j=1}^{n-1}H^0(K^{2j})\big)$ in $\Mm(\sSO(n,n+1))$ is $\Psi_0\big(\Ff_0^{ps}\times \bigoplus\limits_{j=1}^{n-1}H^0(K^{2j})\big)$.
\end{proof}

To show that the image of $\Psi_0$ defines a connected component of the moduli space $\Mm(\sSO(n,n+1)),$ it remains to show that $\Psi_0\big(\Ff^{ps}_0\times\bigoplus\limits_{j=1}^{n-1}H^0(K^{2j})\big)$ is open (see Lemma \ref{Lemma image psi0 closed}). To do this, the local structure of points in the boundary of the closure from Lemma \ref{Lemma: s is open closure is ps} must be examined. We will show that a local neighborhood of such a point in $\Mm(\sSO(n,n+1))$ is homeomorphic the corresponding open neighborhood in $\Ff_0^{ps}\times\bigoplus\limits_{j=1}^{n-1}H^0(K^{2j})$. This amounts to studying the complex \eqref{Eq hypercohomology}.
\begin{Remark}\label{Rem C* open}
	Let $x=([M,0,0],q_2,\cdots,q_{2n-2})$ be a point in $\Ff_0^{ps}\setminus\Ff^s_0\times\bigoplus\limits_{j=1}^{n-1}H^0(K^{2j})$. Using the $\C^*$ action on the Higgs bundle moduli space, a sufficiently small open neighborhood of $\Psi_0(x)$ can be brought into an open of $\Psi_0([M,0,0],0,\cdots,0).$ Thus, it suffices to prove that an open neighborhood of $\Psi_0([M,0,0],0,\cdots,0)$ is homeomorphic to an open neighborhood of $([M,0,0],0,\cdots,0)$ in $\Ff_0^{ps}\times\bigoplus\limits_{j=1}^{n-1}H^0(K^{2j}).$ 
\end{Remark}
 For $M\in\Pic^0(X),$ consider the $\sSO(n,n+1)$ Higgs bundle $(V,W,\eta)$ given by 
\[(V,Q_V)=\left( K^{n-1}\oplus K^{n-3}\oplus\cdots\oplus K^{3-n}\oplus K^{1-n},\ \smtrx{&&1\\&\iddots&\\1&&}\right)~,\]
\[ (W, Q_W)=\left(M\oplus K^{n-2}\oplus K^{n-4}\oplus\cdots\oplus K^{2-n}\oplus M^{-1},\ \smtrx{&&1\\&\iddots&\\1&&}\right)\]
and $\eta$ given by \eqref{EQ M0 Higgs field} with $q_2=\cdots=q_{2n-2}=\mu=\nu=0.$ Similar to \eqref{EQ Fixedpoint in Md}, the $\sSL(2n+1,\C)$ can be represented schematically by
\begin{equation}
	\label{EQ: Higgs field at singularity}
	\xymatrix@C=2em@R=-.3em{K^{n-1}\ar[r]_1&K^{n-2}\ar[r]_1&\cdots\ar[r]_1&\Oo\ar[r]_1&\cdots\ar[r]_1&K^{2-n}\ar[r]_1&K^{1-n}\\&&&\oplus&&&\\&&&M&&&\\&&&\oplus&&&\\&&&M^{-1}&&&}~.
\end{equation}

The Lie algebra bundle with fiber $\fso(n,\C)\oplus\fso(n+1,\C)$ consists of $Q_V$ and $Q_W$ skew symmetric endomorphisms of $V$ and $W$ respectively, we will use the notation  
\[\Lambda_Q^2V\oplus\Lambda_Q^2W\subset\End(V)\oplus\End(W)~.\]

Write the line bundle decompositions of $V$ and $W$ from \eqref{EQ M0 Higgs field} as follows
\[V= V_{1-n}\oplus V_{3-n}\oplus\cdots\oplus V_{n-3}\oplus V_{n-1}~,\]
\[W=W_{2-n}\oplus W_{4-n}\oplus\cdots\oplus W_0\oplus\cdots W_{n-4}\oplus W_{n-2}~,\]
where $V_j=K^{-j},$ $W_j=K^{-j}$ if $j\neq 0$ and $W_0=M\oplus M^{-1}$ if $n$ is odd and $W_0=M\oplus\Oo\oplus M^{-1}$ if $n$ is even. In terms of the above splittings, sections of $\Lambda^2_QV$ consist of $n\times n$ matrices which are antisymmetric with respect to reflecting about the anti-diagonal.  This gives a grading 
\[\Lambda^2_QV\oplus\Lambda^2_QW\cong\bigoplus\limits_{k=4-2n}^{2n-4}(\Lambda^2_QV\oplus\Lambda^2_QW)_k~.\]  
One computes that $(\Lambda^2_QV)_{k}=0$ for $k$ odd and 
\begin{equation}\label{EQ Lambda2V2k}
	\xymatrix@=.4em{(\Lambda^2_QV)_{2k}\cong\bigoplus\limits_{j=0}^{\floor{\frac{n-k}{2}}-1} \Hom(V_{1-n+2j},V_{1-n+2j+2k})&\text{for}\ 0\leq 2k\leq 2n-4\\(\Lambda^2_QV)_{-2k}\cong(\Lambda^2_QV)^*_{2k}}~.
\end{equation}
 Similarly (but changing the indexing scheme), 
\begin{equation}
	\label{EQ Lambda2W2k}\xymatrix@=.4em{(\Lambda^2_QW)_{2k}\cong\bigoplus\limits_{j=\floor{\frac{n-k}{2}}}^{n-k-2} \Hom(W_{2-n+2j},W_{2-n+2j+2k})& \text{for}\ 0<2k\leq 2n-6\\(\Lambda^2_QW)_{-2k}\cong(\Lambda^2_QW)^*_{2k}}~.
\end{equation}
 For $k=0$, we have 
\begin{equation}
	\label{EQ Lambda2W0}
	(\Lambda^2_QW)_0\cong\Lambda^2W_0\oplus\bigoplus\limits_{j=\floor{\frac{n}{2}}}^{n-2}\Hom(W_{2-n+2j},W_{2-n+2j})~.
\end{equation}
For $n$ even, $(\Lambda^2_QW)_{2k-1}=0$ for all $k$. But when $n$ is odd, we have 
\begin{equation}
	\label{EQ Lambda2W2k+1}(\Lambda_Q^2W)^*_{1-2k}\cong(\Lambda_Q^2W)_{2k-1}\cong\begin{dcases}
		\Hom(W_{1-2k},W_0)&\text{for}\ 0< 2k-1\leq 2-n\\
		0&\text{otherwise}
	\end{dcases}
\end{equation}  

Similarly, the bundle $\Hom(V,W)\otimes K$ acquires a grading $\bigoplus\limits_{k=3-2n}^{2n-3}\Hom(V,W)_k\otimes K$, where 
\begin{equation}
	\label{EQ HomVW2k+1}
	\Hom(V,W)_{2k+1}\otimes K=\begin{dcases}
		\bigoplus\limits_{j=0}^{n-k-2}\Hom(V_{1-n+2j},W_{2-n+2k+2j})\otimes K&\text{for\ } k\geq0\\
		 \bigoplus\limits_{j=0}^{n+k-1}\Hom(V_{n-1-2j},W_{n-2j+2k})\otimes K&\text{for\ } k<0
	\end{dcases}~,
\end{equation}
and 
\begin{equation}
	\label{EQ HomVW2k}\Hom(V,W)_{2k}\otimes K=\begin{dcases}
	0&\text{if\ } n\ \text{is\ even}\\
	\Hom(V_{-2k},W_0)\otimes K&\text{if\ } n\ \text{is\ odd}
\end{dcases}~.
\end{equation}

Moreover, the Higgs field $\eta$ is a holomorphic section of $\Hom(V,W)_{+1}\otimes K.$ 
Thus, $ad_{\eta}$ maps $(\Lambda_Q^2V\oplus\Lambda_Q^2W)_k$ to $\Hom(V,W)_{k+1}\otimes K$, and we have a graded complex
\[C^\bullet_k=C^\bullet(V,W,\eta)_k:\xymatrix@R=.3em{(\Lambda_Q^2V\oplus\Lambda_Q^2W)_k\ar[r]^{ad_\eta}&\Hom(V,W)_{k+1}\otimes K\\(g_V,g_W)\ar@{|->}[r] & \eta \circ g_V-g_W\circ\eta}~.\] 
In the hypercohomology sequence from \eqref{Eq hypercohomology} we have 
\[\xymatrix@R=1em@C=1.8em{0\ar[r]&\HH^0(C^\bullet_k)\ar[r]&H^0((\Lambda_Q^2V\oplus\Lambda_Q^2W)_k)\ar[r]^{ ad_\eta\ \ \ \ }&H^0(\Hom(V,W)_{k+1}\otimes K)\\\ar[r]&\HH^1(C^\bullet_k)\ar[r]&H^1((\Lambda_Q^2V\oplus\Lambda_Q^2W)_k)\ar[r]^{ad_\eta \ \ }&H^1(\Hom(V,W)_{k+1}\otimes K)\\\ar[r]&\HH^2(C^\bullet_k)\ar[r]&0~~.}\]

\begin{Lemma}\label{Lemma H2=0}
	If $(V,W,\eta)$ is an $\sSO(n,n+1)$ Higgs bundle given by \eqref{EQ: Higgs field at singularity}, then the second hypercohomology groups $\HH^2(C^\bullet_k)$ in the above sequences vanish for all $k.$
\end{Lemma}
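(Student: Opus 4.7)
The plan is to exploit the orthogonal decomposition $W = (M \oplus M^{-1}) \perp W_H$, where $W_H = K^{n-2} \oplus \cdots \oplus K^{2-n}$ collects the middle $n-1$ summands of $W$. At the Higgs bundle \eqref{EQ: Higgs field at singularity} we have $\mu=\nu=0$ and $q_{2j}=0$ for all $j$, so $\eta(V)\subset W_H\otimes K$. Consequently the underlying $\sSL(2n+1,\C)$ Higgs bundle is the orthogonal direct sum of the $\sSO(n-1,n)$ Hitchin Higgs bundle on $V\oplus W_H$ and a trivial Higgs pair on $M\oplus M^{-1}$; in particular $(V,W,\eta)$ is a reduction to $\sSO(n-1,n)\times \sSO(2)$.

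Using the orthogonal decomposition
\[\Lambda^2_Q W \;\cong\; \Lambda^2_Q(M\oplus M^{-1}) \;\oplus\; \Lambda^2_Q W_H \;\oplus\; \Hom(M\oplus M^{-1},\, W_H),\]
where the last summand is embedded in $\Lambda^2_Q W$ as off-diagonal skew blocks, I would first verify that $ad_\eta$ preserves this splitting; this uses only that $\eta(V)\subset W_H\otimes K$. The complex $C^\bullet(V,W,\eta)$ then decomposes as the direct sum of three sub-complexes:
\begin{align*}
\text{(A)} &\colon \bigl(\Lambda^2_Q V \oplus \Lambda^2_Q W_H\bigr)\xrightarrow{ad_\eta}\Hom(V,W_H)\otimes K,\\
\text{(B)} &\colon \Hom(M\oplus M^{-1},W_H)\xrightarrow{ad_\eta}\Hom(V,M\oplus M^{-1})\otimes K,\\
\text{(C)} &\colon \Lambda^2_Q(M\oplus M^{-1})\cong\Oo \xrightarrow{0} 0.
\end{align*}
Since this splitting refines the grading, vanishing of $\HH^2$ of the total complex is equivalent to $\HH^2(C^\bullet_k)=0$ for every $k$, so it suffices to treat each sub-complex separately. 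Sub-complex (A) is precisely the deformation complex of the $\sSO(n-1,n)$ Hitchin Higgs bundle, whose associated $\sSL(2n-1,\C)$ Higgs bundle is stable; hence $\HH^2(A)=0$ by Proposition \ref{Prop: H2 vanish of stable}. Sub-complex (C) has zero target, so $\HH^2(C)=0$ trivially.

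The substantive step is sub-complex (B). After using the orthogonal structures to identify $\Hom(M\oplus M^{-1},W_H)\cong \Hom(W_H,M\oplus M^{-1})$, the differential of (B) becomes the post-composition $\psi\mapsto \psi\circ \eta|_V$. By Serre duality, $\HH^2$ of (B) vanishes if and only if the dual map
\[H^0(\Hom(M\oplus M^{-1},V))\longrightarrow H^0(\Hom(M\oplus M^{-1},W_H)\otimes K),\qquad \psi\mapsto \eta\circ\psi,\]
is injective. Any $\psi$ in the kernel must take values in $\ker(\eta|_V)$, which at this Higgs bundle is exactly the last summand $K^{1-n}\subset V$ (the summand on which the vanishing $\mu$ and first-row $\nu$ entries sit). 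Each component of such a $\psi$ is then a section of either $M^{-1}K^{1-n}$ or $MK^{1-n}$, both of degree $(1-n)(2g-2)<0$ since $n\geq 2$ and $g\geq 2$, forcing $\psi=0$. I expect the main obstacle to be setting up the decomposition cleanly and correctly identifying the off-diagonal orthogonal embedding in the description of $\Lambda^2_Q W$; once that bookkeeping is in place, the identification of sub-complex (A) as a Hitchin complex and the degree count in sub-complex (B) are both immediate.
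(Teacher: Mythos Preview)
Your proof is correct and takes a genuinely different route from the paper's.

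The paper proceeds by a direct grading-by-grading computation: for $k\le -2$ it notes that $\Hom(V,W)_{k+1}\otimes K$ is a sum of line bundles of degree at least $4g-4$, so $H^1$ of the target vanishes; for $k\ge -1$ it verifies, summand by summand, that $ad_\eta$ induces a surjection (in fact usually a sheaf isomorphism) $(\Lambda^2_Q V\oplus\Lambda^2_Q W)_k\to\Hom(V,W)_{k+1}\otimes K$. Your argument instead uses the orthogonal splitting $W=(M\oplus M^{-1})\perp W_H$ to break the complex into the $\sSO(n-1,n)$ Hitchin deformation complex, an off-diagonal piece handled by Serre duality plus a degree count, and a trivial piece. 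This is cleaner and more conceptual: piece (A) comes for free from stability of the Hitchin section, and piece (B) is a two-line kernel computation. One small remark: Proposition~\ref{Prop: H2 vanish of stable} is stated in the paper only for $\sSO(n,n+1)$, so you are really invoking the general version from \cite{HiggsPairsSTABILITY}; this is harmless but worth flagging. The trade-off is that the paper's explicit computation is immediately recycled in the next lemma (Lemma~\ref{Lemma HH1 decomp}) to identify $\HH^1(C^\bullet_k)$; your decomposition would yield that too (piece (A) gives $\bigoplus H^0(K^{2j})$, piece (C) gives $H^1(\Oo)$, piece (B) gives $H^0(MK^n)\oplus H^0(M^{-1}K^n)$), but you would need to say so.
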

\begin{proof}
Recall that $V_j=K^{-j}$ for all $j$, $W_j=K^{-j}$ for $j\neq0$ and $W_0=M\oplus M^{-1}$ if $n$ is odd and $W_0=M\oplus M^{-1}\oplus\Oo$ if $n$ is even. 

If $k\leq -2$, then, using the decompositions \eqref{EQ HomVW2k+1} and \eqref{EQ HomVW2k}, the holomorphic bundle $\Hom(V,W)_{k+1}\otimes K$ is a direct sum of line bundles with degree at least $4g-4.$ Thus, $H^1(\Hom(V,W)_{k+1}\otimes K)=0,$ and so $\HH^2(C^\bullet_{k}(V,W,\eta))=0.$

For $k\geq -1,$ we will show that the map \[ad_\eta:H^1((\Lambda_Q^2V\oplus\Lambda_Q^2W)_k)\to H^1(\Hom(V,W)_{k+1}\otimes K)\] is surjective.
First assume $2k\geq 2.$ In this case, $(\Lambda_Q^2V\oplus\Lambda_Q^2W)_{2k}$ is given by \eqref{EQ Lambda2V2k} and \eqref{EQ Lambda2W2k} and $\Hom(V,W)_{2k+1}\otimes K$ is given by \eqref{EQ HomVW2k+1}. 
We claim that $ad_\eta$ defines an isomorphism between these two sheaves. Indeed, for $0\leq j\leq \floor{\frac{n-k}{2}}-1$ and each $\alpha\in\Hom(V_{1-n+2j},V_{1-n+2j+2k})$ we have 
\[\xymatrix{V_{1-n+2j}\ar[r]_{\alpha \ \ \ }&V_{1-n+2j+2k}\ar[r]_{1\ \ \ }&W_{2-n+2j+2k}\otimes K}~.\]
Similarly, for $\floor{\frac{n-k}{2}}\leq j\leq n-k-2$ and each $\beta\in \Hom(W_{2-n+2j},W_{2-n+2j+2k})$ we have 
\[\xymatrix{V_{1-n+2j}\ar[r]_{1\ \ \ }&W_{2-n+2j}\otimes K\ar[r]_{\beta\ \ }&W_{2-n+2j+2k}\otimes K}~.\]
Thus we have an isomorphism
\[\xymatrix{(\Lambda_Q^2V)_{2k}\oplus(\Lambda_Q^2W)_{2k}\ar[r]^{ad_\eta}&\Hom(V,W)_{2k+1}\otimes K}~.\]

If $n$ is even, we are done. If $n$ is odd, then we have $(\Lambda_Q^2V\oplus\Lambda_Q^2W)_{2k-1}\cong\Hom(W_{-2k+1},W_0)$ and $\Hom(V,W)_{2k}\otimes K\cong\Hom(V_{-2k},W_0)\otimes K$. The Higgs field again defines an isomorphism since, for any $\gamma\in\Hom(W_{-2k+1},W_0)$ we have
\[\xymatrix{V_{-2k}\ar[r]_{1\ \ \ \ \ }& W_{-2k+1}\otimes K\ar[r]_{\ \  \gamma}&W_0\otimes K}~.\]

 For $k=-1,$ first note that if $n$ is even, then $\Hom(V,W)_0\otimes K=0$. If $n$ is odd, then
 $(\Lambda_Q^2V\oplus\Lambda_Q^2W)_{-1}\cong\Hom(K^{-1},W_0)$ and $\Hom(V,W)_0\otimes K=\Hom(\Oo,W_0)\otimes K$. 
 Again, the Higgs field gives an isomorphism since, for any $\delta\in\Hom(K^{-1},W_0)$
 \[\xymatrix{\Oo\ar[r]_{1\ \ }& K^{-1}\ar[r]_{ \delta}&W_0}~.\]

 Finally, for $k=0$, $(\Lambda_Q^2V\oplus\Lambda_Q^2W)_{0}$ is given by \eqref{EQ Lambda2V2k} and \eqref{EQ Lambda2W0} and $\Hom(V,W)_1\otimes K$ is given by \eqref{EQ HomVW2k+1}. Recall that
	\[\Lambda^2W_0\cong \begin{dcases}
		\Hom(M,M)& \text{if}\  n\ \text{is odd}\\
        \Hom(\Oo,M)\oplus \Hom(\Oo,M^{-1})\oplus \Hom(M,M)&\text{if}\ n\ \text{is even}
	\end{dcases}~,\]
First note that $\Hom(M,M)$ is in the kernel  of $(\Lambda_Q^2V\oplus\Lambda_Q^2W)_{0}\to\Hom(V,W)_1\otimes K$ since $M\oplus M^{-1}$ is invariant by the Higgs field. We claim that the map is surjective and the kernel is exactly the summand $\Hom(M,M).$ In particular, the induced map on $H^1$ is surjective. 
The Higgs field defines isomorphisms
\[\begin{dcases}
	\Hom(V_{1-n+2j},V_{1-n+2j})\cong\Hom(V_{1-n+2j},W_{2-n+2j})\otimes K&2-n+2j<0\\
	\Hom(W_{2-n+2j},W_{2-n+2j})\cong \Hom(V_{1-n+2j},W_{2-n+2j})\otimes K&\floor{\frac{n}{2}}\leq j\leq n-2
\end{dcases}\] 
since for each $\epsilon\in \Hom(V_{1-n+2j},V_{1-n+2j})$ and $\epsilon'\in\Hom(W_{n-2-2j},W_{n-2-2j})$
\[\xymatrix@R=1em{V_{1-n+2j}\ar[r]_\epsilon&V_{1-n+2j}\ar[r]_{1\ \ \ }&W_{2-n+2j}\otimes K&\text{and}\\V_{1-n+2j}\ar[r]_{1\ \ \ }&
W_{2-n+2j}\otimes K\ar[r]_{\epsilon'\ \ \ }&W_{2-n+2j}\otimes K~. }\]
If $n$ is odd, we are done, but if $n$ is even we need to consider the map 
\[\Hom(V_{-1},V_{-1})\oplus\Lambda^2W_0\to\Hom(V_{-1},W_0)\otimes K~.\]
In this case, 
$\Lambda^2W_0\cong\Hom(\Oo,M)\oplus\Hom(\Oo,M^{-1})\oplus\Hom(M,M)$
and
\[\Hom(V_{-1},W_0)\otimes K=(\Hom(V_{-1},\Oo)\oplus\Hom(V_{-1},M)\oplus\Hom(V_{-1},M^{-1}))\otimes K~.\]
As above, the Higgs field defines isomorphisms 
\[\xymatrix@R=.3em@C=.5em{\Hom(V_{-1},V_{-1})\cong\Hom(V_{-1},\Oo)\otimes K~,&\Hom(\Oo,M)\cong \Hom(V_{-1},M)\otimes K\\\text{and}&\Hom(\Oo,M^{-1})\cong\Hom(V_{-1},M^{-1})\otimes K~.}\]
In particular, $ad_\eta:H^1((\Lambda_Q^2V\oplus\Lambda_Q^2W)_0)\to H^1(\Hom(V,W)_1\otimes K)$ is surjective.
\end{proof} 


\begin{Lemma}
	\label{Lemma HH1 decomp} The hypercohomology group $\HH^1(C_k^\bullet(V,W,\eta))$ for a Higgs bundle of type \eqref{EQ: Higgs field at singularity} satisfy the following properties:
	\begin{itemize}
		\item $\HH^1(C_k^\bullet(V,W,\eta))\equiv0$ for $k>0,$
		\item $\HH^1(C_0^\bullet(V,W,\eta))	\cong H^1(\Hom(M,M))$,
		\item for $k<0$ and $k\neq -n$, $\HH^1(C^\bullet_{k})\cong \begin{dcases}
		H^0(K^{k})&\text{if}\ k\ {is\ even}\\
		0&\text{if}\ k\ {is\ odd}
		\end{dcases}~,$
		\item $\HH^1(C_{-n}^\bullet(V,W,\eta))\cong\begin{dcases}
		H^0(MK^n)\oplus H^0(M^{-1}K^n)\oplus H^0(K^n)& \text{if}\ n\ {is\ even}\\
			H^0(MK^n)\oplus H^0(M^{-1}K^n)& \text{if}\ n\ {is\ odd}
		\end{dcases}~.$
	\end{itemize}
\end{Lemma}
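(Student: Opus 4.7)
The plan is to apply the hypercohomology long exact sequence to each two-term complex $C^\bullet_k$ and exploit the vanishing $\HH^2(C^\bullet_k)=0$ from Lemma~\ref{Lemma H2=0}. Abbreviating the two terms as $A_k=(\Lambda^2_QV\oplus\Lambda^2_QW)_k$ and $B_{k+1}=\Hom(V,W)_{k+1}\otimes K$, the sequence takes the form
\[
0\to\HH^0(C^\bullet_k)\to H^0(A_k)\to H^0(B_{k+1})\to\HH^1(C^\bullet_k)\to H^1(A_k)\to H^1(B_{k+1})\to 0,
\]
so the task reduces to analyzing the kernel and cokernel of $ad_\eta\colon A_k\to B_{k+1}$ as a sheaf map.

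The positive and zero grading cases fall out of work already done inside the proof of Lemma~\ref{Lemma H2=0}. For $k>0$, that proof constructs an explicit isomorphism of sheaves $ad_\eta\colon A_k\to B_{k+1}$, so $C^\bullet_k$ is acyclic and $\HH^1(C^\bullet_k)=0$. For $k=0$, the same summand-by-summand matching produces a short exact sequence of sheaves $0\to\Hom(M,M)\to A_0\to B_1\to 0$ with $\Hom(M,M)\cong\Oo$; hence $C^\bullet_0$ is quasi-isomorphic to $\Hom(M,M)$ concentrated in degree zero and $\HH^1(C^\bullet_0)\cong H^1(\Hom(M,M))$, as claimed.

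For $k<0$, every line-bundle summand of $A_k$ has negative degree, so $H^0(A_k)=0$ and $\HH^1(C^\bullet_k)$ is obtained from the cokernel of $ad_\eta$ on global sections together with the kernel of $H^1(A_k)\to H^1(B_{k+1})$. I would run the same summand-by-summand calculation as in the proof of Lemma~\ref{Lemma H2=0}, using the line-bundle decompositions \eqref{EQ Lambda2V2k}--\eqref{EQ HomVW2k}, to track exactly which components of $B_{k+1}$ fail to lie in the image of $ad_\eta$; for $k\neq -n$ a degree count then produces the stated cohomology.

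The hard part will be the critical case $k=-n$. Here the quotient $B_{-n+1}/ad_\eta(A_{-n})$ must be shown to contain line-bundle summands whose global sections give exactly $H^0(MK^n)\oplus H^0(M^{-1}K^n)$---precisely the deformation directions parametrized by $\nu$ and $\mu$ in $\widetilde\Ff_0$. When $n$ is even, the extra $\Oo$-factor coming from $W_0$ contributes an additional summand yielding the extra $H^0(K^n)$ term. As a consistency check I would compute $\chi(C^\bullet_k)$ by Riemann-Roch and confirm that $\sum_k\dim\HH^1(C^\bullet_k)$ matches the expected tangent dimension at the singular point $(V,W,\eta)$, accounting for the $\sO(2,\C)$-stabilizer identified in Lemma~\ref{Lemma O2 stabilizer}.
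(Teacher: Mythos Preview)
Your treatment of the cases $k>0$ and $k=0$ is correct and matches the paper's argument exactly: both follow directly from the sheaf-level analysis already carried out inside the proof of Lemma~\ref{Lemma H2=0}.

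For $k<0$, however, you have a sign error that breaks the argument. The line-bundle summands of $A_k=(\Lambda_Q^2V\oplus\Lambda_Q^2W)_k$ are of the form $\Hom(V_a,V_{a+k})\cong K^{-k}$ (and similarly for the $W$ part), which for $k<0$ have \emph{positive} degree, in fact degree at least $4g-4$. So it is $H^1(A_k)$ that vanishes, not $H^0(A_k)$. With your claim $H^0(A_k)=0$ the long exact sequence would give $H^0(B_{k+1})\hookrightarrow\HH^1(C^\bullet_k)$, and since $B_{k+1}$ is a sum of several copies of $K^{-k}$ this is far too large---for even $k$ you would get roughly $(n+k)$ copies of $H^0(K^{-k})$ rather than one.

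The paper's route is the mirror image of what you wrote: using $H^1(A_k)=0$, the long exact sequence collapses to
\[
\HH^1(C^\bullet_k)\cong\operatorname{coker}\big(H^0(A_k)\xrightarrow{ad_\eta}H^0(B_{k+1})\big).
\]
One then shows, by the same summand-matching as in Lemma~\ref{Lemma H2=0}, that $ad_\eta$ is an injective sheaf map whose cokernel is a single copy of $K^{-k}$ when $k$ is even and $k\neq -n$, is zero when $k$ is odd and $k\neq -n$, and at $k=-n$ picks up the extra $\Hom(V_{n-1},M\oplus M^{-1})\otimes K$ piece (plus $K^n$ when $n$ is even). Once you correct the vanishing statement, your outlined summand-by-summand computation goes through and coincides with the paper's.
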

\begin{proof}
	In the proof of Lemma \ref{Lemma H2=0} it was shown that the map 
	\[ad_\eta:(\Lambda_Q^2V\oplus\Lambda_Q^2W)_k\to\Hom(V,W)_{k+1}\otimes K\]
	is an isomorphism for $k>0.$ Thus, $\HH^1(C_k^\bullet(V,W,\eta))\equiv0$ for $k>0$.

	Also in the proof of Lemma \ref{Lemma H2=0}, it was shown that $(\Lambda_Q^2V\oplus\Lambda_Q^2W)_0\cong\Hom(M,M)\oplus \mathbb \Hom(V,W)_1\otimes K$ and that the map $ad_\eta$ is given by 
	\[ad_\eta:\xymatrix@C=3em{\Hom(M,M)\oplus \mathbb \Hom(V,W)_1\otimes K\ar[r]^{\ \ \ \ \ \ \ \ \ \ \ (0\ \ Id)}&\Hom(V,W)_1\otimes K}.\]
	Thus, $\HH^1(C^\bullet_0(V,W,\eta))\cong H^1(\Hom(M,M)).$
 
	Recall from \eqref{EQ Lambda2V2k}, \eqref{EQ Lambda2W2k}, \eqref{EQ HomVW2k+1} that for $k<0,$ $(\Lambda_Q^2V\oplus\Lambda_Q^2W)_{2k}$ is given by 
	\[\bigoplus\limits_{j=0}^{\floor{\frac{n+k}{2}}-1}\Hom(V_{n-1-2j},V_{n-1-2j+2k})\oplus\bigoplus\limits_{j=\floor{\frac{n+k}{2}}}^{n+k-2}\Hom(W_{n-2-2j},W_{n-2-2j+2k})\]
	and 
	\[\Hom(V,W)_{2k+1}\otimes K=\bigoplus\limits_{j=0}^{n+k-1}\Hom(V_{n-1-2j},W_{n-2j+2k})\otimes K~.\]
	First note that $H^1(\Lambda_Q^2(V\oplus W)_{-2k})=0$, since it is direct sum of line bundles with degree at least $4g-4.$ A simple computation similar to those in the proof of Lemma \ref{Lemma H2=0} shows that if $2k\neq-n,$ then the Higgs field gives isomorphisms  
	\begin{equation}\label{EQ iso 0}
		\Hom(V_{n-1-2j},V_{n-1-2j+2k})\cong\Hom(V_{n-1-2j},W_{n-2j+2k})\otimes K
	\end{equation}
	for $0\leq j\leq \floor{\frac{n+k}{2}}-1$ and 
		\[\Hom(W_{n-2-2j},W_{n-2-2j+2k})\cong\Hom(V_{n-1-2j},W_{n-2-2j+2k})\otimes K\] 
		for $\floor{\frac{n+k}{2}}\leq j\leq n+k-2$. In particular, this proves that, for $k<0$ and $k\neq n,$
		\[\HH^1(C_{2k}^\bullet)\cong H^0(\Hom(V_{-n-2k+1},W_{-n+2})\otimes K)\cong H^0(K^{2k})~.\]
	When $-n=2k,$ the isomorphism \eqref{EQ iso 0} holds for $0<j\leq\floor{\frac{n+k}{2}}-1.$ For $j=0,$ we have 
	\[\Hom(V_{n-1},W_0)\otimes K\cong \Hom(V_{n-1},K)\oplus\Hom(V_{n-1},MK)\oplus\Hom(V_{n-1},M^{-1}K)\]
	and, with respect to this splitting, the map induced by the Higgs field is given by
	  \[\smtrx{1\\0\\0}: \Hom(V_{n-1},V_{-1})\to\Hom(V_{n-1},W_0)\otimes K~.\]
Since $\Hom(V_{1},W_{-n+2})\cong K^{n}$, we conclude
\[\HH^1(C^\bullet_{-n})\cong H^0(MK^n)\oplus H^0(M^{-1}K^n)\oplus H^0(K^n)~.\]

If $n$ is even, we are done. When $n$ is odd, then from \eqref{EQ Lambda2W2k+1} and \eqref{EQ HomVW2k} we have 
\[(\Lambda_Q^2V\oplus\Lambda_Q^2W)_{2k+1}=\begin{dcases}
	\Hom(W_{-2k-1},M\oplus M^{-1})& n<2k+1<0\\
	0&\text{otherwise}
\end{dcases}~,\] and 
\[\Hom(V,W)_{2k+2}\otimes K=\begin{dcases}
	\Hom(V_{-2k-2},M\oplus M^{-1})\otimes K&  0\leq 2k+2\leq n \\
	0&\text{otherwise}
\end{dcases}~.\]
For $n<2k+1<0,$ the Higgs field defines an isomorphism 
\[\Hom(W_{-2k-1},M\oplus M^{-1})\cong \Hom(V_{-2k-2},M\oplus M^{-1})\otimes K~,\] and so 
$\HH^1(C^\bullet_{2k+1})=\begin{dcases}
	H^0(MK^n)\oplus H^0(M^{-1}K^n)& -n=2k+1\\
	0&\text{otherwise}
\end{dcases}~.$
\end{proof}

The following lemma completes the proof of Theorem \ref{THM2}.
\begin{Lemma}\label{Lemma image psi0 closed}
	The image of the map $\Psi_0:\Ff_0^{ps}\times\bigoplus\limits_{j=1}^{n-1}H^0(K^{2j})\to\Mm(\sSO(n,n+1))$ from \eqref{EQ Psi0 map} is open and closed. 
\end{Lemma}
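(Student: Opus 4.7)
Closedness of the image is immediate from Lemma \ref{Lemma: s is open closure is ps}, which identifies $\Psi_0\bigl(\Ff_0^{ps}\times\bigoplus_{j=1}^{n-1}H^0(K^{2j})\bigr)$ with the closure in $\Mm(\sSO(n,n+1))$ of the image of the stable locus. For openness, Lemma \ref{Lemma: s is open closure is ps} already handles the stable part, so it suffices to prove that every strictly polystable point in the image admits an open neighborhood contained in the image.

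By Remark \ref{Rem C* open}, the $\C^*$--action on $\Mm(\sSO(n,n+1))$ lets us reduce to producing an open neighborhood of $\Psi_0([M,0,0],0,\dots,0)$. At such a point the underlying $\sSL(2n+1,\C)$--Higgs bundle is \eqref{EQ: Higgs field at singularity}, which is polystable and has stabilizer a subgroup of $\sO(2,\C)$ acting on the $M\oplus M^{-1}$ factor (cf.\ Lemma \ref{Lemma O2 stabilizer}). Since Lemma \ref{Lemma H2=0} gives $\HH^2(C^\bullet(V,W,\eta))=0$, the Kuranishi description \eqref{EQ: local kuranishi} says that a neighborhood of the isomorphism class of $(V,W,\eta)$ in $\Mm(\sSO(n,n+1))$ is modeled on
\[
\HH^1(C^\bullet(V,W,\eta))\big\slash\big\slash\Aut(V,W,\eta).
\]
So the plan is to identify this local model with the corresponding neighborhood of $([M,0,0],0,\dots,0)$ in $\Ff_0^{ps}\times\bigoplus_{j=1}^{n-1}H^0(K^{2j})$.

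Summing the graded pieces given by Lemma \ref{Lemma HH1 decomp} yields
\[
\HH^1(C^\bullet)\cong H^1(\Oo)\ \oplus\ H^0(M^{-1}K^n)\oplus H^0(MK^n)\ \oplus\bigoplus_{j=1}^{n-1}H^0(K^{2j}),
\]
where the first summand comes from $k=0$ (i.e.\ $H^1(\Hom(M,M))=H^1(\Oo)$, the tangent space to $\Pic^0(X)$ at $M$), the middle two summands come from $k=-n$, and the differentials $H^0(K^{2j})$ come from $k=-2,-4,\dots,-(2n-2)$ with the missing slot at $k=-n$ replaced (when $n$ is even) by the extra $H^0(K^n)$ summand at $k=-n$. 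This agrees canonically with the tangent space to $\widetilde\Ff_0\times\bigoplus_{j=1}^{n-1}H^0(K^{2j})$ at $((M,0,0),0,\dots,0)$.

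What remains is to match the $\Aut$--action on the Kuranishi model with the $\sO(2,\C)$--action \eqref{EQ O2 action on F0} on the parameter space. The identification of $\Aut(V,W,\eta)$ with the subgroup of the gauge group stabilizing the image of $\widetilde\Psi_0$ was worked out in the proof of Lemma \ref{Lemma Gauge group fixing im Psi0}; tracing its action through the explicit isomorphisms in Lemma \ref{Lemma HH1 decomp} shows that the diagonal $\smtrx{\lambda&0\\0&\lambda^{-1}}$ acts trivially on $H^1(\Oo)$ and on each $H^0(K^{2j})$, scales $\mu$ by $\lambda^{-1}$ and $\nu$ by $\lambda$, while the reflection $\smtrx{0&1\\1&0}$ sends $M\mapsto M^{-1}$ (inversion on $\Pic^0(X)$) and swaps $\mu$ and $\nu$. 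This is exactly the action \eqref{EQ O2 action on F0}. Hence the Kuranishi GIT quotient is homeomorphic to an open neighborhood of $([M,0,0],0,\dots,0)$ in $\Ff_0^{ps}\times\bigoplus H^0(K^{2j})$, proving openness. The step of equivariantly matching the Kuranishi $\HH^1$ with the parameter space is the main technical point; everything else is a bookkeeping consequence of Lemmas \ref{Lemma H2=0}, \ref{Lemma HH1 decomp}, and \ref{Lemma: s is open closure is ps}.
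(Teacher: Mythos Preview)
Your proof is correct and follows essentially the same route as the paper: closedness from Lemma \ref{Lemma: s is open closure is ps}, reduction to the point $([M,0,0],0,\dots,0)$ via Remark \ref{Rem C* open}, and then matching the Kuranishi model $\HH^1\big\slash\big\slash\Aut$ (using Lemmas \ref{Lemma H2=0} and \ref{Lemma HH1 decomp}) with the local picture of $\Ff_0^{ps}\times\bigoplus H^0(K^{2j})$ under the $\sO(2,\C)$--action. The paper is slightly more explicit in separating the two cases $M^2=\Oo$ (where $\Aut\cong\sO(2,\C)$) and $M^2\neq\Oo$ (where $\Aut\cong\sSO(2,\C)$), but your treatment covers both.
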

\begin{proof}
	By Lemma \ref{Lemma: s is open closure is ps}, the image of $\Psi_0$ is closed in $\Mm(\sSO(n,n+1)).$ Also, by Lemma \ref{Lemma: s is open closure is ps}, for any $x\in\Ff^s_0\times\bigoplus\limits_{j=1}^{n-1}H^0(K^{2j})$ there is an open neighborhood of $x$ which is contained in the image of $\Psi_0.$ 

	Now suppose $x\in \Ff_0^{ps}\setminus\Ff^s_0\times\bigoplus\limits_{j=1}^{n-1}H^0(K^{2j})$, and recall that $x$ can be written as
	\[x=([M,0,0],q_2,\cdots,q_{2n-2})\]
	for $M\in\Pic^0(X)$ and $q_{2j}\in H^0(K^{2j}).$ By Remark \ref{Rem C* open}, it suffices to consider points of the form $x=([M,0,0],0,\cdots,0)$ in $\Ff_0^{ps}\times\bigoplus\limits_{j=1}^{n-1}H^0(K^{2j})$. By Lemma \ref{Lemma H2=0}, we have $\HH^2(C^\bullet(\Psi_0(x))=0$, thus, by \eqref{EQ: local kuranishi}, an open neighborhood of the Higgs bundle $\Psi_0(x)$ is given by
\[\HH^1(C^\bullet(\Psi_0(x))\big\slash\big\slash\Aut(\Psi_0(x))=\begin{dcases}
	\HH^1(C^\bullet(\Psi_0(x))\big\slash\big\slash\sO(2,\C)&\text{if\ } M^2=\Oo\\
	\HH^1(C^\bullet(\Psi_0(x))\big\slash\big\slash\sSO(2,\C)& \text{if\ } M^2\neq \Oo
\end{dcases}~.\]
By Lemma \ref{Lemma HH1 decomp}, we have 
	\[\HH^1(C^\bullet(\Psi_0(x)))\cong H^0(MK^n)\times H^0(M^{-1}K^n)\times H^1(\Oo)\times \bigoplus\limits_{j=1}^{n-1}H^0(K^{2j})~.\]
Here, $H^1(\Oo)=H^1(\Lambda^2(M\oplus M^{-1}))$ and $\delta\in H^1(\Oo)$ is given by 
\[\smtrx{\delta&\\&-\delta}\in H^1(\End(M\oplus M^{-1}))~.\]
If $M^2\neq\Oo,$ then by Lemmas \ref{Lemma O2 stabilizer} and \ref{Lemma Gauge group fixing im Psi0}, $\Aut(\Psi_0(x))$ is generated by the orthogonal gauge transformations 
	\[(g_V,g_W)=\left(Id_V,\smtrx{\lambda&&\\&Id_{W_0}&\\&&\lambda^{-1}}\right)~.\]
Such a gauge transformation acts on $(\mu,\nu,\delta,q_2,\cdots, q_{2n-2})\in\HH^1(C^\bullet(V,W,\eta))$ by 
\[(\delta,\mu,\nu,q_2,\cdots, q_{2n-2})\longmapsto(\delta,\lambda^{-1}\mu,\lambda\nu,q_2,\cdots, q_{2n-2})~.\]
 If $M^2=\Oo,$ then by Lemmas \ref{Lemma O2 stabilizer} and \ref{Lemma Gauge group fixing im Psi0}, $\Aut(\Psi_0(x))$ is generated by the orthogonal gauge transformations
\[\left(Id_V,\smtrx{\lambda&&\\&Id_{W_0}&\\&&\lambda^{-1}}\right) \ \ \ \text{and} \ \ \ \left((-1)^{n+1}Id_V,(-1)^{n+1}\smtrx{&&\lambda\\&Id_{W_0}&\\\lambda^{-1}&&}\right).\]
The second gauge transformation acts on $(\delta,\mu,\nu,q_2,\cdots, q_{2n-2})\in\HH^1(C^\bullet(V,W,\eta))$ by 
$(\delta,\mu,\nu,q_2,\cdots, q_{2n-2})\longmapsto(-\delta,\lambda^{-1}\nu,\lambda\mu,q_2,\cdots, q_{2n-2})~.$

Since an open neighborhood of $M\in\Pic^0(X)$ is given by an open neighborhood of zero in $H^1(\Oo),$ an open neighborhood of a lift $\tilde x\in\widetilde \Ff_0\times \bigoplus\limits_{j=1}^{n-1}H^0(K^{2j})$ of $x$ is also given by a neighborhood of zero in
\[H^1(\Oo)\times H^0(MK^n)\times H^0(M^{-1}K^n)\times  \bigoplus\limits_{j=1}^{n-1}H^0(K^{2j})~.\]
Since the map $\Psi_0:\Ff^{ps}_0\times\bigoplus\limits_{j=1}^{n-1}H^0(K^{2j})\longrightarrow \Mm(\sSO(n,n+1))$ is a homeomorphism onto its image, the map $\Psi_0$ in open at $x$. 
\end{proof}

\subsection{Proof of Theorem \ref{THM3}}

As in previous sections, for each nonzero $sw_1\in H^1(X,\Z_2)$, let $\pi:X_{sw_1}\to X$ be the corresponding connected orientation double cover.
If $\iota$ denotes the covering involution, then consider the space
\[\Prym(X_{sw_1},X)=\{M\in\Pic^0(X_{sw_1})\ |\ \iota^* M=M^{-1}\}.\] 
Recall that Proposition \ref{Prop Mumford O2} defines a one to one correspondence between holomorphic rank two orthogonal bundles $(W,Q_W)$ with first Stiefel-Whitney class $sw_1$ and $\Prym(X_{sw_1},X)$ given by $M\to (\pi_*M,\pi_*\iota^*).$
 Recall also that $\Prym(X_{sw_1},X)$ has two connected components $\Prym^{sw_2}(X_{sw_1},X)$ labeled by the second Stiefel-Whitney class $sw_2$ of the orthogonal bundle $(\pi_*M,\pi_*\iota^*).$ 
\begin{Lemma}
	\label{Lemma Defining Higgs field sw1not0}
Define the space $\Ff^{sw_2}_{sw_1}$ by 
 	\begin{equation}\label{EQ tildeFsw1 DEF}
 		\Ff^{sw_2}_{sw_1}=\{(M,\mu)\ |\ M\in\Prym^{sw_2}(X_{sw_1},X),\ \mu\in H^0(M^{-1}K_{X_{sw_1}}^n)\}
 	\end{equation}
 	There is a well defined smooth map 
 	\begin{equation}
 		\label{EQ tildePsi sw1 Def}
 		\widetilde\Psi^{sw_2}_{sw_1}:\xymatrix{\Ff^{sw_2}_{sw_1}\times \bigoplus\limits_{j=1}^{n-1} H^0(K^{2j})\ar[r]&\{\text{stable}\ \sSO(n,n+1)\text{-Higgs\ bundles}}\}
 	\end{equation}
 	defined by $\widetilde\Psi^{sw_2}_{sw_1}(M,\mu,q_2,\cdots q_{2p-2})=(V,W,\eta)$,  where 
 	\[\xymatrix@=.5em{V=K^{n-1}\oplus K^{n-3}\oplus\cdots \oplus K^{3-n}\oplus K^{1-n}~,\\W=\pi_*M\oplus K^{n-2}\oplus K^{n-4}\oplus\cdots\oplus K^{4-n}\oplus K^{2-n}~,}\]
 	\begin{equation}
 		\label{EQ M sw1 Higgs field}
 		\eta=\mtrx{0&0&0&\cdots&0&\pi_*\mu\\
			   1&q_2&q_4&\cdots&q_{2n-4}&q_{2n-2}\\
			   0&1&q_2&q_4&\cdots&q_{2n-4}\\
			   0&0&1&q_2&\cdots&q_{2n-6}\\
			   &\ddots&\ddots&\ddots&&\\
			   &&&0&1&q_2}:V\longrightarrow W\otimes K~.
 	\end{equation} 
 	Moreover, $\widetilde\Psi^{sw_2}_{sw_1}(M,\mu,q_2,\cdots,q_{2n-2})$ and $\widetilde\Psi^{sw_2}_{sw_1}(M',\mu',q_2',\cdots,q_{2n-2}')$ lie in the same $\sS(\sO(n,\C)\times\sO(n+1,\C))$ gauge orbit if and only if, for all $j$
 	\[\xymatrix{M'=M &\mu'=\pm\mu& q_{2j}'=q_{2j}}\] 
 	 
 \[\text{or}\ \ \ \ \ \xymatrix{M'=\iota^*M &\mu'=\pm\iota^*\mu& q_{2j}'=q_{2j}}~.\]
\end{Lemma}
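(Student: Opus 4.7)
The plan is to follow the same two-step strategy as Lemmas \ref{Lemma defining Higgs bundles in Md} and \ref{Lemma Image in Md}, with the essential structural change being that the split rank-two summand $M\oplus M^{-1}$ of $W$ is replaced by the (generically) indecomposable rank-two orthogonal bundle $\pi_*M$ on $X$. In particular, the gauge-group analysis from Lemma \ref{Lemma Image in Md} must be carried out with $\pi_*M$ treated as an irreducible block rather than a sum of line bundles.

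For the stability claim, I would first verify (poly)stability at the cyclic locus where $q_2=\cdots=q_{2n-2}=0$. The associated $\sSL(2n+1,\C)$ Higgs bundle is cyclic of Simpson's type, so by Proposition 6.3 of \cite{KatzMiddleInvCyclicHiggs} one only needs to rule out destabilizing subbundles within each graded summand. The chain of line bundles $K^{n-1}, K^{n-2}, \ldots, K^{2-n}, K^{1-n}$ contains no nonnegative-degree invariant pieces, and the block $\pi_*M$ is a semistable degree-zero orthogonal bundle with no positive-degree line subbundle (since $M\in\Prym^{sw_2}(X_{sw_1},X)$ has degree zero on the double cover and $M\not\cong\iota^*M$ for generic $M$, with the remaining case handled directly). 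Openness of stability together with the scaling gauge transformations analogous to \eqref{EQ scaling gauges} (which are diagonal on the line bundle summands and act as the identity on $\pi_*M$) then propagate (poly)stability to all values of $(q_2,\ldots,q_{2n-2})$; this is the same device used in the proof of Lemma \ref{Lemma defining Higgs bundles in Md}.

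For the gauge equivalence classification, I would decompose $W=\pi_*M\oplus W_0$ with $W_0=K^{n-2}\oplus\cdots\oplus K^{2-n}$ and write any putative intertwining gauge $(g_V,g_W)$ in the block form of \eqref{EQ decomposition gW}. Holomorphicity, combined with the fact that $W_0$ is a chain of line bundles of strictly decreasing degree and that $\pi_*M$ is indecomposable of degree zero, forces $g_V$ and the $g_{W_0}$ block to be upper triangular and the off-diagonal $\pi_*M\leftrightarrow W_0$ blocks to vanish, exactly as in the proof of Lemma \ref{Lemma Image in Md}. Hitchin's parameterization of $\Hit(\sSO(n-1,n))$ then forces $g_V=Id_V$ and $g_{W_0}=Id_{W_0}$, and the $\sS(\sO(n,\C)\times\sO(n+1,\C))$ determinant constraint fixes $\det$ of the remaining block on $\pi_*M$. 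What is left is to identify the $(M,\mu)$-data which give gauge-equivalent Higgs bundles: the endomorphism $-\mathrm{Id}$ on $\pi_*M$ yields $\mu\mapsto-\mu$ with $M$ unchanged, while the tautological orthogonal isomorphism $\pi_*M\cong\pi_*(\iota^*M)=\pi_*(M^{-1})$ transports the data $(M,\mu)$ to $(\iota^*M,\iota^*\mu)$ producing the same underlying $\sSO(n,n+1)$ Higgs bundle on $X$. Together these give the claimed four-element ambiguity.

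The main obstacle, and the only genuinely new input compared with Lemma \ref{Lemma Image in Md}, is the analysis of holomorphic orthogonal automorphisms of the indecomposable rank-two bundle $\pi_*M$. To handle this cleanly I would pull back via $\pi$ to identify such automorphisms with $\iota$-equivariant orthogonal automorphisms of $M\oplus\iota^*M$; for generic $M\in\Prym^{sw_2}$ (where $M^2\not\cong\Oo$, so $\Hom(M,\iota^*M)\cong H^0(M^{-2})=0$) these reduce to $\pm\mathrm{Id}$, while the special locus $M^2\cong\Oo$ must be verified separately and corresponds precisely to the orbifold points of $\Ff^{sw_2}_{sw_1}/(\Z_2\oplus\Z_2)$ described in Theorem \ref{THM3}.
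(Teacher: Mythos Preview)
Your proposal is essentially correct, but it takes a slightly different route from the paper's proof, and the difference is worth noting.

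The paper does not redo the block-matrix gauge analysis on $X$. Instead, for the gauge-orbit statement it pulls back the \emph{entire} Higgs bundle $\widetilde\Psi_{sw_1}^{sw_2}(M,\mu,q_2,\ldots,q_{2n-2})$ to the double cover $X_{sw_1}$, observes that $\pi^*(\pi_*M)\cong M\oplus M^{-1}$ and $\pi^*K_X=K_{X_{sw_1}}$, and thereby identifies the pullback with a Higgs bundle in the image of $\widetilde\Psi_0$ on $X_{sw_1}$ (with $\nu=\iota^*\mu$). Two Higgs bundles on $X$ are gauge equivalent iff their pullbacks are gauge equivalent via an $\iota^*$-invariant gauge transformation; the already-proven Lemma~\ref{Lemma Gauge group fixing im Psi0} then says the relevant gauge group upstairs is $\sO(2,\C)$, and the $\iota^*$-invariance condition forces $\lambda=\lambda^{-1}$, i.e.\ $\lambda=\pm1$, yielding exactly the claimed $\Z_2\oplus\Z_2$ ambiguity. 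The stability argument similarly leans on Lemma~\ref{Lemma Polystable image of Psi0}.

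Your approach---working directly on $X$ with the block form $W=\pi_*M\oplus W_0$ and only pulling back at the end to analyze $\Aut(\pi_*M)$---is valid but less economical: you end up reproving parts of Lemma~\ref{Lemma Image in Md} with $\pi_*M$ in place of $M\oplus M^{-1}$. One small correction: Hitchin's parameterization of $\Hit(\sSO(n-1,n))$ gives $(g_V,g_{W_0})=(\pm Id_V,\pm Id_{W_0})$, not just the identity; the sign interacts with the determinant constraint and the action on the $\pi_*M$ block (this is exactly the subtlety handled in Lemma~\ref{Lemma Gauge group fixing im Psi0}). The paper's pullback strategy sidesteps this by reducing wholesale to the $d=0$ case already analyzed.
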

\begin{proof}
Before checking the image of $\widetilde\Psi_{sw_1}^{sw_2}$ consists of stable Higgs bundles, we first prove the statement about gauge orbits. Two Higgs bundles $(V,W,\eta)$ and $(V',W',\eta')$ in the image of $\widetilde\Psi_{sw_1}^{sw_2}$ lie in the same $\sS(\sO(n)\times\sO(n+1))$ gauge orbit if and only if $\pi^*(V,W,\eta)$ to $\pi^*(V',W',\eta')$ are gauge equivalent on $X_{sw_1}$ via an $\iota^*$-invariant gauge transformation.
Since $\pi^*K_X=K_{X_{sw_1}},$ the $\sSO(n,n+1)$ Higgs bundle $\pi^*\widetilde\Psi_{sw_1}^{sw_2}(M,\mu,q_2,\cdots,q_{2n-2})$ on $X_{sw_1}$ are in the image of $\widetilde\Psi_0$ from \eqref{EQ M0 Higgs field}, with $M\in\Prym(X_{sw_1})$ and $\nu=\iota^*\mu.$ 
By Lemma \ref{EQ M sw1 Higgs field}, the $\sSO(n,n+1)$ Higgs bundles $\widetilde\Psi_0(M,\mu,\nu,q_2,\cdots,q_{2n-2})$ and $\widetilde\Psi_0(M',\mu',\nu',q_2,\cdots,q_{2n-2})$ on $X_{sw_1}$ are in the same gauge orbit if and only 
\[\xymatrix@R=0em{(M',\mu',\nu',q_2',\cdots,q_{2n-2}')= (M,\lambda\mu,\lambda^{-1}\nu,q_2,\cdots,q_{2n-2})&\text{or}\\(M',\mu',\nu',q_2',\cdots,q_{2n-2}')= (M^{-1},\lambda^{-1}\nu,\lambda\mu,q_2,\cdots,q_{2n-2})&}\]
for $\lambda\in\C^*.$ The corresponding gauge transformations are given by \eqref{EQ d=0 gauge Trans}, and are $\iota^*$-invariant if and only if $\lambda=\lambda^{-1},$ i.e. $\lambda=\pm1.$

	Polystability of the Higgs bundle \eqref{EQ M sw1 Higgs field} follows almost immediately from the proof of the Lemma \ref{Lemma Polystable image of Psi0}. 
	Namely, for the zero locus of the holomorphic differentials $(q_2,\cdots,q_{2n-2})$ the corresponding $\sSL(2n+1,\C)$ Higgs bundles are cyclic and can be represented schematically as:
	\[\xymatrix@R=1em{K^{p-1}\ar[r]^1&\cdots\ar[r]^1&K^{1-p}\ar[dl]^{\pi_*\mu^T}\\ &\pi_*M\ar[ul]^{\pi_*\mu}&}.\]

	To check that this Higgs bundle is polystable it suffices to show $\pi_*M$ has no positive invariant subbundles. In fact, $\pi_*M$ does not have any positive degree subbundles. 
	Indeed, if $0\to L\to \pi_*M$ is a holomorphic subbundle if and only if there is a positive degree $\iota^*$-invariant subbundle $0\to\tilde L\to\pi^*\pi_*M=M\oplus M^{-1}.$ But $M\oplus M^{-1}$ has no positive subbundles. 

	If $\mu=0$, then the Higgs bundle is a direct sum of a stable $\sSL(2n-1,\C)$ Higgs bundle with a degree zero stable rank 2 bundle. Since $sw_1\neq0,$ the orthogonal bundle $\pi_*M$ does not have any isotropic line subbundles. Thus, if $\mu\neq0$ and $N\subset \pi_*M$ is a degree zero invariant line subbundle, then $N$ is an orthogonal subbundle. Thus, we can take its orthogonal complement and split the Higgs bundle as a stable $\sSL(2n,\C)$ Higgs bundle plus an invariant degree zero line bundle. 
	This implies the Higgs bundles in \ref{EQ M sw1 Higgs field} are polystable for $q_{2j}=0$. Since the automorphism group of such a Higgs bundles is finite, the Higgs bundle is stable.
	Using the openness of stability, as in Lemma \ref{Lemma Polystable image of Psi0} we conclude that $\widetilde\Psi^{sw_2}_{sw_1}$ is well defined.
\end{proof}
There is a $\Z_2\oplus\Z_2$ action on $\Ff_{sw_1}^{sw_2}$ generated by
\[\xymatrix{(M,\mu)\to (M,-\mu)&\text{and}&(M,\mu)\to (\iota^*M,\iota^*\mu)}~.\]
Moreover, if we extend this action trivially to $\Ff_{sw_2}^{sw_1}\times\bigoplus\limits_{j=1}^{n-1}H^0(K^{2j})$ then the map $\widetilde\Psi_{sw_1}^{sw_2}$ from \eqref{EQ tildePsi sw1 Def} is $\Z_2\oplus\Z_2$-equivariant. 
This gives a well defined continuous map 
\begin{equation}\label{eq psisw1 map}
	\Psi_{sw_1}^{sw_2}:\xymatrix{\Ff_{sw_1}^{sw_2}/(\Z_2\oplus\Z_2)\times\bigoplus\limits_{j=1}^{n-1}H^0(K^{2j})\ar[r]&\Mm(\sSO(n,n+1))}~
\end{equation}
which is a homeomorphism onto its image. 
The following lemma completes the proof of Theorem \ref{THM3}.
\begin{Lemma}
	For each $(sw_1,sw_2)\in H^1(X,\Z_2)\setminus\{0\}\times H^2(X,\Z_2)$, the image of the map $\Psi_{sw_1}^{sw_2}$ from \eqref{eq psisw1 map} is open and closed in $\Mm(\sSO(n,n+1)).$ 
\end{Lemma}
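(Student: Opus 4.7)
The plan is to follow the template of Lemma \ref{Lemma image psi0 closed}: establish closedness using properness of the Hitchin fibration, and then establish openness by showing that the local Kuranishi model at each Higgs bundle in the image reproduces the corresponding neighborhood in $\Ff_{sw_1}^{sw_2}/(\Z_2\oplus\Z_2) \times \bigoplus_{j=1}^{n-1} H^0(K^{2j})$. The structural change from the $\Psi_0$ situation is that the degree-zero summand $M\oplus M^{-1}\subset W$ is replaced by the pushforward orthogonal bundle $\pi_*M$, and the continuous $\C^*$-symmetry is replaced by the finite $\Z_2\oplus\Z_2$-symmetry identified in Lemma \ref{Lemma Defining Higgs field sw1not0}.

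For closedness, I would pick a basis of invariant polynomials analogous to the one used in \eqref{EQ Hitchin fibration on Md}. In this basis the Hitchin invariants of $\widetilde\Psi_{sw_1}^{sw_2}(M,\mu,q_2,\ldots,q_{2n-2})$ are $q_{2j}$ for $1\leq j\leq n-1$ together with a top-degree invariant in $H^0(K_X^{2n})$ obtained by descending the $\iota$-invariant section $\mu\cdot\iota^*\mu\in H^0(K_{X_{sw_1}}^{2n})$ via $\pi_*$ (using $K_{X_{sw_1}}=\pi^*K_X$ and $\iota^*\mu\in H^0(MK_{X_{sw_1}}^n)$). Since $\Prym^{sw_2}(X_{sw_1},X)$ is compact and the fibers of $\Ff_{sw_1}^{sw_2}\to\Prym^{sw_2}$ have constant rank, a divergent sequence in the domain must force either some $q_{2j}$ or the norm of $\mu$ to escape to infinity, and hence some Hitchin invariant to diverge; properness of the Hitchin map then forces the images to leave every compact set of $\Mm(\sSO(n,n+1))$. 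For openness at points where $\mu\neq 0$, the associated $\sSL(2n+1,\C)$ Higgs bundle is stable by the argument in Lemma \ref{Lemma Defining Higgs field sw1not0}, so Proposition \ref{Prop: H2 vanish of stable} gives $\HH^2(C^\bullet)=0$ and Proposition \ref{Prop Orbifold/Smooth GHiggs} identifies an open neighborhood with the quotient $\HH^1(C^\bullet)/\Aut$; a dimension comparison (both sides realize the expected dimension of $\Mm(\sSO(n,n+1))$) forces this neighborhood to agree with the corresponding neighborhood in the image of $\Psi_{sw_1}^{sw_2}$.

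The main obstacle is openness at the orbifold points where $\mu=0$, because here the associated $\sSL(2n+1,\C)$ Higgs bundle is only polystable and Proposition \ref{Prop: H2 vanish of stable} no longer applies. I would handle these by a weight-by-weight computation modeled on Lemma \ref{Lemma H2=0}, using the cyclic grading on $\Lambda^2_QV\oplus\Lambda^2_QW$ and $\Hom(V,W)\otimes K$ induced by the Higgs field. The substitutions needed are that the weight-zero summand of $\Lambda^2_QW$ becomes $\Lambda^2(\pi_*M)$, a $2$-torsion line bundle representing $sw_1$, and that the key stability input becomes: $\pi_*M$ has no $\Phi$-invariant line subbundles when $sw_1\neq 0$, which follows from $\iota^*M\neq M$ for generic $M$ and from $sw_1\neq 0$ in general. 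Running the same isomorphism arguments — the Higgs field $\eta$ induces isomorphisms on all non-weight-zero graded pieces — then yields $\HH^2(C^\bullet)=0$ together with
\[
\HH^1(C^\bullet) \cong H^1(\Lambda^2(\pi_*M)) \oplus H^0(M^{-1}K_{X_{sw_1}}^n) \oplus \bigoplus_{j=1}^{n-1}H^0(K^{2j}),
\]
where $H^1(\Lambda^2(\pi_*M))$ is canonically the tangent space $T_M\Prym^{sw_2}(X_{sw_1},X)$. Finally, one identifies $\Aut(V,W,\eta)\cong\Z_2\oplus\Z_2$, generated by the orthogonal gauge transformation acting as $-1$ on $\pi_*M$ and by the involution swapping the two eigenlines of $\pi^*(\pi_*M)=M\oplus\iota^*M$; their induced action on $\HH^1$ matches the generators in \eqref{eq Z2 action on Fsw1}. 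The Kuranishi quotient $\HH^1/\Aut$ therefore realizes an open neighborhood of $[(M,0)]$ in $\Ff_{sw_1}^{sw_2}/(\Z_2\oplus\Z_2)\times\bigoplus_{j=1}^{n-1}H^0(K^{2j})$, completing the proof of openness.
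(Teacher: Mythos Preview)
Your argument is correct, but it is considerably more elaborate than the paper's, because you model it on Lemma~\ref{Lemma image psi0 closed} rather than on Theorem~\ref{THM1}. The paper's proof explicitly says ``the proof is almost equivalent to the proof of Theorem~\ref{THM1}'': closedness via properness of the Hitchin fibration (with $p_n(\Phi)=\pi_*\mu^T\otimes\pi_*\mu$), and openness via the dimension-plus-Hitchin-fibration argument of Lemma~\ref{Lemma: s is open closure is ps}, applied \emph{uniformly at every point}. The reason no separate Kuranishi analysis is needed is already contained in Lemma~\ref{Lemma Defining Higgs field sw1not0}: when $sw_1\neq 0$, the gauge-orbit computation there shows that $\Aut(V,W,\eta)$ is contained in $\Z_2\oplus\Z_2$ for \emph{every} $(M,\mu)$, including $\mu=0$. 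The continuous $\C^*\subset\sO(2,\C)$ that forced the delicate analysis in the $\Psi_0$ case simply does not descend from $X_{sw_1}$ to $X$, because only the $\iota$-invariant automorphisms survive. So every point in the image is already asserted to be a smooth or orbifold point, and the paper stops there.

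Your more careful treatment does buy something. Your observation that at $\mu=0$ the associated $\sSL(2n+1,\C)$ Higgs bundle is only strictly polystable (it splits off $\pi_*M$ with zero Higgs field) is correct, so Proposition~\ref{Prop: H2 vanish of stable} does not literally apply and the paper's invocation of ``smooth or orbifold point'' at those points is elliptical. Your proposed graded computation, with $\Lambda^2(\pi_*M)$ the $2$-torsion line bundle representing $sw_1$ and $H^1(\Lambda^2(\pi_*M))\cong T_M\Prym^{sw_2}(X_{sw_1},X)$, is the right substitute for Lemma~\ref{Lemma H2=0} and would make this rigorous. One small correction: your claim that for $\mu\neq 0$ the $\sSL(2n+1,\C)$ Higgs bundle is stable is not quite what Lemma~\ref{Lemma Defining Higgs field sw1not0} establishes --- that lemma shows $\sSO(n,n+1)$-stability (finite automorphisms), and indeed notes that $\pi_*M$ can still contain an invariant orthogonal line subbundle, making the $\sSL$ bundle strictly polystable even when $\mu\neq 0$. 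This does not affect your argument, since the explicit $\HH^2$ computation covers those points as well.
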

\begin{proof}
	The proof is almost equivalent to the proof of Theorem \ref{THM1}. 
	Let $(E,\Phi)$ be the $\sSL(2n+1,\C)$ Higgs bundle associated to $\Psi_{sw_1}^{sw_2}(M,\mu,q_2,\cdots,q_{2n-2})$. As in \eqref{EQ Hitchin fibration on Md}, we can choose a basis of invariant polynomials $(p_1,\cdots,p_n)$ so that 
	\[
	p_j(\Psi_{sw_1}^{sw_2}(M,\mu,\nu,q_2,\cdots,q_{2n-2}))=\begin{dcases}
		q_{2j}& 1\leq j\leq n-1\\
		\pi_*\mu^T\otimes\pi_*\mu&j=n
	\end{dcases}~.
				\] 
By properness of the Hitchin fibration, the image of any divergent sequence in $\Ff_{sw_1}^{sw_2}\times\bigoplus\bigoplus\limits_{j=1}^{n-1}H^0(K^{2j})$ also diverges in $\Mm(\sSO(n,n+1)).$ 
Thus, the image of $\Psi_{sw_1}^{sw_2}$ is closed. 

A simple calculation shows that the dimension of the image of $\Psi_{sw_1}^{sw_2}$ is the expected dimension of the moduli space $\Mm(\sSO(n,n+1))$. Since every point in the image is a smooth point or an orbifold point, the image of $\Psi_{sw_1}^{sw_2}$ is open by the same argument for openness in Lemma \ref{Lemma: s is open closure is ps}.
\end{proof}




\section{Zariski closures of reducible representations}\label{Section Zariski closure}
Recall from Proposition \ref{Prop Irreducible reps and Stable SLnC Higgs bundles} that a representation $\rho:\Gamma\to\sSO(n,n+1)$ is reducible if and only if the corresponding $\sSL(2n+1,\C)$ Higgs bundle is strictly polystable. Moreover, a representation $\rho$ has Zariski closure $\sG'\subset\sSO(n,n+1)$ if and only if the structure group of the corresponding $\sSO(n,n+1)$ Higgs bundle reduces to $\sG'$ (see Proposition \ref{Prop smaller zariski closure}).
 \subsection{A few important subgroups of $\sSO(n,n+1)$}
 Recall that $\sSO(n,n+1)$ is the group of orientation preserving linear automorphisms of $\R^{2n+1}$ which preserve a signature $(n,n+1)$-inner product. More generally, the group $\sO(n,m)$ is the group of linear automorphism of $\R^{n+m}$ which preserve a signature $(n,m)$-inner product. 

 If $Q_n$ and $Q_m$ are positive definite symmetric $n\times n$ and $m\times m$ matrices, then $\sO(n,m)$ consists of elements of $g\in\sGL(\R^{n+m})$ so that 
 \[g^T\mtrx{Q_n&\\&-Q_m}g=\mtrx{Q_n&\\&-Q_m}~.\] 
 The group has $\sO(n,m)$ has four connected components which we will denote by $\sO^{\pm,\pm}(n,m)$. If $\R^{n,0}\subset\R^{n+m}$ is a positive definite subspace of maximal dimension and $\R^{0,m}\subset\R^{n,m}$ is a negative definite subspace with maximal dimension, then an element $g\in\sO(n,m)$ is in $\sO^{+,-}(n,m)$ if it preserves an orientation of $\R^{n,0}$ and reverses an orientation of $\R^{0,m}.$ The components $\sO^{+,+}(n,m)$, $\sO^{-,+}(n,m)$ and $\sO^{-,-}(n,m)$ are defined similarly. The group $\sO^{+,\pm}(n,m)$ consists of elements which preserve the orientation an $\R^{n,0}$. 

\begin{Proposition}\label{Prop: Important subgroups}
If the quadratic form $Q_{n+1}$ is given by $Q_{n+1}=\smtrx{Q_a&\\&Q_b}$, then matrices of the form $\smtrx{A&0\\0&B}$ define subgroups of $\sSO(n,n+1)$ isomorphic to
	\begin{itemize}
		\item $\sSO(n,n-1)\times \sSO(2)$ if $a=n-1,$ $A\in\sSO(n,n-1)$ and $B\in\sSO(2)$,
		\item $\sSO(n,n)$ if $a=n$, $A\in\sSO(n,n)$ and $B=1$,
		\item $\sO^{+,\pm}(n,n)$ if $a=n,$ $A\in\sO^{+,\pm}(n,n)$ and $B=det(A)$,
		\item $\sS(\sO^{+,\pm}(n,n-1)\times\sO(2))$ if $a=n-1,$ $A\in\sO^{+,\pm}(n,n-1)$, $B\in\sO(2)$ and $det(A)=det(B).$
	\end{itemize}
\end{Proposition}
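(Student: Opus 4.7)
The plan is to verify each of the four claims by a direct computation based on the orthogonal splitting $Q_{n+1}=\smtrx{Q_a&\\&Q_b}$. Writing $\R^{2n+1}=\R^n\oplus\R^a\oplus\R^b$ accordingly, the ambient bilinear form becomes $Q_n\oplus(-Q_a)\oplus(-Q_b)$, so that the first two summands carry a form of signature $(n,a)$ while the last carries a negative definite form of rank $b$. For a block matrix of the form $\smtrx{A&0\\0&B}$, where $A$ acts on $\R^n\oplus\R^a$ and $B$ on $\R^b$, the condition of lying in $\sO(n,n+1)$ decouples into two independent conditions: $A$ preserves $Q_n\oplus(-Q_a)$, i.e.\ $A\in\sO(n,a)$, and $B$ preserves $-Q_b$, i.e.\ $B\in\sO(b)$. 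The requirement of lying in $\sSO(n,n+1)$ then amounts to the single extra condition $\det(A)\det(B)=1$. Since block diagonal multiplication is componentwise, this subset is automatically a subgroup.

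Given this reduction, each of the four cases is a routine verification. In cases (1) and (2), both blocks have determinant one by definition, so the block matrix lies in $\sSO(n,n+1)$, and the resulting subgroup is the evident direct product $\sSO(n,n-1)\times\sSO(2)$ or $\sSO(n,n)\times\{1\}\cong\sSO(n,n)$. In case (3), where $b=1$ and hence $\sO(1)=\{\pm 1\}$, taking $B=\det(A)$ enforces $\det(A)\det(B)=\det(A)^2=1$, and the map $A\mapsto\smtrx{A&0\\0&\det(A)}$ is a group homomorphism, injective because $A$ is recovered as the upper-left block; its image realizes $\sO^{+,\pm}(n,n)$ as a subgroup of $\sSO(n,n+1)$. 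In case (4), the constraint $\det(A)=\det(B)$ again yields $\det(A)\det(B)=\det(A)^2=1$, and the resulting subgroup is the fibre product of $\sO^{+,\pm}(n,n-1)$ and $\sO(2)$ over $\{\pm 1\}$, which by definition is $\sS(\sO^{+,\pm}(n,n-1)\times\sO(2))$.

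The only point requiring any care is checking that the orientation labels $\sO^{+,\pm}$ behave coherently under the embedding. Here one observes that the positive definite summand $\R^n$ of $\R^{n+a}$ coincides with the positive definite summand of $\R^{2n+1}$, since the additional blocks $\R^a$ and $\R^b$ are both negative definite; consequently the superscript $+$ in $\sO^{+,\pm}$ refers to the same orientation in $\sO(n,a)$ and in the ambient $\sO(n,n+1)$, and there is no ambiguity in the notation used in cases (3) and (4). No deeper ideas beyond this orientation bookkeeping are needed; I expect the whole argument to fit in a short paragraph in the paper.
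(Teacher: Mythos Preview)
Your proposal is correct and is exactly the sort of direct verification the paper has in mind; in fact the paper states this proposition without proof, treating it as immediate from the block-diagonal description of $\sSO(n,n+1)$. Your final paragraph on the orientation bookkeeping is a nice touch but not strictly needed, since the proposition only asserts that certain explicit subsets are subgroups isomorphic to the stated groups, not how their components sit inside $\pi_0(\sSO(n,n+1))$.
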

The definition of an $\sO(n,m)$ Higgs bundle is similar to that of an $\sSO(n,n+1)$ Higgs bundle.
\begin{Definition}
	An $\sO(n,m)$ Higgs bundle over a Riemann surface $X$ is a triple $(V,W,\eta)$ where 
	\begin{itemize}
	\item $V$ and $W$ are respectively rank $n$ and $m$ holomorphic vector bundles on $X$ equipped with holomorphic orthogonal structures $Q_V$ and $Q_W$.
	\item $\eta\in H^0(\Hom(V,W)\otimes K).$
\end{itemize}
An $\sO(n,m)$ Higgs bundle $(V,W,\eta)$ is an $\sO^{+,\pm}(n,m)$ Higgs bundle if $det(V)=\Oo.$ 
\end{Definition}

For $\sG'\subset\sSO(n,n+1)$ one of the subgroups from Proposition \ref{Prop: Important subgroups}, the following characterizes when an $\sSO(n,n+1)$ Higgs bundle reduces to a $G'$ Higgs bundle.

\begin{Proposition}\label{Prop: Higgs bundle reductions}
  	Let $\sG'$ be one of the subgroups from Proposition \ref{Prop: Important subgroups}. An $\sSO(n,n+1)$ Higgs bundle $(V,W,\eta)$ on $X$ reduces to a $\sG'$ Higgs bundle if  
  	\begin{itemize}
  		\item $G'=\sSO(n,n-1)\times\sSO(2)$, $(W,Q_W)=\left(W_0\oplus M\oplus M^{-1},\smtrx{Q_{W_0}&&\\&0&1\\&1&0}\right)~,$
  		where $(W_0,Q_{W_0})$ is a rank $(n-1)$ holomorphic orthogonal bundle with trivial determinant, $M\in\Pic^0(X)$ and 
  		\[ \eta=\smtrx{\eta_0\\0\\0}:V\to (W_0\oplus M\oplus M^{-1})\otimes K~.\]
  		\item $G'=\sSO(n,n),$ $(W,Q_W)=\left(W_0\oplus \Oo,\smtrx{Q_{W_0}&\\&1}\right),$
  		where $(W_0,Q_{W_0})$ is a rank $n$ holomorphic orthogonal bundle with trivial determinant and 
  		\[\eta=\smtrx{\eta_0\\0}:V\to (W_0\oplus \Oo)\otimes K~.\]
  		\item $G'=\sO^{+,\pm}(n,n)$, $(W,Q_W)=\left(W_0\oplus M,\smtrx{Q_{W_0}&\\&1}\right),$
  		where $(W_0,Q_{W_0})$ is a rank $n$ holomorphic orthogonal bundle, $M\in\Pic^0(X)$ such that $det(W_0)=M$ and 
  		\[\eta=\smtrx{\eta_0\\0}:V\to (W_0\oplus M)\otimes K~.\]
  		\item $\sG'=\sS(\sO^{+,\pm}(n,n-1)\times\sO(2)),$ $(W,Q_W)=\left(W_0\oplus W',\smtrx{Q_{W_0}&\\&Q_{W'}}\right),$ 
  		where $(W_0,Q_{W_0})$ is a rank $(n-1)$ holomorphic orthogonal bundle, $(W',Q_{W'})$ is a rank $2$ holomorphic orthogonal bundle with $det(W_0)=det(W')$ and 
  		\[\eta=\smtrx{\eta_0\\0}:V\to (W_0\oplus W')\otimes K~.\]
  	\end{itemize}
 \end{Proposition}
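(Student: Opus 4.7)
The strategy is a direct case-by-case verification, unwinding Definition \ref{DEF:Higgs bundle Reduction} for each of the four subgroups $\sG'\subset\sSO(n,n+1)$ listed in Proposition \ref{Prop: Important subgroups}. For each case I would first compute the complexified Cartan decomposition $\fg'_\C=\fh'_\C\oplus\fm'_\C$ together with the inclusions $\sH'_\C\hookrightarrow\sS(\sO(n,\C)\times\sO(n+1,\C))$ and $\fm'_\C\hookrightarrow\Hom(V,W)$ induced by the block-diagonal embeddings of Proposition \ref{Prop: Important subgroups}. Since every subgroup on the list is block-diagonal with respect to a fixed orthogonal splitting of $\R^{2n+1}$, the reduction of structure group is witnessed at the level of Higgs bundles by a compatible holomorphic orthogonal splitting of $W$ (with $V$ preserved in all four cases), and the condition $\varphi\in\Pp'[\fm'_\C]\otimes K$ translates into a vanishing condition on certain blocks of $\eta$.

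As the template case, take $\sG'=\sSO(n,n-1)\times\sSO(2)$: its maximal compact is $\sS(\sO(n)\times\sO(n-1))\times\sSO(2)$, and $\fm'_\C\cong\Hom(V',W_0')$ where $V',W_0'$ are the standard representations of $\sO(n,\C)$ and $\sO(n-1,\C)$, with no contribution from the compact $\sSO(2)$-factor. The orthogonal structure $Q_W=\smtrx{Q_{W_0}&&\\&0&1\\&1&0}$ exhibits $W$ as the orthogonal direct sum $W_0\oplus(M\oplus M^{-1})$, where the hyperbolic pairing on $M\oplus M^{-1}$ is precisely the data of an $\sSO(2,\C)\cong\C^*$-reduction. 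The prescribed block form $\eta=\smtrx{\eta_0\\0\\0}$ with $\eta_0\colon V\to W_0\otimes K$ then confines $\varphi$ to the associated subbundle $\Pp'[\fm'_\C]\otimes K$, yielding the desired reduction. I would verify cases~2 and~4 by the same procedure: the prescribed orthogonal decomposition of $W$, together with the compatibility conditions on determinants, furnishes the holomorphic reduction, and the vanishing of the listed blocks of $\eta$ places the Higgs field in the smaller $\fm'_\C$.

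The main technical point I expect to require genuine care is tracking the determinant condition $\det V=\det W$ across the four embeddings. In cases~1 and~2 it is automatic, since the extra summands ($M\oplus M^{-1}$ and $\Oo$) have trivial total determinant; in case~4 it forces the matching $\det(W_0)=\det(W')$ stated in the proposition. The subtlest is case~3, where the embedding $A\mapsto\smtrx{A&0\\0&\det(A)}$ twists the extra line summand by the determinant character: the relation $\det(W_0)=M$ combined with $\det V=\det W=\det(W_0)\cdot M$ is consistent only when $M^2\cong\Oo$, reflecting the fact that $\sO^{+,\pm}(n,n)$ meets $\sSO(n,n+1)$ through a $\Z_2$-extension supported on the determinant line bundle. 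Once this bookkeeping is in place, the verification that the prescribed data $(V,W,\eta)$ gives an actual $\sG'$-Higgs bundle (and not merely a smooth reduction) follows because the orthogonal splittings of $W$ are holomorphic by hypothesis and the block form of $\eta$ is preserved by the residual $\sH'_\C$-gauge group.
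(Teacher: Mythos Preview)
The paper states this proposition without proof; it is taken as an immediate consequence of Definition~\ref{DEF:Higgs bundle Reduction} once the Cartan decompositions of the four subgroups are written down. Your case-by-case verification is exactly the right approach and is correct, and your observation that case~3 implicitly forces $M^2\cong\Oo$ is accurate and is indeed how the proposition is applied later in Proposition~\ref{PROP: Zariski closures}.
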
  

 The $\sSO(n,n)$-Hitchin component is diffeomorphic to \[\Hit(\sSO(n,n))\cong \bigoplus\limits_{j=1}^{n-1}H^0(K^{2j})\oplus H^0(K^n)~.\] 
 The map $\bigoplus\limits_{j=1}^{n-1} H^0(K^{2j})\oplus H^0(K^n)\longrightarrow \Mm(\sSO(n,n))$ is defined by sending a tuple of holomorphic differentials $(q_2,q_4,\cdots,q_{2n-2},q_n)$ to the Higgs bundle $(V,W,\eta)$ where
\[\xymatrix@=0em{V=K^{n-2}\oplus K^{n-4}\oplus\cdots\oplus K^{4-n}\oplus K^{2-n}\oplus\Oo~,\\ W=K^{n-1}\oplus K^{n-3}\oplus\cdots\oplus K^{3-n}\oplus K^{1-n}~,}  \]
\begin{equation}
	\label{EQ Hitchin component Higgs field SO(n,n)}
	\eta=\mtrx{q_2&q_4&q_6&\cdots&q_{2n-2}&0\\
			   1&q_2&q_4&\cdots&q_{2n-4}&0\\
			   0&1&q_2&\cdots&q_{2n-6}&0\\
			   &\ddots&\ddots&\ddots&&\\
			   &&0&1&q_2&0\\
			   &&&0&1&0\\
			   &&&&0&q_n}:V\longrightarrow W\otimes K~.
\end{equation}
The orthogonal structures on $V$ and $W$ are given by 
\[Q_{V}=\smtrx{&&1&\\&\iddots&&\\1&&&\\&&&1}\ \ \ \ \ \ \ \ \text{and}\ \ \ \ \ \ \ \ Q_W=\smtrx{&&1\\&\iddots&\\1} .\]
\begin{Remark}
	Note that in \eqref{EQ Hitchin component Higgs field SO(n,n)}, if $q_n=0$, then the Higgs bundle reduces to $\sSO(n,n-1)\subset\sSO(n,n).$
\end{Remark}
 \subsection{Zariski closures of reducible representations}
 Recall from \eqref{EQ M0 Higgs field} that a Higgs bundle in $\Mm_0(\sSO(n,n+1))$ is determined by a tuple $(M,\mu,\nu,q_2,\cdots,q_{2n-2})$ where $M\in\Pic^0(X),$ $\mu\in H^0(M^{-1}K^n)$, $\nu\in H^0(MK^n)$ and $q_{2j}\in H^0(K^{2j})$ such that $\mu=0$ if and only if $\nu=0.$ Moreover, by Lemma \ref{Lemma O2 stabilizer}, the isomorphism class associated to a tuple $(M,\mu,\nu,q_2,\cdots,q_{2n-2})$ is a singular point of $\Mm_0(\sSO_0(n,n+1))$ if and only if $\mu=\nu=0$ or $M=M^{-1}$ and $\mu=\lambda\nu$ for $\lambda\in\C^*.$
\begin{Proposition}\label{PROP: Zariski closures}
	With the above notation, a Higgs bundle in $\Mm_0(\sSO_0(n,n+1))$ given by a tuple $(M,\mu,\nu,q_2,\cdots,q_{2n-2})$ 
	\begin{itemize}
		\item reduces to an $\sSO(n,n-1)\times\sSO(2)$ Higgs bundle whose $\sSO(n,n-1)$-factor is in the Hitchin component if $\mu=\nu=0$,
		\item reduces to an $\sSO(n,n)$ Higgs bundle in the Hitchin component if $M=\Oo$ and $\mu=\lambda\nu$ for $\lambda\in\C^*$,
		\item reduces to an $\sO^{+,\pm}(n,n)$ Higgs bundle if $M^2=\Oo$ and $\mu=\lambda\nu$ for $\lambda\in\C^*,$
	\end{itemize} 
\end{Proposition}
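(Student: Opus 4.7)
The plan is to handle each case by producing an explicit orthogonal holomorphic decomposition of $W$ to which Proposition~\ref{Prop: Higgs bundle reductions} applies. In all three cases the decomposition treats the hyperbolic summand $M\oplus M^{-1}\subset W$; the remaining orthogonal summand $W_0:=K^{n-2}\oplus\cdots\oplus K^{2-n}$ (for which $\det W_0=\Oo$) is present throughout and carries the $\sSO(n-1,\C)$-part of the structure.

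For case~(1), since $\mu=\nu=0$, the Higgs field in \eqref{EQ M0 Higgs field} has vanishing first and last rows, so $\eta$ factors through $W_0\otimes K$. The splitting $W=(M\oplus M^{-1})\oplus W_0$ is already orthogonal, and $(M\oplus M^{-1},\smtrx{0&1\\1&0})$ is an $\sSO(2,\C)$-bundle because its determinant is trivial. The middle block of $\eta$ is exactly the Hitchin Higgs field for $\sSO(n,n-1)$ in the parameterization analogous to \eqref{EQ Hitchin component Higgs field SO(n,n+1)}, so Proposition~\ref{Prop: Higgs bundle reductions} yields the $\sSO(n,n-1)\times\sSO(2)$ reduction with the $\sSO(n,n-1)$-factor in its Hitchin component.

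For cases~(2) and (3) the assumption $M^2=\Oo$ provides an isomorphism $\phi\colon M\to M^{-1}$, and for each $t\in\C^*$ I will consider the holomorphic line subbundles
\[L_t^{\pm}=\{(a,\pm t\phi(a)):a\in M\}\subset M\oplus M^{-1}~.\]
A direct computation with the hyperbolic form shows that $L_t^+$ and $L_t^-$ are mutually orthogonal, each isomorphic to $M$ as holomorphic line bundles, together span $M\oplus M^{-1}$, and carry restricted forms equal to nonzero constant multiples of $\phi$. Writing the Higgs-field component into $M\oplus M^{-1}$ as $(\nu,\mu)=(\nu,\lambda\phi(\nu))$ and decomposing along $L_t^+\oplus L_t^-$, its $L_t^-$-component vanishes exactly when $t=\lambda$. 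Setting $t=\lambda$ and rescaling $L_\lambda^{\pm}$ to normalize their orthogonal forms to $1$, we obtain an orthogonal decomposition $W=(L_\lambda^+\oplus W_0)\oplus L_\lambda^-$ in which the Higgs field maps $V$ only into $(L_\lambda^+\oplus W_0)\otimes K$, and $\det(L_\lambda^+\oplus W_0)=M$ because $\det W_0=\Oo$. Proposition~\ref{Prop: Higgs bundle reductions} then gives the $\sO^{+,\pm}(n,n)$-reduction of case~(3), with sign governed by the second Stiefel-Whitney class of $L_\lambda^+\oplus W_0$. In case~(2), where $M=\Oo$, both $L_\lambda^{\pm}$ are canonically trivial and the reduction refines to $\sSO(n,n)\subset\sSO(n,n+1)$ via the embedding of Proposition~\ref{Prop: Important subgroups}.

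The remaining step, and the main bookkeeping obstacle, is to identify the reduced $\sSO(n,n)$-Higgs bundle in case~(2) with the Hitchin parameterization \eqref{EQ Hitchin component Higgs field SO(n,n)}. One must rearrange the line-bundle grading of $V$ and $L_\lambda^+\oplus W_0$ so that the $(q_{2j})$-entries of $\eta$ occupy the upper $(n-1)\times(n-1)$ block of the Hitchin matrix, while the nonzero entry $\nu\in H^0(K^n)$ plays the role of the Hitchin differential $q_n$; applying the outer automorphism of $\sSO(n,n)$ that swaps the two chiral factors if necessary then places the reduced Higgs bundle in the Hitchin section.
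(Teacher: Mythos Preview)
Your proof is correct and essentially the same as the paper's: case~(1) is identical, and in cases~(2)--(3) the paper obtains the orthogonal splitting of $M\oplus M^{-1}$ by taking the image of $\eta_\mu$ and the kernel of $\eta_\mu^*$, which are exactly your explicitly constructed $L_\lambda^+$ and $L_\lambda^-$. Your closing remark about possibly applying the outer automorphism of $\sSO(n,n)$ is an extra precaution the paper does not spell out.
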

\begin{proof}
	Let $(V,W,\eta)$ denote the $\sSO(n,n+1)$ Higgs bundle corresponding to a tuple $(M,\mu,\nu,q_2,\cdots,q_{2n-2})$. It is given by \eqref{EQ M0 Higgs field}. The bundle $W$ can be written as $W_0\oplus M\oplus M^{-1}$ where $W_0=K^{n-2}\oplus K^{n-4}\oplus\cdots\oplus K^{2-n}.$ If $\mu=\nu=0$, then with respect to this splitting the Higgs field decomposes as 
	\[\eta=\mtrx{\eta_0\\0\\0}:V\longrightarrow (W_0\oplus M\oplus M^{-1})\otimes K~,\]
	where $\eta_0:V\to W_0\otimes K$ is the Higgs field in the $\sSO(n,n-1)$-Hitchin component associated to the holomorphic differentials $(q_2,\cdots,q_{2n-2}).$ 
	Thus, by Proposition \ref{Prop: Higgs bundle reductions}, the structure group reduces to $\sSO(n,n-1)\times \sSO(2)$. 

	If $M=M^{-1}$ and $\mu=\lambda\nu$, consider the portions of $\eta$ and $\eta^*$ given by 
	\[\eta_\mu=\mtrx{\lambda\mu\\\mu}:K^{-n}\ra M\oplus M^{-1}\ \ \ \ \ \ \text{and} \ \ \ \ \ \ \eta^*_\mu=\mtrx{\mu&\lambda\mu}:M\oplus M^{-1}\ra K^n.\] 
	The kernel of $\eta_\mu^*$ is an orthogonal subbundle of $M\oplus M^{-1}$ which is isomorphic to $M.$ Moreover, the image of $\eta_\mu$ is exactly the orthogonal complement of $ker(\eta_\mu^*).$ Thus the orthogonal bundle $(W,Q_W)$ can be written as 
	\[(W,Q_W)=\left(M\oplus K^{n-2}\oplus\cdots\oplus K^{2-n}\oplus M\smtrx{1&&&&\\&&&1&\\&&\iddots&&\\&1&&\\&&&&1}\right)\]
	with Higgs field given by 
	\[\eta=\mtrx{0&\cdots&0&\mu\\
			   1&q_2&\cdots&q_{2n-2}\\
			   &\ddots&\ddots&&\\
			   &0&1&q_2\\
			   &&0&0}:V\longrightarrow W\otimes K~.\]
Thus, the Higgs bundle $(V,Q_V,W,Q_W,\eta)$ decomposes a direct sum of $M$ (with zero Higgs field) and $(V,Q_V,W_0,Q_{W_0},\eta_0)$  where $W_0= K^{n-2}\oplus \cdots\oplus K^{2-n}\oplus M.$ The determinant of $W_0$ is $M$, thus the structure group of the Higgs bundle reduces to $\sO^{+,\pm}(n,n)\subset\sSO(n,n+1)$ by Proposition \ref{Prop: Higgs bundle reductions}. When $M=\Oo,$ the structure group reduces to $\sSO(n,n)$ and $\mu\in H^0(K^n)$. Thus, the Higgs field $\eta':V\ra W_0$ is in the $\sSO(n,n)$-Hitchin component \eqref{EQ Hitchin component Higgs field SO(n,n)}. 
	\end{proof} 

\begin{Theorem}\label{THM: Zariski closures}
	If $\rho:\Gamma\to\sSO(n,n+1)$ is a reducible representation which defines a point in $\Xx_0(\sSO(n,n+1)$ or $\Xx_{sw_1}^{sw_2}(\sSO(n,n+1))$, then there is a finite index subgroup $\widehat\Gamma\subset\Gamma$ such that the restriction of $\rho$ to $\widehat\Gamma$ either factors through $\sSO(n,n-1)\times \sSO(2)$ with $\sSO(n,n-1)$ factor in the Hitchin component, or factors through an $\sSO(n,n)$-Hitchin representation.
\end{Theorem}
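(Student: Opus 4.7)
The plan is to translate reducibility of $\rho$ into strict polystability of the associated $\sSL(2n+1,\C)$ Higgs bundle via Proposition \ref{Prop Irreducible reps and Stable SLnC Higgs bundles}, identify the resulting loci inside the explicit models \eqref{EQ M0 Higgs field} and \eqref{EQ M sw1 Higgs field}, and then use Proposition \ref{Prop smaller zariski closure} (applied through the reduction statements of Proposition \ref{Prop: Higgs bundle reductions} and Proposition \ref{PROP: Zariski closures}) together with a suitable finite cover of $X$ to obtain the Hitchin description. Throughout, the fact that the $\sSO(n,n-1)$-Hitchin Higgs bundle $(V,W_0,\eta_0)$ appearing as a subsystem is already stable will force every destabilizing subsheaf to come from the $M$ and $M^{-1}$ summands (or, in the $\Xx_{sw_1}^{sw_2}$ setting, from $\pi_*M$).

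For the $\Xx_0(\sSO(n,n+1))$ case, this analysis together with Lemma \ref{Lemma O2 stabilizer} singles out exactly the three strictly polystable loci listed in Proposition \ref{PROP: Zariski closures}: (a) $\mu=\nu=0$, (b) $M=\Oo$ with $\mu=\lambda\nu$, and (c) $M^2=\Oo$, $M\neq\Oo$, with $\mu=\lambda\nu$. Cases (a) and (b) immediately yield, with $\widehat\Gamma=\Gamma$, reductions to $\sSO(n,n-1)\times\sSO(2)$ with Hitchin first factor and to $\sSO(n,n)$-Hitchin respectively. In case (c), the $2$-torsion line bundle $M\in\Pic^0(X)[2]\cong H^1(X,\Z_2)$ determines a connected unramified double cover $\wh\pi:\wh X\to X$ with $\wh\pi^*M\cong\Oo$. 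Setting $\widehat\Gamma=\pi_1(\wh X)\subset\Gamma$ (an index-two subgroup) and pulling back the Higgs bundle along $\wh\pi$, the bundle $W_0\oplus M$ of the $\sO^{+,\pm}(n,n)$ reduction becomes an orthogonal bundle with trivial determinant, and after the trivialization $\wh\pi^*M\cong\Oo$ one recognizes the pulled-back Higgs field as the $\sSO(n,n)$-Hitchin form \eqref{EQ Hitchin component Higgs field SO(n,n)} with top differential $q_n$ proportional to $\mu$; hence $\rho|_{\widehat\Gamma}$ factors through an $\sSO(n,n)$-Hitchin representation.

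For $\Xx_{sw_1}^{sw_2}(\sSO(n,n+1))$ the key simplification is that reducibility forces $\mu=0$. Indeed, since $sw_1\neq 0$, the orthogonal bundle $\pi_*M$ admits no orthogonal (non-degenerate) line subbundle, and (by the argument in the proof of Lemma \ref{Lemma Defining Higgs field sw1not0}) combined with the stability of the Hitchin subsystem this precludes any $\Phi$-invariant degree zero destabilizing subsheaf when $\mu\neq 0$. When $\mu=0$ the Higgs bundle splits as $\pi_*M\oplus(V,W_0,\eta_0)$ with $\eta_0$ the $\sSO(n,n-1)$-Hitchin Higgs field. I then take $\widehat\Gamma=\pi_1(X_{sw_1})$ and pull back along $\pi:X_{sw_1}\to X$; using $\pi^*\pi_*M\cong M\oplus M^{-1}$ (with $\iota^*M=M^{-1}$), the pullback is an $\sSO(n,n+1)$ Higgs bundle on $X_{sw_1}$ that falls exactly into case (a) above on $X_{sw_1}$, so $\rho|_{\widehat\Gamma}$ factors through $\sSO(n,n-1)\times\sSO(2)$ with Hitchin first factor.

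The main obstacle is bookkeeping the identifications between the pulled-back Higgs bundles and the standard Hitchin models on the cover: one must check that $\wh\pi^*K_X=K_{\wh X}$, that the trivializations $\wh\pi^*M\cong\Oo$ (respectively $\pi^*\pi_*M\cong M\oplus M^{-1}$) match the orthogonal structures used in \eqref{EQ Hitchin component Higgs field SO(n,n)} and \eqref{EQ M0 Higgs field}, and that the induced gauge equivalence lands in the correct connected component. Once these compatibilities are in place, Theorem \ref{Nonabelian Hodge Correspondence} and Proposition \ref{Prop smaller zariski closure} translate the Higgs-bundle reductions back into the factorization statement for $\rho|_{\widehat\Gamma}$.
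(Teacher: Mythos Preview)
Your treatment of the $\Xx_0(\sSO(n,n+1))$ case is correct and follows the paper closely. The issue is in the $\Xx_{sw_1}^{sw_2}(\sSO(n,n+1))$ case, where your key claim that reducibility forces $\mu=0$ rests on the assertion that ``since $sw_1\neq 0$, the orthogonal bundle $\pi_*M$ admits no orthogonal (non-degenerate) line subbundle.'' This is false. What the proof of Lemma \ref{Lemma Defining Higgs field sw1not0} establishes is that $\pi_*M$ has no \emph{isotropic} line subbundle when $sw_1\neq 0$; it can very well admit orthogonal ones. For instance, take $M=\Oo\in\Prym(X_{sw_1},X)$: then $\pi_*\Oo\cong\Oo\oplus L_{sw_1}$ splits orthogonally into two $2$-torsion line bundles, with $sw_1(\pi_*\Oo)=sw_1(L_{sw_1})=sw_1\neq 0$. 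More generally, whenever $M^2=\Oo$ in $\Prym(X_{sw_1},X)$ and $\iota^*\mu=\pm\mu$ with $\mu\neq 0$, the associated $\sSL(2n+1,\C)$ Higgs bundle on $X$ acquires a degree-zero $\Phi$-invariant orthogonal line subbundle inside $\pi_*M$, so $\rho$ is reducible even though $\mu\neq 0$. These are precisely the orbifold points of $\Xx_{sw_1}^{sw_2}$ identified in Lemma \ref{Lemma Defining Higgs field sw1not0}, and your argument omits them.

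The paper sidesteps this difficulty entirely: it does not attempt to characterise the reducible locus in $\Xx_{sw_1}^{sw_2}$ directly. Instead it observes that if $\rho$ is reducible then so is $\rho|_{\widehat\Gamma}$ for $\widehat\Gamma=\pi_1(X_{sw_1})$, and that by construction this restriction lies in $\Xx_0(\sSO(n,n+1))$ for the surface group $\widehat\Gamma$. One then applies the already-proven $\Xx_0$ case on the cover, which may in turn require passing to a further index-two subgroup $\widehat{\widehat\Gamma}\subset\widehat\Gamma$ (exactly in the $M^2=\Oo$, $\mu\neq 0$ situation you missed). Your pull-back step is the right move; you just need to drop the incorrect ``$\mu=0$'' shortcut and invoke the $\Xx_0$ analysis on $X_{sw_1}$ in full.
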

\begin{proof}
If $\rho\in\Xx_0(\sSO(n,n+1))$ is reducible, then the associated $\sSO(n,n+1)$ Higgs bundle is determined by a tuple $(M,\mu,\nu,q_2,\cdots,q_{2n-2})$ with $\mu=\nu=0$ or $M=M^{-1}$ and $\mu=\lambda\nu$ for $\lambda\in\C^*.$ 
Indeed, if $0\neq\mu\neq\lambda\nu$, then the corresponding $\sSL(n,\C)$ Higgs bundle is stable, and hence the representation $\rho$ is irreducible. 

By Proposition \ref{Prop: Higgs bundle reductions}, if $\mu=\nu=0$ the Higgs bundle reduces to an $\sSO(n,n-1)\times\sSO(2)$ Higgs bundle whose $\sSO(n,n-1)$-factor is in the Hitchin component. Similarly, if $M=\Oo$ and $\mu=\lambda\mu$, the Higgs bundle reduces to an $\sSO(n,n)$ Higgs bundle in the Hitchin component. 
In both of these cases, the corresponding representation either factors through $\sSO(n,n-1)\times \sSO(2)$ with $\sSO(n,n-1)$ factor in the Hitchin component or an $\sSO(n,n)$-Hitchin representation.
If $M^2\cong\Oo$, $M\neq \Oo$ and $\mu=\lambda\nu$, the first Stiefel-Whitney class $sw_1$ of the orthogonal bundle $M$ is nonzero. Let $\pi: X_{sw_1}\to X$ be the associated connected orientation double cover. Since $\pi^*M=\Oo$ and $\pi^* K=K_{X_{sw_1}}$, the pull back of the Higgs bundle to $X_{sw_1}$ reduces to an $\sSO(n,n)$ Higgs bundle in the Hitchin component. Thus, the restriction of the representation $\rho$ to the index two subgroup $\pi_1(X_{sw_1})=\widehat{\Gamma}$ factors through an $\sSO(n,n)$-Hitchin representation.

For $\rho\in\Xx_{sw_1}^{sw_2}(\sSO(n,n+1))$, let $X_{sw_1}\to X$ be the connected orientation double cover associated to $sw_1\in H^1(X,\Z_2)\setminus\{0\}$ and let $\pi_1(X_{sw_1})=\widehat\Gamma\subset \Gamma$ be the associated index two subgroup. 
By construction, the restriction of $\rho$ to $\widehat\Gamma$ defines a representation in the connected component $\Xx_0(\sSO(n,n+1))$ of the character $\sSO(n,n+1)$-character variety of $\widehat\Gamma.$ 
Thus, there is a finite index subgroup of $\widehat{\widehat\Gamma}\subset\widehat\Gamma$ such that the restriction of $\rho$ factors through $\sSO(n,n-1)\times \sSO(2)$ with $\sSO(n,n-1)$ factor in the Hitchin component or an $\sSO(n,n)$-Hitchin representation.
\end{proof}

\subsection{Zariski closure of representations in $\Xx_d(\sSO(n,n+1))$ for $d>0$}
Recall that the connected components $\Xx_d(\sSO(n,n+1))$ from Theorem \ref{THM1} are smooth for $d\in(0,n(2g-2)]$. In particular, every representation in such a component is irreducible. Recall also that every representations in the components $\Xx_d(\sSO(n,n+1))$ factors through the connected component of the identity $\sSO_0(n,n+1)$. Note that the Zariski closure of $\sSO_0(n,n+1)\subset\sSO(n,n+1)$ is the full group $\sSO(n,n+1).$
\begin{Conjecture}
	For $0<d<n(2g-2)$, all representations in $\Xx_d(\sSO(n,n+1))$ are Zariski dense. 
\end{Conjecture}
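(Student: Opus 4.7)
The plan is to reduce the conjecture to a classification statement about proper reductive subgroups of $\sSO(n,n+1)$ and then to rule out, case by case, that any Higgs bundle of the form \eqref{EQ Md Higgs field} with $0<d<n(2g-2)$ admits a holomorphic reduction to such a subgroup. By Corollary \ref{COR Md Smooth} and Proposition \ref{Prop Irreducible reps and Stable SLnC Higgs bundles}, every $\rho\in\Xx_d(\sSO(n,n+1))$ is irreducible, so its Zariski closure $\sG_\rho\subseteq\sSO(n,n+1)$ is reductive and acts irreducibly on $\R^{2n+1}$ through the standard representation. Suppose, for contradiction, that $\sG_\rho\subsetneq\sSO(n,n+1)$. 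By Proposition \ref{Prop smaller zariski closure}, the Higgs bundle $\widetilde\Psi_d(M,\mu,\nu,q_2,\dots,q_{2n-2})$ then admits a holomorphic reduction of structure group to $\sG_\rho$ in the sense of Definition \ref{DEF:Higgs bundle Reduction}.

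The first substantive step is to enumerate the proper reductive subgroups of $\sSO(n,n+1)$ whose inclusion is irreducible on $\R^{2n+1}$. Using Dynkin's classification of maximal subgroups of the complex simple Lie group $\sSO(2n+1,\C)$, together with real-form considerations, these fall into a short list: images of principal embeddings $\sSO_0(1,2)\hookrightarrow\sSO(n,n+1)$ (the $\sSO(n,n+1)$-Hitchin locus), images of non-principal irreducible $\sSL(2)$-homomorphisms, the exceptional embedding $\sG_{2(2)}\subset\sSO(3,4)$ when $n=3$, irreducible images of smaller split orthogonal groups such as $\sSO(k,k)\hookrightarrow\sSO(n,n+1)$ via odd-dimensional irreducible representations, and finite extensions thereof. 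In each case the Higgs bundle reduction imposes strong constraints on the tuple $(M,\mu,\nu,q_2,\dots,q_{2n-2})$.

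The second step is to rule these out. For the principal $\sSO_0(1,2)$ locus, a reduction would force the Higgs bundle to agree with the $n$-th symmetric power of a rank-$2$ bundle, which (together with the prescribed form \eqref{EQ Md Higgs field}) pins $M$ to a tensor power of $K^{1/2}$ and, after matching degrees, forces $d=n(2g-2)$, contradicting $d<n(2g-2)$. For the other candidates, the strategy is analogous: the line bundle $M$ of degree $d\in(0,n(2g-2))$ plays the role of a topological obstruction, and the $\sG'$-invariant polynomials pulled back along the Hitchin map \eqref{EQ Hitchin fibration on Md} must cut the invariants $(q_2,\ldots,q_{2n-2},\mu\otimes\nu)$ out of a proper closed subvariety of $\bigoplus_{j=1}^{n}H^0(K^{2j})$; openness of $\Xx_d$ and the surjectivity of the invariant map on sufficiently large subsets will contradict this for generic choices. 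Combined with the fact that the set of representations of a given Zariski closure is constructible, ruling out reductions on a dense open subset is enough to conclude the absence of any reduction on the entire connected component.

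The main obstacle is precisely this second step in its full generality. Verifying that the Higgs bundle form \eqref{EQ Md Higgs field} admits no $\sG'$-reduction for each candidate $\sG'$ is a delicate Lie-theoretic and algebro-geometric case analysis that grows in complexity with $n$: for $n=2$ this is carried out in \cite{SO23LabourieConj} using the Cayley partner structure specific to Hermitian-type groups, a tool unavailable here. For general $n$ the most plausible route combines (i) the $\C^*$-equivariant Morse-theoretic picture to reduce to the VHS locus $\nu=0$, $q_j=0$, where the Zariski closure of the corresponding representation can be computed explicitly from its Hodge decomposition, with (ii) a semicontinuity argument showing that the generic representation in $\Xx_d(\sSO(n,n+1))$ has Zariski closure no smaller than that of the central VHS fixed point, yielding Zariski density provided that single VHS representation is already Zariski dense. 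Carrying this out rigorously appears to require more than the Higgs bundle description alone and is why the statement is posed as a conjecture rather than a theorem.
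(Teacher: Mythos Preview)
This statement is a \emph{Conjecture} in the paper, not a theorem; the paper does not prove it. The relevant discussion is in Section~\ref{Section Zariski closure}: the author observes that smoothness of $\Xx_d(\sSO(n,n+1))$ forces any proper Zariski closure $\sG'$ to be simple and to act irreducibly on $\R^{2n+1}$, notes that for $n=2$ the result is established in \cite{SO23LabourieConj} using tightness results for Hermitian groups which are unavailable for $n\geq 3$, and reports an \emph{Atlas} check that no such $\sG'$ exists for $3<n<7$. That is the extent of the evidence offered; no proof is given or claimed.

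Your outline is broadly in line with this discussion and you correctly identify where the difficulty lies. However, two of the steps you sketch are not valid as stated. First, the assertion that ``ruling out reductions on a dense open subset is enough to conclude the absence of any reduction on the entire connected component'' does not follow from constructibility: the locus of representations factoring through a fixed proper subgroup is closed, and nothing prevents it from being a nonempty proper closed subset of $\Xx_d$. Second, the semicontinuity in your step (ii) points the wrong way. Zariski closure is upper semicontinuous, so if the VHS fixed point is Zariski dense you obtain an open neighborhood of Zariski dense representations, not the whole component; conversely, a limit of Zariski dense representations can certainly have smaller closure. Thus even the ``generic'' version of the conjecture would not follow from analyzing a single fixed point. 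These are genuine gaps, not merely omitted details, and they are precisely why the paper leaves the statement as a conjecture.
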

For $d=n(2g-2)$ the component $\Xx_{n(2g-2)}(\sSO(n,n+1))$ is the $\sSO(n,n+1)$-Hitchin component. Thus, by the definition of the Hitchin component (Definition \ref{Def: Hitchin comp}), $\Xx_{n(2g-2)}(\sSO(n,n+1))$ contains representations which are not Zariski dense. 
In \cite{SO23LabourieConj}, it is shown that, for $n=2$ and $d\in(0,4g-4)$, every representation in the components $\Xx_d(\sSO(2,3))$ is Zariski dense. The proof relies on the fact that $\sSO_0(2,3))$ is a group of Hermitian type and that the representations in $\Xx_d(\sSO(2,3))$ are maximal representations. Thus, by results of \cite{BIWmaximalToledoAnnals}, the Zariski closure of such a representation is a tightly embedded subgroup of Hermitian type. 
Using the Higgs bundles, one can rule out the handful of proper subgroups of $\sSO_0(2,3)$ which are tightly embedded. 

For $n>2,$ the group $\sSO_0(n,n+1)$ is not of Hermitian type, so the above methods do not apply.
However, since the components $\Xx_d(\sSO(n,n+1))$ are smooth, the only way a representation $\rho\in\Xx_d(\sSO(n,n+1))$ can have a Zariski closure $\sG'$ smaller than $\sSO_0(n,n+1)$ is if $\sG'$ is a {\em simple} Lie group and there is a {\em faithful irreducible representation} representation $\psi:\sG'\to \sGL(\R^{2n+1})$ which preserves a signature $(n,n+1)$ inner product and $\rho$ factors through $\sG':$
\[\xymatrix{\Gamma\ar[r]^{\rho\ \ \ \ }\ar@{-->}[dr]&\sSO(n,n+1)\\&\sG'\ar@{^{(}->}[u]_\psi}~.\]

As an example, the signature of the Killing form for $\sSU(p,p)$ is $(2p^2,2p^2-1)$, thus the adjoint representation of $\sSU(p,p)$ provides such an irreducible representation. This doesn't occur until $\sSO(7,8).$ 
Also, there is an irreducible seven dimensional representation of $\sG_2$ which preserves a signature $(3,4)$ inner product. However, one can show directly that the Higgs bundles in the components $\Mm_d(\sSO(3,4))$ do not reduce to $\sG_2$. 

Using the software {\em Atlas}, one can list the irreducible representations of a fixed Lie group $\sG'$ which admit an irreducible representations which preserves a signature $(n,n+1)$ inner product. In particular, for $3<n<7,$ there are no simple Lie groups $\sG'$ which admits a faithful irreducible representation $\psi:\sG'\to\sGL(\R^{2n+1})$ which preserves an signature $(n,n+1)$ inner product.

\section{Positive Anosov representations}
\label{section: Positive}
In this section we show that all reducible representations in the connected components of $\Xx(\sSO(n,n+1))$ described in Theorems \ref{THM2} and \ref{THM3} are positive Anosov representations. We first recall the notion of an Anosov representation, then review the work of Guichard-Wienhard \cite{PosRepsGWPROCEEDINGS} and Guichard-Labourie-Wienhard \cite{PosRepsGLW} on positive representations. After describing the positive structures for the groups $\sSO(n,n)$ and $\sSO(n,n+1),$ Theorem \ref{THM4} is proven.

Anosov representations were introduced by Labourie \cite{AnosovFlowsLabourie} and have many interesting geometric and dynamic properties which generalize convex cocompact representations into rank one Lie groups. 
Important examples of Anosov representations include Hitchin representations into split real groups and maximal representations into Lie groups of Hermitian type. We will describe the main properties of Anosov representations which will be useful for our setting, and refer the reader to \cite{AnosovFlowsLabourie,guichard_wienhard_2012,AnosovAndProperGGKW,KLPDynamicsProperCocompact} for more details. 

Let $\sG$ be a semisimple Lie group and $\sP\subset\sG$ be a parabolic subgroup. Let $\sL\subset\sP$ be the Levi factor (the maximal reductive subgroup) of $\sP$. If $\sP^{opp}$ denotes the opposite parabolic of $\sG$, then $\sL=\sP\cap\sP^{opp}.$ We will mostly be interested in $\sG=\sSO(n,n+1)$, in this case all parabolic subgroups are conjugate to there opposites. We will assume all parabolic subgroups are conjugate to their opposite from now on.   

The homogeneous space $\sG/\sL$ is the unique open $\sG$ orbit in $\sG/\sP\times\sG/\sP$. A pair of distinct generalized flags $(x,y)\in\sG/\sP\times\sG/\sP$ are called {\em transverse} if they are in the unique open $\sG$-orbit $\sG/\sL.$

\begin{Definition}\label{DEF: Anosov rep}
	Let $\Gamma$ be the fundamental group of a closed surface of genus $g\geq 2$. Let $\p_\infty\Gamma$ be the Gromov boundary of the group $\Gamma,$ topologically $\p_\infty\Gamma\cong\R\P^1$. A representation $\rho:\Gamma\ra\sG$ is $\sP$ Anosov if and only if there exists a unique continuous boundary map 
\[\xi_\rho:\xymatrix{\p_\infty\Gamma\ar[r]&\sG/\sP}\]
which satisfies 
\begin{itemize}
	\item Equivariance: $\xi(\gamma\cdot x)=\rho(\gamma)\cdot\xi(x)$ for all $\gamma\in\Gamma$ and all $x\in\p_\infty\Gamma$.
	\item Transversality: for all distinct $x,y\in\p_\infty\Gamma$ the generalized flags $\xi(x)$ and $\xi(y)$ are transverse.
	\item Dynamics preserving: see \cite{AnosovFlowsLabourie,guichard_wienhard_2012,AnosovAndProperGGKW,KLPDynamicsProperCocompact} for the precise notion. 
\end{itemize}
The map $\xi_\rho$ will be called the {\em $\sP$ Anosov boundary curve}.
\end{Definition}
\begin{Remark}
	\label{REM: FACTS about ANOSOV}
	The following facts about Anosov representations will be important:
\begin{itemize}
	\item Openness: Let $\rho:\Gamma\to\sG$ be a $\sP$ Anosov representation, there is an open neighborhood of $\rho$ in $\Xx(\sG)$ consisting of $\sP$ Anosov representations.
	\item Action of centralizer: The centralizer of $\rho$ acts trivially on $\xi(\p_\infty\Gamma)$. 
	\item Finite index subgroups: A representation $\rho$ is a $\sP$ Anosov representation if and only if the restriction of $\rho$ to any finite index subgroup $\widehat\Gamma\subset\Gamma$ is $\sP$ Anosov. 
\end{itemize}
\end{Remark}

\subsection{Positive Anosov representations}
The important cases of Hitchin representations and maximal representations define connected components of Anosov representations. Both Hitchin representations and maximal representations satisfy an additional ``positivity'' property which is a closed condition. 
For Hitchin representations this was proven by Labourie \cite{AnosovFlowsLabourie} and Fock-Goncharov \cite{fock_goncharov_2006}, and for maximal representations by Burger-Iozzi-Wienhard \cite{MaxRepsAnosov}. 
These notions of positivity have recently been unified by Guichard-Wienhard \cite{PosRepsGWPROCEEDINGS}. The generalized notion of positivity defined below is conjectured to be a closed condition. 

For a parabolic subgroup $\sP\subset\sG$, denote the Levi factor of $\sP$ by $\sL$ and the unipotent subgroup by $\sU\subset\sP$. 
The Lie algebra $\fp$ of $\sP$ admits an $Ad_{\sL}$-invariant decomposition $\fp=\fl\oplus\fu$ where $\fl$ and $\fu$ are the Lie algebras of $\sL$ and $\sU$ respectively. 
Moreover, the unipotent Lie algebra $\fu$ decomposes as 
\[\fu=\bigoplus\fu_\beta~,\]
 where $\fu_\beta$ is an irreducible $\sL$-representation. 
Recall that a parabolic subgroup $\sP$ is determined by fixing a simple restricted root system $\Delta$ of a maximal $\R$-split torus of $\sG$, and choosing a subset $\Theta\subset\Delta$ of simple roots. 
To each simple root $\beta_j\in\Theta$ there is a corresponding irreducible $\sL$-representation space $\fu_{\beta_j}.$

\begin{Definition}(\cite[Definition 4.2]{PosRepsGWPROCEEDINGS})\label{DEF: Positive Structure}
	A pair $(\sG,\sP_\Theta)$ admits a positive structure if for all $\beta_j\in\Theta,$ the $\sL_\Theta$-representation space $\fu_{\beta_j}$ has an $\sL_\Theta^0$-invariant acute convex cone $c_{\beta_j}^\Theta$, where $\sL_\Theta^0$ denotes the identity component of $\sL_\Theta$. 
\end{Definition}
\begin{Remark}
	When $\sG$ is a split real form and $\Theta=\Delta$, the corresponding parabolic is a Borel subgroup of $\sG$. In this case, the connected component of the identity of the Levi factor is $\sL_\Delta^0\cong(\R^+)^{rk(\sG)}$ and each simple root space $\fu_{\beta_i}$ is one dimensional. The $\sL_\Delta^0$-invariant acute convex cone in each simple root space $\fu_{\beta_i}$ is isomorphic to $\R^+.$ 
	When $\sG$ is a group of Hermitian type and $\sP$ is the maximal parabolic associated to the Shilov boundary of the Riemannian symmetric space of $\sG$, the pair $(\sG,\sP)$ also admits a notion of positivity \cite{BIWmaximalToledoAnnals}. 
\end{Remark}
Recall that the Weyl group $\Ww$ of a root system is generated by reflections $s_\alpha$ associated to the simple roots $\alpha\in\Delta.$ 
In \cite{PosRepsGWPROCEEDINGS}, it is shown that, if $(\sG,\sP_\Theta)$ admits a notion of positivity, then there is at most one simple root $\beta_\Theta\subset\Theta$ which, in the Dynkin diagram of $\Delta,$ is connected to $\Delta\setminus \Theta.$ 
Denote the longest word in the Weyl group of $\Delta\setminus \Theta$ by $\omega^0(\Delta\setminus \Theta).$

 \begin{Definition}
	\label{DEF: W(theta) group} If $(\sG,\sP_\Theta)$ admits a positive structure, define $\Ww(\Theta)\subset\Ww$ as the subgroup generated by $\{\sigma_\beta\ |\ \beta\in\Theta\}$ where 
	\[\sigma_{\beta}=\xymatrix{
		s_\beta\ \  \text{if}\ \beta\in\Theta\setminus\{\beta_\Theta\}&\text{and}&
		\sigma_{\beta_\Theta}= \omega_0(\Delta\setminus\Theta)}~.\]
\end{Definition}
If $(\sG,\sP_\Theta)$ admits a positive structure, then exponentiating certain combinations of elements in the $\sL_\Theta^0$-invariant acute convex cones give rise to a semigroup $\sU^{>0}\subset\sU$. 
\begin{Definition}\label{DEF: positive semi group}
Suppose $(\sG,\sP_\Theta)$ admits a positive structure and, for each $\beta\in\Theta$, let $c^\Theta_\beta$ be the corresponding $\sL_{\Theta}^0$-invariant acute convex cone in $\fu_\beta$ given in Definition \ref{DEF: Positive Structure}. 
Denote the longest word in the group $\Ww(\Theta)$ by $\omega^0_\Theta$, and suppose $\omega^0_\Theta=\sigma_{\beta_{j_1}}\cdots\sigma_{\beta_{j_\ell}}$ is a reduced expression. Define the semisubgroup $\sU_\Theta^{>0}\subset\sU_\Theta$ to be the image of  
	\begin{equation}
		\label{EQ: semigroup Def}F_{\sigma_{j_1}\cdots\sigma_{j_\ell}}:
		\xymatrix@R=.5em@C=3em{c^\Theta_{\beta_{j_1}}\times\cdots c^\Theta_{\beta_{j_\ell}}\ar[r]&\sU_\Theta\\
		(v_{j_1},\cdots,v_{j_\ell})\ar@{|->}[r]&\exp(v_{j_1})\cdots \exp(v_{j_\ell})}~.
	\end{equation}
The semigroup $\sU_\Theta^{>0}$ will be called the {\em positive semigroup}.
\end{Definition}
\begin{Theorem}(Theorem 4.5 \cite{PosRepsGWPROCEEDINGS}) The positive subsemigroup $\sU_\Theta^{>0}$ from Definition \ref{DEF: positive semi group} is independent of the reduced expression of the longest word $\omega^0_\Theta$ of $\Ww(\Theta).$
\end{Theorem}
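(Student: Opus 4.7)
The plan is to mimic the classical proof of well-definedness for totally positive matrices due to Lusztig, adapted to the generalized setting of $\Theta$-positivity. First, I would invoke a Matsumoto-type theorem for the Coxeter group $\Ww(\Theta)$ (with generating set $\{\sigma_\beta\ |\ \beta\in\Theta\}$) to reduce the claim to invariance under the braid relations. Concretely, any two reduced expressions for $\omega^0_\Theta$ are connected by a finite sequence of moves of the form
\[\sigma_{\beta_i}\sigma_{\beta_j}\sigma_{\beta_i}\cdots \ =\ \sigma_{\beta_j}\sigma_{\beta_i}\sigma_{\beta_j}\cdots\qquad (m_{ij}\ \text{factors on each side}),\]
where $m_{ij}$ is the corresponding Coxeter exponent. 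It therefore suffices to exhibit, for each such braid pair, a homeomorphism $\Phi$ between the two parameter products of cones such that the two parameterization maps $F$ of Definition~\ref{DEF: positive semi group} factor through $\Phi$. Since $\Phi$ is a bijection, the images coincide, and the image semigroup is independent of the chosen reduced word.

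Second, the heart of the argument is the construction of $\Phi$ from the rank-two exchange relation. One analyzes the subsemigroup of $\sU_\Theta$ generated by $\exp(c_{\beta_i}^\Theta)$ and $\exp(c_{\beta_j}^\Theta)$ and uses the Baker--Campbell--Hausdorff expansion (in the nilpotent algebra $\fu_\Theta$) together with the bracket relations among the $\fu_\beta$'s to write
\[\exp(v_{i_1})\exp(v_{j_1})\exp(v_{i_2})\cdots\ =\ \exp(w_{j_1})\exp(w_{i_1})\exp(w_{j_2})\cdots,\]
and then verifies, using the $\sL_\Theta^0$-invariance and the acuteness of the cones $c_\beta^\Theta$, that the $w$'s obtained from positive $v$'s lie again in the corresponding positive cones. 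The rational-formula bijection so produced plays the role of Lusztig's classical exchange map. Combining these rank-two exchanges along the sequence of braid moves yields the global $\Phi$ and settles the invariance.

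The main obstacle is the behavior of the special generator $\sigma_{\beta_\Theta}$, which is not a simple reflection but the longest element $\omega_0(\Delta\setminus\Theta)$ of a nontrivial parabolic Weyl subgroup. Correspondingly, $\fu_{\beta_\Theta}$ is the entire irreducible $\sL_\Theta$-representation attached to the ``connecting'' root, not a one-dimensional root line, so the exchange computation above becomes genuinely multidimensional and depends on the fine structure of the $\sL_\Theta$-module $\fu_{\beta_\Theta}$. In the classical split case ($\Theta=\Delta$, every $\sigma_\beta=s_\beta$) and in the Hermitian tube-type case ($\Theta=\{\beta_\Theta\}$, a single Jordan-algebra block) the calculation is explicit and known; the task here is to treat the intermediate case relevant to $\sSO(n,n+1)$ uniformly. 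A convenient way to handle this is to embed the rank-two subalgebra generated by $\fu_{\beta_i}\oplus\fu_{\beta_j}$ into a reductive subgroup $\sG'\subset\sG$ whose $\Theta$-positive structure is either the split or the tube-type one, reducing the exchange relation to an already-established case.

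Finally, once $\Phi$ is constructed for every braid move, a concatenation argument over a reduced-word path shows that $\sU_\Theta^{>0}$ is the same subset of $\sU_\Theta$ regardless of the chosen decomposition of $\omega^0_\Theta$, completing the proof.
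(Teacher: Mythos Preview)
The paper does not contain a proof of this statement: it is quoted verbatim as Theorem~4.5 of Guichard--Wienhard \cite{PosRepsGWPROCEEDINGS} and used as a black box, with no argument supplied. So there is nothing in the paper to compare your proposal against.

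That said, your sketch is broadly the correct outline of how the result is proven in the cited reference: reduce via Matsumoto's theorem to braid moves in the Coxeter group $\Ww(\Theta)$, and for each braid relation exhibit an explicit positive change-of-coordinates between the two cone products. Your identification of the main difficulty---that the generator $\sigma_{\beta_\Theta}$ is not a simple reflection and the corresponding cone $c^\Theta_{\beta_\Theta}$ is a genuine multidimensional convex cone, so the rank-two exchange is not the classical Lusztig formula---is exactly the point where Guichard--Wienhard's argument requires new work beyond Lusztig's split case. Your proposed strategy of reducing each rank-two exchange to a subgroup where the positive structure is already understood (split or Hermitian tube type) is in the right spirit, though the actual verification that the exchange map preserves the cones is delicate and case-dependent; your sketch does not carry this out, so it remains a plan rather than a proof.
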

The positive semigroup $\sU^{>0}_\Theta$ allows one to define a notion of positively ordered triples in the generalized flag variety $\sG/\sP_\Theta.$ Since the group $\sG$ acts transitively on the space of transverse generalized flags, any two generalized flags $x,y\in\sG/\sP_{\Theta}$ can be mapped to the generalized flags $(x_+,x_-)$ associated to $\sP_\Theta$ and $\sP_\Theta^{opp}$ respectively.   
\begin{Definition}(\cite[Definition 4.6]{PosRepsGWPROCEEDINGS})
 	\label{DEF: Positive triples in G/P}
 	Let $x_+\in\sG/\sP_\Theta$ be the generalized flag associated to $\sP_\Theta$ and $x_-\in\sG/\sP_\Theta$ be the generalized flag associated to $\sP_\Theta^{opp}.$ 
Any flag $x_0$ which is transverse to $x_+$ is the image of $x_-$ under a unique element $u_0\in\sU_\Theta$. The triple $(x_+,x_0,x_-)$ is {\em positive} if $u_0$ is in the positive subsemigroup $\sU^{>0}_\Theta.$
 \end{Definition} 
 With respect to the orientation on $\p_\infty\Gamma$, we say that a triple of pairwise distinct points $(a,b,c)$ is a {\em positive triple} if the points appear in this order.  
\begin{Definition}(\cite[Definition 5.3]{PosRepsGWPROCEEDINGS})\label{DEF: Positive rep}
	If the pair $(\sG,\sP_\Theta)$ admits a positive structure, then a $\sP_\Theta$ Anosov representation $\rho:\Gamma\ra\sG$ is called a positive if the Anosov boundary curve $\xi:\p_\infty\Gamma\ra\sG/\sP_\Theta$ sends positive triples in $\p_\infty\Gamma$ to positive triples in $\sG/\sP_\Theta.$
\end{Definition}


\subsection{Positive structures for $\sSO(n,n)$ and $\sSO(n,n+1)$}
We now discuss the positive structures for the groups $\sSO(n,n)$ and $\sSO(n,n+1)$ and discuss how the embeddings 
\[\sSO(n,n-1)\subset\sSO(n,n)\subset\sSO(n,n+1)\]
preserve these notions of positivity. 
For $j\in\{2n-1,2n,2n+1\}$ and $x=(x_1,\cdots,x_{j})\in\R^{j}$, the inner product
\begin{equation}
	\label{EQ: n,n-1 inner product}
	\ip{x}{x}_{n,n-1}=2x_1x_{2n-1}+\cdots+2x_{n-1}x_{n+1}+x_n^2
\end{equation}
has signature $(n,n-1),$ the inner product
\begin{equation}
	\label{EQ: n,n inner product}
	\ip{x}{x}_{n,n}=2x_1x_{2n}+\cdots+2x_{n}x_{n+1}
\end{equation}
has signature $(n,n)$, and the inner product 
\begin{equation}
	\label{EQ: n,n+1 inner product}
	\ip{x}{x}_{n,n+1}=2x_1x_{2n+1}+\cdots+2x_{n}x_{n+2}-x_{n+1}^2
\end{equation}
has signature $(n,n+1).$

Consider the following isometric embeddings 
\begin{equation}
	\label{EQ: isometric embedings} 
	\xymatrix@R=.3em@C=1.5em{
	(\R^{2n-1},\ipd_{n,n-1})\ar[r]^{\iota_{n,n-1}}&(\R^{2n},\ipd_{n,n})\\
	(x_1,\cdots,x_{2n-1})\ar@{|->}[r]&(x_1,\cdots,x_{n-1},\frac{x_n}{\sqrt{2}},\frac{x_n}{\sqrt 2},x_{n+1},\cdots,x_{2n-1})&\\\\(\R^{2n},\ipd_{n,n})\ar[r]^{\iota_{n,n}}&(\R^{2n+1},\ipd_{n,n+1})\\
	(x_1,\cdots,x_{2n})\ar@{|->}[r]&(x_1,\cdots,x_n,0,x_{n+1},\cdots,x_{2n})
	}
\end{equation}
Let $\iota_{n,n-1}:\sSO(n,n-1)\ra\sSO(n,n)$ and $\iota_{n,n}:\sSO(n,n)\ra\sSO(n,n+1)$ be the embeddings induced by the isometric embeddings \eqref{EQ: isometric embedings}.

\smallskip 

\noindent$\mathbf{\sG=\sSO(n,n-1), \ \Theta=\Delta:}$
The group $\sSO(n,n-1)$ consists of $(2n-1)\times(2n-1)$-matrices $A\in\sSL(2n-1,\R)$ which preserve the inner product \eqref{EQ: n,n-1 inner product}. The set of diagonal matrices 
 \begin{equation}
 		\label{EQ torus SO(n,n-1)}
 	\sT=\{ diag(t_1,t_2,\cdots,t_{n-1},1,t_{n-1}^{-1},\cdots,t^{-1}_1)\ | t_i\in\R^*\}
 	\end{equation}
 	is a maximal split torus of $\sSO(n,n-1)$. 
 	The Lie algebra $\ft$ of $\sT$ is given by 
 	\[\ft=\{diag(x_1,\cdots,x_{n-1},0,-x_{n-1},\cdots,-x_1)\ |\ x_i\in\R\}~.\]
 	Consider the simple root system $\Delta=\{\beta_1,\cdots,\beta_{n-2},\beta_{n-1}\}$ with 
\[\beta_j(diag(x_1,\cdots,x_{n-1},0,-x_{n-1},\cdots,-x_1))=\begin{dcases}
	x_j-x_{j+1}&1\leq j\leq n-2\\
	x_{n-1}&j=n-1
\end{dcases}~.\]
 The parabolic $\sP_\Delta$ associated to $\Delta$ has Levi factor $L_\Delta=\sT$. 
 The decomposition of the unipotent Lie algebra $\fu_\Delta$ into irreducible $\sL_\Delta$ representations is the same as the decomposition into positive root spaces. 
 	Let $E_{ij}$ be the elementary matrix with a $1$ in the $(i,j)$ entry and zero elsewhere. The root spaces of the simple roots are  
 	\[\fg_{\beta_i}=\langle E_{i,i+1}-E_{2n-1-i,2n-i}\rangle~.\]

 	The identity component of the Levi factor $L_\Delta^0$ consists of diagonal matrices of the form \eqref{EQ torus SO(n,n-1)} with positive entries. 
 	An element $(t_1,\cdots,t_{n-1},1,t_{n-1}^{-1},\cdots,t_1^{-1})$ acts on the simple root space $\fu_{\beta_i}$ by $t_it_{i+1}^{-1}$ for $1\leq i\leq n-2$ and by $t_{n-1}$ on $\fg_{\beta_{n-1}}.$ 
 	The invariant acute cone in the simple root space $\fg_{\beta_i}$ is  
 	\[c^\Delta_{\beta_i}=\R^+\cdot (E_{i+1,i}-E_{2n-1-i,2n-i})~.\]

The group $\Ww(\Theta)$ from Definition \ref{DEF: W(theta) group} is the whole Weyl group $\Ww$, and is generated by reflections $s_{\beta_j}$. A reduced expression for the longest word $\omega^0_{\Delta}(\sSO(n,n-1))$ in the Weyl group for $\sSO(n,n-1)$ is given by 
\begin{equation}\label{EQ: SO(n,n-1) longest word} 
	\omega^0_{\Delta}(\sSO(n,n-1))=b_{1}b_{2}\cdots b_{n-1}~.
\end{equation}
where 
\[b_j=s_{\beta_{n-j}}\cdot s_{\beta_{n-j+1}}\cdots s_{\beta_{n-2}}\cdot s_{\beta_{n-1}}\cdot s_{\beta_{n-2}}\cdots s_{\beta_{n-j+1}}\cdot s_{\beta_{n-j}}~.\]
Define $B_j^\Delta:c^\Delta_{\beta_{n-j}}\times\cdots\times c^\Delta_{\beta_{n-2}}\times c^\Delta_{\beta_{n-1}}\times c^\Delta_{\beta_{n-2}}\cdots \times c^\Delta_{\beta_{n-j}}\longrightarrow \sU_\Delta$
by 
\begin{equation}
	\label{EQ: Bj Delta}
	\xymatrix@R=.2em{B^\Delta_j(u_{n-j},\cdots,u_{n-2},v_{n-1},w_{n-2},\cdots,w_{n-j})=\\\exp(u_{n-j})\cdots \exp(u_{n-2})\cdot \exp(v_{n-1})\cdot \exp(w_{n-2})\cdots \exp(w_{n-j})}~.
\end{equation}
The positive semisubgroup $U_\Delta^{>0}\subset\sU_\Delta$ from Definition \ref{DEF: positive semi group} is given by the image of \[B_1^\Delta \cdots B^\Delta_{n-1}~.\]

\smallskip

\noindent$\mathbf{\sG=\sSO(n,n),\ \Theta=\Delta}$: The group $\sSO(n,n)$ consists of $2n\times2n$-matrices $A\in\sSL(2n,\R)$ which preserve the inner product \eqref{EQ: n,n inner product}. The set of diagonal matrices 
\begin{equation}
 		\label{EQ torus SO(n,n)}
 	\sT=\{ A=diag(t_1,t_2,\cdots,t_{n},t_{n}^{-1},\cdots,t^{-1}_1)\ | t_i\in\R^*\}
 	\end{equation}
 	is a maximal split torus of $\sSO(n,n)$. 
 	The Lie algebra $\ft$ of $\sT$ is given by 
 	\[\ft=\{X=diag(x_1,\cdots,x_{n},-x_{n},\cdots,-x_1)\ |\ x_i\in\R\}~.\]
 	Consider the simple root system $\Delta=\{\delta_1,\cdots,\delta_{n}\}$ with 
 	\[\delta_j(X)=\begin{dcases}
 		x_j-x_{j+1}&1\leq j\leq n-1\\x_{n-1}+x_n
 	\end{dcases}~.\] 
 	The parabolic $\sP_\Delta$ associated to $\Delta$ has Levi factor $L_\Delta=\sT$. 
 	The decomposition of the unipotent Lie algebra $\fu_\Delta$ into irreducible $\sL_\Delta$ representations is the same as the decomposition into positive root spaces $\fu_\Delta=\bigoplus\limits_{\delta\in R^+}\fu_{\delta}$. 
 	The root spaces of the simple roots are given by 
 	\[\fg_{\delta_i}=\begin{dcases}
 		\langle E_{i,i+1}-E_{2n+1-i,2n-i}\rangle&1\leq i\leq n-1\\\langle E_{1+n,n-1}-E_{n+2,n}\rangle &i=n
 	\end{dcases}~.\]  
 	The identity component of the Levi factor $L_\Delta^0$ consists of diagonal matrices of the form \eqref{EQ torus SO(n,n)} with positive entries. 
 	An element $(t_1,\cdots,t_{n},t_{n}^{-1},\cdots,t_1^{-1})$ acts on the simple root space $\fu_{\delta_i}$ by $t_it_{i+1}^{-1}$ for $1\leq i\leq n-1$ and by $t_{n-1}t_n$ on $\fg_{\delta_{n}}.$ 
 	The invariant acute cone $c^\Delta_{\delta_i}$ in the simple root space $\fg_{\delta_i}$ is given by 
 	\[c^\Delta_{\delta_i}=\begin{dcases}
 		\R^+\cdot (E_{i+1,i}-E_{2n-i,2n-i-1})& 1\leq i\leq n-1\\\R^+\cdot (E_{n-1,n+1}-E_{n,n+2})&i=n
 	\end{dcases}~.\]

Since $\Theta=\Delta,$ the group $\Ww(\Theta)$ from Definition \ref{DEF: W(theta) group} is the whole Weyl group $\Ww$; it is generated by reflections $s_{\delta_j}$. A reduced expression for the longest word $\omega^0_{\Delta}(\sSO(n,n))$ in the Weyl group for $\sSO(n,n)$ is given by 
\begin{equation}\label{EQ: SO(n,n) longest word} 
	\omega^0_{\Delta}(\sSO(n,n))=d_{1}d_{2}\cdots d_n~,
\end{equation}
where 
\[d_j=\begin{dcases}
	s_{\delta_{n+1-j}}&j\leq 2\\
	s_{\delta_{n+1-j}}\cdot s_{\delta_{n-j}}\cdots s_{\delta_{n-2}}\cdot s_{\delta_{n-1}}\cdot s_{\delta_n}\cdot s_{\delta_{n-2}}\cdots s_{\delta_{n-j}}\cdot s_{\delta_{n+1-j}}& 3\leq j\leq n 
\end{dcases}~.\]
For $j=1,2$ define 
\[D_j:\xymatrix@R=.2em{c^\Delta_{\delta_{n+1-j}}\ar[r]&\sU_\Delta\\v\ar@{|->}[r]&\exp(v)}\]
 and, for $3\leq j\leq n$ define 
\[D_j:\xymatrix{c^\Delta_{\delta_{n+1-j}}\times\cdots\times c^\Delta_{\delta_{n-2}}\times c^\Delta_{\delta_{n-1}}\times c^\Delta_{\delta_n}\times c^\Delta_{\delta_{n-2}}\times \cdots \times c^\Delta_{\delta_{n+1-j}}\ar[rr]&&\sU_\Delta}\]
by 
\[D_j(u_{n+1-j},\cdots,u_{n-2},v_{n-1},v_n,w_{n-2},\cdots,w_{n+1-j})=\]
\begin{equation}
	\label{EQ: Dj}
\exp(u_{n+1-j})\cdots \exp(u_{n-2})\cdot \exp(v_{n-1})\cdot \exp(v_n) \cdot \exp(w_{n-2})\cdots \exp(w_{n+1-j})~.\end{equation}
The positive semisubgroup $U_\Delta^{>0}\subset\sU_\Delta$ from Definition \ref{DEF: positive semi group} is given by the image of \[D_1\cdot D_2\cdot D_3\cdots D_n~.\]

\begin{Proposition}\label{Prop: Delta to Delta Positivity}
The embedding $\iota_{n,n-1}:\sSO(n,n-1)\ra\sSO(n,n)$ induced by the isometric embedding \eqref{EQ: isometric embedings} maps the positive semigroup of $(\sSO(n,n-1),\Delta)$ into the positive semigroup of $(\sSO(n,n),\Delta)$.
\end{Proposition}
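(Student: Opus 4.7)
The plan is to show that under $d\iota_{n,n-1}$ each cone $c^\Delta_{\beta_i}$ of $\sSO(n,n-1)$ maps into an appropriate sum of cones in $\sSO(n,n)$, and then to rearrange the resulting cone exponentials into a valid factorization of $\omega^0_\Delta(\sSO(n,n))$.

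First, I would identify $d\iota_{n,n-1}$ concretely. The isometric embedding \eqref{EQ: isometric embedings} realizes $\sSO(n,n-1)$ as the stabilizer of $v_0 = \frac{1}{\sqrt 2}(e_n - e_{n+1})$ in $\sSO(n,n)$, and the maximal split torus of $\sSO(n,n-1)$ embeds into the split torus of $\sSO(n,n)$ as the hyperplane $\{t_n = 1\}$, restricting $\delta_i$ to $\beta_i$ for $i = 1, \ldots, n-2$. For each such $i$, the generator of $c^\Delta_{\beta_i}$ has all nonzero matrix entries at positions avoiding the middle indices $\{n, n+1\}$ of $\sSO(n,n)$, so a direct reading of the matrix embedding yields $d\iota_{n,n-1}(c^\Delta_{\beta_i}) = c^\Delta_{\delta_i}$.

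Next I would treat the delicate case $c^\Delta_{\beta_{n-1}}$, whose generator involves the middle position. A direct computation in the basis $\{e_k\}_{k \neq n, n+1} \cup \{\frac{1}{\sqrt 2}(e_n + e_{n+1}),\, v_0\}$ yields
\[ d\iota_{n,n-1}(c^\Delta_{\beta_{n-1}}) \;\subset\; \tfrac{1}{\sqrt 2}\bigl(c^\Delta_{\delta_{n-1}} \oplus c^\Delta_{\delta_n}\bigr), \]
so each generator of $c^\Delta_{\beta_{n-1}}$ is sent to $\tfrac{1}{\sqrt 2}(a + b)$ with $a \in c^\Delta_{\delta_{n-1}}$ and $b \in c^\Delta_{\delta_n}$. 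Crucially, $\delta_{n-1}$ and $\delta_n$ are not adjacent in the Dynkin diagram of $\sSO(n,n)$, so $[\fg_{\delta_{n-1}}, \fg_{\delta_n}] = 0$ and $\exp(a + b) = \exp(a)\exp(b) = \exp(b)\exp(a)$.

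Combining these observations, the image of each generator of $B_j^\Delta$ becomes a product of cone exponentials in $\sSO(n,n)$, with the middle factor $\exp(v_{n-1})$ replaced by $\exp(a_{n-1})\exp(b_n)$ for $a_{n-1} \in c^\Delta_{\delta_{n-1}}$ and $b_n \in c^\Delta_{\delta_n}$. Comparing with the reduced expression $\omega^0_\Delta(\sSO(n,n)) = d_1 \cdots d_n$ from \eqref{EQ: SO(n,n) longest word} and using the commutation $s_{\delta_{n-1}} s_{\delta_n} = s_{\delta_n} s_{\delta_{n-1}}$, one checks that $B_1^\Delta$ maps into the image of $D_1 \cdot D_2$ while $B_j^\Delta$ maps into the image of $D_{j+1}$ for $j \geq 2$. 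The total length $(n-1)^2 + (n-1) = n(n-1)$ matches the length of $\omega^0_\Delta(\sSO(n,n))$, confirming the concatenated factorization is reduced and so $\iota_{n,n-1}(\sU^{>0}_\Delta(\sSO(n,n-1))) \subset \sU^{>0}_\Delta(\sSO(n,n))$. The main technical step is the explicit Lie algebra calculation establishing the inclusion $d\iota_{n,n-1}(c^\Delta_{\beta_{n-1}}) \subset \frac{1}{\sqrt 2}(c^\Delta_{\delta_{n-1}} \oplus c^\Delta_{\delta_n})$; after this, the commutativity argument and length bookkeeping are routine.
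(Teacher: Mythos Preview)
Your proposal is correct and follows essentially the same approach as the paper: both compute $d\iota_{n,n-1}$ on simple root spaces, observe that $c^\Delta_{\beta_i}\mapsto c^\Delta_{\delta_i}$ for $i\le n-2$ while $c^\Delta_{\beta_{n-1}}$ lands diagonally in $c^\Delta_{\delta_{n-1}}\oplus c^\Delta_{\delta_n}$, and then match $B_1^\Delta\mapsto D_1D_2$ and $B_j^\Delta\mapsto D_{j+1}$ for $j\ge 2$. Your explicit mention of the commutativity $[\fg_{\delta_{n-1}},\fg_{\delta_n}]=0$ (needed to split $\exp(a+b)$) is a step the paper uses implicitly, and your length check is unnecessary since the positive semigroup is already known to be independent of the reduced expression, but neither point changes the argument.
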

\begin{proof} 
Let $E_{ij}$ be the elementary matrix with a $1$ in the $(i,j)$ entry and zero elsewhere. 
The isometric embedding $\iota_{n,n-1}$ from \eqref{EQ: isometric embedings} induces a map $\iota:\fgl(2n-1,\R)\ra\fgl(2n,\R)$ given by 
	\[\iota(E_{ij})=\begin{dcases}
		E_{ij}& 1\leq i< n\ \text{and\ }1\leq j< n\\
		E_{in}+E_{i,n+1} & j=n\\
		E_{i+1,j}& n+1\leq i\leq 2n-1\ \text{and\ }1\leq j< n\\
		E_{i,j+1}& 1\leq i< n\ \text{and\ }n+1\leq j\leq 2n-1\\
		E_{i+1,j+1}& n+1\leq i\leq2n-1\ \text{and\ }n+1\leq j\leq 2n-1\\
	\end{dcases}~.\] 
In particular, for $1\leq j\leq n-2$, the restriction of $\iota$ to the simple root space $\fu_{\beta_j}$ is the identity, $\iota|_{\fu_{\beta_j}}=Id:\fu_{\beta_j}\to\fg_{\delta_j}$. 
The restriction of $\iota$ to the simple root space $\fu_{\beta_{n-1}}$ maps $\fu_{\beta_{n-1}}$ into the direct sum of the simple root spaces $\fu_{\delta_{n-1}}\oplus\fu_{\delta_n}$ via the diagonal map $x\to (x,x).$ 
Thus, the inclusion $\iota:\fso(n,n-1)\to\fso(n,n)$ maps the product of positive cones $c_{\beta_j}^\Delta$ into the product of positive cones $c_{\delta_j}^\Delta.$

Let $\sU^{>0}_\Delta(n,n-1)$ be the positive subsemigroup for $\sSO(n,n-1)$. If $g\in\sU^{>0}_\Delta(n,n-1)$, then 
\[g=g_1\cdot g_2\cdots g_{n-1}~,\]
where $g_j=B^{\Delta}_j(u_{n-j},\cdots,u_{n-2},v_{n-1},w_{n-2},\cdots,w_{n-j})$ is defined by equation \eqref{EQ: Bj Delta}. Recall that $u_{n-i},w_{n-i}\in\fu_{\beta_{n-i}}$ and $v_{n-1}\in\fu_{\beta_{n-1}}.$
The image $\iota(g_j)$ is given by 
\[\iota(g_j)=\exp(\iota(u_{n-j}))\cdots \exp(\iota(u_{n-2}))\cdot \exp(\iota(v_{n-1}))\cdot \exp(\iota(w_{n-2}))\cdots \exp(\iota(w_{n-j}))~.\]
Recall the definition of $D_j$ from \eqref{EQ: Dj}. By the definition of $\iota$ we have
\[\iota(g_j)=\begin{dcases}
D_1\big(\frac{v_{n-1}}{\sqrt 2}\big)\cdot D_2\big(\frac{v_{n-1}}{\sqrt 2} \big)&j=1\\
	D_{j+1}(u_{n-j},\cdots,u_{n-2},\frac{v_{n-1}}{\sqrt 2},\frac{v_{n-1}}{\sqrt 2},w_{n-2},\cdots,w_{n-j})&2\leq j
\end{dcases}~.\]
Hence, $\iota(g)=\iota(g_1)\cdots\iota(g_{n-1})$ is in the positive semigroup $\sU_\Delta^{>0}(n,n)$ of $\sSO(n,n).$
\end{proof}

\noindent$\mathbf{\sG=\sSO(n,n+1),\ \Theta=\{\beta_1,\cdots,\beta_{n-1}\}}:$ 
The group $\sSO(n,n+1)$ consists of $(2n+1)\times(2n+1)$-matrices $A\in\sSL(2n+1,\R)$ which preserve the inner product \eqref{EQ: n,n+1 inner product}. The maximal torus $\sT$ and the simple root system $\Delta$ analogous to $\sSO(n,n-1).$ Namely, 
 \[\sT=\{ A=diag(t_1,t_2,\cdots,t_{n},1,t_{n}^{-1},\cdots,t^{-1}_1)\ | t_i\in\R^*\}~,\]
 	\[\ft=\{X=diag(x_1,\cdots,x_{n},0,-x_{n},\cdots,-x_1)\ |\ x_i\in\R\}~,\]
and the simple root system $\Delta=\{\beta_1,\cdots,\beta_{n-2},\beta_{n}\}$ is given by 
\[\beta_j(X)=\begin{dcases}
	x_j-x_{j+1}&1\leq j\leq n-1\\
	x_{n}&j=n
\end{dcases}~.\]
 Consider the subset $\Theta=\{\beta_1,\cdots,\beta_{n-1}\}\subset \Delta$, the parabolic $\sP_\Theta$ has Levi factor $\sL_\Theta$ consisting of matrices of the form
 \begin{equation}
 	\label{EQ Levi of PTheta}
\smtrx{t_1&&&&&& \\ &\ddots&&&&& \\ &&t_{n-1}&&&& \\ &&&A&&& \\&&&&t_{n-1}^{-1}&& \\ &&&&&\ddots&\\ &&&&&&t_1^{-1}}
\ \ \ \ \ \ \text{where}\ \ t_i\in\R^*\ \ \text{and}\ \ A\in\sSO(1,2)~.
 \end{equation}
 For $1\leq j\leq n-2,$ the irreducible $\sL_\Theta$ representations $\fu_{\beta_j}$ associated to the simple roots $\beta_j\in\Theta$ are one dimensional. The irreducible $\L_\Theta$ representation $\fu_{\beta_{n-1}}$ is the three dimension vector space spanned by the root spaces for the positive roots 
 \[\xymatrix{\beta_{n-1}~,&\beta_{n-1}+\beta_n& \text{and} &\beta_{n-1}+2\beta_n}~.\] 
 Thus, the vector space $\fu_{\beta_{n-1}}$ is given by  
 \begin{equation}
 	\label{EQ span light cone}\fu_{\beta_{n-1}}=\langle E_{n-1,n}-E_{n+2,n+3}\ ,\  E_{n-1,n+1}-E_{n+1,n+3}\ ,\  E_{n-1,n+2}-E_{n,n+3}\rangle~.
 \end{equation}

For $(t_1,\cdots,t_{n-1},A)$ in the identity component of the Levi factor $\sL_\Theta^0,$ the action on $\fu_{\beta_i}$ is by 
\[(t_1,\cdots,t_{n-1},A)\cdot x=\begin{dcases}
	t_it^{-1}_{i+1}x&\ \text{for}\ x\in\fu_{\beta_i}\ \text{and}\ 1\leq i\leq n-2\\
	t_{n-1}xA^{-1}& \ \text{for}\ x\in\fu_{\beta_{n-1}}\ \text{and}\ A\in \sSO_0(1,2)
\end{dcases}~.\] 
As before, for $1\leq i\leq n-2$, the positive reals define an invariant acute convex cone $c^\Theta_{\beta_i}\subset\fu_{\beta_i}.$ For $i=n-1,$ the interior of the light cone in $\R^{1,2}$ is the invariant acute convex cone, namely in the basis \eqref{EQ span light cone},
\begin{equation}
	\label{EQ: light cone}c^\Theta_{\beta_{n-1}}=\{(x,y,z)\ |\ 2xz-y^2>0 \}~.
\end{equation}

The element $\beta_\Theta=\beta_{n-1}\in \Theta$ is the unique element of $\Theta$ which, in the Dynkin diagram of $\Delta,$ is connected to $\Delta\setminus\Theta.$ 
The group $\Ww(\Theta)$ from Definition \ref{DEF: W(theta) group} is generated by $\{\sigma_1,\cdots,\sigma_{n-1}\}$, where 
\begin{itemize}
	\item $\sigma_{j}=s_{\beta_j}$ for $1\leq j\leq n-2$, 
	\item $\sigma_{n-1}$ is the longest word in the Weyl group of $\beta_\Theta\cup\Delta\setminus\Theta=\{\beta_{n-1},\beta_{n}\}$. 
\end{itemize}

The group $\Ww(\Theta)$ is isomorphic to the Weyl group of type $B_{n-1}$ with its standard generators. Thus, the longest word $\omega^0_\Theta(\sSO(n,n+1))$ has reduced expression
\[\omega^0_\Theta(\sSO(n,n+1))=b_1\cdots b_{n-1}~,\]
where \[b_j=s_{\beta_{n-j}}\cdot \sigma_{\beta_{n-j-1}}\cdots \sigma_{\beta_{n-2}}\cdot \sigma_{\beta_{n-1}}\cdot \sigma_{\beta_{n-2}}\cdots \sigma_{\beta_{n-j-1}}\cdot \sigma_{\beta_{n-j}}~.\]
Define $B_j^\Theta:c^\Theta_{\beta_{n-j}}\times\cdots\times c^\Theta_{\beta_{n-2}}\times c^\Theta_{\beta_{n-1}}\times c^\Theta_{\beta_{n-2}}\cdots \times c^\Theta_{\beta_{n-j}}\longrightarrow \sU_\Theta$ as in \eqref{EQ: Bj Delta}:
\begin{equation}
	\label{EQ: Bj Theta}
	\xymatrix@R=.2em{B^\Theta_j(u_{n-j},\cdots,u_{n-2},v_{n-1},w_{n-2},\cdots,w_{n-j})=\\\exp(u_{n-j})\cdots \exp(u_{n-2})\cdot \exp(v_{n-1})\cdot \exp(w_{n-2})\cdots \exp(w_{n-j})}.
\end{equation}
The positive semisubgroup $U_\Theta^{>0}\subset\sU_\Theta$ from Definition \ref{DEF: positive semi group} is given by the image of \[B_1^\Theta \cdots B^\Theta_{n-1}~.\]

\begin{Proposition}\label{Prop: Delta to Theta Positivity}
The embedding $\iota_{n,n+1}:\sSO(n,n)\ra\sSO(n,n+1)$ induced by the isometric embedding \eqref{EQ: isometric embedings} maps the positive semigroup of $(\sSO(n,n),\Delta)$ into the positive semigroup of $(\sSO(n,n+1),\Theta)$.
\end{Proposition}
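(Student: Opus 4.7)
The strategy parallels Proposition \ref{Prop: Delta to Delta Positivity}, with the new feature that two distinct simple root spaces of $\sSO(n,n)$ merge into a single three-dimensional root space of $\sSO(n,n+1)$. I begin by computing the induced Lie algebra map $\iota_{*}:\fso(n,n)\to\fso(n,n+1)$ from \eqref{EQ: isometric embedings}: each elementary matrix $E_{ij}$ is sent to $E_{i',j'}$ where $k' = k$ for $k\le n$ and $k' = k+1$ for $k\ge n+1$. Applying this to the simple root spaces: for $1\le j\le n-2$, the one-dimensional $\fg_{\delta_j}$ is carried isomorphically onto $\fu_{\beta_j}$, and the generator of $c^\Delta_{\delta_j}$ maps to a positive multiple of the generator of $c^\Theta_{\beta_j}$. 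In the remaining cases, $\iota_{*}(c^\Delta_{\delta_{n-1}})$ and $\iota_{*}(c^\Delta_{\delta_n})$ land inside the three-dimensional $\fu_{\beta_{n-1}}$, along the two extremal rays $\R^{+}\cdot(E_{n-1,n}-E_{n+2,n+3})$ and $\R^{+}\cdot(E_{n-1,n+2}-E_{n,n+3})$ on the boundary of the light cone $c^\Theta_{\beta_{n-1}}$ of \eqref{EQ: light cone}.

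The crucial observation is that $\fu_{\beta_{n-1}}$ is an abelian Lie subalgebra of $\fso(n,n+1)$: as a representation of the $\sSO(1,2)$-factor of $\sL_\Theta^{0}$ it is the standard three-dimensional representation on $\R^{1,2}$, and a direct commutator computation among the three basis elements confirms commutativity. Hence for $v_{n-1}\in c^\Delta_{\delta_{n-1}}$ and $v_n\in c^\Delta_{\delta_n}$ we have $\exp(\iota_{*}(v_{n-1}))\exp(\iota_{*}(v_n)) = \exp(\iota_{*}(v_{n-1}) + \iota_{*}(v_n))$. In the light-cone coordinates of $\fu_{\beta_{n-1}}$, this argument has coordinates $(t_{n-1},0,t_n)$ with $t_{n-1},t_n > 0$, satisfying $2t_{n-1}t_n - 0^{2} > 0$, and so lies in the open cone $c^\Theta_{\beta_{n-1}}$.

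Finally, given $g\in\sU^{>0}_\Delta(\sSO(n,n))$, I write $g = g_1 g_2 \cdots g_n$ using the factorization $D_1\cdot D_2\cdots D_n$ from \eqref{EQ: Dj}. The key structural fact is that in each $D_j$ the exponentials for $\delta_{n-1}$ and $\delta_n$ occur in adjacent positions, so by the preceding paragraph their $\iota_{*}$-images collapse into a single exponential in the open light cone. Explicitly, $\iota_{*}(g_1 g_2)$ lies in the image of $B^\Theta_1$ from \eqref{EQ: Bj Theta}; and for $j\ge 3$, the flanking exponentials of $g_j$ (indexed by $\delta_{n+1-j},\ldots,\delta_{n-2}$) map to exponentials in the one-dimensional positive cones $c^\Theta_{\beta_{n+1-j}},\ldots,c^\Theta_{\beta_{n-2}}$ by the first paragraph, while the central pair collapses as just described, so $\iota_{*}(g_j)$ lies in the image of $B^\Theta_{j-1}$. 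Therefore
\[
\iota(g) = \iota_{*}(g_1g_2)\cdot\iota_{*}(g_3)\cdots\iota_{*}(g_n) \in \mathrm{Im}(B^\Theta_1)\cdot\mathrm{Im}(B^\Theta_2)\cdots\mathrm{Im}(B^\Theta_{n-1}) = \sU^{>0}_\Theta(\sSO(n,n+1)),
\]
matching the reduced expression $\omega^0_\Theta(\sSO(n,n+1)) = b_1\cdots b_{n-1}$.

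The main obstacle is the sign and convention bookkeeping: one must verify that each $\iota_{*}(c^\Delta_{\delta_j})$ for $j\le n-2$ lands in the positive (rather than opposite) ray of $c^\Theta_{\beta_j}$, and that the two extremal rays picked out by $\iota_{*}(c^\Delta_{\delta_{n-1}})$ and $\iota_{*}(c^\Delta_{\delta_n})$ lie in the same half-cone of the light cone so that their sum is future-pointing timelike. These verifications are routine but delicate computations based on the explicit generators and the $\sL_\Theta^{0}$-action described above.
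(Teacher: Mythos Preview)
Your proof is correct and follows essentially the same route as the paper's: compute $\iota_*$ on the simple root spaces, observe that $\fu_{\delta_{n-1}}$ and $\fu_{\delta_n}$ land on the two null rays $(a,0,0)$ and $(0,0,b)$ in $\fu_{\beta_{n-1}}^\Theta$ so that their sum lies in the open light cone, and then match the factorizations $D_1\cdots D_n$ and $B_1^\Theta\cdots B_{n-1}^\Theta$ block by block. Your explicit remark that $\fu_{\beta_{n-1}}^\Theta$ is abelian (hence $\exp(\iota_*(v_{n-1}))\exp(\iota_*(v_n))=\exp(\iota_*(v_{n-1}+v_n))$) is exactly the step the paper uses implicitly when it writes $\iota(g_1)\cdot\iota(g_2)=B_1^\Theta(\iota(v_n+v_{n-1}))$ and $\iota(g_j)=B_{j-1}^\Theta(\ldots,\iota(v_{n-1}+v_n),\ldots)$.
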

\begin{proof}
Let $E_{ij}$ be the elementary matrix with a $1$ in the $(i,j)$ entry and zero elsewhere. 
The isometric embedding $\iota_{n,n}$ from \eqref{EQ: isometric embedings} induces a map $\iota:\fgl(2n,\R)\ra\fgl(2n+1,\R)$ given by  
	\[\iota(E_{ij})=\begin{dcases}
		E_{ij}& 1\leq i\leq n\ \text{and\ }1\leq j\leq n\\
		E_{i+1,j}& n+1\leq i\leq 2n\ \text{and\ } 1\leq j< n\\
		E_{i,j+1}& 1\leq i< n\ \text{and\ }n+1\leq j\leq 2n\\
		E_{i+1,j+1}& n+1\leq i\leq2n\ \text{and\ }n+1\leq j\leq 2n\\
	\end{dcases}~.\] 
In particular, for $1\leq j\leq n-2$, the restriction of the $\iota$ to the simple $\fu_{\delta_j}$ is the identity  $\iota|_{\fu_{\delta_j}}=Id:\fu_{\delta_j}\ra\fu^\Theta_{\beta_j}$. Thus, for $1\leq j\leq n-2$, $\iota$ maps the positive cone $c^\Delta_{j}$ identically onto the positive cone $c^\Theta_j.$

For $j=n-1$ and $j=n,$ the restriction of $\iota$ to $\fg_{\delta_{j}}$ is the identity: 
\[\xymatrix@C=.5em{\iota|_{\fu_{\delta_{n-1}}}=Id:\fu^\Theta_{\delta_{n-1}}\to\fg_{\beta_{n-1}}\subset\fu^\Theta_{\beta_{n-1}}&\text{and}&\iota|_{\fu_{\delta_{n}}}=Id:\fu_{\delta_{n}}\ra\fg_{\beta_{n-1}+2\beta_n}\subset\fu_{\beta_{n-1}} }~.\]
Recall that the space $\fu_{\beta_{n-1}}^\Theta$ is a direct sum of root spaces $\fu_{\beta_{n-1}}^\Theta=\fg_{\beta_{n-1}}\oplus\fg_{\beta_{n-1}+\beta_{n}}\oplus\fg_{\beta_{n-1}+2\beta_n}.$ The map $\iota$ is given by
\[\iota:\xymatrix@R=.3em{\fu_{\delta_{n-1}}\oplus\fu_{\delta_n}\ar[r]&\fg_{\beta_{n-1}}\oplus\fg_{\beta_{n-1}+\beta_{n}}\oplus\fg_{\beta_{n-1}+2\beta_n}\\(a,b)\ar@{|->}[r]&(a,0,b)}~.\]
Moreover, since $a\in c^\Delta_{\delta_{n-1}}$ and $b\in c^\Delta_{\delta_n}$ implies $2ab-0>0$, $\iota(c^\Delta_{\delta_{n-1}}\times c^\Delta_{\delta_n})\subset c^\Theta_{\beta_{n-1}}$.

Let $\sU^{>0}_\Delta(n,n)$ be the positive subsemigroup for $\sSO(n,n)$. If $g\in\sU^{>0}_\Delta(n,n)$, then 
\[g=g_1\cdot g_2\cdots g_{n}~,\]
where
\[g_j=\begin{dcases}
D_1(v_{n}) & j=1\\
D_2(v_{n-1})&j=2\\
	D_j(u_{n+1-j},\cdots,u_{n-2},v_{n-1},v^j_n,w_{n-2},\cdots,w_{n+1-j})& \text{if} \ 3\leq j\leq n 
\end{dcases}\] are defined by equation \eqref{EQ: Dj}. 
Recall that $u_{n-i},w_{n-i},v_{n-i}\in\fu_{\delta_{n-i}}$.
The image $\iota(g_j)$ is given by 
$\exp(\iota(u_{n+1-j}))\cdots \exp(\iota(u_{n-2}))\cdot \exp(\iota(v_{n-1}))\cdot \exp(\iota(v_{n}))\cdot \exp(\iota(w_{n-2}))\cdots \exp(\iota(w_{n+1-j})).$

Recall the definition of $B^\Theta_j$ from \eqref{EQ: Bj Theta}. By the definition of $\iota$ we have 
\[\iota(g_1)\cdot\iota(g_2)=B_1^\Theta(\iota(v_{n}+v_{n-1}))~,\] and  for $3\leq j\leq n,$
\[\iota_(g_j)=B^\Theta_{j-1}(u_{n+1-j},\cdots,u_{n-2},\iota(v_{n-1}+v_n),w_{n-2},\cdots,w_{n+1-j})~.\]
 Hence, $\iota(g)=\iota(g_1)\cdots\iota(g_{n})$ is in the positive semigroup $\sU_\Theta^{>0}$ of $\sSO(n,n+1).$
\end{proof}

\subsection{Positive $\sSO(n,n+1)$-representations}
We are now ready to prove the reducible representations in the components $\Xx_0(\sSO(n,n+1))$ and $\Xx_{sw_1}^{sw_2}(\sSO(n,n+1))$ are $\sP_\Theta$ positive Anosov representations. For $\Theta=\{\beta_1,\cdots,\beta_{n-1}\},$ the generalized flag variety $\sSO(n,n+1)/\sP_\Theta$ consists of the set flags 
\[V_1\subset\cdots\subset V_{n-1}\subset V_{n-1}^\perp\subset\cdots\subset V_1^\perp\subset\R^{2n+1}~,\] where $V_j\subset\R^{2n+1}$ is an isotropic $j$-plane. 
We start with the following proposition.

\begin{Proposition}\label{Prop: singular Theta Pos reps}
Let $\rho:\Gamma\ra\sSO(n,n+1)$ be a representation. If there is a finite index subgroup $\widehat\Gamma\subset\Gamma$ such that the restriction of $\rho$ to $\widehat\Gamma$ factors though an $\sSO(n,n-1)\times\sSO(2)$-representation with $\sSO(n,n-1)$ factor a Hitchin representation or $\rho$ factors through an $\sSO(n,n)$-Hitchin representation, then $\rho$ is a positive  $\sP_\Theta$ Anosov representation. 
\end{Proposition}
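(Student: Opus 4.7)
The first move is to reduce everything to the restriction $\rho|_{\widehat{\Gamma}}$. By Remark~\ref{REM: FACTS about ANOSOV}, $\rho$ is $\sP_\Theta$-Anosov if and only if $\rho|_{\widehat{\Gamma}}$ is. Since $\widehat{\Gamma}\subset\Gamma$ is of finite index in a surface group, the inclusion $\partial_\infty\widehat{\Gamma}\hookrightarrow\partial_\infty\Gamma$ is a homeomorphism, the two boundary curves coincide as maps of sets, and a triple in $\partial_\infty\Gamma$ is positive if and only if it is positive in $\partial_\infty\widehat{\Gamma}$; hence $\sP_\Theta$-positivity of $\rho$ is equivalent to $\sP_\Theta$-positivity of $\rho|_{\widehat{\Gamma}}$. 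It therefore suffices to treat the two factoring situations for a representation defined on $\widehat\Gamma$ itself.

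\textbf{Case~1: $\sSO(n,n)$-Hitchin factor.} Write $\rho=\iota_{n,n}\circ\rho'$ with $\rho':\widehat{\Gamma}\to\sSO(n,n)$ Hitchin. By the results of Labourie and Fock--Goncharov, $\rho'$ is $\sP_\Delta$-positive Anosov, so it comes with an equivariant, transverse, dynamics-preserving boundary curve $\xi':\partial_\infty\widehat{\Gamma}\to\sSO(n,n)/\sP_\Delta$ sending positively ordered triples to positive triples in the sense of Definition~\ref{DEF: Positive triples in G/P}. The plan is to push $\xi'$ forward to $\sSO(n,n+1)/\sP_\Theta$ along the natural $\iota_{n,n}$-induced inclusion of flag varieties $\sSO(n,n)/\sP_\Delta\hookrightarrow\sSO(n,n+1)/\sP_\Theta$ (a complete isotropic flag in $\R^{2n}$ of length $n$, truncated to length $n-1$, is an isotropic flag of length $n-1$ in $\R^{2n+1}$). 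Equivariance is immediate; transversality of two $\sP_\Theta$-flags follows from transversality of the original $\sP_\Delta$-flags because any element $u_0\in\sU_\Delta^{>0}(\sSO(n,n))$ witnessing the relation $x_0=u_0\cdot x_-$ of transverse pairs maps by Proposition~\ref{Prop: Delta to Theta Positivity} to an element of $\sU_\Theta^{>0}(\sSO(n,n+1))$. The same semigroup-to-semigroup inclusion then immediately gives the positive-triple property for the composed curve. The dynamics-preserving condition transfers along any equivariant, transverse, continuous boundary map out of a closed surface group in view of the characterizations of Anosov representations via projective limits of flag triples (cf.~\cite{guichard_wienhard_2012,AnosovAndProperGGKW,PosRepsGWPROCEEDINGS}).

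\textbf{Case~2: $\sSO(n,n-1)\times\sSO(2)$ with Hitchin first factor.} Write $\rho|_{\widehat{\Gamma}}=(\rho_1,\rho_2)$ with $\rho_1$ an $\sSO(n,n-1)$-Hitchin representation and $\rho_2:\widehat{\Gamma}\to\sSO(2)$. The $\sSO(n,n-1)$-Hitchin representation $\rho_1$ is $\sP_\Delta$-positive Anosov, with boundary curve $\xi_1:\partial_\infty\widehat{\Gamma}\to\sSO(n,n-1)/\sP_\Delta$. By composing Propositions~\ref{Prop: Delta to Delta Positivity} and \ref{Prop: Delta to Theta Positivity}, the chain of inclusions $\sSO(n,n-1)\hookrightarrow\sSO(n,n)\hookrightarrow\sSO(n,n+1)$ maps $\sU_\Delta^{>0}(\sSO(n,n-1))$ into $\sU_\Theta^{>0}(\sSO(n,n+1))$. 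Using this, the argument of Case~1 applied to $\iota_{n,n}\circ\iota_{n,n-1}\circ\rho_1$ produces a $\sP_\Theta$-positive Anosov representation inside $\sSO(n,n+1)$ with boundary curve $\xi$ obtained by pushing $\xi_1$ forward. The key observation to incorporate the $\sSO(2)$ factor is that, in the model of Proposition~\ref{Prop: Important subgroups}, the $\sSO(2)$-piece acts as the identity on the $\R^{2n-1}$ carrying the signature $(n,n-1)$ form, so it fixes every isotropic flag $V_1\subset\cdots\subset V_{n-1}\subset\R^{2n-1}\subset\R^{2n+1}$ pointwise. Consequently $\xi$ is also $(\rho_1,\rho_2)$-equivariant, and all three Anosov conditions as well as the positivity of triples transfer unchanged from the Hitchin factor to the full product representation.

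\textbf{Main obstacle.} The substantive content is not the set-theoretic pushforward of boundary curves but the verification that the positive semigroup inclusions from Propositions~\ref{Prop: Delta to Delta Positivity} and \ref{Prop: Delta to Theta Positivity} simultaneously encode transversality and the positive-triple property at the level of flag varieties (as in the general formalism of \cite{PosRepsGWPROCEEDINGS,PosRepsGLW}). Once that compatibility is spelled out once, both cases of the proposition reduce to a semigroup computation already carried out in the preceding two propositions, and the $\sSO(2)$ factor in Case~2 is absorbed for free because it centralizes the image of the Hitchin factor and acts trivially on the flags that the boundary curve sees.
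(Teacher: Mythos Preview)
Your proposal is correct and follows essentially the same approach as the paper: reduce to the finite-index subgroup via Remark~\ref{REM: FACTS about ANOSOV}, invoke Labourie and Fock--Goncharov for $\sP_\Delta$-positivity of the Hitchin factors, push the boundary curves forward along the flag-variety maps induced by $\iota_{n,n-1}$ and $\iota_{n,n}$, and use Propositions~\ref{Prop: Delta to Delta Positivity} and~\ref{Prop: Delta to Theta Positivity} to transfer positivity; the $\sSO(2)$ factor is absorbed because it lies in the centralizer and hence acts trivially on the image of the boundary curve. The paper's proof is terser and does not spell out transversality or the dynamics-preserving condition separately, whereas you attempt to; one small wording issue is that in your transversality step you speak of an element of $\sU_\Delta^{>0}$ ``witnessing the relation $x_0=u_0\cdot x_-$ of transverse pairs,'' but bare transversality only gives $u_0\in\sU_\Delta$, not $\sU_\Delta^{>0}$---the correct route (which you implicitly use anyway) is to complete any pair of distinct boundary points to a positive triple and then apply the semigroup inclusion.
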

\begin{proof}
Let $\rho_0:\Gamma\ra\sSO(n,n-1)$ and $\rho_0':\Gamma\ra\sSO(n,n)$ be Hitchin representations. By \cite{fock_goncharov_2006,AnosovFlowsLabourie}, there are $\rho_0$ and $\rho_0'$ equivariant positive Anosov boundary curves
\[\xymatrix{\xi_{\rho_0}:\p_\infty\Gamma\ra\sSO(n,n-1)/B_{n,n-1}&\text{and}&\xi_{\rho_0'}:\p_\infty\Gamma\ra\sSO(n,n)/B_{n,n}}~,\]
where $B_{n,n-1}\subset\sSO(n,n-1)$ and $B_{n,n}\subset\sSO(n,n)$ are Borel subgroups. 
The embeddings 
\[\xymatrix{\sSO(n,n-1)\ar[r]^{\ \ \iota_{n,n-1}}&\sSO(n,n)\ar[r]^{\iota_{n,n}\ \ }&\sSO(n,n+1)}\] induced by \eqref{EQ: isometric embedings} induce maps 
\[\xymatrix{\sSO(n,n-1)/B_{n,n-1}\ar[r]^{\ \ \iota_{n,n-1}}&\sSO(n,n)/B_{n,n}\ar[r]^{\iota_{n,n}\ \ }&\sSO(n,n+1)/\sP_\Theta}~.\]  
By Propositions \ref{Prop: Delta to Delta Positivity} and \ref{Prop: Delta to Theta Positivity} the Anosov boundary curves 
\[\xymatrix@R=.2em{\iota_{n,n}\circ\iota_{n,n-1}\circ\xi_{\rho_0}:\p_\infty\Gamma\longrightarrow\sSO(n,n+1)/\sP_\Theta&\text{and}\\\iota_{n,n}\circ\xi_{\rho_0'}:\p_\infty\Gamma\longrightarrow\sSO(n,n+1)/\sP_\Theta}\]
are $\Theta$-positive. 
The centralizer of $\iota_{n,n}(\iota_{n,n-1}(\sSO(n,n-1)))$ in $\sSO(n,n+1)$ contains $\sSO(n,n-1)\times\sSO(2).$ 
Thus, by Remark \ref{REM: FACTS about ANOSOV}, if $\rho:\Gamma\ra\sSO(n,n+1)$ is a representation and there exists a finite order subgroup $\widehat\Gamma\subset\Gamma$ such that the restriction of $\rho$ to $\widehat\Gamma$ factors though an $\sSO(n,n-1)\times\sSO(2)$-representation with $\sSO(n,n-1)$ factor a Hitchin representation or $\rho$ factors through an $\sSO(n,n)$-Hitchin representation, then $\rho$ is a positive  $\sP_\Theta$ Anosov representation.
\end{proof}
\begin{Theorem}\label{THM4}
	Let $\sSO(n,n+1)/P_\Theta$ be the generalized flag variety of flags 
	\[V_1\subset\cdots\subset V_{n-1}\subset V_{n-1}^\perp\subset\cdots\subset V_1^\perp\subset\R^{2n+1}~,\] where $V_j\subset\R^{2n+1}$ is an isotropic $j$-plane. If $n\geq2,$ then the set of representations in $\Xx_0(\sSO(n,n+1))$ or $\Xx_{sw_1}^{sw_2}(\sSO(n,n+1))$ which are not irreducible is a nonempty set which consists of positive $\sP_\Theta$ Anosov representations. 
\end{Theorem}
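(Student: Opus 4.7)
The plan is to assemble Theorem \ref{THM4} directly from the structural results already established in Sections \ref{Section Zariski closure} and \ref{section: Positive}. The proof breaks into three steps: exhibit a reducible representation in each component, identify its Zariski closure via Theorem \ref{THM: Zariski closures}, and deduce $\sP_\Theta$-positivity via Proposition \ref{Prop: singular Theta Pos reps}.

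For nonemptiness, I would read off the singular locus of each component from its Higgs bundle description. By Lemma \ref{Lemma O2 stabilizer} together with Lemma \ref{Lemma Gauge group fixing im Psi0}, the Higgs bundles in $\Xx_0(\sSO(n,n+1))$ with nontrivial stabilizer are exactly those triples $(M,\mu,\nu)$ with $\mu=\nu=0$, together with those for which $M^2=\Oo$ and $\mu=\lambda\nu$. Similarly, the orbifold points of $\Xx_{sw_1}^{sw_2}(\sSO(n,n+1))$ are the fixed locus of the $\Z_2\oplus\Z_2$ action on $\Ff_{sw_1}^{sw_2}$; both loci are visibly nonempty. Since by Proposition \ref{Prop Irreducible reps and Stable SLnC Higgs bundles} a representation $\rho$ is irreducible if and only if the associated $\sSL(2n+1,\C)$ Higgs bundle is stable, and the Higgs bundles in the singular locus are strictly polystable, these Higgs bundles correspond precisely to the reducible representations in the components. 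Hence the set of reducible representations is nonempty.

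For positivity, I would invoke Theorem \ref{THM: Zariski closures}: for any reducible $\rho$ in $\Xx_0(\sSO(n,n+1))$ or $\Xx_{sw_1}^{sw_2}(\sSO(n,n+1))$, there is a finite index subgroup $\widehat\Gamma\subset\Gamma$ such that $\rho|_{\widehat\Gamma}$ factors either through $\sSO(n,n-1)\times\sSO(2)$ with the $\sSO(n,n-1)$-factor a Hitchin representation, or through an $\sSO(n,n)$-Hitchin representation. Proposition \ref{Prop: singular Theta Pos reps} then immediately concludes that $\rho$ is $\sP_\Theta$-positive Anosov, since that proposition was formulated precisely for this configuration: the Anosov boundary curve of the $\sSO(n,n-1)$-Hitchin or $\sSO(n,n)$-Hitchin representation, composed with the isometric inclusions from \eqref{EQ: isometric embedings}, yields a $\sP_\Theta$-positive Anosov boundary curve into $\sSO(n,n+1)/\sP_\Theta$ by Propositions \ref{Prop: Delta to Delta Positivity} and \ref{Prop: Delta to Theta Positivity}; the passage from $\widehat\Gamma$ back to $\Gamma$ uses the facts in Remark \ref{REM: FACTS about ANOSOV} together with the observation that the $\sSO(2)$-factor centralizes the image of the Hitchin representation and hence fixes the boundary curve.

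The hard part, conceptually, is not the present theorem but the earlier work it leans on. In particular, Propositions \ref{Prop: Delta to Delta Positivity} and \ref{Prop: Delta to Theta Positivity} require carefully matching reduced expressions for the longest words in $\Ww(\Theta)$ on both sides of each inclusion and checking that the differentials of the embeddings carry the invariant acute cones into one another, the most delicate point being the diagonal embedding of the one-dimensional cones $c^\Delta_{\delta_{n-1}}\oplus c^\Delta_{\delta_n}$ of $\sSO(n,n)$ into the three-dimensional light cone $c^\Theta_{\beta_{n-1}}$ inside $\fu_{\beta_{n-1}}\cong\R^{1,2}$. Finally, the case $n=2$ is handled separately: here $\sSO_0(2,3)\cong\sP\sSp(4,\R)$ is Hermitian of tube type, the components $\Xx_0$ and $\Xx_{sw_1}^{sw_2}$ consist of maximal representations, and $\sP_\Theta$ is the Shilov boundary parabolic, so $\sP_\Theta$-positivity for the reducible representations follows from \cite{MaxRepsAnosov}.
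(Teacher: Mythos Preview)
Your proposal is correct and follows essentially the same route as the paper: invoke Theorem \ref{THM: Zariski closures} to place every reducible representation into the hypothesis of Proposition \ref{Prop: singular Theta Pos reps}, and conclude. The paper's own proof is just those two sentences; your additional paragraphs on nonemptiness, on the content of Propositions \ref{Prop: Delta to Delta Positivity} and \ref{Prop: Delta to Theta Positivity}, and on the $n=2$ case via maximality are accurate but not needed here, since those propositions are already in hand and the argument works uniformly for all $n\geq 2$.

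One small point of phrasing: you equate the reducible locus with the ``singular locus'' or ``orbifold points'' via the $\sO(2,\C)$ (respectively $\Z_2\oplus\Z_2$) stabilizer. This is correct as a description of the set, but the logical link you need is not ``nontrivial stabilizer $\Leftrightarrow$ strictly polystable $\sSL(2n+1,\C)$ Higgs bundle''; rather it is the explicit reduction of structure group carried out in Proposition \ref{PROP: Zariski closures} and the converse stability check in the proof of Theorem \ref{THM: Zariski closures}. Citing those directly would tighten the argument.
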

\begin{proof}
By Theorem \ref{THM: Zariski closures}, every reducible representation in the connected components $\Xx_0(\sSO(n,n+1))$ and $\Xx_{sw_1}^{sw_2}(\sSO(n,n+1))$ satisfy the hypothesis of Proposition \ref{Prop: singular Theta Pos reps}. Thus, all reducible representations in $\Xx_0(\sSO(n,n+1))$ or $\Xx_{sw_1}^{sw_2}(\sSO(n,n+1))$ is a positive $\sP_\Theta$ Anosov representation.\end{proof}
\begin{Remark}
	\label{REM: Facts about Pos reps}In \cite{PosRepsGLW}, it is shown that the set of positive representations is open, and it is conjectured to be closed. In fact, it can be shown that the set of positive Anosov representations is closed in the set of irreducible representations \cite{AnnaPrivateCommunication}. Namely, let $\rho_j:\Gamma\to \sSO(n,n+1)$ is a sequence of positive $\sP_\Theta$ Anosov representation which converge to $\rho_\infty:\Gamma\to\sSO(n,n+1).$ If each $\rho_j$ is irreducible (see Definition \ref{DEF irreducible REP}) and $\rho_{\infty}$ is also irreducible, then $\rho_{\infty}$ is a positive $\sP_\Theta$ Anosov representation.

Assuming these results, Theorem \ref{THM4} can be significantly strengthened to the statement that the components $\Xx_0(\sSO(n,n+1))$ and $\Xx_{sw_1}^{sw_2}(\sSO(n,n+1))$ consist {\em entirely of Anosov representations}. 

The argument is as follows: Let $\rho$ be a reducible representation in $\Xx_0(\sSO(n,n+1))$ or $\Xx_{sw_1}^{sw_2}(\sSO(n,n+1)).$ 
Since positive representations define an open set in the character variety, there is an open neighborhood $U_{\rho}$ of $\rho$ consisting of $\Theta$-positive representations. 
In particular, there exists $\rho\in U_{\rho_0}$ which is irreducible. Since positivity is closed in the set of irreducible representations, all irreducible representations $\rho\in\Xx_0(\sSO(n,n+1))$ are $\Theta$-positive. By Theorem \ref{THM4} all representations in $\Xx_0(\sSO(n,n+1))$ and $\Xx_{sw_1}^{sw_2}(\sSO(n,n+1))$ are positive $\sP_\Theta$ Anosov representations.
 \end{Remark}

\bibliography{../../mybib}{}
\bibliographystyle{plain}

\end{document}